\newcommand\reallywidehat[1]{%
\savestack{\tmpbox}{\stretchto{%
  \scaleto{%
    \scalerel*[\widthof{\ensuremath{#1}}]{\kern-.6pt\bigwedge\kern-.6pt}%
    {\rule[-\textheight/2]{1ex}{\textheight}}
  }{\textheight}%
}{0.5ex}}%
\stackon[1pt]{#1}{\tmpbox}%
}
\numberwithin{equation}{section}
\numberwithin{figure}{section}
\theoremstyle{plain}
\newtheorem{thm}{\protect\theoremname}[section]
\theoremstyle{plain}
\newtheorem{cor}[thm]{\protect\corollaryname}
\theoremstyle{plain}
\newtheorem{lem}[thm]{\protect\lemmaname}
\newenvironment{proof}[1][\protect\proofname]{\par
	\normalfont\topsep6\p@\@plus6\p@\relax
	\trivlist
	\itemindent\parindent
	\item[\hskip\labelsep\scshape #1]\ignorespaces
}{%
	\endtrivlist\@endpefalse
}
\providecommand{\proofname}{Proof}
\theoremstyle{plain}
\newtheorem{prop}[thm]{\protect\propositionname}
\theoremstyle{plain}
\newtheorem{hyp}[thm]{\protect\hypothesisname}
\theoremstyle{plain}
\newtheorem*{prop*}{\protect\propositionname}
\theoremstyle{definition}
\newtheorem{defn}[thm]{\protect\definitionname}
\theoremstyle{plain}
\newtheorem*{lem*}{\protect\lemmaname}
\theoremstyle{remark}
\newtheorem{claim}[thm]{\protect\claimname}
\providecommand{\claimname}{Claim}
\providecommand{\corollaryname}{Corollary}
\providecommand{\definitionname}{Definition}
\providecommand{\lemmaname}{Lemma}
\providecommand{\propositionname}{Proposition}
\providecommand{\theoremname}{Theorem}
\providecommand{\hypothesisname}{Hypothesis}
\newcommand{\remove}[1]{}
\begin{document}
\global\long\def\connected{\text{highly connected}}%
 
\global\long\def\f{\mathcal{F}}%
 
\global\long\def\a{\mathcal{A}}%
 
\global\long\def\pn{\mathcal{P}\left(\left[n\right]\right)}%
 
\global\long\def\g{\mathcal{G}}%

\global\long\def\Hom{\mathrm{Hom}}%
 
\global\long\def\l{\mathcal{L}}%
 
\global\long\def\s{\mathcal{S}}%
 
\global\long\def\j{\mathcal{J}}%
 
\global\long\def\d{\mathcal{D}}%
 
\global\long\def\Cay{\mathrm{Cay}}%

\global\long\def\OPT{\mathrm{OPT}}
 
\global\long\def\Image{\mathrm{Im}}%

\global\long\def\supp{\mathrm{supp}}
 
\global\long\def\GL{\mathrm{GL}}%
 
\global\long\def\SL{\mathrm{SL}}%
 
\global\long\def\Inf{}%
 
\global\long\def\Id{\textrm{Id}}%
 
\global\long\def\Tr{\mathrm{Tr}}%
 
\global\long\def\sgn{\textrm{sgn}}%
 
\global\long\def\p{\mathcal{P}}%
 
\global\long\def\h{\mathcal{H}}%
 
\global\long\def\n{\mathbb{N}}%
 
\global\long\def\a{\mathcal{A}}%
 
\global\long\def\b{\mathcal{B}}%
 
\global\long\def\c{\mathcal{C}}%
 
\global\long\def\e{\mathbb{E}}%
 
\global\long\def\x{\mathbf{x}}%
 
\global\long\def\y{\mathbf{y}}%
 
\global\long\def\z{\mathbf{z}}%
 
\global\long\def\c{\mathcal{C}}%
 
\global\long\def\av{\mathsf{A}}%
 
\global\long\def\chop{\mathrm{Chop}}%
 
\global\long\def\stab{\mathrm{Stab}}%
 
\global\long\def\Span{\mathrm{Span}}%
 
\global\long\def\Domain{\mathrm{Domain}}%
 
\global\long\def\codim{\mathrm{codim}}%
 
\global\long\def\Var{\mathrm{Var}}%
 
\global\long\def\rank{\mathrm{rank}}%
 
\global\long\def\t{\mathsf{T}}%
 
\global\long\def\sqbinom#1#2{\left[\begin{array}{c}
#1\\
#2
\end{array}\right]}%

\title[An analogue of Bonami's Lemma for functions on spaces of linear maps, and 2-2 Games]{An analogue of Bonami's Lemma for functions on spaces of linear maps, and 2-2 Games}
\author{David Ellis}
\address{School of Mathematics, University of Bristol, United Kingdom.}
\email{david.ellis@bristol.ac.uk}
\author{Guy Kindler}
\address{Rachel and Selim Benin School of Computer Science and Engineering,
Hebrew University of Jerusalem, Edmond J. Safra Campus, Givat Ram,
Jerusalem 91904, Israel.}
\email{gkindler@cs.huji.ac.il}
\author{Noam Lifshitz}
\address{Einstein Institute of Mathematics, Hebrew University of Jerusalem,
Edmond J. Safra Campus, Givat Ram, Jerusalem 91904, Israel.}
\email{noamlifshitz@gmail.com}
\begin{abstract}
We prove an analogue of Bonami's (hypercontractive) lemma for complex-valued functions on
$\mathcal{L}(V,W)$, where $V$ and $W$ are vector spaces over a
finite field. This inequality is useful for functions on $\mathcal{L}(V,W)$
whose `generalised influences' are small, in an appropriate sense.
It leads to a significant shortening of the proof of a recent seminal
result by Khot, Minzer and Safra \cite{kms} that pseudorandom sets
in Grassmann graphs have near-perfect expansion, which (in combination
with the work of Dinur, Khot, Kindler, Minzer and Safra \cite{dkkms})
implies the 2-2 Games conjecture (the variant, that is, with imperfect completeness).
\end{abstract}

\maketitle

\section{Introduction}
Hypercontractive inequalities  are of great importance and use in mathematical physics, analysis, geometry, probability theory, combinatorics and theoretical computer science (having first been introduced by Nelson \cite{Nelson}, motivated by mathematical physics). In general, for $1 \leq p < q \leq \infty$, a $(p,q)$-hypercontractive inequality for a measure space $X$ and an operator $T:L^p(X) \to L^q(X)$ says that $\|T(f)\|_q \leq \|f\|_p$ for all $f \in L^p(X)$.  One of the most classical, fundamental and useful hypercontractive inequalities is the hypercontractive inequality of Bonami, Beckner and Gross regarding the noise operator on the discrete cube, with the uniform measure. Let us give the statement in full. For $0 \leq \rho \leq 1$, the noise operator $T_{\rho}:L^p(\{0,1\}^n) \to L^q(\{0,1\}^n)$ is defined by
$$(T_{\rho}f)(x) = \mathbb{E}_{y \sim N_{\rho}(x)}[f(y)]\quad \forall x \in \{0,1\}^n,\ \forall f:\{0,1\}^n \to \mathbb{R},$$
where the distribution $y \sim N_\rho(x)$ is defined as follows: independently for each coordinate $i \in [n]$, we set $y_i = x_i$ with probability $\rho$, and with probability $1-\rho$ we take $y_i \in \{0,1\}$ uniformly at random (independently of $x_i$). In other words, we obtain $y$ from $x$ by resampling each coordinate of $x$ independently with probability $1-\rho$, so $y$ is a `noisy' version of $x$. Note that $T_1(f)=f$, i.e.\ $T_1$ is simply the identity operator; on the other hand, $T_0$ maps a function $f$ to the constant function with value $\mathbb{E}[f]$. For $0 < \rho < 1$, $T_{\rho}$ interpolates between these two extremes: the smaller the value of $\rho$, the greater the degree of `smoothing'.

The hypercontractive inequality of Bonami \cite{bonami}, Beckner \cite{beckner} and Gross \cite{gross-ineq}\footnote{It was discovered independently by these three authors, though Bonami considered only the case $p=2$, which suffices for most applications.} states that 
$$\|T_{\rho}(f)\|_q \leq \|f\|_p\quad \forall \rho \leq \sqrt{(p-1)/(q-1)},\ \forall f:\{0,1\}^n \to \mathbb{R}.$$
As the spectral norm of $T_{\rho}$ is $1$, this inequality means that it acts as a smoothing operator, smoothing out sharp peaks.

Often, the special case with $q=4$ and $p=2$ suffices for applications; this says that
\begin{equation}
    \label{eq:42hyp}
\|T_{\rho}(f)\|_4 \leq \|f\|_2 \quad \forall \rho \leq 1/\sqrt{3}.
\end{equation}
$T_{\rho}$ can also be written in terms of the Fourier transform, writing $f:\{0,1\}^n \to \mathbb{R}$ as $f = \sum_{S \subseteq [n]} \hat{f}(S)\chi_S$, where $\chi_S(x) = (-1)^{\sum_{i\in S}x_i}$ for all $x \in \{0,1\}^n$ and $S \subseteq [n]$ (here, $\hat{f}(S) = \langle f,\chi_S\rangle$ for all $S \subseteq [n]$), the noise operator $T_{\rho}$ is given by
$$T_{\rho}(f) = \sum_{S \subset [n]} \rho^{|S|} \hat{f}(S)\chi_S.$$
This yields the following corollary of (\ref{eq:42hyp}), known as Bonami's lemma, which is extremely useful.
\begin{lem}[Bonami's Lemma]
\label{lem:bonami}
Let $f:\{0,1\}^n \to \mathbb{R}$ be a function of degree at most $d$; then
$$\|f\|_4 \leq 3^{d/2}\|f\|_2.$$
\end{lem}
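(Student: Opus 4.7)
The plan is to deduce the lemma directly from the $(2,4)$-hypercontractive inequality \eqref{eq:42hyp}: since $T_{1/\sqrt{3}}$ maps $L^2$ contractively into $L^4$, and since the noise operator acts diagonally on the Fourier basis, the task reduces to exhibiting $f$ as $T_{1/\sqrt{3}}$ applied to a function whose $L^2$ norm is only moderately larger than $\|f\|_2$.

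Concretely, write $f = \sum_{|S|\le d}\hat f(S)\chi_S$ and define
$$g \;:=\; \sum_{|S|\le d} 3^{|S|/2}\hat f(S)\,\chi_S.$$
Using the Fourier-diagonal formula $T_{\rho}(h)=\sum_S \rho^{|S|}\hat h(S)\chi_S$ recorded in the excerpt, one checks at once that $T_{1/\sqrt{3}}(g)=f$, so \eqref{eq:42hyp} applied to $g$ gives
$$\|f\|_4 \;=\; \|T_{1/\sqrt{3}}(g)\|_4 \;\le\; \|g\|_2.$$
Parseval together with the degree bound then yields
$$\|g\|_2^2 \;=\; \sum_{|S|\le d} 3^{|S|}\hat f(S)^2 \;\le\; 3^d\sum_{|S|\le d}\hat f(S)^2 \;=\; 3^d\|f\|_2^2,$$
and combining these two inequalities gives $\|f\|_4\le 3^{d/2}\|f\|_2$.

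There is no real obstacle here once \eqref{eq:42hyp} is in hand: the only `idea' needed is to pre-inflate each Fourier level of $f$ by the matching factor $3^{|S|/2}$, so that applying $T_{1/\sqrt{3}}$ exactly cancels this inflation and recovers $f$. If one wished to prove Bonami's Lemma without invoking \eqref{eq:42hyp}, the standard route would be Bonami's original induction on $n$: one decomposes $f$ according to a distinguished coordinate as $f=f_0+x_n f_1$ with $\deg f_1\le d-1$, checks the inequality by hand on a single Boolean variable, and then uses tensorisation of the noise operator together with the inductive hypothesis. With \eqref{eq:42hyp} already available, however, the three-line derivation above is optimal, and this is the approach I would take.
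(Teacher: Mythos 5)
Your proof is correct and matches exactly the route the paper alludes to: the paper states Bonami's Lemma as a corollary of the $(2,4)$-hypercontractive inequality \eqref{eq:42hyp} without spelling out the details, and your argument---defining $g$ so that $T_{1/\sqrt{3}}(g)=f$, applying \eqref{eq:42hyp}, and estimating $\|g\|_2$ via Parseval and the degree bound---is precisely that standard derivation, carried out correctly.
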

(Recall that the {\em degree} of a function $f:\{0,1\}^n \to \mathbb{R}$ is the maximal size of a set $S$ such that $\hat{f}(S) \neq 0$.) Bonami's lemma bounds the 4-norm of a low-degree function in terms of its 2-norm; roughly speaking, it says that low-degree functions on $\{0,1\}^n$ do not have very large `peaks' in their modulus (such peaks would lead to their having large 4-norm). 

The Bonami-Beckner-Gross hypercontractive inequality was a crucial ingredient in the proof of the seminal Kahn-Kalai-Linial theorem \cite{kkl} on the influences of Boolean functions, and of Friedgut's junta theorem \cite{friedgut-junta}; both have been of huge importance in combinatorics and theoretical computer science over the last three decades. (In fact, Bonami's lemma suffices for these two applications.) 

The notion of a `noise' operator (which we defined above for the discrete cube) readily generalises to $L^p(X,\mu)$ for many other measure spaces $(X,\mu)$: one just needs to find a (natural) way to resample the input of a function (resampling in a way that is more or less `extreme', depending on the noise parameter). For example, the noise operator $U_{\rho}:L^p(\mathbb{R}^n,\gamma^n) \to L^q(\mathbb{R}^n,\gamma^n)$ on $n$-dimensional Gaussian space (with the standard $n$-dimensional Gaussian measure $\gamma^n$) is defined by
$$U_{\rho}(f)(x) = \mathbb{E}_{Y \sim \gamma^n}\left[f\left(\rho x+ \sqrt{1-\rho^2}Y\right)\right]\quad \forall f \in L^p(\mathbb{R}^n, \gamma^n);$$
this is natural because if $X$ and $Y$ are independent $n$-dimensional standard Gaussian random variables, then $X$ and $\rho X + \sqrt{1-\rho^2}Y$ are  $\rho$-correlated standard $n$-dimensional Gaussians. Here, the `noisy' version of $x$ is the random variable $\mathbb{N}_\rho(x): = \rho x+\sqrt{1-\rho^2}Y$, where $Y \sim \gamma^n$.

Hypercontractive inequalities for natural `noise' operators on many other spaces have been obtained over the last five decades. A very useful example is the hypercontractive inequality for the noise operator in Gaussian space \cite{beckner,gross-ineq,Nelson2} (an earlier suboptimal version appeared in~\cite{Nelson}); this is intimately related to the heat equation. Rothaus proved a sharp hypercontractive inequality \cite{rothaus} for the $n$-dimensional sphere $S^n$. Gross \cite{gross-ineq} proved that a hypercontractive inequality for a space is equivalent to a log-Sobolev inequality for that space, linking two important bodies of work, and proved hypercontractive inequalities over some noncommutative algebras related to quantum field theory. The hypercontractive inequality for the noise operator in Gaussian space was a crucial ingredient in the proof of the seminal Invariance Theorem of Mossel, O'Donnell and Oleszkiewicz \cite{moo}.

The hypercontractive inequalities we have discussed above hold for {\em all} functions on the corresponding space. For some important examples of spaces, however, a (strong) hypercontractive inequality does not hold for all functions --- even an analogue of Bonami's lemma does not hold, since there are `badly-behaved' low-degree functions whose 4-norm is large compared to their 2-norm. This is the case for functions on the $p$-biased cube $(\{0,1\}^n,\mu_p)$, where $p=o(1)$: the `dictatorship' functions defined by $f(x) = x_i$ for some $i$ have 4-norm $p^{1/4}$, which is much greater than their 2-norm $p^{1/2}$, when $p=o(1)$. (Recall that the $p$-biased measure on $\{0,1\}^n$ is defined by $\mu_p(x) =  p^{\sum_{i=1}^{n}x_i}(1-p)^{n-\sum_{i=1}^{n}x_i}$. A `weak' analogue of Bonami's lemma holds for the $p$-biased measure, but with $\sqrt{3}$ replaced by function of $p$ which tends to infinity as $p$ tends to zero.  This weak analogue is insufficient for many important applications.) 

Recently, Keevash, Lifshitz, Long and Minzer \cite{kllm} proved that `dictatorships' and similar `junta-type' constructions are in a sense the only barrier to hypercontractivity: for functions whose norm is not too much affected by restricting the values of a small set of coordinates, a hypercontractive inequality does hold. (Keevash, Lifshitz, Long and Minzer called such functions {\em global functions}.) A hypercontractive inequality for global functions may be termed a {\em conditional hypercontractive inequality} (the precise quantitative notion of `global' may differ according to the context or the application in mind) --- the classical Bonami-Beckner-Gross inequality and its Gaussian analogue, on the other hand, are unconditional (the hypercontractive inequality there holds for all functions, not just global ones). In \cite{kllm}, Keevash, Lifshitz, Long and Minzer obtained (conditional) hypercontractive inequalities (for the natural noise operator) for global functions on both the $p$-biased cube $(\{0,1\}^n,\mu_p)$ and for a general product space $(X^n,\mu^n)$; these had several important applications in extremal combinatorics and theoretical computer science (see e.g.\ \cite{kllm,global-app}). In \cite{fklm}, Filmus, Kindler, Lifshitz and Minzer obtained a (conditional) hypercontractive inequality for global functions on the symmetric group $S_n$, a non-product space (in the case of the symmetric group, again, a `strong' hypercontractivity does not hold for all functions, as one can see by considering the indicator function of a point-stabilizer); this in turn was a crucial ingredient in the resolution by Keevash, Lifshitz and Minzer \cite{an} of a well-known open problem of Crane, concerning the largest product-free sets in the alternating groups $A_n$.

One important family of applications of hypercontractive inequalities (both unconditional and conditional hypercontractive inequalities) is to obtain {\em small-set expansion} theorems. A small-set expansion theorem for a finite, regular undirected graph $G$ says, roughly speaking, that small sets\footnote{Possibly, provided they satisfy an additional `globalness' or `psuedorandomness' condition, such as having no large density increment on a `nice' subset.} have very large vertex-boundary in $G$, much larger than the bound guaranteed by the Cheeger constant\footnote{Or, which is roughly equivalent, the second eigenvalue of the graph.}, the latter bound being sharp only for larger (or non-pseudorandom) sets. More precisely, a small-set expansion theorem for $G = (V,E)$ says that if $S \subset V(G)$ with $|S|$ small (and, possibly, satisfies an additional globalness or psuedorandomness condition), then choosing a uniform random element $u$ of $S$ and a random edge $uv$ of $G$ incident with $u$, the vertex $v$ (at the other end of the random edge) will lie outside $S$ with probability close to 1. There is a similar notion for weighted graphs, where the edges are weighted with non-negative weights and the weighting is regular (meaning that the sum of the weights of edges incident to each vertex is the same): in this case, the random edge $uv$ is chosen with probability proportional to the weight of the edge $uv$.

A hypercontractive inequality can often be used to prove a small-set expansion theorem, as we shall now roughly outline. First, given a graph $G$ on a probability space $(X,\mu)$, one finds a noise operator $T_\rho$ defined by $T_{\rho}f(x) = \mathbb{E}_{y \sim N_\rho(x)}[f(y)]$, such that $T_{\rho}$ satisfies a hypercontractive inequality, and such that the `noised' version $N_\rho(x)$ of $x$ is concentrated on close neighbours of $x$ in $G$ (i.e., on vertices of $G$ with small graph-distance from $x$). This means, roughly, that $T_{\rho}f(x)$ is an average value of $f(y)$ over vertices $y$ that are `close neighbors' of $x$. This in turn means that if $f$ is the indicator function of a set $S$, then the inner product $\langle T_{\rho}f,f\rangle$ is roughly (or sometimes, exactly) proportional to the probability that if we choose a uniform random vertex $u$ in $S$ and a uniform random edge $uv$ incident with $u$, traversing the edge from $u$ to $v$ does not take us outside the set $S$.   
Partitioning $T_\rho f$ to its low-degree and high-degree part, the high-degree part contributes little to the inner product because $T_{\rho}$ shrinks its 2-norm to something very small (this follows from the Fourier transform representation of $T_\rho$). As for the contribution of the low-degree part, this can be bounded by an expression involving the $4$-norm of $T_\rho f$, by using H\"older's inequality. Applying the hypercontractive inequality for $T_{\rho}$ and rearranging, we obtain an upper bound on the probability of staying inside $S$, and thus a lower bound on the probability of moving outside it. 

\remove{First, one proves (using a hypercontractive inequality) that a (possibly pseudorandom) set has 4-norm bounded from above in terms of its 2-norm. One then applies H\"older's inequality to conclude that a small (and possibly pseudorandom) set has Fourier transform concentrated on the high degrees. Finally, one uses estimates on the eigenvalues of the adjacency operator $A$ of $G$ (specifically, estimates showing that the large eigenvalues correspond to low degrees), together with an expansion of $\langle A1_S,1_S\rangle$ analogous to the proof of Hoffman's eigenvalue bound, to conclude that small (pseudorandom) sets expand.}

\bigskip In this paper, we obtain an analogue of Bonami's lemma for `global' functions on the space $\l(V,W)$ of linear maps from $V$ to $W$, where $V$ and $W$ are finite-dimensional vector spaces over a finite field. This leads to a significant conceptual simplification and streamlining/shortening of the proof of the seminal result of Khot, Minzer and Safra \cite{kms} obtaining small-set expansion for pseudorandom sets in the Grassmann graph; the latter was one of the crucial ingredients in the celebrated proof of the 2-2 Games conjecture (with imperfect completeness), along with the earlier results of Khot, Minzer and Safra in \cite{kms-first}, of Dinur, Kindler, Khot, Minzer and Safra in \cite{dkms2,dkkms}, and of Barak, Kothari and Steurer in \cite{bks}.

The Unique Games conjecture of Khot is considered by many to be the second-most important open problem in complexity theory, after the P versus NP problem; yet it is not considered to be out of reach in the same way as the P versus NP problem. The proof of the 2-2 Games conjecture (with imperfect completeness) is one of the greatest breakthroughs in the area, in recent times. We proceed to give a full statement of the problem.

One can think of an instance of the Unique Games problem as a system of linear equations over  $\mathbb{F}_p$ for some prime $p$, where every equation (/constraint) is of the form
$$t_{ij}x_i + t_{ij}'x_j = c_{ij},$$
for $i,j \in [n]$, where $x_1,x_2,\dots x_n$ are variables taking values in  $\mathbb{F}_p$, and $c_{ij},t_{ij},t_{ij}' \in \mathbb{F}_p$ are constants\footnote{The original version of the Unique Games Conjecture allowed for more general types of constraints, but it was shown in~\cite{maxcut} that one can assume without loss of generality that the constraints are as we describe here.}. The goal is to find an assignment of the variables that satisfies a large fraction of the equations (/constraints). The Unique Games conjecture states that for any $\epsilon>0$, there exists $p_0(\epsilon) \in \mathbb{N}$ such that for all primes $p \geq p_0(\epsilon)$, given an instance of the Unique Games conjecture for $\mathbb{F}_{p}$ where we are promised there is an assignment satisfying at least a $(1-\epsilon)$-fraction of the equations, it is an NP-hard problem to find an assignment satisfying (even) at least an $\epsilon$-fraction of the equations.

The `uniqueness' in the Unique Games problem refers to the fact each equation (/constraint) $\mathcal{E}$ of the form $t_{ij}x_i + t_{ij}'x_j = c_{ij}$ inside an instance actually fixes a one-to-one correspondence between assignments of the variable $x_i$ and assignments of the variable $x_j$, since if the coefficients $t_{ij}$ and $t_{ij}'$ are non-zero (which indeed we may assume, without loss of generality), then for each assignment of $x_i$ there is a unique assignment of $x_j$ for which $\mathcal{E}$ is satisfied, and vice versa. The 2-2 Games conjecture (the variant, that is, with imperfect completeness) refers to an analogous problem, where each constraint sets a relation between a pair of distinct variables $x_i$ and $x_j$ which, rather than being `unique' (or `one-to-one'), is instead `two-to-two'. (We explain precisely what this means, shortly.) This is a more general set of allowed constraints, and so intuitively one would guess that it would be more difficult to find an assignment that satisfies at least an $\epsilon$-fraction of the constraints, even when one is promised that there exists an assignment satisfying at least a $(1-\epsilon)$-fraction of them. This guess turns out to be correct: it is easy to prove that the 2-2 Games conjecture with imperfect completeness, follows from the Unique Games conjecture, and (as mentioned above) the former has now been proven, whereas the latter remains open.

 Now let us explain what a $2$-to-$2$ constraint is. A very simple example is the constraint
 $$t_{ij}x_i + t_{ij}'x_j \in \{c_{ij},c_{ij}'\}$$
 on the pair of variables $x_i$ and $x_j$, where $t_{ij},t_{ij}' \in \mathbb{F}_p^{\times}$ and $c_{ij}\neq c_{ij}' \in \mathbb{F}_p$. Now each assignment of $x_i$ that satisfies the constraint has {\em two} corresponding assignments of $x_j$ that satisfy the constraint, and vice versa. Formally, a constraint on two variables $x$ and $y$ is said to be a {\em $2$-to-$2$ relation on their assignments} if there is a partition of the set of possible assignments of $x$ into a collection of pairs $\mathcal{P}$, and a partition of the possible assignments of $y$ into a collection of pairs $\mathcal{Q}$, along with a perfect matching from $\mathcal{P}$ to $\mathcal{Q}$, such that once two matched pairs are chosen (one pair, $p$ say, in $\mathcal{P}$ and the other pair, $q$ say, in $\mathcal{Q}$), any assignment of $x$ from $p$ and any assignment of $y$ from $q$ will satisfy the constraint; and furthermore, any assignments of $x$ and $y$ that do not come from matched pairs do not satisfy the constraint.
 
 Let us now give a more complicated example of a 2-2 constraint, an example that was crucial in the aforementioned works on the 2-2 Games conjecture. We now index the variables by $\ell$-dimensional subspaces of $\mathbb{F}_2^k$, and we impose constraints $C_{L,L'}$ on pairs of variables $x_{L},x_{L'}$, where $L$ and $L'$ are $\ell$-dimensional subspaces with $\dim(L \cap L') = \ell-1$. For each $\ell$-dimensional subspace $L$, we seek to assign values (to the variable $x_L$) which are $\mathbb{F}_2$-linear functionals on $L$, i.e.\ the assignments to $x_L$ are elements $f_{L}$ of the dual space $L^*$. The constraint $C_{L,L'}$ is defined as follows: an assignment $f_L$ to $x_L$ and an assignment $f_{L'}$ to $x_{L'}$ together satisfy $C_{L,L'}$ if $f_L(x)=f_{L'}(x)$ for all $x\in L\cap L'$, i.e.\ if the linear functionals $f_L$ and $f_{L'}$ agree on $L\cap L'$. We note that since $L\cap L'$ is of codimension one in $L$ (and also of codimension one in $L'$), and since we are working over $\mathbb{F}_2$, for any given linear functional $g$ on $L \cap L'$ there are exactly two possible extensions of $g$ to a linear functional on $L$ and exactly two possible extensions of $g$ to a linear functional on $L'$. It follows that the constraint $C_{L,L'}$ is indeed 2-2 in the above sense. 

In \cite{dkms2}, Dinur, Khot, Kindler, Minzer and Safra reduced the 2-2 Games conjecture (with imperfect completeness) to a statement called the `Grassmann Soundness Hypothesis', which concerns constraints of the form $C_{L,L'}$ defined above. To explain further, we need some additional terminology. The {\em Grassmann graph} $G_{k,\ell}$ denotes the graph whose vertex-set consists of all $\ell$-dimensional subspaces of $\mathbb{F}_2^{k}$, and where two $\ell$-dimensional subspaces $L$ and $L'$ are joined by an edge if $\dim(L \cap L')=\ell-1$. An {\em $(\ell,k)$-Grassmann Test} is a system of constraints where we have a variable $x_L$ for every vertex of the Grassmann graph (i.e.\ for every $\ell$-dimensional subspace $L$ of $\mathbb{F}_2^k$), and a constraint $C_{L,L'}$ as defined above for every edge of the Grassmann graph. The Grassmann Soundness Hypothesis states (roughly) that if an assignment $(f_L)_{L \in V(G_{k,\ell})}$ satisfies at least an $\epsilon$-fraction of the constraints $(C_{L,L'})_{\{L,L'\}\in E(G_{k,\ell})}$, then there must be a linear functional $f:\mathbb{F}_2^{k}\to\mathbb{F}_2$ that agrees on $L$ with the assignment $f_L:L\to \mathbb{F}_2$, for many $\ell$-dimensional subspaces $L$. More precisely, there must be a linear functional $f:\mathbb{F}_2^k \to \mathbb{F}_2$, and two subspaces $A \leq B \leq \mathbb{F}_2^k$ (with $A$ of low dimension and $B$ of low codimension) such that $f$ agrees with a constant fraction of those assignments $f_L$ for which $L$ is sandwiched between $A$ and $B$. The formal statement is as follows.
\begin{hyp}[Grassmann Soundness Hypothesis]
For every $\epsilon>0$, there exist $\ell_0 \in \mathbb{N}$, $\eta>0$, $d\in \mathbb N$ and a function $k_0:\mathbb{N} \to \mathbb{N}$ such that the following holds. If $\ell \geq \ell_0$ and $k \geq k_0(\ell)$, and an assignment is given for the $(\ell,k)$-Grassmann Test that satisfies at least an $\epsilon$-fraction of the constraints, then there exists a linear functional $f:\mathbb{F}_2^k\to\mathbb{F}_2$ and subspaces $A\subseteq B\subseteq \mathbb{F}_2^k$ with $\dim(A)+\codim(B)\leq d$, such that for at least an $\eta$-fraction of the $\ell$-dimensional spaces $A\subseteq L \subseteq B$, it holds that $f_L$ (the assignment of $x_L$) is equal to the restriction of $f$ to $L$.
\end{hyp}
 The work of Barak, Kothari and Steurer \cite{bks} further reduced 
the Grassmann Soundness Hypothesis to the `Grassmann Expansion Hypothesis', a statement about the expansion properties of the Grassmann graph, which we now describe. Given a finite, $d$-regular graph $G=(V,E)$ and a set of vertices $S \subset V(G)$, we define the {\em expansion ratio}
$$\Phi_G(S) := \frac{|E_G(S,\overline{S})|}{d|S|},$$
where $E_G(S,\overline{S})$ denotes the set of edges of $G$ with one endpoint in $S$ and the other endpoint in $\overline{S}: = V(G)\setminus S$. (Note that $\Phi_G(S)$ is precisely the probability that, if we pick uniformly at random a vertex $u$ of $S$ and then uniformly at random an edge of $G$ that is incident with $u$, then the other endpoint of this edge lies outside $S$.) The Grassmann Expansion Hypothesis states that pseudorandom sets in the Grassmann graph have high expansion ratio, where by `psuedorandom' we mean that the density of the set on lower-order copies of the Grassmann graph is not too high:
\begin{hyp}[Grassmann Expansion Hypothesis]
For any $0 < \epsilon <1$, there exists $\ell_0=\ell_0(\epsilon) \in \mathbb{N}$, $d \in \mathbb{N}$ and $\eta >0$ such that the following holds. Let $\ell \geq \ell_0$ and let $k$ be sufficiently large depending on $\ell$. Let $S \subset V(G_{k,\ell})$ such that for any subspaces $A$ and $B$ of $\mathbb{F}_2^{k}$ with $A \subseteq B$ and $\dim(A)+\codim(B) \leq d$, we have
$$\frac{|\{L \in S:\ A \subseteq L \subseteq B\}|}{|\{L \in V(G_{k,\ell}):\ A \subseteq L \subseteq B\}|} \leq \eta.$$
Then $\Phi_{G_{k,\ell}}(S) \geq 1-\epsilon$.
\end{hyp}
The proof of the 2-2 Games conjecture (with imperfect completeness) was completed when Khot, Minzer and Safra proved the Grassmann Expansion Hypothesis in the seminal work \cite{kms}. The proof in \cite{kms}, however, is extremely long and technical. In this paper, we find a streamlined proof by first obtaining an (essentially optimal) analogue of Bonami's lemma for complex-valued functions on $\l(V,W)$, where $V$ and $W$ are vector spaces over $\mathbb{F}_q$, and then using the $q=2$ case of this to obtain a small-set expansion theorem for pseudorandom sets in the Shortcode Graph (the graph with vertex-set $\l(V,W)$, where two linear maps $A_1$ and $A_2$ are joined by an edge if $A_1-A_2$ is of rank one); such a small-set expansion theorem was already known to imply the Grassmann Expansion Hypothesis, by the work of Barak, Kothari and Steurer in \cite{bks}.

We now describe our results in more detail. Our conceptual starting-point is the following (conditional) analogue of Bonami's lemma for global functions on product spaces, obtained by Keevash, Lifshitz, Long and Minzer in \cite{kllm}. To state it we need some more notation and definitions. If $\Omega = X^n$ is a finite product-space, and $S\subset [n]$, we write $\Omega_S = X^S$. For $x \in \Omega_S$ and a function $f:\Omega\to \mathbb{C}$, we write $f_{S \to x}$ for the `restricted' function on $\Omega_{[n] \setminus S}$ defined by $f_{S \to x}(y) = f(x,y)$, where (abusing notation slightly) we write $(x,y)$ for the element $z \in \Omega$ with $z_i = x_i$ for all $i \in S$ and $z_i=y_i$ for all $i \in [n] \setminus S$. We equip the product-space $\Omega$ with the uniform (product) measure $\mu$ on $\Omega$, and similarly we equip the product-space $\Omega_S$ with the uniform (product) measure on $\Omega_S$, for any $S \subset [n]$. The {\em Efron-Stein decomposition} is an orthogonal decomposition of
$L^{2}(\Omega,\mu)$ into spaces $V_{S}$ (for $S \subset [n]$), where
$V_{S}$ consists of the functions in $L^2(\Omega,\mu)$ that depend
only upon the coordinates in $S$ and are orthogonal to any function that
depends only upon the coordinates in $T$, for a proper subsets $T$ of $S$. For a complex-valued function $f:\Omega \to \mathbb{C}$ and for each $S \subset [n]$, we define $f^{=S}$ to be the orthogonal projection
of $f$ onto $V_{S}$. We define the {\em Efron-Stein degree} of $f$ to be $\max\{|S|: f^{=S} \neq 0\}$, and we define the {\em degree-$d$ truncation} of $f$ to be the function $f^{\leq d}$ obtained by orthogonally projecting $f$ onto the linear space of functions of (Efron-Stein) degree at most $d$ (in other words, $f^{\leq d}$ is simply the degree-$d$ part of $f$).
\begin{thm}[Keevash, Lifshitz, Long, Minzer, 2019+]
\label{thmcor:KLLM} Let $\Omega$ be finite product space. Let $f\colon\Omega\to\mathbb{C}$ and let $\delta>0$. Suppose that 
$\|f_{S\to x}\|_{2}^{2}\leq \delta$ for sets $S\subseteq [n]$ with $|S| \leq d$ and all $x\in\Omega_{S}$. Then $\|f^{\leq d}\|_{4}^{4}\le 1000^{d}\delta\|f^{\leq d}\|_{2}^{2}$.
\end{thm}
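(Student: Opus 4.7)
The plan is to prove the estimate by induction on $n$, the number of coordinates of $\Omega = X_1 \times \cdots \times X_n$. The base case $n = 0$ is trivial: $\Omega$ is a single point, $f$ is a constant, and the hypothesis with $S = \emptyset$ gives $|f|^2 = \|f\|_2^2 \leq \delta$, so $\|f^{\leq d}\|_4^4 = |f|^4 \leq \delta|f|^2 = \delta\|f^{\leq d}\|_2^2$.

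For the inductive step I single out the coordinate $n$ and split $f^{\leq d}$ along the Efron--Stein levels according to whether they contain $n$:
\[
f^{\leq d} = \mu + \gamma,\qquad \mu := \sum_{n\notin S,\,|S|\leq d} f^{=S},\qquad \gamma := \sum_{n\in S,\,|S|\leq d} f^{=S}.
\]
Here $\mu$ is independent of the $n$-th coordinate (a function on $\Omega_{[n-1]}$ of Efron--Stein degree at most $d$, equal to $(E_n f)^{\leq d}$ where $E_n$ denotes conditional expectation over $x_n$), while for each letter $j$ in the $n$-th alphabet the slice $\gamma_j := \gamma|_{x_n = j}$ is a function on $\Omega_{[n-1]}$ of Efron--Stein degree at most $d-1$, with $\mathbb{E}_j[\gamma_j] = 0$. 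I would then expand
\[
\|f^{\leq d}\|_4^4 = \mathbb{E}_j \mathbb{E}_{y\in\Omega_{[n-1]}}|\mu(y) + \gamma_j(y)|^4
\]
binomially, exploit $\mathbb{E}_j[\gamma_j] = 0$ to kill the linear cross term in $\gamma_j$, and bound the remaining cross terms $\mathbb{E}[\mu^2\gamma_j^2]$ and $\mathbb{E}[\mu\gamma_j^3]$ via Cauchy--Schwarz and Young's inequality, reducing the problem to estimating $\|\mu\|_4^4$ and $\mathbb{E}_j\|\gamma_j\|_4^4$ separately.

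For $\|\mu\|_4^4$, I'd apply the inductive hypothesis (on $n-1$ coordinates, with the same $d$) to the function $E_n f$: by Jensen's inequality $\|(E_n f)_{S\to x}\|_2^2 = \|E_n(f_{S\to x})\|_2^2 \leq \|f_{S\to x}\|_2^2 \leq \delta$ for every $S \subseteq [n-1]$ with $|S| \leq d$, so induction yields $\|\mu\|_4^4 \leq 1000^d\delta\|\mu\|_2^2 \leq 1000^d\delta\|f^{\leq d}\|_2^2$. The hard part is $\mathbb{E}_j\|\gamma_j\|_4^4$, where one wants to apply induction with degree parameter $d-1$ to each slice $\gamma_j$. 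The main obstacle is to verify the global hypothesis uniformly in $j$: one must show $\|(\gamma_j)_{T \to y}\|_2^2 \leq C\delta$ for every $|T| \leq d-1$, every $y \in \Omega_T$, and every $j$, with $C$ an absolute ($d$-independent) constant. Because $\gamma_j = f^{\leq d}|_{x_n = j} - \mu$ is defined from the truncation $f^{\leq d}$ rather than from $f$ itself, this does not follow directly from the assumption on $f$. I would handle this by writing $f^{\leq d} = f - f^{>d}$ and using the triangle inequality: the contribution from $f$ is bounded by $\sqrt{\delta}$ since $|T\cup\{n\}| \leq d$, and the tail contribution $\|(f^{>d})_{(T\cup\{n\})\to(y,j)}\|_2$ is controlled via a Parseval/orthogonality estimate on the Efron--Stein decomposition of the restricted function $f_{(T\cup\{n\})\to(y,j)}$ combined with the hypothesis on $f$. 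Granted this uniform bound, induction gives $\|\gamma_j\|_4^4 \leq 1000^{d-1}C\delta\|\gamma_j\|_2^2$, and averaging over $j$ (using $\mathbb{E}_j\|\gamma_j\|_2^2 = \|\gamma\|_2^2 \leq \|f^{\leq d}\|_2^2$) yields $\mathbb{E}_j\|\gamma_j\|_4^4 \leq 1000^{d-1}C\delta\|f^{\leq d}\|_2^2$. Combining the $\mu$ and $\gamma$ bounds and absorbing the constants from the cross-term estimates and from the hypothesis transfer into the base $1000$ of the exponential closes the induction; verifying that these absolute constants fit inside the factor $1000$ is the most delicate numerical point of the argument.
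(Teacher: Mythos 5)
Your proposal takes a genuinely different route from the paper's. The paper does not actually prove Theorem \ref{thmcor:KLLM} as stated (it cites it from Keevash--Lifshitz--Long--Minzer); what it proves in Section 2 is a weaker version, for $\mathbb{F}_p^n$ and with $(Cd)^d$ in place of $1000^d$, and it does so by induction on the \emph{degree} via a degree-reduction lemma (Proposition \ref{prop:Inductive lemma }): expand $\widehat{f^2}(\gamma)$, separate the ``Boolean-type'' pairs (handled by Bonami's lemma on the discrete cube) from those ``explained by Laplacians'', then restrict the Laplacians $L_T$ to get derivatives $D_{T,x}$ of lower degree. Your proposal instead inducts on the number of coordinates $n$, writing $f^{\leq d}|_{x_n = j} = \mu + \gamma_j$ with $\mu = (E_n f)^{\leq d}$ and using the vanishing of $\mathbb{E}_j\gamma_j$ to kill a cross term. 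This martingale-style decomposition is closer in spirit to the original KLLM argument, and both schemes are legitimate; the paper's buys a cleaner route to the $\l(V,W)$ setting, while yours is more elementary for product spaces.

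There is, however, a genuine gap in your treatment of the $\gamma_j$ term. You correctly identify the issue (globalness of $\gamma_j$ does not follow tautologically from that of $f$), but your proposed fix --- writing $f^{\leq d} = f - f^{>d}$ and claiming $\|(f^{>d})_{(T\cup\{n\})\to(y,j)}\|_2$ is ``controlled via a Parseval/orthogonality estimate'' --- does not work. Restriction mixes Efron--Stein levels: for $g = f_{S\to x}$ we have $g^{=U} = \sum_{R\subseteq S}(f^{=R\cup U})_{R\to x_R}$, and the summands with $|R\cup U|\leq d$ and those with $|R\cup U| > d$ are \emph{not} orthogonal to each other (they are all functions of $\Omega_U$ at the same Efron--Stein level of $g$). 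So $(f^{>d})_{S\to x}$ is not a block of the Efron--Stein decomposition of $g$, and no Parseval bound applies. Indeed, the best bound provable via the paper's influence machinery (Lemmas \ref{lem:decomposition of generalised influences}, \ref{lem:restriction global implies global}, \ref{lem:global implies restriction global}) is $\|(f^{\leq d})_{S\to x}\|_2^2 \leq 16^{|S|}\delta$, which is \emph{not} $d$-independent; feeding that into your induction costs you a factor $\approx 16^{k}$ at each depth and lands at $16^{\Theta(d^2)}$, not $1000^d$. The clean fix is to avoid the issue entirely: note that $\gamma_j = D_{n,j}[f^{\leq d}] = (D_{n,j}[f])^{\leq d-1}$, where $D_{n,j}[f] := f_{n\to j} - E_n[f]$ (this is exactly the degree-reduction identity of Lemma \ref{lem:degree reduction}), and apply the inductive hypothesis with degree parameter $d-1$ to the \emph{untruncated} function $D_{n,j}[f]$ rather than to $\gamma_j$. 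Since $\|(D_{n,j}[f])_{T\to y}\|_2 \leq \|f_{(T\cup\{n\})\to(y,j)}\|_2 + \|E_n[f_{T\to y}]\|_2 \leq 2\sqrt{\delta}$ by the triangle inequality and Jensen, $D_{n,j}[f]$ is $(d-1,4\delta)$-global, and the inductive hypothesis then directly gives $\|\gamma_j\|_4^4 \leq 4\cdot 1000^{d-1}\delta\|\gamma_j\|_2^2$. The point is that the theorem demands globalness of the function you truncate, not of the truncation itself. (Even with this fix, I'd flag that the cross-term bookkeeping via Cauchy--Schwarz and Young is not obviously enough to close the induction at base $1000$ when $\|\mu\|_2 \gg \|\gamma\|_2$, so the ``delicate numerical point'' you mention is real and needs more than AM--GM on $\|\mu\|_4$ and $\|\gamma_j\|_4$.)
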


We call the functions $f_{S \to x}$ (for $|S| \leq d$) the {\em $d$-restrictions} of $f$. The above theorem says that if $f$ is a function whose $d$-restrictions have small 2-norms, then the 4-norm of the degree-$d$ part of $f$ can be bounded from above in terms of its $2$-norm. Theorem \ref{thmcor:KLLM} was used
in \cite{kllm} to obtain a small-set expansion theorem for
noise operators on product spaces; this small-set expansion theorem then played a crucial role in obtaining sharp forbidden intersection
theorems for subsets $[m]^{n}$.

Our first aim in this paper is to obtain an analogue of Theorem \ref{thmcor:KLLM} for complex-valued functions on $\l(V,W)$, but with Efron-Stein degree replaced by a different notion of degree, namely, the maximum rank of a linear map appearing in the Fourier expansion of $f$ (this turns out to be the same as the `junta degree', defined below). We note that $\l(V,W)$ could be viewed as a product space by fixing bases of $V$ and $W$, and it could be equipped with the corresponding Efron-Stein degree, but this notion of degree would not be invariant under changes of basis and would not therefore be useful for applications.

To state our (conditional) Bonami-type lemma for functions on $\l(V,W)$, we need some more definitions. Let $q$ be a prime power, and let $V$ and $W$ be finite-dimensional vector spaces over $\mathbb{F}_q$. We must first define our notion of a {\em $d$-restriction} of a function $f:\l(V,W) \to \mathbb{C}$. This is a little notationally cumbersome, if intuitively clear.

Let $V_{1}$ be a subspace of $V$, let $W_{1}$ be a subspace
of $W$, let $T\in \l(V,W)$ be a linear map, and let $f:\l(V,W) \to \mathbb{C}$. The restriction $f_{(V_{1},W_{1})\to T}$ is the function from $\l(V/V_{1},W_1)$ to $\mathbb{C}$ defined by
\[f_{(V_{1},W_{1})\to T}(A)=f(A'+T) \quad \forall A \in \l(V/V_1,W_1),
\]
where $A'\in \l(V,W)$ is the unique linear map with kernel containing $V_1$ and satisfying $A' = A \circ \mathcal{Q}_{V_1}$, with $\mathcal{Q}_{V_1}:V \to V/V_1$ denoting the natural quotient map. If $\dim(V_1)+\codim(W_1) \leq d$ then we call such a restriction a \emph{$d$-restriction}. We note that the linear maps $B$ of the form $A'+T$ in the definition $f_{(V_1,W_1)\to T}$ are precisely the linear maps $B$ such that $B$ agrees with $T$ on $V_1$ and $B^*$ agrees with $T^*$ on the annihilator of $W_1$.

Adopting the matrix perspective, the $d$-restriction of a function $f$ on $n$ by $m$ matrices over $\mathbb{F}_q$ corresponds to restricting $f$ to those matrices where $r$ specific rows and $c$ specific columns take fixed values, where $r+c \leq d$ (and possibly translating the domain by a fixed matrix, if the matrix of $T$ has non-zero entries outside the $r$ fixed rows and the $c$ fixed columns).

A function $f:\l(V,W)\to \mathbb{C}$ is said to be a \emph{$d$-junta} if there exist $v_{1},\ldots,v_{i}\in V,u_{i+1},\ldots,u_{d}\in W^{*}$,
such that the value of $f(A)$ is determined once we know the values
of $A(v_{i})$ and the values of $A^{*}(u_{i})$. The {\em junta-degree} of a
function $f$ is the minimal integer $d$ such that $f$ can be written as
a sum of $d$-juntas. (As mentioned above, we will show that the junta-degree of $f$ is equal to the maximum rank of a linear map that appears in the Fourier expansion of $f$.) For a function $f:\l(W,V) \to \mathbb{C}$, we let $f^{\leq d}$ denote its orthogonal projection onto the (linear) space of all functions with junta-degree at most $d$ (in other words, as before, $f^{\leq d}$ is simply the degree-$d$ part of $f$).

For $d \in \mathbb{N}$ and $\delta >0$, we say a function $f:\l(V,W)\to \mathbb{C}$ is {\em $(d,\delta)$-restriction global} if $\|f_{(V_1,W_1) \to T}\|^2_2 \leq \delta$ for all $V_1 \leq V$ and $W_1 \leq W$ with $\dim(V_1)+\codim(W_1) \leq d$ and all $T \in \l(V,W)$; in other words, if all the $d$-restrictions of $f$ have 2-norm at most $\sqrt{\delta}$. This is our notion of `globalness' for functions on $\l(V,W)$.

We can now state our Bonami-type lemma for global functions on $\l(V,W)$.

\begin{thm}
\label{cor:KLLM for BS} Let $d \in \mathbb{N}$, let $\delta>0$, let $q$ be a prime power, let $V$ and $W$ be finite-dimensional vector spaces over $\mathbb{F}_q$, and suppose that $f:\l(V,W) \to \mathbb{C}$ is a $(d,\delta)$-restriction global function. Then 
\[
\|f^{\leq d}\|_{4}^{4}\le q^{Cd^{2}}\delta\|f^{\leq d}\|_{2}^{2},
\]
where $C>0$ is an absolute constant.
\end{thm}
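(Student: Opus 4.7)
My plan is to reduce Theorem~\ref{cor:KLLM for BS} to the product-space Bonami result Theorem~\ref{thmcor:KLLM} by realizing $\l(V,W)$ as a product space. After fixing a basis $e_1,\ldots,e_n$ of $V$, the map $A \mapsto (A(e_1),\ldots,A(e_n))$ identifies $\l(V,W)$ with $W^n$, each ``coordinate'' ranging over $W$. The $(d,\delta)$-restriction global hypothesis on $\l(V,W)$, specialized to coordinate subspaces $V_1=\Span(e_{i_1},\ldots,e_{i_s})$ with $W_1=W$, is exactly the hypothesis of Theorem~\ref{thmcor:KLLM} for this $W^n$ product structure. Theorem~\ref{thmcor:KLLM} therefore gives
\[
\|f^{\le d,\mathrm{ES}}\|_4^4 \;\le\; 1000^d\, \delta \, \|f^{\le d,\mathrm{ES}}\|_2^2,
\]
where $f^{\le d,\mathrm{ES}}$ is the Efron--Stein degree-$d$ projection of $f$ in the $W^n$ structure --- equivalently, the sum of Fourier contributions $\hat f(B)\chi_B$ over matrices $B$ having at most $d$ non-zero columns.

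The central obstacle is that $f^{\le d,\mathrm{ES}} \ne f^{\le d}$: a rank-$1$ character $\chi_{uv^T}$ can have up to $n$ non-zero columns, so the junta-degree-$\le d$ projection $f^{\le d}$ (sum over $B$ with $\rank(B)\le d$) is strictly larger than $f^{\le d,\mathrm{ES}}$. Still, for each rank-$\le d$ matrix $B$ there is a basis of $V$ (any basis whose first $n-d$ vectors lie in $\ker B$) in which $B$ has at most $d$ non-zero columns. This motivates decomposing
\[
f^{\le d} \;=\; \sum_{V_1 \le V:\ \codim V_1 \le d} \alpha_{V_1}\, g_{V_1},
\]
where $g_{V_1}$ collects the Fourier contributions from characters $\chi_B$ with $\ker B \supseteq V_1$, and $\alpha_{V_1}$ is a M\"obius coefficient on the poset of subspaces of $V$ chosen so that each rank-$\le d$ character is counted exactly once.

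For each fixed $V_1$, the function $g_{V_1}$ factors through the quotient $\l(V,W)\to\l(V/V_1,W)$, and in a basis of $V$ adapted to $V_1$ it has column-Efron--Stein degree $\le d$. Restriction-globalness of $f$ is manifestly invariant under change of basis of $V$, so Theorem~\ref{thmcor:KLLM} applies uniformly to each $g_{V_1}$. Summing the resulting bounds, and using Parseval in the rank-stratified Fourier decomposition to control the 2-norms, yields the target inequality with constant $q^{Cd^2}$.

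The hard part will be controlling the $q^{Cd^2}$ constant: a naive bound on the number of codim-$d$ subspaces $V_1$ is of order $q^{d(n-d)}$, which is unacceptable since it grows with $n$. To avoid this blow-up one must exploit the $\GL(V)\times\GL(W)$-symmetry of the rank function: each rank-$d$ matrix $B$ admits $|\GL_d(\mathbb{F}_q)|\sim q^{d^2}$ factorizations $B = LR$ with $L\in\l(\mathbb{F}_q^d,W)$ and $R\in\l(V,\mathbb{F}_q^d)$ of full rank, and it is this ``interior'' $\GL_d(\mathbb{F}_q)$-orbit that is responsible for the $q^{Cd^2}$ factor rather than the full Grassmannian. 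Making the M\"obius accounting precise so that only this $d^2$-dimensional interior symmetry is paid for --- and verifying that the $L^2$ pieces $\|g_{V_1}\|_2^2$ aggregate correctly via orthogonality rather than via a lossy triangle inequality --- is the main technical challenge.
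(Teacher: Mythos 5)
Your high-level idea --- view $\l(V,W)\cong W^n$ as a product space via a basis of $V$, invoke Theorem~\ref{thmcor:KLLM}, then pass from Efron--Stein degree (number of non-zero columns) to junta-degree (rank) by a M\"obius-type aggregation over subspaces $V_1\le V$ --- does not work as stated, and the gap you flag in your last paragraph is not a technicality but a fundamental obstruction to this route. The decomposition
\[
f^{\le d}=\sum_{V_1\le V:\ \codim V_1\le d}\alpha_{V_1}\,g_{V_1}
\]
necessarily has $\sim q^{d(n-d)}$ non-zero terms (the number of codimension-$\le d$ subspaces of $V$), each rank-$\le d$ Fourier character $\chi_B$ appearing in $g_{V_1}$ for \emph{every} $V_1\subseteq\ker B$, i.e.\ $\gtrsim q^{d(n-2d)}$ of the $g_{V_1}$'s. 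There is no orthogonality in $L^4$ between the $g_{V_1}$'s, so aggregating the bounds $\|g_{V_1}\|_4^4\le 1000^d\delta\|g_{V_1}\|_2^2$ requires a triangle or H\"older inequality across this $n$-dependent family; and even on the $L^2$ side $\sum_{V_1}\|g_{V_1}\|_2^2$ overcounts each $|\hat f(B)|^2$ by an $n$-dependent factor, so it cannot be bounded by $\|f^{\le d}\|_2^2$ independently of $n$. Your appeal to the $\GL_d(\mathbb{F}_q)$-orbit of factorizations $B=LR$ is suggestive but is not attached to any inequality: the symmetry you would need to quotient by acts on the decomposition of $B$, not on the set of subspaces $V_1\subseteq\ker B$, and I do not see a formulation of the ``interior M\"obius accounting'' that avoids the full Grassmannian.

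A second, smaller issue: Theorem~\ref{thmcor:KLLM} gives $\|f^{\le d,\mathrm{ES}}\|_4^4\le 1000^d\delta\|f^{\le d,\mathrm{ES}}\|_2^2$ for $f$ itself, with $f^{\le d,\mathrm{ES}}$ the full ES-degree-$d$ truncation in the chosen basis; you instead want a bound on $\|g_{V_1}\|_4$ for the strictly smaller projection $g_{V_1}=\sum_{T\subseteq S}f^{=T}$ onto characters supported in a fixed $d$-element coordinate set $S$. That does follow (since $g_{V_1}$ is itself $(d,\delta)$-global, being an average of $d$-restrictions of $f$), but it is a step you would need to justify. The paper takes an entirely different route: rather than reducing to the product-space theorem, it proves a degree-reduction lemma (Lemma~\ref{lem:hyp inductive step}) directly for $\l(V,W)$ by splitting the Parseval expansion of $\reallywidehat{f^2}(X)$ into ``Boolean-type'' pairs (handled by Bonami's lemma after an encoding into a discrete cube of dimension independent of $n$) and ``Laplacian'' pairs, and then induces on the degree using derivatives $D_{V_1,W_1,T}$ and $D_X$ built to respect the rank structure. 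The crucial point, which your approach misses, is that the number of \emph{derivative operators of order $\le d$ acting on a fixed low-rank character} is $q^{O(d^2)}$ --- one never sums over an $n$-dependent family of subspaces.
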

The $d^{2}$ in the exponent is sharp, as can be verified by inspecting
the function $\sum_{X\in\mathcal{L}(W,V):\,\mathrm{rank}(X)=d}u_{X}$,
where $u_{X}(A)=\omega^{\tau(\Tr(XA))}$, $\omega = \exp(2\pi i /p)$ and $\tau:\mathbb{F}_q \to \mathbb{F}_p$ is defined by $\tau(x) = x+x^p+\ldots+x^{p^{s-1}}$ for $q = p^s$. (This example shows that one must take $C\geq 1$, for any $d$ and $q$.)

To motivate our proof of Theorem \ref{cor:KLLM for BS}, and to illustrate some of the key ideas in a simpler setting, we will first give a proof of a (slightly weaker) version of Theorem \ref{thmcor:KLLM} for the product space $\mathbb{F}_p^n$ for $p$ a prime, and with $C^d$ replaced by $(Cd)^d$.

Using Theorem \ref{cor:KLLM for BS}, we obtain the following quantitatively sharp small-set expansion theorem for the shortcode graph, which (as mentioned above) implies the Grassmann Expansion Hypothesis.

\begin{thm}[Small-set expansion theorem for the shortcode graph]
\label{thm:ssesc}
 There exist absolute constants $C_1,C_2>0$ such that the following holds. Let $r \in \mathbb{N}$, and let $S\subseteq\mathcal{L}\left(V,W\right)$
be a family of linear maps with $1_{S}$ being $\left(C_1r,q^{-C_2r^2}\right)$-restriction
global. Then 
\[
\Pr_{A\sim S,\ B\text{ of rank }1}\left[A+B\in S\right] < q^{-r}.
\]
\end{thm}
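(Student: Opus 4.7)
The plan is the standard spectral/hypercontractive route to small-set expansion, with Theorem \ref{cor:KLLM for BS} playing the role of Bonami's lemma. Set $f=1_S$, $\alpha=\|f\|_2^2=|S|/|\l(V,W)|$, and let $T$ be the averaging operator $Tf(A)=\e_{B\text{ of rank }1}[f(A+B)]$. A direct computation gives
\[
\Pr_{A\sim S,\,B\text{ of rank }1}[A+B\in S]=\langle Tf,f\rangle/\alpha,
\]
so it suffices to show $\langle Tf,f\rangle<q^{-r}\alpha$.

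First I would diagonalise $T$ in the character basis $\{u_X\}_{X\in\l(W,V)}$ from the sharpness discussion for Theorem \ref{cor:KLLM for BS}, where $u_X(A)=\omega^{\tau(\Tr(XA))}$. Since $\l(V,W)$ is abelian under addition, $Tu_X=\lambda_X u_X$ with $\lambda_X=\e_{B\text{ of rank }1}[u_X(B)]$. Writing a rank-one $B$ as $w\otimes\phi$ with $w\in W\setminus\{0\}$ and $\phi\in V^{*}\setminus\{0\}$, one has $\Tr(XB)=\phi(Xw)$, and a short character-sum computation then gives $|\lambda_X|\le q^{-\rank(X)}$ for every $X\neq 0$. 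Combined with the identification of junta-degree with the maximum rank appearing in the Fourier expansion (already established earlier in the paper), this means that if we split $f=f^{\leq d}+f^{>d}$ by junta-degree then $T$ preserves the splitting, and the eigenvalues of $T$ restricted to the high-degree part all satisfy $|\lambda_X|\le q^{-d}$.

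Set $d:=C_1 r$. By Cauchy--Schwarz and the eigenvalue estimate,
\[
|\langle Tf^{>d},f^{>d}\rangle|\le\|Tf^{>d}\|_2\|f^{>d}\|_2\le q^{-d}\alpha.
\]
For the low-degree part, $T$ is $L^4$-contractive (being an average over a translation-invariant distribution, by Jensen's inequality). Using degree-piece orthogonality, $\langle Tf^{\leq d},f^{\leq d}\rangle=\langle Tf^{\leq d},f\rangle$; then H\"older with exponents $(4,4/3)$, the $L^4$-contractivity of $T$, and Theorem \ref{cor:KLLM for BS} applied with $\delta=q^{-C_2 r^2}$ yield
\[
|\langle Tf^{\leq d},f^{\leq d}\rangle|\le\|f^{\leq d}\|_4\cdot\|f\|_{4/3}\le q^{Cd^2/4}\delta^{1/4}\alpha^{1/4}\cdot\alpha^{3/4}=q^{(CC_1^2-C_2)r^2/4}\alpha.
\]
Adding the two bounds, $\langle Tf,f\rangle\le\bigl(q^{-C_1 r}+q^{(CC_1^2-C_2)r^2/4}\bigr)\alpha$; choosing $C_1\ge 2$ makes the first summand at most $q^{-r-1}\alpha$, and then choosing $C_2\ge 4C+8$ makes the second summand at most $q^{-r-1}\alpha$ as well, giving $\langle Tf,f\rangle<q^{-r}\alpha$ as required.

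The one point that takes some (routine) care is the eigenvalue computation for the uniform-rank-one distribution: this is a slight modification of the cleaner $(w,\phi)\sim W\times V^{*}$ parametrisation (which gives $\lambda_X=q^{-\rank(X)}$ exactly), and one has to check that the $B=0$ correction introduced by the latter only decreases $|\lambda_X|$ for $\rank(X)\ge 1$ rather than inflating it. Everything else is bookkeeping, and the argument follows the familiar small-set-expansion template driven entirely by Theorem \ref{cor:KLLM for BS}.
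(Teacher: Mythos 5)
Your proposal is correct and follows essentially the same route as the paper. The paper's proof (Theorem~\ref{thm:sses}) also diagonalises the rank-one averaging operator $T$ in the character basis, truncates by degree, bounds the high-degree contribution using the eigenvalue decay $|\lambda_d|\le q^{-d}$, and bounds the low-degree contribution by H\"older plus the conditional Bonami lemma. The one cosmetic difference is in the packaging of the low-degree step: the paper first derives a level-$d$ inequality (Corollary~\ref{cor:concentration on high degrees}, $\|1_S^{=d}\|_2^2\le q^{30d^2}\epsilon^{1/4}\|1_S\|_2^2$) and sums over $d\le r+1$, while you instead apply H\"older once to $\langle Tf^{\leq d},f\rangle$, use $L^4$-contractivity of $T$ (an average of shifts), and invoke Theorem~\ref{cor:KLLM for BS} directly on $f^{\leq d}$; these two routes are interchangeable and rest on the same underlying lemmas. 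Two small points worth tightening: (i) the eigenvalue of $T$ on rank-$d$ characters is $\lambda_d=\bigl(\tfrac{q^{\dim V-d}-1}{q^{\dim V}-1}-q^{-\dim W}\bigr)/\bigl(1-q^{-\dim W}\bigr)$, which can be (slightly) negative and whose absolute value can marginally exceed $q^{-d}$ at the extreme rank $d=\dim V=\dim W$; both the paper and your proposal wave at $q^{-d}$, and the fix is routine but should be noted (e.g. bound $|\lambda_d|\le 2q^{-d}$ and absorb the $2$ into the constants). (ii) Your closing numerics give $\le 2q^{-r-1}\alpha$, which is not strictly below $q^{-r}\alpha$ when $q=2$ and $r=1$; using the sharper $q^{-(d+1)}$ (since $f^{>d}$ lives on ranks $\ge d+1$) or nudging $C_1,C_2$ removes this boundary case.
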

Theorem \ref{thm:ssesc} is sharp up to the values of $C_1$ and $C_2$, as can be seen by considering the family $S = \{A \in \l(V,W):\ \rank(A) \leq n-r\}$, where $\dim(V)=\dim(W)=n$.

The Bonami-type lemma established in this paper for the bilinear scheme --- specifically, the fact that the constant scales as $q^{O(d^2)}$, independent of the dimension $n$ --- has proven to be a fundamental tool for analyzing functions on non-Abelian groups. In a follow-up paper, Evra, Kindler and Lifshitz \cite{EKL24} extend these methods to the special linear group $\operatorname{SL}_n(q)$ by embedding it into $\mathcal{L}(\mathbb{F}_q^n,\mathbb{F}_q^n)$ applying the results established here. Furthermore, a subsequent work by Evra, Kindler, Lifshitz and Lindzey \cite{EKLL26+} generalizes this approach to all families of classical finite simple groups (symplectic, unitary and orthogonal). By lifting functions from the group to the bilinear scheme in a natural way, they derive a Bonami-type inequality for global functions on these groups. This framework yields two striking consequences. Firstly, it recovers the breakthrough character bounds of Guralnick, Larsen and Tiep (Theorem 1.3 in \cite{GLT24}) via a purely analytic argument, avoiding any reliance on Deligne-Lusztig theory. And secondly, it implies a polynomial Bogolyubov theorem for general (non-normal) subsets of simple groups of unbounded rank, a result that was previously out of reach.

\section{Warm up: Hypercontractivity on $\mathbb{F}_{p}^{n}$}

In this section we give a new proof of a (slightly weaker) version of
Theorem \ref{thmcor:KLLM} for the product space $\mathbb{F}_p^n$ (for a prime $p$), with $C^{d}$ replaced by $(Cd)^{d}$. This will
help motivate and elucidate its (more complicated) adaptation for functions on $\l(V,W)$. We start by
introducing some notation.

\subsection{The Fourier expansion of functions on $\mathbb{F}_{p}^{n}$}

We equip $\mathbb{F}_p^n$ with the natural product measure $\mu = \mu^n$ (which is simply the uniform measure on $\mathbb{F}_p^n$), and we let $L^2(\mathbb{F}_p^n)$ be the Hilbert space of complex-valued functions on $\mathbb{F}_p^n$ with the inner product
$$\langle f,g \rangle = \mathbb{E}[f\overline{g}].$$
Let $\omega=e^{\frac{2\pi i}{p}}$. The characters of the Abelian group $(\mathbb{F}_p^n,+)$ are the functions $\{\chi_\gamma:\ \gamma \in \mathbb{F}_p^n\}$, where $\chi_{\gamma}\left(x\right)=\omega^{\left\langle \gamma,x\right\rangle}$ for $x\in\mathbb{F}_{p}^{n}$. (Here, for $\delta,\gamma \in \mathbb{F}_p^n$, we write $\langle \delta,\gamma \rangle = \sum_{i=1}^{n}\delta_i \gamma_i$; note that this inner product is not normalised, unlike the one for functions). These characters form an orthonormal basis for $L^2(\mathbb{F}_p^n)$ and therefore every
function $f\colon\mathbb{F}_{p}^{n}\to\mathbb{C}$ has a \emph{Fourier
expansion} 
\[
f=\sum_{\gamma\in\mathbb{F}_{p}^{n}}\hat{f}\left(\gamma\right)\chi_{\gamma},
\]
 where 
\[
\hat{f}\left(\gamma\right)=\left\langle f,\chi_{\gamma}\right\rangle .
\]

\noindent We write $\mathrm{supp}\left(\gamma\right)$ for the set of coordinates
$i\in\left[n\right]$ for which $\gamma_{i}\ne0.$ We write $\left|\gamma\right|=|\mathrm{supp}\left(\gamma\right)|$.

\subsection{Restrictions}

For $S \subset [n]$, we write $\overline{S}: = [n] \setminus S$ for the complement of $S$. Abusing notation slightly, for $x\in\mathbb{F}_{p}^{S}$ and $y\in\mathbb{F}_{p}^{\overline{S}}$
we write $\left(x,y\right)$ for the vector $z \in \mathbb{F}_p^n$ defined by $z_i = x_i$ for all $i \in S$ and $z_i = y_i$ for all $i \in \overline{S}$, i.e.\ we put $x$
in the $S$-coordinates and $y$ in the $\overline{S}$-coordinates.
For $f:\mathbb{F}_p^n \to \mathbb{C}$ and $x\in\mathbb{F}_{p}^{S}$, we define the {\em restriction} $f_{S\to x}\colon\mathbb{F}_{p}^{\overline{S}}\to\mathbb{C}$
of $f$ by setting $f_{S\to x}\left(y\right)=f\left(x,y\right)$ for all $y \in \mathbb{F}_p^{\overline{S}}$. If $|S| \leq d$ then we call $f_{S \to x}$ a {\em $d$-restriction}. Similarly, for a set $A \subset \mathbb{F}_p^n$, we define the {\em restriction} $A_{S \to x}$ by
$$A_{S \to x} = \{y \in \mathbb{F}_p^{\overline{S}}:\ (x,y) \in A\} \subset \mathbb{F}_p^{\overline{S}}.$$
If $|S| \leq d$ then we call $A_{S \to x}$ a {\em $d$-restriction}.

\subsection{Global functions}
We now introduce the (crucial) definition of a {\em global function}: a function is global if its restrictions (more precisely, those restrictions corresponding to small sets) have small 2-norms.

\begin{defn}
For $d \in \mathbb{N} \cup \{0\}$ and $\epsilon >0$, we say a function $f:\mathbb{F}_p^n \to \mathbb{C}$ is {\em $(d,\epsilon)$-global} if $\|f_{S \to x}\|_2^2 \leq \epsilon$ for all $S \subset [n]$ with $|S| \leq d$ and all $x \in \mathbb{F}_p^S$.
\end{defn}
Note that for a set $A \subset \mathbb{F}_p^n$, the indicator function $1_{A}$ is $(d,\epsilon)$-global if each of its $d$-restrictions $A_{S \to x}$ satisfy
$$\mu^{\overline{S}}(A_{S \to x}) \leq \epsilon.$$
(Here, of course, $\mu^{\overline{S}} = \mu^{[n] \setminus S}$ denotes the product measure (= uniform measure) on $\mathbb{F}_p^{\overline{S}}$. As the relevant measure will always be clear from the context, sometimes we will omit the superscript.)

\subsection{The Efron-Stein decomposition}

For a product space $X^n$ (equipped with a product measure $\mu^n$), the {\em Efron-Stein decomposition} is an orthogonal decomposition of
$L^{2}(X^n,\mu^n)$ into spaces $V_{S}$ (for $S \subset [n]$), where
$V_{S}$ consists of the functions in $L^2(X^n,\mu^n)$ that depend
only upon the coordinates in $S$ and are orthogonal to all functions that
depend only upon the coordinates in $T$, for all proper subsets $T$ of $S$. For each $S \subset [n]$, we define $f^{=S}$ to be the orthogonal projection
of $f$ onto $V_{S}$. For $d \in \mathbb{N} \cup \{0\}$, we define $f^{=d} = \sum_{S \subset [n]:\ |S|=d}f^{=S}$ and we define $f^{\leq d} = \sum_{S \subset [n]:\ |S| \leq d} f^{=S}$. The functions $f^{=S}$ are sometimes called the {\em Efron-Stein components} of $f$, though `Efron-Stein projections' would be more accurate.

In our case, $X = \mathbb{F}_p$ and $\mu^n$ is the uniform measure on $\mathbb{F}_p^n$; the function $f^{=S}$ can also be defined Fourier-analytically, by
setting 
\[
f^{=S}=\sum_{\gamma:\mathrm{supp}\left(\gamma\right)=S}\hat{f}\left(\gamma\right)\chi_{\gamma}.
\]
 The \emph{degree} of $f\in L^{2}(\mathbb{F}_{p}^{n})$ is defined to be the maximal size of a set $S$ such that $f^{=S} \ne 0$.

\subsection{Laplacians and their Efron-Stein formulae}

Let $S\subseteq\left[n\right]$. We
define the \emph{expectation-when-resampling-$S$} operator
\[
\mathrm{E}_{S}\colon L^{2}\left(\mathbb{F}_{p}^{n}\right)\to L^{2}\left(\mathbb{F}_{p}^{n}\right)
\]
 by 
\[
\mathrm{E}_{S}f\left(x,y\right)=\mathbb{E}_{y'\sim\mathbb{F}_{p}^{S}}\left[f\left(x,y'\right)\right]
\]
for all $x \in \mathbb{F}_p^{\overline{S}}$ and $y \in \mathbb{F}_p^S$. For each $i \in [n]$, we write $\mathrm{E}_{i}$ for $\mathrm{E}_{\left\{ i\right\} }$, for  brevity. Note that $E_S$ is a linear operator. Since $E_S[\chi_{\gamma}] = 0$ whenever $\mathrm{supp}(\gamma) \cap S \neq \emptyset$, the operator $E_S$ has the following formula in terms of the Efton-Stein components:
$$E_S[f] = \sum_{T \subset [n]:\atop S \cap T = \emptyset} f^{=T}.$$
The \emph{Laplacians} $L_i:L^2(\mathbb{F}_p^n) \to L^2(\mathbb{F}_p^n)$ (for $i \in [n]$) are linear operators defined by
\[
L_{i}[f]=f-\mathrm{E}_{i}\left[f\right]
\]
 for all $f \in L^2(\mathbb{F}_p^n)$. (Note that $L_i$ is the Laplacian of the Markov chain on $\mathbb{F}_p^n$ where the $i$th coordinate is resampled uniformly at random at each step, independently of all previous steps, justifying the name.) It is easy to see that
\[
L_{i}\left[f\right]=\sum_{S\ni i }f^{=S}
\]
for all $f \in L^2(\mathbb{F}_p^n)$. Following \cite{kllm}, for a set $T=\{i_{1},\ldots,i_{t}\}$ of coordinates,
the \emph{iterated Laplacian} is defined by
\[
L_{T}[f]=L_{i_{1}}\circ\cdots\circ L_{i_{t}}[f]
\]
for $f \in L^2(\mathbb{F}_p^n)$ (it is easy to check that this definition is independent of the ordering of $i_1,\ldots,i_t$).
The iterated Laplacian can be interpreted in two ways. The `analytic'
way is via the formula 
\[
L_{T}[f]=\sum_{S\supseteq T}f^{=S}.
\]
 The `probabilistic' way (analogous to inclusion-exclusion) is via the formula 
\[
L_{T}[f]=\sum_{S\subseteq T}(-1)^{|S|}\mathrm{E}_{S}[f].
\]
Henceforth, by a slight abuse of terminology, we will refer to the iterated Laplacians simply as Laplacians.

\subsection{Derivatives}

For the Boolean cube $\left\{ 0,1\right\} ^{n}$, there is a natural
notion of the (discrete) derivative of a function $f:\{0,1\}^n \to \mathbb{R}$ in the $i$-direction, namely $f_{i\to1}-f_{i\to0}$. In the general
product space setting, such a notion is not so readily available, and instead
it is common to work with the Laplacians in place of the derivatives.

For our purposes, however, we need a derivative-like operator that (strictly) reduces the degree (much as the partial derivative operator $\partial/\partial x_i$, applied to multivariate polynomials in $x_1,\ldots,x_n$, reduces the total degree); this will enable us to carry out induction on the degree. The Laplacians themselves do not necessarily reduce the degree of a function on $\mathbb{F}_p^n$, but one can easily fix this problem by defining the derivatives to be restrictions
of the Laplacians.

For $S\subseteq\left[n\right]$ and $x\in\mathbb{F}_{p}^{S}$,
we define the derivative operator
\[
D_{S,x}\colon L^{2}\left(\mathbb{F}_{p}^{n}\right)\to L^{2}\left(\mathbb{F}_{p}^{\left[n\right]\setminus S}\right)
\]
by $D_{S,x}[f]:=(L_{S}\left[f\right])_{S\to x}$ for $f \in L^2(\mathbb{F}_p^n)$.
\begin{lem}
Let $f:\mathbb{F}_p^n \to \mathbb{C}$, let $S \subseteq [n]$, let $x \in \mathbb{F}_p^S$ and let $g=D_{S,x}\left[f\right]$. Then 
\begin{equation}
\hat{g}\left(\gamma\right)=\sum_{\beta\in\mathbb{F}_{p}^{S}: \atop \mathrm{supp}\left(\beta\right)=S}\hat{f}\left(\beta,\gamma\right)\chi_{\beta}\left(x\right)\label{eq:Fourier formulas of derivatves}
\end{equation}
for all $\gamma \in \mathbb{F}_p^{\overline{S}}$.
\end{lem}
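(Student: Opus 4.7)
The plan is a short direct computation, using first the Fourier/Efron--Stein expansion of the iterated Laplacian $L_S[f]$ and then splitting each character under the restriction $S \to x$.

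First I would rewrite $L_S[f]$ in terms of the Fourier expansion of $f$. Combining the formula $L_S[f] = \sum_{T \supseteq S} f^{=T}$ (stated earlier) with the identity $f^{=T} = \sum_{\gamma:\,\supp(\gamma)=T}\hat f(\gamma)\chi_\gamma$ gives
\[
L_S[f] \;=\; \sum_{\gamma \in \mathbb{F}_p^n:\ \supp(\gamma)\supseteq S} \hat f(\gamma)\,\chi_\gamma,
\]
so only those characters whose support contains $S$ survive.

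Next I would apply the restriction. Every $\gamma \in \mathbb{F}_p^n$ decomposes uniquely as $(\beta,\gamma')$ with $\beta \in \mathbb{F}_p^{S}$ and $\gamma' \in \mathbb{F}_p^{\overline S}$, and the condition $\supp(\gamma)\supseteq S$ becomes exactly $\supp(\beta)=S$ (while $\gamma'$ ranges freely over $\mathbb{F}_p^{\overline S}$). Because the inner product $\langle\gamma,\cdot\rangle$ on $\mathbb{F}_p^n$ splits as a sum of the inner products on $\mathbb{F}_p^S$ and $\mathbb{F}_p^{\overline S}$, we obtain the multiplicativity
\[
\chi_\gamma(x,y) \;=\; \omega^{\langle\beta,x\rangle + \langle\gamma',y\rangle} \;=\; \chi_\beta(x)\,\chi_{\gamma'}(y).
\]
Substituting into the expression for $L_S[f]$ and then evaluating at $(x,y)$ yields
\[
g(y) \;=\; D_{S,x}[f](y) \;=\; \sum_{\gamma'\in\mathbb{F}_p^{\overline S}}\Bigl(\sum_{\beta\in\mathbb{F}_p^S:\,\supp(\beta)=S}\hat f(\beta,\gamma')\,\chi_\beta(x)\Bigr)\chi_{\gamma'}(y).
\]

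Finally, since the characters $\{\chi_{\gamma'}\}_{\gamma'\in\mathbb{F}_p^{\overline S}}$ form an orthonormal basis of $L^2(\mathbb{F}_p^{\overline S})$, the coefficient of $\chi_{\gamma'}$ in the above display must equal $\hat g(\gamma')$, which is exactly the asserted formula~\eqref{eq:Fourier formulas of derivatves}. There is no real obstacle here: the only thing that needs a moment's care is the bookkeeping of support conditions under the decomposition $\gamma=(\beta,\gamma')$, and in particular the observation that $\supp(\gamma)\supseteq S$ is equivalent to $\supp(\beta)=S$ with $\gamma'$ unconstrained.
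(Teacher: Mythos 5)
Your proof is correct and takes essentially the same approach as the paper: both rely on the Efron--Stein formula $L_S[f]=\sum_{T\supseteq S}f^{=T}$, observe that only characters with support containing $S$ survive, split $\gamma=(\beta,\gamma')$ and use multiplicativity $\chi_\gamma(x,y)=\chi_\beta(x)\chi_{\gamma'}(y)$. The only cosmetic difference is that the paper first reduces to a single character $f=\chi_{(\beta',\gamma')}$ by linearity before doing the calculation, whereas you carry the full Fourier sum through the computation; the content is identical.
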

\begin{proof}
By linearity of both sides of (\ref{eq:Fourier formulas of derivatves}) in $f$, it suffices to prove the lemma in the case where $f$ is a character $\chi$ of $\mathbb{F}_p^n$; such a character can of course be written in the form
$\chi_{\left(\beta',\gamma'\right)}$, for $\beta'\in\mathbb{F}_{p}^{S}$
and $\gamma'\in\mathbb{F}_{p}^{\overline{S}}$. Clearly, if $f = \chi_{(\beta',\gamma')}$, then the right-hand side of (\ref{eq:Fourier formulas of derivatves})
is equal to
\[
\begin{cases}
\chi_{\beta'}\left(x\right) & \text{ if } \gamma'=\gamma\ \text{and}\ \mathrm{supp}\left(\beta'\right)=S,\\
0 & \mathrm{otherwise.}
\end{cases}
\]
To prove the lemma we must therefore show that these are indeed
the Fourier coefficients of $D_{S,x}\left[\chi_{\left(\beta',\gamma'\right)}\right]$, i.e. that 
\[
D_{S,x}\left[\chi_{\left(\beta',\gamma'\right)}\right]=\begin{cases}
\chi_{\beta'}\left(x\right)\chi_{\gamma'} & \text{ if }\mathrm{supp}\left(\beta\right)=S,\\
0 & \text{otherwise.}
\end{cases}
\]
 We have $L_{S}\left[\chi_{\left(\beta',\gamma'\right)}\right]=0$ whenever $\mathrm{supp}\left(\beta'\right)\ne S$,
which settles the case $\mathrm{supp}\left(\beta'\right)\ne S$. When
$\mathrm{supp}\left(\beta'\right)=S$ we have $L_{S}\left[f\right]=\chi_{\left(\beta',\gamma'\right)}$
and therefore 
\[
D_{S,x}\left[\chi_{\left(\beta',\gamma'\right)}\right]=\chi_{\beta'}\left(x\right)\chi_{\gamma'},
\]
settling the case $\mathrm{supp}(\beta')=S$.
\end{proof}

\subsection{Influences}

We define the \emph{influence} of a set $T \subset [n]$ by setting $I_{T}[f]=\|L_{T}[f]\|_{2}^{2}$.
Roughly speaking, the influence of a set $T$ measures the impact
of the coordinates in $T$ on the value of $f$. For $x\in\mathbb{F}_{p}^{T}$, we define the influence of the pair
$\left(T,x\right)$ by 
\[
I_{T,x}[f]:=\|D_{T,x}\left[f\right]\|_{2}^{2}.
\]
It is easy to see that 
\[
I_{T}\left[f\right]=\mathbb{E}_{x\sim\mathbb{F}_{p}^{T}}\left[I_{T,x}\left[f\right]\right].
\]

The derivatives $D_{T,x}$ serve as a good analogues of the discrete
derivatives of a Boolean function. They satisfy the following properties.
\begin{enumerate}
\item \emph{Linearity:} The operator $D_{T,x}$ is a linear operator.
\item \emph{Degree reduction:} The operator $D_{T,x}$ sends a function
of degree $d$ to a function of degree at most $d-|T|$. (See Lemma
\ref{lem:degree reduction}, below.)
\item \emph{A measure of globalness:} As we are going to show in Section
\ref{sec:glob}, the smallness of the influences
$I_{T,x}$ of a function is equivalent to the smallness of the 2-norms
of its restrictions.
\end{enumerate}
\begin{lem}
\label{lem:degree reduction} Let $f\colon\mathbb{F}_{p}^{n}\to\mathbb{C}.$
Let $S,T\subseteq[n]$ with $S \cap T=\emptyset$, let $x\in\mathbb{F}_{p}^{T}$,
and let $g=D_{T,x}\left[f\right]$.
Then $g^{=S}=(f^{=T\cup S})_{T\to x}$. In particular, if $f$ is
a function of degree $d$, then $g$ is a function of degree at most
$d-|T|$.
\end{lem}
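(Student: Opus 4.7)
The plan is to derive both parts of the lemma as immediate Fourier-analytic consequences of formula \eqref{eq:Fourier formulas of derivatves} in the preceding lemma. The key observation is that, because $T$ and $S$ are disjoint, a character $\chi_{(\beta,\gamma)}$ on $\mathbb{F}_p^n$ (with $\beta\in\mathbb{F}_p^T$ and $\gamma\in\mathbb{F}_p^{\overline{T}}$) has $\mathrm{supp}(\beta,\gamma)=T\cup S$ if and only if $\mathrm{supp}(\beta)=T$ and $\mathrm{supp}(\gamma)=S$.

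First I would write $g^{=S}=\sum_{\gamma\in\mathbb{F}_p^{\overline{T}}:\,\mathrm{supp}(\gamma)=S}\hat{g}(\gamma)\chi_\gamma$ and substitute the Fourier formula for $\hat{g}(\gamma)$ from \eqref{eq:Fourier formulas of derivatves}, with the ``$S$'' of that lemma renamed to $T$. This expresses $g^{=S}$ as a double sum over $\beta\in\mathbb{F}_p^T$ with $\mathrm{supp}(\beta)=T$ and $\gamma\in\mathbb{F}_p^{\overline{T}}$ with $\mathrm{supp}(\gamma)=S$, with summand $\hat{f}(\beta,\gamma)\,\chi_\beta(x)\,\chi_\gamma$. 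Next I would expand $f^{=T\cup S}$ directly in the Fourier basis; by the key observation above, exactly the same pairs $(\beta,\gamma)$ index its non-zero terms, and restricting the $T$-coordinates to $x$ replaces each character $\chi_\beta$ on the $T$-part by the scalar $\chi_\beta(x)$. A term-by-term comparison then yields the equality $g^{=S}=(f^{=T\cup S})_{T\to x}$. The ``in particular'' assertion is then immediate: if $\deg f\le d$, then $f^{=T\cup S}=0$ whenever $|T|+|S|=|T\cup S|>d$, so $g^{=S}=0$ for every $S\subseteq[n]\setminus T$ with $|S|>d-|T|$, showing that $\deg g\le d-|T|$.

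I do not anticipate any real obstacle: the whole argument is a short bookkeeping exercise built directly on the previous lemma. The only point requiring some care is notational, since the letter $S$ plays different roles in the two statements and one must rename variables consistently when invoking \eqref{eq:Fourier formulas of derivatves}; the disjointness hypothesis $S\cap T=\emptyset$ is precisely what makes the characterisation of $\mathrm{supp}(\beta,\gamma)$ clean, and thereby makes the two double sums match up on the nose.
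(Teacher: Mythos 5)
Your proof is correct and is essentially the same Fourier-analytic calculation as in the paper: both arguments boil down to the fact that for disjoint $S,T$, a character $\chi_{(\beta,\gamma)}$ with $\beta\in\mathbb{F}_p^T$, $\gamma\in\mathbb{F}_p^{\overline T}$ has support $T\cup S$ iff $\mathrm{supp}(\beta)=T$ and $\mathrm{supp}(\gamma)=S$, after which both sides are seen to equal $\sum_{\mathrm{supp}(\beta)=T,\,\mathrm{supp}(\gamma)=S}\hat f(\beta,\gamma)\chi_\beta(x)\chi_\gamma$. The only difference is presentational: the paper reduces to a single character $\chi_{(\beta,\gamma)}$ by linearity and evaluates both sides directly, whereas you reuse the Fourier formula \eqref{eq:Fourier formulas of derivatves} as a black box and compare coefficients in the full expansion.
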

\begin{proof}
By linearity of the Efron-Stein components and the restriction operators
it is enough to prove the lemma for $f=\chi_{\left(\beta,\gamma\right)}$
where $\beta\in\mathbb{F}_{p}^{T}$ and $\gamma\in\mathbb{F}_{p}^{\overline{T}}.$
In this case, both the functions $g^{=S}$ and $\left(f^{=T\cup S}\right)_{T\to x}$
are zero unless $\supp\left(\beta\right)=T$. So suppose that
$\supp\left(\beta\right)=T$. Then $g=D_{T,x}\left[f\right]=\chi_{\beta}\left(x\right)\chi_{\gamma}$
and therefore 
\[
g^{=S}=\begin{cases}
g & \text{ if }\supp\left(\gamma\right)= S,\\
0 & \text{ if } \supp\left(\gamma\right)\neq S,
\end{cases}.
\]
 Similarly,
$$f^{=S\cup T}=\begin{cases}
f & \text{ if }\supp\left(\gamma\right)= S,\\
0 & \text{ if }\supp\left(\gamma\right)\neq S,
\end{cases}$$
and therefore 
\[
\left(f^{=S\cup T}\right)_{T\to x}=\chi_{\beta}\left(x\right)\chi_{\gamma}.
\]
\end{proof}

We would like to note here a consequence of the above: truncating a function at degree $d$ cannot increase the influences, as the following lemma implies.
\begin{lem}
\label{lem:decomposition of generalised influences} $I_{S,x}[f]=\sum_{T\supseteq S}I_{S,x}[f^{=T}]$.
In particular, $I_{S,x}[f^{\le d}]\le I_{S,x}[f]$, so if $f$ is $\left(d,\epsilon\right)$-global, then so is its degree-$d$ truncation
$f^{\le d}$.
\end{lem}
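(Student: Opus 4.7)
The plan is to decompose $D_{S,x}[f]$ along the Efron–Stein components of $f$ and apply Pythagoras. By linearity of $D_{S,x}$, we have $D_{S,x}[f] = \sum_T D_{S,x}[f^{=T}]$. From the formula $L_S[h] = \sum_{R \supseteq S} h^{=R}$ applied to $h = f^{=T}$, the Laplacian $L_S$ kills $f^{=T}$ unless $T \supseteq S$, in which case $L_S[f^{=T}] = f^{=T}$. Restricting to $S\to x$ therefore gives $D_{S,x}[f^{=T}] = (f^{=T})_{S \to x}$ for $T\supseteq S$ and $0$ otherwise, so
\[
D_{S,x}[f] = \sum_{T \supseteq S} D_{S,x}[f^{=T}].
\]

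Next I would establish that these summands are pairwise orthogonal. Lemma \ref{lem:degree reduction}, applied with $f^{=T}$ in place of $f$, gives that the $R$-th Efron–Stein projection of $D_{S,x}[f^{=T}]$ equals $((f^{=T})^{=S \cup R})_{S \to x}$, which vanishes unless $R = T \setminus S$. Thus $D_{S,x}[f^{=T}]$ lies entirely in the Efron–Stein component $V_{T\setminus S}$ of $L^{2}(\mathbb{F}_p^{\overline{S}})$. As $T$ ranges over subsets containing $S$, the sets $T\setminus S$ are distinct, so the corresponding summands are mutually orthogonal, and Pythagoras yields
\[
I_{S,x}[f] = \|D_{S,x}[f]\|_2^2 = \sum_{T\supseteq S} \|D_{S,x}[f^{=T}]\|_2^2 = \sum_{T\supseteq S} I_{S,x}[f^{=T}],
\]
which is the main identity. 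Applying the same identity to $f^{\le d}$ and using $(f^{\le d})^{=T} = f^{=T}$ for $|T|\le d$ and $0$ for $|T|>d$ retains only the sub-sum over $T \supseteq S$ with $|T|\le d$; since every term is non-negative, $I_{S,x}[f^{\le d}] \le I_{S,x}[f]$. The globalness conclusion is then a direct consequence of the equivalence between smallness of the $d$-restrictions and smallness of the influences $I_{T,x}$ for $|T|\le d$, to be established in Section \ref{sec:glob}.

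There is no serious conceptual obstacle here; the only delicate point is verifying that $D_{S,x}[f^{=T}]$ is \emph{pure}, i.e., concentrated in a single Efron–Stein component of $L^{2}(\mathbb{F}_p^{\overline{S}})$, which is exactly what is needed to kill the cross-terms $\langle D_{S,x}[f^{=T}], D_{S,x}[f^{=T'}]\rangle$ for $T\ne T'$. This purity is precisely the content of Lemma \ref{lem:degree reduction}, and once it is in hand the decomposition of the influence is a one-line application of Parseval.
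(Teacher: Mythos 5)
Your proof is correct and follows essentially the same route as the paper's: decompose $D_{S,x}[f]$ over the Efron--Stein components of $f$, observe via Lemma~\ref{lem:degree reduction} that each $D_{S,x}[f^{=T}]$ is pure (concentrated in $V_{T\setminus S}$), and invoke orthogonality of the Efron--Stein decomposition of $L^2(\mathbb{F}_p^{\overline S})$ to apply Pythagoras. The paper's proof is the terse one-line version of exactly this argument, and your final remark correctly notes that the globalness assertion leans on the restriction--influence equivalence of Section~\ref{sec:glob}.
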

\begin{proof}
This follows from Lemma \ref{lem:degree reduction} and the orthogonality
of the Efron-Stein components.
\end{proof}

We can now state the `conditional hypercontractive inequality' we will prove; it bounds the 4-norm of a function of degree at most $d$, in terms of the 2-norms of its derivatives.

\begin{thm}
\label{thm:hyp-prod}
Let $f:\mathbb{F}_p^n \to \mathbb{C}$ be a function of degree at most $d$. Then 
\[
\|f\|_{4}^{4}\le\left(100d\right)^{d}\sum_{S\subseteq\left[n\right]}\mathbb{E}_{x\sim\mathbb{F}_{p}^{S}}\left[(I_{S,x}[f])^{2}\right].
\]
Equivalently,
\[
\|f\|_{4}^{4}\le\left(100d\right)^{d}\sum_{S\subseteq\left[n\right]}\mathbb{E}_{x\sim\mathbb{F}_{p}^{S}}\|D_{S,x}[f]\|_2^4.
\]
\end{thm}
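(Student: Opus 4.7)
The plan is to prove Theorem~\ref{thm:hyp-prod} by induction on the degree $d$, with a sub-induction on $n$. The base case $d=0$ is immediate, since $f$ is then a constant and both sides equal $|f|^4$.

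For the inductive step, I would single out a coordinate (say coordinate $n$) and decompose
\[
f = g + h, \qquad g := \mathrm{E}_n[f], \qquad h := L_n[f] = f - \mathrm{E}_n[f].
\]
Here $g$ is independent of $x_n$ and has degree at most $d$, while $h$ satisfies $\mathbb{E}_{x_n}[h] = 0$ for every fixing of the other coordinates. The key observation is that, for each $y \in \mathbb{F}_p$, the slice $h^{(y)}(x') := h(x', y)$ is a function on $\mathbb{F}_p^{n-1}$ of degree at most $d-1$, because every nonzero Efron--Stein component of $h$ must include coordinate $n$ in its support (by Lemma~\ref{lem:degree reduction} applied with $T = \{n\}$). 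This degree drop is what allows the inductive hypothesis at degree $d-1$ to be applied to $h$.

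To bound the $h$-part, I apply the inductive hypothesis (at degree $d-1$) to each slice $h^{(y)}$ and average over $y \in \mathbb{F}_p$. The commutativity identity
\[
D_{T,x}[h^{(y)}] = D_{T \cup \{n\}, (x, y)}[f] \qquad (T \subseteq [n-1],\ x \in \mathbb{F}_p^T,\ y \in \mathbb{F}_p),
\]
which holds because $L_T$ commutes both with $L_n$ and with the restriction $(\cdot)_{\{n\}\to y}$ when $T \subseteq [n-1]$, converts the RHS of the inductive bound into a sum over $S \ni n$, yielding
\[
\|h\|_4^4 = \mathbb{E}_y\|h^{(y)}\|_4^4 \leq (100(d-1))^{d-1}\sum_{S \ni n}\mathbb{E}_z\|D_{S,z}[f]\|_2^4.
\]
For the $g$-part, a sub-induction on $n$ (treating $g$ as a degree-$d$ function on $\mathbb{F}_p^{n-1}$), combined with the identity $D_{T,x}[g] = \mathrm{E}_n[D_{T,x}[f]]$ and the $L^2$-contractivity of $\mathrm{E}_n$, gives
\[
\|g\|_4^4 \leq (100d)^d\sum_{S \not\ni n}\mathbb{E}_x\|D_{S,x}[f]\|_2^4.
\]

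The main obstacle, and where the proof requires real delicacy, is combining the two bounds without inflating the $(100d)^d$ prefactor. A naive triangle-type bound like $\|f\|_4^4 \leq 8(\|g\|_4^4 + \|h\|_4^4)$ is too crude, since the factor $8$ cannot be absorbed into the $(100d)^d$ attached to the $g$-side. To get around this, I would start from the exact expansion
\[
\|f\|_4^4 = \|g\|_4^4 + \|h\|_4^4 + 4\mathbb{E}[|g|^2|h|^2] + 2\Re\mathbb{E}[g^2\bar h^2] + 4\Re\mathbb{E}[g\bar h|h|^2],
\]
obtained from $|f|^2 = |g|^2 + 2\Re(g\bar h) + |h|^2$ by squaring and integrating; all terms linear in $h$ vanish by $\mathbb{E}_{x_n}[h] = 0$. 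The three surviving cross terms are bounded via H\"older's inequality as $\|g\|_4^a\|h\|_4^{4-a}$ for $a \in \{1,2\}$, and then, via a weighted Young's inequality with a small parameter $\varepsilon = \varepsilon(d)$, their contribution is charged (almost entirely) to the $h$-side as $\varepsilon\|g\|_4^4 + C(\varepsilon)\|h\|_4^4$. The slack $(100d)^d/(100(d-1))^{d-1} \sim 100d\cdot e$ between the two induction constants is precisely what is needed to absorb this $C(\varepsilon)$ blow-up on the $h$-side, closing the induction at the claimed constant $(100d)^d$.
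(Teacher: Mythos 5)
Your approach is genuinely different from the paper's, and unfortunately the route you propose has a gap at precisely the point you identify as the delicate one — the cross-term absorption does not close.

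The problem is structural. Your sub-induction on $n$ applies the theorem (at the same degree $d$) to $g=\mathrm{E}_n[f]$ with the tight constant $(100d)^d$: there is no slack between consecutive steps of the $n$-induction, since the constant does not decrease with $n$. When you expand $\|f\|_4^4$ exactly and bound the cross terms via a weighted Young inequality as $\varepsilon\|g\|_4^4 + C(\varepsilon)\|h\|_4^4$, the $h$-side blow-up $C(\varepsilon)$ can indeed be absorbed in the degree slack $(100d)^d/(100(d-1))^{d-1}$, but the $\varepsilon\|g\|_4^4$ term lands on the $n$-inductive side, where no slack exists. One step gives $\|f\|_4^4 \le (1+\varepsilon)\|g\|_4^4 + (\cdots)\|h\|_4^4$, and iterating the $n$-induction down from $n$ coordinates to $0$ accumulates a factor $(1+\varepsilon)^n$ on the $g$-line, which is unbounded in $n$ for any fixed $\varepsilon>0$; choosing $\varepsilon\sim 1/n$ to control this instead makes $C(\varepsilon)$ depend on $n$, destroying the $n$-independence the theorem requires. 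There is no value of $\varepsilon$ that wins. The classical proof of Bonami's lemma avoids this because its cross term $\|g\|_2^2\|h\|_2^2$ recombines with $\|g\|_2^4+\|h\|_2^4$ into the perfect square $(\|g\|_2^2+\|h\|_2^2)^2 = \|f\|_2^4$; here the right-hand side $\sum_S\mathbb{E}\|D_{S,x}[f]\|_2^4$ is purely additive across $S\ni n$ and $S\not\ni n$ and has no square structure that a cross term of the form $\|\cdot\|_4^a\|\cdot\|_4^{4-a}$ can be folded into.

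The paper sidesteps this entirely by avoiding the coordinate-by-coordinate peel. It first proves a one-shot degree-reduction inequality (Proposition \ref{prop:Inductive lemma }), obtained by Efron--Stein-decomposing $f^2$ and partitioning the pairs $(T_1,T_2)$ by their relation to $S$: the ``diagonal'' pairs with $T_1\Delta T_2 = S$ carry all the would-be cross terms and are bounded in one stroke by $9^d\|f\|_2^4$ via Bonami's lemma on a discrete cube built from the numbers $\|f^{=T}\|_2$, while the pairs with $T_1\cap T_2 \cap S\ne\emptyset$ are charged to the Laplacian terms $\|L_T[f]\|_4^4$ with a $(4d)^{|T|}$ weight. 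The $\|f\|_2^4$ term is exactly the $S=\emptyset$ summand in the target, so its factor $2\cdot 9^d$ is absorbed once into $(100d)^d$ rather than compounded $n$ times. Only then does the paper induct, and only on the degree (via the Laplacian terms, which after restriction become derivatives of strictly smaller degree). If you want to rescue a coordinate-peeling argument, you would need to strengthen the inductive hypothesis so that the $g$-side has genuine slack (e.g.\ prove a family of inequalities whose constants decrease along the $n$-induction), but as written the plan does not work.
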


We call this a `conditional hypercontractive inequality' (with only slight abuse of terminology) because it quickly implies a hypercontractive inequality for global functions, as we will shortly see, in Section \ref{sec:glob}.

\subsection{Overview of the proof of Theorem \ref{thm:hyp-prod}}

The key ingredient of the proof of Theorem \ref{thm:hyp-prod} is the following `degree-reduction' lemma.
\begin{prop}
\label{prop:Inductive lemma } If $f:\mathbb{F}_p^n \to \mathbb{C}$ is a function of degree at most $d$, then 
\begin{equation}
\|f\|_{4}^{4}\le 2\cdot9^{d}\|f\|_{2}^{4}+2\sum_{S\ne\varnothing}\left(4d\right)^{\left|S\right|}\|L_{S}\left[f\right]\|_{4}^{4}.\label{eq:inductive lemma}
\end{equation}
 
\end{prop}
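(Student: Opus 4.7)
My plan is to prove the proposition by induction on the degree $d$. The base case $d=0$ is immediate: $f$ is a constant and $\|f\|_{4}^{4}=\|f\|_{2}^{4}\le 2\|f\|_{2}^{4}$. For the inductive step with $d\ge 1$, I pick a coordinate (say $n$) and decompose $f=g+h$ where $g:=\mathrm{E}_{n}[f]$ is independent of $x_{n}$ and $h:=L_{n}[f]$ satisfies $\mathbb{E}_{x_{n}}[h]=0$; both have degree $\le d$. Expanding $\mathbb{E}[|g+h|^{4}]$, using these orthogonality properties to kill several cross-terms, and bounding the remaining mixed moments by Cauchy--Schwarz and H\"older on $\mathbb{E}[|g|^{2}|h|^{2}]$, $\mathrm{Re}\,\mathbb{E}[g^{2}\bar h^{2}]$ and $\mathrm{Re}\,\mathbb{E}[g\bar h|h|^{2}]$, I obtain
\[
\|f\|_{4}^{4}\le \|g\|_{4}^{4} + 6\|g\|_{4}^{2}\|h\|_{4}^{2} + 4\|g\|_{4}\|h\|_{4}^{3} + \|h\|_{4}^{4}.
\]

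The key step is to exploit the inductive hypothesis. Fourier-decomposing $h$ along $x_{n}$ as $h=\sum_{j\neq 0}\omega^{jx_{n}}f_{j}$ with $f_{j}\colon\mathbb{F}_{p}^{n-1}\to\mathbb{C}$ of degree $\le d-1$, I apply the inductive hypothesis to each $f_{j}$. Combined with a direct expansion of $\mathbb{E}[|h|^{4}]$ (in which only tuples with $j_{1}+j_{2}\equiv k_{1}+k_{2}\pmod p$ survive the $x_{n}$-average) and Cauchy--Schwarz to control each surviving term, this yields a bound of the form $\|h\|_{4}^{4}\le 2\cdot 9^{d-1}\|h\|_{2}^{4}+2\sum_{S'\neq\emptyset}(4d)^{|S'|+1}\|L_{S'\cup\{n\}}[f]\|_{4}^{4}$, using the identity $L_{S'}[f_{j}]=(L_{S'\cup\{n\}}[f])_{j}$ for the $j$-th $x_{n}$-Fourier coefficient. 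In parallel, I bound $\|g\|_{4}^{4}$ by applying the proposition to $g$ as a function on $\mathbb{F}_{p}^{n-1}$ (nested induction on $n$), using $L_{S'}[g]=\mathrm{E}_{n}[L_{S'}[f]]$ together with the contraction $\|\mathrm{E}_{n}L_{S'}f\|_{4}\le\|L_{S'}f\|_{4}$, to obtain $\|g\|_{4}^{4}\le 2\cdot 9^{d}\|g\|_{2}^{4}+2\sum_{\emptyset\neq S'\subseteq[n-1]}(4d)^{|S'|}\|L_{S'}[f]\|_{4}^{4}$.

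To conclude, I substitute these bounds and the expansion $\|f\|_{2}^{4}=\|g\|_{2}^{4}+2\|g\|_{2}^{2}\|h\|_{2}^{2}+\|h\|_{2}^{4}$ into the chain of inequalities above, controlling each mixed moment $\|g\|_{4}^{a}\|h\|_{4}^{b}$ via $\|g\|_{4}^{2}\le\sqrt{2\cdot 9^{d}}\|g\|_{2}^{2}+\sqrt{\Sigma_{g}}$ and $\|h\|_{4}^{2}\le\sqrt{2\cdot 9^{d-1}}\|h\|_{2}^{2}+\sqrt{\Sigma_{h}}$ (using $\sqrt{a+b}\le\sqrt a+\sqrt b$), followed by weighted AM--GM to split the resulting products. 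The main obstacle is the delicate bookkeeping of constants: the coefficient on $\|f\|_{2}^{4}$ must come out to exactly $2\cdot 9^{d}$, which hinges on the identity $9\cdot 9^{d-1}=9^{d}$ --- the factor $9$ arising from the Cauchy--Schwarz constants on the cross terms must be exactly absorbed by the inductive $9^{d-1}$ factor for $h$. Similarly, each coefficient $(4d)^{|S|}$ on a Laplacian term must dominate the inductive $(4(d-1))^{|S|}$-factor times the surplus ($\le 4$) from the cross-term analysis; this holds because $d/(d-1)\ge 1$ for $d\ge 2$, with the case $d=1$ requiring a direct check.
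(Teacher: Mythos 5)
Your approach is genuinely different from the paper's: you attempt a double induction on $d$ and $n$ via the single-coordinate decomposition $f=\mathrm{E}_n[f]+L_n[f]=g+h$, whereas the paper never splits $f$ at all --- it Parseval-expands $\|f\|_4^4 = \sum_S\|(f^2)^{=S}\|_2^2$, classifies the products $f^{=T_1}f^{=T_2}$ into pairs with $T_1\Delta T_2 = S$ (handled by transporting to the Boolean cube and applying Bonami's lemma, which is what yields the $p$-independent $9^d$) and pairs with $T_1\cap T_2\cap S\neq\emptyset$ (controlled by an inclusion--exclusion in the iterated Laplacians), and shows the rest vanish. Unfortunately your route has a gap that I do not see how to repair.

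Your key intermediate claim
\[
\|h\|_4^4 \;\le\; 2\cdot 9^{d-1}\|h\|_2^4 + 2\sum_{S'\neq\emptyset}(4d)^{|S'|+1}\|L_{S'\cup\{n\}}[f]\|_4^4
\]
is false. Take $n=1$, $d=1$ and $f=h=p\cdot 1_{\{x=0\}}-1$ on $\mathbb{F}_p$ (so $g=\mathrm{E}_1[f]=0$ and $h=L_1[f]=f$). Then $\|h\|_2^2=p-1$, so the right-hand side is $2(p-1)^2$ (the sum over nonempty $S'\subseteq[n-1]$ is empty), whereas $\|h\|_4^4 = \bigl((p-1)^4+(p-1)\bigr)/p \sim p^3$, which exceeds $2(p-1)^2\sim 2p^2$ already for $p\ge 5$. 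The underlying obstruction is that the decomposition $h=\sum_{j\neq 0}\omega^{jx_n}f_j$ runs the wrong way: for each fixed $y$, $\mathbb{E}_{x_n}|h|^4 \geq (\mathbb{E}_{x_n}|h|^2)^2 = (\sum_j|f_j(y)|^2)^2 \geq \sum_j|f_j(y)|^4$, hence $\|h\|_4^4\ge \sum_j\|f_j\|_4^4$, so you cannot deduce an upper bound on $\|h\|_4^4$ from the inductive upper bounds on the individual $\|f_j\|_4^4$. Any Cauchy--Schwarz control of the surviving cross-frequency terms $\mathbb{E}[f_{j_1}\overline{f_{k_1}}f_{j_2}\overline{f_{k_2}}]$ introduces a factor on the order of $(p-1)^3$ (the number of admissible tuples), which is fatal because the proposition's constants must be $p$-independent. (Note also that $h=L_n[f]$ still has degree up to $d$, not $d-1$, so the $d$-induction cannot be applied to $h$ directly; and including the $S'=\emptyset$ term $\|L_{\{n\}}[f]\|_4^4=\|h\|_4^4$ on the right would make the claimed bound vacuous for $d\ge 1$.)

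There is a second, quieter problem: the cross-term bookkeeping does not close the nested induction on $n$. Having bounded $4\|g\|_4\|h\|_4^3\le 2\|g\|_4^2\|h\|_4^2+2\|h\|_4^4$, the coefficient on $\|g\|_2^2\|h\|_2^2$ coming from $8\|g\|_4^2\|h\|_4^2$ is $8\sqrt{(2\cdot 9^d)(2\cdot 9^{d-1})}=\tfrac{16}{3}\cdot 9^d$, which overshoots the budget $4\cdot 9^d$ supplied by expanding $2\cdot 9^d\|f\|_2^4=2\cdot 9^d(\|g\|_2^2+\|h\|_2^2)^2$; the $\|g\|_2^4$ budget is already saturated exactly by the nested induction for $g$, so any AM--GM shift of the excess onto $\|h\|_2^4$ spills back onto $\|g\|_2^4$, and the $S=\{n\}$ Laplacian budget cannot absorb terms proportional to $\|g\|_4^4$. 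Consequently the constants $2\cdot 9^d$ and $(4d)^{|S|}$ are not preserved from $\mathbb{F}_p^{n-1}$ to $\mathbb{F}_p^n$.
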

Theorem \ref{thm:hyp-prod} follows fairly easily from this by induction on the degree. Indeed, we
may then restrict $S$ to some $x\in\mathbb{F}_{p}^{\left|S\right|}$
and apply the inductive hypothesis to $D_{S,x}[f]$,
which has lower degree than $f$.

To prove our degree reduction lemma we first use Parseval (applied to the Efron-Stein decomposition of $f^2$) to write 
\[
\mathbb{E}\left[\left|f\right|^{4}\right]=\sum_{S\subseteq\left[n\right]}\left\Vert \left(f^{2}\right)^{=S}\right\Vert _{2}^{2}.
\]
We then use Efron-Stein decomposition of $f$ to write 
\[
f^{2}=\sum_{T_{1},T_{2}\subseteq [n]}f^{=T_{1}}f^{=T_{2}},
\]
 so that
\[
\left(f^{2}\right)^{=S}=\sum_{T_{1},T_{2} \subseteq[n]}\left(f^{=T_{1}}f^{=T_{2}}\right)^{=S}.
\]
 This will allow us to divide the nonzero terms $\left(f^{=T_{1}}f^{=T_{2}}\right)^{=S}$
into two groups:
\begin{enumerate}
\item The `Boolean-type' terms: these are terms $\left(f^{=T_{1}}f^{=T_{2}}\right)^{=S}$
with $T_{1}\Delta T_{2}=S$. Such terms correspond to the situation
in the Boolean cube, where $\chi_{T_1}\chi_{T_2}=\chi_{T_1\Delta T_2}$ for all $T_1,T_2 \subset [n]$. We
upper-bound these terms by $9^{d}\|f\|_{2}^{4}$ via a
reduction to the Bonami-Beckner-Gross hypercontractivity theorem (for the Boolean cube).
\item The terms `explained' by the Laplacians: these are the terms with $T_{1}\cap T_{2}\cap S\ne\varnothing$.
They also appear as terms in $\left(L_{T}\left[f\right]^{2}\right)^{=S}$
for every $T\subseteq T_{1}\cap T_{2}\cap S$. We will therefore be
able to upper-bound the terms $\left(f^{=T_{1}}f^{=T_{2}}\right)^{=S}$
by the terms $\|L_{T}\left[f\right]\|_{4}^{4}$ that appear at the
right-hand side of (\ref{eq:inductive lemma}).
\end{enumerate}

\subsection{Proof of Theorem \ref{thm:hyp-prod}}

We first follow the strategy we outlined above to upper-bound each of the terms
$\left\Vert \left(f^{2}\right)^{=S}\right\Vert _{2}^{2}.$
\begin{lem}
Let $f:\mathbb{F}_p^n \to \mathbb{C}$, let $g=f^{2}$ and let $g_{T}:=(L_{T}[f])^{2}$ for each $T \subset [n]$. Then 
\[
\|g^{=S}\|_{2}^{2}\le 2\sum_{\emptyset \neq T\subseteq S}\left(2\left|S\right|\right)^{-\left|T\right|}\|(g_{T})^{=S}\|_{2}^{2}+2\left(\sum_{T_{1}\Delta T_{2}=S}\|f^{=T_{1}}\|_{2}\|f^{=T_{2}}\|_{2}\right)^{2}
\]
for all $S \subseteq [n]$.
\end{lem}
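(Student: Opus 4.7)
The plan is to decompose $g^{=S}$ into two natural pieces and bound each separately. Since $g=f^{2}$, the Efron--Stein expansion gives
\[
g^{=S}=\sum_{T_{1},T_{2}\subseteq[n]}(f^{=T_{1}}f^{=T_{2}})^{=S}.
\]
A short Fourier-analytic check shows $(f^{=T_{1}}f^{=T_{2}})^{=S}$ vanishes unless $T_{1}\Delta T_{2}\subseteq S\subseteq T_{1}\cup T_{2}$, and the nonzero summands split disjointly into the \emph{Boolean-type} ones with $T_{1}\Delta T_{2}=S$ (equivalently $T_{1}\cap T_{2}\cap S=\emptyset$) and the \emph{Laplacian-type} ones with $T_{1}\cap T_{2}\cap S\neq\emptyset$. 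Writing $g^{=S}=A+B$ for these two sub-sums and applying $\|A+B\|_{2}^{2}\leq2\|A\|_{2}^{2}+2\|B\|_{2}^{2}$ reduces the lemma to bounding $\|A\|_{2}^{2}$ and $\|B\|_{2}^{2}$ separately.

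For $B$, the key claim is $\|(f^{=T_{1}}f^{=T_{2}})^{=S}\|_{2}\leq\|f^{=T_{1}}\|_{2}\|f^{=T_{2}}\|_{2}$ whenever $T_{1}\Delta T_{2}=S$. Indeed, for any characters $\chi_{\gamma_{1}},\chi_{\gamma_{2}}$ with $\supp(\gamma_{j})=T_{j}$ whose product contributes to $(f^{=T_{1}}f^{=T_{2}})^{=S}$, the condition $\supp(\gamma_{1}+\gamma_{2})=S$ forces $\gamma_{2}|_{T_{1}\cap T_{2}}=-\gamma_{1}|_{T_{1}\cap T_{2}}$, so a given output character $\chi_{\delta}$ (with $\supp(\delta)=S$) has Fourier coefficient given by an inner-product-type sum over the ``middle'' variable indexing $T_{1}\cap T_{2}$; Cauchy--Schwarz in that variable yields the bound. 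The triangle inequality then gives $\|B\|_{2}\leq\sum_{T_{1}\Delta T_{2}=S}\|f^{=T_{1}}\|_{2}\|f^{=T_{2}}\|_{2}$, which produces the second term of the RHS after squaring.

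For $A$, combine the identity $L_{T}[f]=\sum_{T'\supseteq T}f^{=T'}$ with the inclusion--exclusion identity $\mathbf{1}[U\neq\emptyset]=\sum_{\emptyset\neq T\subseteq U}(-1)^{|T|+1}$, applied to $U=T_{1}\cap T_{2}\cap S$, to obtain
\[
A=\sum_{\emptyset\neq T\subseteq S}(-1)^{|T|+1}(g_{T})^{=S}.
\]
A weighted Cauchy--Schwarz of the shape $\|\sum_{T}c_{T}v_{T}\|_{2}^{2}\leq(\sum_{T}w_{T})(\sum_{T}w_{T}^{-1}\|v_{T}\|_{2}^{2})$ with weights scaled as a power of $2|S|$ in $|T|$, combined with the geometric-series estimate $\sum_{\emptyset\neq T\subseteq S}(2|S|)^{-|T|}=(1+1/(2|S|))^{|S|}-1<e^{1/2}-1<1$, then yields the first term of the RHS.

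The main obstacle is the Laplacian step: the correct calibration of weights is essential, since a naïve choice would introduce an undesired factor like $(1+2|S|)^{|S|}$. The crucial point is that $\sum_{\emptyset\neq T\subseteq S}(2|S|)^{-|T|}$ stays uniformly bounded (below $1$) as $|S|\to\infty$, which is what allows the normalization factor from Cauchy--Schwarz to be absorbed into the bound and produces the sharp coefficient.
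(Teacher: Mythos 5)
Your proposal is correct and follows essentially the same route as the paper's proof: decompose the pairs $(T_1,T_2)$ by whether $T_1\cap T_2\cap S$ is empty (the complement-of-$\mathcal{F}_1\cup\mathcal{F}_2$ pairs contributing nothing), express the Laplacian part via inclusion--exclusion as $\sum_{\emptyset\neq T\subseteq S}(-1)^{|T|+1}(g_T)^{=S}$, apply a weighted Cauchy--Schwarz with the geometric series $\sum_{\emptyset\neq T\subseteq S}(2|S|)^{-|T|}<1$ absorbing the normalization, and bound the Boolean part via $\|(f^{=T_1}f^{=T_2})^{=S}\|_2 \leq \|f^{=T_1}\|_2\|f^{=T_2}\|_2$. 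The one presentational difference is this last bound: you Cauchy--Schwarz the Fourier coefficient over the middle variable running over $T_1\cap T_2$, whereas the paper writes $(f^{=T_1}f^{=T_2})^{=S}=\mathrm{E}_{\overline{S}}\big(f^{=T_1}f^{=T_2}\big)$, applies Cauchy--Schwarz pointwise in $x$, and then uses the independence of $\|(f^{=T_1})_{S\to x}\|_2$ and $\|(f^{=T_2})_{S\to x}\|_2$ as functions of $x$ (supported on the disjoint sets $T_1\cap S$ and $T_2\cap S$) --- these are equivalent arguments. One small remark: your weighted Cauchy--Schwarz, like the paper's proof and the subsequent use in Proposition \ref{prop:Inductive lemma }, produces the coefficient $(2|S|)^{+|T|}$ in front of $\|(g_T)^{=S}\|_2^2$; the exponent $-|T|$ in the displayed lemma statement is a typo.
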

\begin{proof}
We have 
\[
\|f\|_{4}^{4}=\sum_{S\subseteq\left[n\right]}\|g^{=S}\|_{2}^{2}
\]
and 
\[
g^{=S}=\sum_{T_{1},T_{2}\subseteq\left[n\right]}\left(f^{=T_{1}}f^{=T_{2}}\right)^{=S}.
\]
We would like to write the right-hand side in terms of the Laplacians.
We divide the pairs $\left(T_{1},T_{2}\right) \in (\mathcal{P}([n]))^2$ into three categories:
\begin{enumerate}
\item $\mathcal{F}_{1}=\mathcal{F}_1(S) = \left\{ \left(T_{1},T_{2}\right):\:T_{1}\Delta T_{2}=S\right\}$.
\item $\mathcal{F}_{2} = \mathcal{F}_2(S)=\left\{ \left(T_{1},T_{2}\right):\,T_{1}\cap T_{2}\cap S\ne\varnothing\right\}$.
\item $\mathcal{F}_{3} = \mathcal{F}_3(S) = (\mathcal{P}([n]))^2\setminus (\mathcal{F}_1(S)\cup \mathcal{F}_2(S))$ consists of the rest of the pairs.
\end{enumerate}

\subsection*{Upper-bounding the contribution from the pairs in $\mathcal{F}_{2}$}

For each $T\subseteq S$, let $g_{T}=L_{T}\left[f\right]^{2}$ and
let $\mathcal{F}_{T}$ be the set of pairs $(T_{1},T_{2}) \in (\mathcal{P}([n]))^2$ with $T_{1}\cap T_{2}\supseteq T$.
Then we have 
\[
(g_{T})^{=S}=\sum_{\left(T_{1},T_{2}\right)\in\mathcal{F}_{T}}\left(f^{=T_{1}}f^{=T_{2}}\right)^{=S}.
\]
 Inclusion-exclusion yields
\[
1_{\left(T_{1},T_{2}\right)\in\mathcal{F}_{2}}=\sum_{\emptyset \neq T\subseteq S}\left(-1\right)^{\left|T\right|-1}1_{\left(T_{1},T_{2}\right)\in\mathcal{F}_{T}}.
\]
 This implies that 
\[
\sum_{\emptyset \neq T\subseteq S}\left(-1\right)^{\left|T\right|-1}(g_{T})^{=S}=\sum_{\left(T_{1},T_{2}\right)\in\mathcal{F}_{2}}\left(f^{=T_{1}}f^{=T_{2}}\right)^{=S}.
\]
 Therefore, by Cauchy--Schwarz and the triangle inequality, we have 
\begin{align*}
\left\Vert \sum_{\left(T_{1},T_{2}\right)\in\mathcal{F}_{2}}\left(f^{=T_{1}}f^{=T_{2}}\right)^{=S}\right\Vert _{2}^{2} & \le\left(\sum_{\emptyset \neq T\subseteq S}\left(2\left|S\right|\right)^{-\left|T\right|}\right)\left(\sum_{\emptyset \neq T\subseteq S}\left(2\left|S\right|\right)^{\left|T\right|}\|(g_{T})^{=S}\|_{2}^{2}\right)\\
 & \le \sum_{\emptyset \neq T\subseteq S}\left(2\left|S\right|\right)^{\left|T\right|}\|(g_{T})^{=S}\|_{2}^{2}.
\end{align*}

\subsection*{Upper-bounding the contribution from the pairs in $\mathcal{F}_{3}$}

In fact, there is no contribution from these pairs. Indeed, if $\chi'$ and $\chi''$ are characters with supports $T_{1}$ and $T_{2}$
respectively, then $\chi'\chi''$ is a character
whose support contains $T_{1}\Delta T_{2}$ and is contained in $T_{1}\cup T_{2}$, so $(f^{=T_1}f^{=T_2})^{=S}$ is zero unless
\[
T_{1}\Delta T_{2}\subseteq S\subseteq T_{1}\cup T_{2}.
\]
Hence, when $\left(T_{1},T_{2}\right)\in\mathcal{F}_{3}$
we have $(f^{=T_1}f^{=T_2})^{=S}=0$.

\subsection*{Upper-bounding the contribution from the pairs in $\mathcal{F}_{1}$}

Suppose now that $T_{1}\Delta T_{2}=S$. Then we have 
\[
\left(f^{=T_{1}}f^{=T_{2}}\right)^{=S}=\mathrm{E}_{\overline{S}}\left(f^{=T_{1}}f^{=T_{2}}\right).
\]
 Indeed, for each $S' \subsetneq S$ we have 
\[
\left(f^{=T_{1}}f^{=T_{2}}\right)^{=S'}=0,
\]
since $S'$ does not contain $T_{1}\Delta T_{2}$. 

We now upper-bound $\mathrm{E}_{\overline{S}}\left(f^{=T_{1}}f^{=T_{2}}\right)\left(x\right)$, for $x \in \mathbb{F}_p^S$. By Cauchy--Schwarz, we have
\[
\left|\mathrm{E}_{\overline{S}}\left(f^{=T_{1}}f^{=T_{2}}\right)\left(x\right)\right| = \left|\left\langle \left(f^{=T_{1}}\right)_{S\to x},\overline{\left(f^{=T_{2}}\right)_{S\to x}}\right\rangle\right| \le\|(f^{=T_1})_{S\to x}\|_{2} \cdot \|(f^{=T_2})_{S\to x}\|_{2}
\]
for each $x \in \mathbb{F}_p^S$. Now, the function $x\mapsto\|(f^{=T_1})_{S\to x}\|_{2}$ depends only
upon the coordinates in $T_{1}\cap S$, whereas the function $x\mapsto\|(f^{=T_2})_{S\to x}\|_{2}$
depends only upon the coordinates in $T_{2}\cap S$. As the set $T_1 \cap S$ is disjoint from the set $T_2 \cap S$, it follows that when $x \in \mathbb{F}_p^S$ is uniformly random, $\|(f^{=T_1})_{S\to x}\|_{2}$
and $\|(f^{=T_2})_{S\to x}\|_{2}$ are independent random variables,
and therefore 
\begin{align*}
\|\mathrm{E}_{\overline{S}}\left(f^{=T_{1}}f^{=T_{2}}\right)\|_{L^{2}\left(\mathbb{F}_{p}^{S}\right)} & = \sqrt{\mathbb{E}_{x \in \mathbb{F}_p^S}\left|\mathrm{E}_{\overline{S}}\left(f^{=T_1}f^{=T_2}\right)(x)\right|^2}\\
& \leq \sqrt{\mathbb{E}_{x \in \mathbb{F}_p^S}\left[\|(f^{=T_{1}})_{S \to x}\|^2_{2}\|(f^{=T_2})_{S\to x}\|^2_{2}\right]}\\
& = \sqrt{\mathbb{E}_{x \in \mathbb{F}_p^S}\left[\|(f^{=T_1})_{S\to x}\|^2_{2}\right] \mathbb{E}_{x \in \mathbb{F}_p^S}\left[\|(f^{=T_2})_{S\to x}\|^2_{2}\right]}\\
 & =\|f^{=T_{1}}\|_{2} \cdot \|f^{=T_{2}}\|_{2}.
\end{align*}
 Hence by Cauchy-Schwarz yet again, we obtain 
\begin{align*}
\left\|\sum_{\left(T_{1},T_{2}\right)\in\mathcal{F}_{1}}\left(f^{=T_{1}}f^{=T_{2}}\right)^{=S}\right\|_2^2 & =\sum_{\left(T_{1},T_{2}\right),\left(T_{3},T_{4}\right)\in\mathcal{F}_{1}}\left\langle \left(f^{=T_{1}}f^{=T_{2}}\right)^{=S},\left(f^{=T_{3}}f^{=T_{4}}\right)^{=S}\right\rangle \\
 & \le\sum_{\left(T_{1},T_{2}\right),\left(T_{3},T_{4}\right)\in\mathcal{F}_{1}}\|\left(f^{=T_{1}}f^{=T_{2}}\right)^{=S}\|_{2}\|\left(f^{=T_{3}}f^{=T_{4}}\right)^{=S}\|_{2}\\
 & \le\sum_{\left(T_{1},T_{2}\right),\left(T_{3},T_{4}\right)\in\mathcal{F}_{1}}\|f^{=T_{1}}\|_{2}\|f^{=T_{2}}\|_{2}\|f^{=T_{3}}\|_{2}\|f^{=T_{4}}\|_{2}\\
 & = \left(\sum_{\left(T_{1},T_{2}\right) \in\mathcal{F}_{1}}\|f^{=T_{1}}\|_{2}\|f^{=T_{2}}\|_{2}\right)^2\\
 & = \left(\sum_{\left(T_{1},T_{2}\right) :T_1 \Delta T_2=S}\|f^{=T_{1}}\|_{2}\|f^{=T_{2}}\|_{2}\right)^2.
\end{align*}
 Combining our upper bounds on the terms from $\mathcal{F}_{1}$ and on those from
$\mathcal{F}_{2}$, and applying Cauchy-Schwarz one more time, we obtain
\begin{align*}
\|g^{=S}\|_{2}^{2} &\le 2\left\|\sum_{\left(T_{1},T_{2}\right)\in\mathcal{F}_{2}}\left(f^{=T_{1}}f^{=T_{2}}\right)^{=S}\right\|_2^2 + 2\left\|\sum_{\left(T_{1},T_{2}\right)\in\mathcal{F}_{1}}\left(f^{=T_{1}}f^{=T_{2}}\right)^{=S}\right\|_2^2\\
&\leq 2\sum_{\varnothing\ne T\subseteq S}\left(2\left|S\right|\right)^{\left|T\right|}\|g_{T}^{=S}\|_{2}^{2}+2\left(\sum_{T_{1}\Delta T_{2}=S}\|f^{=T_{1}}\|_{2}\|f^{=T_{2}}\|_{2}\right)^{2},
\end{align*}
 as required.
\end{proof}
We are now ready to prove Proposition \ref{prop:Inductive lemma },
which we restate for the convenience of the reader.
\begin{prop*}
Let $f:\mathbb{F}_p^n \to \mathbb{C}$ be a function of degree at most $d$. Then 
\[
\|f\|_{4}^{4}\le 2\cdot9^{d}\|f\|_{2}^{4}+ 2\sum_{\emptyset \neq T\subseteq\left[n\right]}\left(4d\right)^{\left|T\right|}\|L_{T}\left[f\right]\|_{4}^{4}
\]
\end{prop*}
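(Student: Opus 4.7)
The plan is to sum the bound from the preceding lemma over all $S \subseteq [n]$. Using Parseval applied to the Efron-Stein decomposition of $g = f^2$, we have $\|f\|_4^4 = \|g\|_2^2 = \sum_{S} \|g^{=S}\|_2^2$, so the lemma gives
\[
\|f\|_4^4 \leq 2\sum_{S \subseteq [n]} \sum_{\emptyset \neq T \subseteq S} (2|S|)^{|T|} \|(g_T)^{=S}\|_2^2 + 2 \sum_{S \subseteq [n]} \Bigl( \sum_{T_1 \Delta T_2 = S} \|f^{=T_1}\|_2 \|f^{=T_2}\|_2 \Bigr)^{\!2}.
\]
I will handle the two terms separately.

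For the first term, I swap the order of summation to obtain $\sum_{\emptyset \neq T} \sum_{S \supseteq T} (2|S|)^{|T|} \|(g_T)^{=S}\|_2^2$. Since $f$ has degree at most $d$, the function $L_T[f]$ also has degree at most $d$ (it is a sum of Efron-Stein components $f^{=S'}$ with $S' \supseteq T$), so $g_T = L_T[f]^2$ has degree at most $2d$. Therefore $(g_T)^{=S}$ vanishes whenever $|S| > 2d$, so we may replace $(2|S|)^{|T|}$ by $(4d)^{|T|}$. Pulling this factor out and using Parseval one more time gives $\sum_{S \supseteq T} \|(g_T)^{=S}\|_2^2 \leq \|g_T\|_2^2 = \|L_T[f]\|_4^4$, yielding the second summand of the bound.

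For the second term, the key idea is to recognize the Efron-Stein norms $a_T := \|f^{=T}\|_2$ as Fourier coefficients of an auxiliary function on the classical Boolean cube. Define $P : \{0,1\}^n \to \mathbb{R}$ by its Walsh expansion $P = \sum_{T \subseteq [n]} a_T \chi_T$, where $\chi_T(x) = (-1)^{\sum_{i \in T} x_i}$. Since $a_T = 0$ whenever $|T| > d$, the function $P$ has degree at most $d$. Squaring and using $\chi_{T_1}\chi_{T_2} = \chi_{T_1 \Delta T_2}$, the $S$-th Walsh coefficient of $P^2$ equals $\sum_{T_1 \Delta T_2 = S} a_{T_1} a_{T_2}$. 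Hence, by Parseval on the Boolean cube,
\[
\sum_{S \subseteq [n]} \Bigl( \sum_{T_1 \Delta T_2 = S} a_{T_1} a_{T_2} \Bigr)^{\!2} = \|P^2\|_2^2 = \|P\|_4^4.
\]
Applying the classical Bonami lemma (Lemma~\ref{lem:bonami}) on $\{0,1\}^n$ to $P$ yields $\|P\|_4^4 \leq 9^d \|P\|_2^4 = 9^d \bigl(\sum_T a_T^2\bigr)^2 = 9^d \|f\|_2^4$, which gives the first summand of the required bound. Combining the two contributions completes the proof.

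The conceptually interesting step is the reduction of the $\mathcal{F}_1$ contribution to classical Bonami on $\{0,1\}^n$: the sum of Efron-Stein norms $\|f^{=T}\|_2$, viewed as Walsh coefficients, assembles into a legitimate low-degree Boolean-cube function whose $4$-norm packages exactly the quantity we need to estimate. The rest is essentially bookkeeping.
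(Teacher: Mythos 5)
Your proof is correct and is essentially identical to the paper's: both sum the preceding lemma over $S$, handle the Laplacian term by swapping the order of summation and using the degree bound $\deg(g_T)\le 2d$ to replace $(2|S|)^{|T|}$ by $(4d)^{|T|}$, and bound the $\mathcal{F}_1$ contribution by encoding the Efron--Stein norms $\|f^{=T}\|_2$ as Walsh coefficients of an auxiliary degree-$d$ function on $\{0,1\}^n$ and invoking Bonami's lemma. No issues.
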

\begin{proof}[Proof of Proposition \ref{prop:Inductive lemma }]
 Let $g_{T}=L_{T}\left[f\right]^{2}$. Then the degree of $g_{T}$
is at most $2d$. We therefore have 
\begin{align*}
\sum_{S\subseteq\left[n\right]}\sum_{\varnothing\ne T\subseteq S}\left(2\left|S\right|\right)^{\left|T\right|}\|(g_{T})^{=S}\|_{2}^{2} & \le\sum_{T\ne\varnothing}\sum_{S\supseteq T}\left(4d\right)^{\left|T\right|}\|(g_{T})^{=S}\|_{2}^{2}\\
 & \leq \sum_{T\ne\varnothing}\left(4d\right)^{\left|T\right|}\|g_{T}\|_{2}^{2}\\
 & =\sum_{T\ne\varnothing}\left(4d\right)^{\left|T\right|}\|L_{T}\left[f\right]\|_{4}^{4}.
\end{align*}
Now define a function $f:\{0,1\}^n \to \mathbb{R}$ by
\[
\tilde{f}=\sum_{T\subseteq\left[n\right]}\|f^{=T}\|_{2}\, \chi_{T},
\]
 where $\chi_T(x) = (-1)^{\sum_{i \in T}x_i}$ for each $x \in \{0,1\}^n$ and $T \subset [n]$, i.e.\ the $\chi_T$ are the Fourier characters of the discrete cube (with the uniform measure). We then obtain, by Bonami's lemma (Lemma \ref{lem:bonami}, above), that
\[
\left(\sum_{T_{1}\Delta T_{2}=S}\|f^{=T_{1}}\|_{2}\|f^{=T_{2}}\|_{2}\right)^{2}=\|\tilde{f}\|_{4}^{4}\le9^{d}\|\tilde{f}\|_{2}^{4}=9^{d}\|f\|_{2}^{4}.
\]
 
\end{proof}
We now complete the proof of Theorem \ref{thm:hyp-prod}.
As mentioned above, the proof proceeds by induction on the degree, using the preceding proposition to obtain an inequality in terms of the $L_T[f]$'s, restricting $T$ to a random $x$
to go from $L_{T}\left[f\right]$ to $D_{T,x}\left[f\right]$ and
then applying the inductive hypothesis using the fact that the degree
of $D_{T,x}\left[f\right]$ is at most $d-\left|T\right| < d$, for $T \neq \emptyset$. 

\begin{proof}[Proof of Theorem \ref{thm:hyp-prod}.]
By induction on the degree of $f$. Trivially, the theorem holds for functions of degree zero. Now let $f$ be of degree $d$ and suppose that the theorem holds for all functions of degree less than $d$. By the preceding proposition, we have 
\[
\|f\|_{4}^{4}\le2\cdot9^{d}\|f\|_{2}^{4}+2\sum_{\varnothing\ne T\subseteq\left[n\right]}\left(4d\right)^{\left|T\right|}\|L_{T}\left[f\right]\|_{4}^{4}.
\]
 By the inductive hypothesis, for any $T \subset [n]$ with $T\neq \emptyset$, we have 
\begin{align*}
\|L_{T}\left[f\right]\|_{4}^{4} & =\mathbb{E}_{x\sim\mathbb{F}_{p}^{T}}\left[\|D_{T,x}\|_{4}^{4}\right]\\
 & \leq \mathbb{E}_{x\sim\mathbb{F}_{p}^{T}} \sum_{S' \subset [n] \setminus T} (100d)^{d-|T|} \mathbb{E}_{x' \sim \mathbb{F}_p^{S'}} \left[\|D_{S',x'}[D_{T,x}[f]]\|_2^4\right]\\
& = \sum_{S\supseteq T}\left(100d\right)^{d-\left|T\right|}\mathbb{E}_{x\sim\mathbb{F}_{p}^{S}}\left[\|D_{S,x}[f]\|_2^4\right]\\
& = \sum_{S\supseteq T}\left(100d\right)^{d-\left|T\right|}\mathbb{E}_{x\sim\mathbb{F}_{p}^{S}}\left[(I_{S,x}[f])^2\right],
\end{align*}
using the fact that $D_{T,x}[f]$ has degree at most $d-|T|$. Hence,
\begin{align*}
\|f\|_{4}^{4} & \leq 2\cdot9^{d}\|f\|_{2}^{2}+2\sum_{\varnothing\ne T\subseteq\left[n\right]}\left(4d\right)^{\left|T\right|}\sum_{S\supseteq T}\left(100d\right)^{d-\left|T\right|}\mathbb{E}_{x\sim\mathbb{F}_{p}^{S}}\left[(I_{S,x}[f])^2\right]\\
& \leq 2\cdot9^{d}\|f\|_{2}^{2} + \sum_{\emptyset \neq S\subseteq\left[n\right]}\mathbb{E}_{x\sim\mathbb{F}_{p}^{S}}\left[(I_{S,x}[f])^{2}\right]\sum_{\varnothing\ne T\subseteq S}\left(\left(100d\right)^{d-\left|T\right|}\cdot 2\cdot\left(4d\right)^{\left|T\right|}\right)\\
 & \leq \sum_{S\subseteq\left[n\right]}\left(100d\right)^{d}\mathbb{E}_{x\sim\mathbb{F}_{p}^{S}}\left[(I_{S,x}[f])^2\right],
\end{align*}
as required.
\end{proof}

\subsection{Our hypercontractive inequality for global functions}
\label{sec:glob}
To obtain our hypercontractive inequality for global functions, we need to show that the smallness of the influences $I_{S,x}[f]$ (for sets $S$ of bounded size) is equivalent to the smallness of the 2-norms of the restrictions of $f$. This is accomplished by the following two lemmas, which relate the influences of a function to the 2-norms of the restrictions of the function. (We note that this is part of our proof that does not generalise very easily to the setting of functions on $\l(V,W)$.)

\begin{lem}
\label{lem:restriction global implies global} Let $f\colon\mathbb{F}_{p}^{n}\to\mathbb{C}$,
let $S\subseteq[n]$, and let $x\in\mathbb{F}_{p}^{S}$. Suppose that
$\|f_{T\to x_{T}}\|_{2}^{2}\le\epsilon$ for all subsets $T\subseteq S$.
Then $I_{S,x}[f]\le2^{2|S|}\epsilon.$
\end{lem}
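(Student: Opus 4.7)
The plan is to expand the derivative $D_{S,x}[f]$ using the probabilistic (inclusion-exclusion) formula for the iterated Laplacian, and then bound each of the $2^{|S|}$ resulting terms separately using Jensen's inequality together with the hypothesis.

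Specifically, I would start from the identity
\[
D_{S,x}[f] \;=\; (L_S[f])_{S\to x} \;=\; \sum_{T\subseteq S}(-1)^{|T|}\,(\mathrm{E}_T[f])_{S\to x},
\]
which follows from the probabilistic formula $L_T[f]=\sum_{T'\subseteq T}(-1)^{|T'|}\mathrm{E}_{T'}[f]$ recorded earlier. The key observation is that for $T\subseteq S$ and $y\in\mathbb{F}_p^{\overline S}$, unrolling the definitions gives
\[
(\mathrm{E}_T[f])_{S\to x}(y) \;=\; \mathbb{E}_{z\sim\mathbb{F}_p^{T}}\bigl[f_{S\setminus T\to x_{S\setminus T}}(z,y)\bigr],
\]
i.e.\ this term is exactly the average over the $T$-coordinates of the restriction $f_{S\setminus T\to x_{S\setminus T}}$. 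Since $S\setminus T\subseteq S$, the hypothesis applies, giving $\|f_{S\setminus T\to x_{S\setminus T}}\|_2^2\le\epsilon$.

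Now the main estimate comes from Jensen's inequality (or equivalently Cauchy--Schwarz applied to the averaging over the $T$-coordinates): for each fixed $T\subseteq S$,
\[
\|(\mathrm{E}_T[f])_{S\to x}\|_2^2 \;=\; \mathbb{E}_{y}\Bigl|\mathbb{E}_z\,f_{S\setminus T\to x_{S\setminus T}}(z,y)\Bigr|^2 \;\le\; \mathbb{E}_{y,z}\bigl|f_{S\setminus T\to x_{S\setminus T}}(z,y)\bigr|^2 \;=\; \|f_{S\setminus T\to x_{S\setminus T}}\|_2^2 \;\le\;\epsilon.
\]
Applying the triangle inequality to the expansion of $D_{S,x}[f]$ and summing over the $2^{|S|}$ subsets $T\subseteq S$ then yields
\[
\|D_{S,x}[f]\|_2 \;\le\; \sum_{T\subseteq S}\|(\mathrm{E}_T[f])_{S\to x}\|_2 \;\le\; 2^{|S|}\sqrt{\epsilon},
\]
so squaring gives $I_{S,x}[f]=\|D_{S,x}[f]\|_2^2\le 2^{2|S|}\epsilon$, as required.

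There is no real obstacle here: the only points that need checking are the clean identification of $(\mathrm{E}_T[f])_{S\to x}$ with an average of a restriction of $f$ (which uses crucially that $T\subseteq S$, so that the averaging and the restriction act on disjoint coordinate sets), and the fact that the hypothesis covers all subsets of $S$ (including $S\setminus T$). The $2^{2|S|}$ factor arises from the triangle inequality over all subsets of $S$ and is presumably non-tight; a more careful accounting via Parseval applied to the Efron--Stein expansion of $L_S[f]$ could shave it, but the crude bound suffices for the subsequent application.
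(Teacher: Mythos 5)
Your proposal is correct and follows essentially the same route as the paper: both start from the probabilistic (inclusion-exclusion) formula $L_S = \sum_{T\subseteq S}(-1)^{|T|}\mathrm{E}_T$, identify $(\mathrm{E}_T[f])_{S\to x}$ as an average of a restriction $f_{S\setminus T\to x_{S\setminus T}}$, bound that term by $\epsilon$ using contractivity of averaging (your Jensen step is exactly this), and then incur the $2^{2|S|}$ factor from the triangle inequality over the $2^{|S|}$ subsets. The only cosmetic difference is that you apply the triangle inequality and then square, whereas the paper applies the triangle inequality followed by Cauchy--Schwarz; these produce the same bound.
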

\begin{proof}
Using the
triangle inequality and the Cauchy--Schwarz inequality, we have 
\begin{align*}
 I_{S,x}[f]& = \|D_{S,x}[f]\|_2^2 = \|(L_S[f])_{S \to x}\|_2^2 = \left\|\left(\sum_{T \subseteq S}(-1)^{|T|} \mathrm{E}_T[f]\right)_{S \to x}\right\|_2^2 = \left(\sum_{T \subseteq S} \|(\mathrm{E}_T[f])_{S \to x}\|_2\right)^2\\
 & \leq 2^{|S|} \sum_{T \subseteq S}\|(\mathrm{E}_T[f])_{S \to x}\|_2^2 = 2^{|S|}\sum_{T \subseteq S}\|(\mathrm{E}_{S \setminus T}[f])_{S \to x}\|_2^2 = 2^{|S|}\sum_{T \subseteq S} \|\mathrm{E}_{S \setminus T}[f_{T \to x_T}]\|_2^2 \leq 2^{|S|}\sum_{T \subseteq S}\|f_{T \to x_T}\|_2^2\\ & \leq 2^{2|S|}\epsilon,
\end{align*}
where the penultimate inequality follows from the fact that the averaging operators $\mathrm{E}_{S\setminus T}$ are contractions
with respect to 2-norms.
\end{proof}
We also have the following converse implication, which we note for interest and to motivate our proof-strategy (though it is not used in the sequel).
\begin{lem}
\label{lem:global implies restriction global} Let $f\colon\mathbb{F}_{p}^{n}\to\mathbb{C}$,
let $S\subseteq[n]$, and let $x\in\mathbb{F}_{p}^{S}$. Suppose that
$I_{T,x\left(T\right)}[f]\le\epsilon$ for all sets $T\subseteq S$.
Then $\|f_{S\to x}\|_{2}^{2}\le2^{2|S|}\epsilon.$
\end{lem}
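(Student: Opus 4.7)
My plan is to establish the identity
\begin{equation}
f_{S \to x} \;=\; \sum_{T \subseteq S} \mathrm{E}_{S \setminus T}\!\left[D_{T, x_T}[f]\right], \label{eq:keyid}
\end{equation}
in which each summand is naturally regarded as a function on $\mathbb{F}_{p}^{\overline{S}}$ (it depends only on the $\overline{S}$-coordinates, since $\mathrm{E}_{S \setminus T}$ has averaged out the $S \setminus T$-coordinates). This identity is the exact dual of the formula $L_{S} = \sum_{T \subseteq S}(-1)^{|T|}\mathrm{E}_{T}$ that drove the proof of Lemma \ref{lem:restriction global implies global}: there, one wrote a derivative as a signed combination of restricted averaging operators; here, one writes a restriction as an unsigned combination of averaged derivatives.

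To verify \eqref{eq:keyid} I would carry out a direct Fourier computation: expanding $f = \sum_\gamma \hat f(\gamma)\chi_\gamma$, splitting each frequency as $\gamma = (\gamma_S, \gamma_{\overline{S}})$, and grouping the terms according to $T := \supp(\gamma_S) \subseteq S$, one finds that the block indexed by $T$ equals
$$\sum_{\gamma:\, \supp(\gamma_S)=T} \hat f(\gamma)\,\chi_{\gamma_S}(x)\,\chi_{\gamma_{\overline{S}}}.$$
Using the Fourier formula $\widehat{D_{T,x_T}[f]}(\beta,\delta) = \sum_{\alpha\in \mathbb{F}_p^T,\, \supp(\alpha)=T}\hat f(\alpha,\beta,\delta)\chi_\alpha(x_T)$ already established in the paper, this block is precisely $\mathrm{E}_{S \setminus T}[D_{T, x_T}[f]]$, since $\mathrm{E}_{S \setminus T}$ retains exactly the $\beta = 0$ Fourier coefficients. (Alternatively, one can prove \eqref{eq:keyid} by induction on $|S|$, with base case the one-coordinate identity $f_{\{i\}\to x_i} = \mathrm{E}_i[f] + D_{\{i\}, x_i}[f]$, which follows immediately from $f = \mathrm{E}_i[f] + L_i[f]$.)

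Given \eqref{eq:keyid}, the bound is immediate from the triangle inequality combined with the fact that each averaging operator $\mathrm{E}_{S \setminus T}$ is a contraction on $L^{2}$:
$$\|f_{S \to x}\|_{2} \;\le\; \sum_{T \subseteq S}\|\mathrm{E}_{S \setminus T}[D_{T, x_T}[f]]\|_{2} \;\le\; \sum_{T \subseteq S}\|D_{T, x_T}[f]\|_{2} \;=\; \sum_{T \subseteq S}\sqrt{I_{T, x_T}[f]} \;\le\; 2^{|S|}\sqrt{\epsilon},$$
so squaring gives $\|f_{S \to x}\|_{2}^{2} \le 2^{2|S|}\epsilon$, as required.

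The only real point of care is the verification of \eqref{eq:keyid} and the bookkeeping that ensures each summand is viewed as a function on $\mathbb{F}_{p}^{\overline{S}}$ (rather than merely on $\mathbb{F}_p^{\overline{T}}$), so that the contractivity of $\mathrm{E}_{S \setminus T}$ can be applied on $L^{2}(\mathbb{F}_p^{\overline{S}})$. Once the identity is in place the remaining inequalities are routine.
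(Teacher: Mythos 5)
Your proof is correct and takes essentially the same approach as the paper's own: the paper writes $f = \sum_{T \subseteq S}\mathrm{E}_{S\setminus T}L_T[f]$ (proved by iterating $f = \mathrm{E}_i[f]+L_i[f]$) and then restricts $S \to x$, which is precisely your identity \eqref{eq:keyid}, and both proofs then finish by the triangle inequality together with the $L^2$-contractivity of the averaging operators. The only difference is cosmetic: you verify the restricted identity directly (via Fourier or by induction), whereas the paper states the unrestricted decomposition first and restricts afterwards.
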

\begin{proof}
We have $f=\sum_{T\subseteq S}\mathrm{E}_{S\setminus T}L_{T}[f]$ for all sets $S \subset [n]$,
as can be observed most easily by using the formula $f = \mathrm{E}_i[f]+L_i[f]$, together with induction on $|S|$. As in the proof of the previous lemma, by the triangle
inequality, Cauchy-Schwarz, and the fact that the averaging operators $\mathrm{E}_{S \setminus T}$ are contractions
with respect to 2-norms, we obtain 
\[
\|f_{S\to x}\|_{2}^{2}\le\left(\sum_{T\subseteq S}\|L_{T}[f]_{T\to x(T)}\|_{2}\right)^{2}\le2^{2|S|}\epsilon.
\]
\end{proof}

We can now obtain our hypercontractive inequality for
global functions. The following corollary says that if $f$ is a $\left(d,\epsilon\right)$-global function, then the $4$-norm of its degree-$d$ truncation can be upper-bounded in
terms of its $2$-norm (in fact, by the 2-norm of its degree-$d$ truncation). 
\begin{cor}
\label{cor:global-hyp-trunc}
Let
$f:\mathbb{F}_p^n \to \mathbb{C}$ be a $\left(d,\epsilon\right)$-global function.
Then 
\[
\|f^{\le d}\|_{4}^{4}\le\left(800d\right)^{d}\epsilon\|f^{\leq d}\|_{2}^{2}.
\]
\end{cor}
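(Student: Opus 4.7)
My plan is to derive Corollary \ref{cor:global-hyp-trunc} as an essentially mechanical consequence of Theorem \ref{thm:hyp-prod}, applied to $f^{\le d}$, once all the influences of $f^{\le d}$ have been controlled. Since $f^{\le d}$ has degree at most $d$, Theorem \ref{thm:hyp-prod} gives
\[
\|f^{\le d}\|_4^4 \;\le\; (100d)^d \sum_{S\subseteq[n]} \mathbb{E}_{x\sim\mathbb{F}_p^{S}}\bigl[(I_{S,x}[f^{\le d}])^2\bigr],
\]
so the task reduces to bounding the right-hand side in terms of $\epsilon\|f^{\le d}\|_2^2$.

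First I would observe that the sum over $S$ is effectively a sum over $|S|\le d$: by Lemma \ref{lem:degree reduction}, $D_{S,x}[f^{\le d}]$ has degree at most $d-|S|$, so $D_{S,x}[f^{\le d}]=0$ (and hence $I_{S,x}[f^{\le d}]=0$) whenever $|S|>d$. For $|S|\le d$, Lemma \ref{lem:decomposition of generalised influences} gives $I_{S,x}[f^{\le d}]\le I_{S,x}[f]$, and Lemma \ref{lem:restriction global implies global} applied to the $(d,\epsilon)$-global function $f$ yields
\[
I_{S,x}[f^{\le d}] \;\le\; I_{S,x}[f] \;\le\; 2^{2|S|}\epsilon \;\le\; 4^{d}\epsilon.
\]
Hence $(I_{S,x}[f^{\le d}])^{2}\le 4^{d}\epsilon\cdot I_{S,x}[f^{\le d}]$, and averaging over $x$ gives $\mathbb{E}_{x}[(I_{S,x}[f^{\le d}])^{2}]\le 4^{d}\epsilon\cdot I_{S}[f^{\le d}]=4^{d}\epsilon\cdot\|L_{S}[f^{\le d}]\|_{2}^{2}$.

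It then remains to sum $\|L_{S}[f^{\le d}]\|_{2}^{2}$ over $S\subseteq[n]$ with $|S|\le d$. By Parseval applied to the Efron--Stein decomposition, $\|L_{S}[f^{\le d}]\|_{2}^{2}=\sum_{T\supseteq S,\,|T|\le d}\|(f^{\le d})^{=T}\|_{2}^{2}$, and exchanging the order of summation gives
\[
\sum_{S\subseteq[n]} \|L_{S}[f^{\le d}]\|_{2}^{2} \;=\; \sum_{T:\,|T|\le d} 2^{|T|}\,\|(f^{\le d})^{=T}\|_{2}^{2} \;\le\; 2^{d}\|f^{\le d}\|_{2}^{2}.
\]
Combining the three factors yields $\|f^{\le d}\|_{4}^{4}\le(100d)^{d}\cdot 4^{d}\cdot 2^{d}\cdot \epsilon\,\|f^{\le d}\|_{2}^{2}=(800d)^{d}\epsilon\|f^{\le d}\|_{2}^{2}$, as required. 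The only real ``obstacle'' is conceptual rather than technical: one must bridge between the definition of globalness (smallness of restriction 2-norms) and the influence-squared quantities that appear on the right-hand side of Theorem \ref{thm:hyp-prod}, and this is exactly the content of Lemma \ref{lem:restriction global implies global}; everything else is bookkeeping.
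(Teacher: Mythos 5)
Your proof is correct and follows essentially the same route as the paper's: apply Theorem \ref{thm:hyp-prod} to $g=f^{\le d}$, use Lemmas \ref{lem:decomposition of generalised influences} and \ref{lem:restriction global implies global} to bound each $I_{S,x}[g]$ by $4^{d}\epsilon$ (pulling out one such factor from $(I_{S,x}[g])^2$), and then use $\sum_{S}\|L_{S}[g]\|_2^2=\sum_{T}2^{|T|}\|g^{=T}\|_2^2\le 2^{d}\|g\|_2^2$. The paper expresses the middle step by factoring out $\max_{|S|\le d,x}I_{S,x}[g]$ rather than term by term, but the argument and the final constant $(100d)^d\cdot 4^d\cdot 2^d=(800d)^d$ are identical.
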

\begin{proof}
Write $g = f^{\leq d}$ and note that, by the Lemma \ref{lem:decomposition of generalised influences}, $g$ is $(d,\epsilon)$-global. Using Theorem \ref{thm:hyp-prod}, Lemma \ref{lem:restriction global implies global} and the fact that $I_{S,x}[g] = 0$ for all $|S| > d$, we have 
\begin{align*}
\|g\|_{4}^{4} & \le\left(100d\right)^{d}\sum_{S\subseteq\left[n\right]}\mathbb{E}_{x\sim\mathbb{F}_{p}^{S}}(I_{S,x}[g])^{2}\\
& \leq \max_{S \subset [n]:\ |S| \leq d,\ x \in \mathbb{F}_p^S} (I_{S,x}[g]) \cdot \left(100d\right)^{d}\sum_{S\subseteq\left[n\right]}\mathbb{E}_{x\sim\mathbb{F}_{p}^{S}}I_{S,x}[g]\\
 & \le\epsilon\left(400d\right)^{d}\sum_{S\subseteq\left[n\right]}\mathbb{E}_{x\sim\mathbb{F}_{p}^{S}}I_{S,x}[g]\\
 & =\epsilon\left(400d\right)^{d}\sum_{S\subseteq\left[n\right]}\|L_{S}\left[g\right]\|_{2}^{2}\\
 &= \epsilon\left(400d\right)^{d}\sum_{S}\sum_{T\supseteq S}\|g^{=T}\|_{2}^{2}.\\
 & \le\epsilon\left(800d\right)^{d}\sum_{T \subseteq [n]}\|g^{=T}\|_{2}^{2}\\
 & =\epsilon\left(800d\right)^{d}\|g\|_{2}^{2}.
\end{align*}
 
\end{proof}
When substituting in a value of $\epsilon$ which is $\approx \|f\|_{2}^{2}$,
Corollary \ref{cor:global-hyp-trunc} tells us that the $4$-norm and the $2$-norm are within a constant
depending on $d$ of one another. 

\subsection{Overview of the proof of small-set expansion in $\mathbb{F}_p^n$.}

To obtain from Corollary \ref{cor:global-hyp-trunc} our small-set expansion theorem, we use an argument based on H\"{o}lder's inequality to take advantage of the following
two facts. On the one hand, if $f\colon\left\{ 0,1\right\} ^{n}\to\left\{ 0,1\right\}$
is a sparse Boolean function (here, `sparse' means that $\mathbb{E}\left[f\right]$
is small), then $\|f\|_{r}=\mathbb{E}[f]^{1/r}$
increases rapidly as $r$ increases. On the other hand, when considering
a low-degree function $g$ that has sufficiently small influences,
we obtain by our hypercontractive inequality that $\|g\|_{4}$ and
$\|g\|_{2}$ are within a constant of one another. This can be used
to show that the projection $f^{\le d}$ (onto the space of functions of degree at most $d$)
of a sparse Boolean function $f$, has very small $2$-norm. The latter
is accomplished by applying H\"{o}lder's inequality, to obtain 
\[
\|f^{\le d}\|_{2}^{2}=\left\langle f^{\le d},f\right\rangle \le\|f^{\le d}\|_{4}\|f\|_{\frac{4}{3}}.
\]
As we have seen, $f^{\le d}$ inherits the globalness properties
of $f$, and therefore our hypercontractive inequality for global functions implies that $\|f^{\le d}\|_{4}$
is small; and $\|f\|_{\frac{4}{3}}=\mathbb{E}\left[f\right]^{\frac{3}{4}}$
is much smaller than the factor of $\sqrt{\mathbb{E}\left[f\right]}$ which one
would get from applying Cauchy-Schwarz (instead of H\"{o}lder).

\subsection{Obtaining small-set expansion}

The following lemma shows that if $f$ is Boolean and global, and $\mathbb{E}[f^2]$ is small, then
$\|f^{\le d}\|_{2}^{2}$ is much smaller than $\|f\|_{2}^{2}.$
\begin{cor}
\label{cor:Holder argument} Let $f\colon\mathbb{F}_{p}^{n}\to\left\{ 0,1\right\} $
be $\left(d,\epsilon\right)$-global. Then 
\[
\|f^{\le d}\|_{2}^{2}\le\epsilon^{\frac{1}{4}}\left(800d\right)^{d/4}\|f\|_{2}^2.
\]
\end{cor}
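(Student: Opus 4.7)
The plan is to follow exactly the H\"{o}lder argument sketched in the preceding subsection, turning the hypercontractive bound of Corollary \ref{cor:global-hyp-trunc} into a bound on the 2-norm of $f^{\le d}$ by pairing $f^{\le d}$ against $f$ in its sparse Boolean form. Since $f^{\le d}$ is the orthogonal projection of $f$ onto the space of degree-at-most-$d$ functions, I would begin by writing
\[
\|f^{\le d}\|_{2}^{2} = \langle f^{\le d}, f\rangle.
\]
Applying H\"{o}lder's inequality with the conjugate exponents $4$ and $4/3$ yields
\[
\|f^{\le d}\|_{2}^{2} \le \|f^{\le d}\|_{4}\,\|f\|_{4/3}.
\]

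Next I would exploit the Booleanness of $f$: because $f$ takes values in $\{0,1\}$, one has $|f|^{4/3} = f$, so $\|f\|_{4/3} = \mathbb{E}[f]^{3/4} = \|f\|_{2}^{3/2}$. For the other factor, I would apply Corollary \ref{cor:global-hyp-trunc} to $f$ (which is given to be $(d,\epsilon)$-global) to obtain
\[
\|f^{\le d}\|_{4}^{4} \le (800d)^{d}\epsilon\,\|f^{\le d}\|_{2}^{2},
\]
and then combine this with the trivial orthogonal-projection bound $\|f^{\le d}\|_{2} \le \|f\|_{2} = \mathbb{E}[f]^{1/2}$ to conclude
\[
\|f^{\le d}\|_{4} \le (800d)^{d/4}\epsilon^{1/4}\|f^{\le d}\|_{2}^{1/2} \le (800d)^{d/4}\epsilon^{1/4}\mathbb{E}[f]^{1/4}.
\]

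Multiplying the two factors gives
\[
\|f^{\le d}\|_{2}^{2} \le (800d)^{d/4}\epsilon^{1/4}\mathbb{E}[f]^{1/4}\cdot \mathbb{E}[f]^{3/4} = \epsilon^{1/4}(800d)^{d/4}\,\mathbb{E}[f] = \epsilon^{1/4}(800d)^{d/4}\|f\|_{2}^{2},
\]
which is the claimed estimate. There is no real obstacle here since all the work has been done in Corollary \ref{cor:global-hyp-trunc}; the only subtlety is remembering to pre-bound $\|f^{\le d}\|_{2}$ by $\|f\|_{2} = \mathbb{E}[f]^{1/2}$ \emph{before} applying H\"{o}lder, rather than simply rearranging the resulting inequality for $\|f^{\le d}\|_{2}^{2}$, as the latter would yield an exponent of $1/3$ on $\epsilon$ rather than the sharper $1/4$.
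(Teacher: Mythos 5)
Your proof is correct and reproduces the paper's one-line argument: write $\|f^{\le d}\|_2^2 = \langle f^{\le d}, f\rangle$, apply H\"older with exponents $(4,4/3)$, invoke Corollary \ref{cor:global-hyp-trunc} for $\|f^{\le d}\|_4$ together with the projection bound $\|f^{\le d}\|_2 \le \|f\|_2$, and use Booleanness to evaluate $\|f\|_{4/3} = \|f\|_2^{3/2}$. Your closing remark about sharpness is, however, backwards: rearranging instead gives $\|f^{\le d}\|_2^2 \le (800d)^{d/3}\epsilon^{1/3}\|f\|_2^2$, and since the estimate is only informative when $(800d)^d\epsilon < 1$, the exponent $1/3$ actually yields a \emph{smaller} (stronger) factor $\bigl((800d)^d\epsilon\bigr)^{1/3} < \bigl((800d)^d\epsilon\bigr)^{1/4}$; the paper states the $1/4$ version simply because that is the form invoked in the subsequent small-set-expansion corollary.
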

\begin{proof}
By Lemma \ref{lem:decomposition of generalised influences} and Corollary
\ref{cor:global-hyp-trunc}, we have 
\[
\|f^{\le d}\|_{2}^{2}=\left\langle f^{\le d},f\right\rangle \le\|f^{\le d}\|_{4}\|f\|_{\frac{4}{3}}\le\epsilon^{\frac{1}{4}}\left(800d\right)^{d/4}\|f\|_{2}^{2}.
\]
\end{proof}
By direct analogy with the Boolean $(p=2)$ case, for $0\leq \rho \leq 1$ we define the noise operator $T_\rho:L^2(\mathbb{F}_p^n) \to L^2(\mathbb{F}_p^n)$ by
$$(T_{\rho}f)(x) = \mathbb{E}_{y \sim N_{\rho}(x)}[f(y)]\quad \forall x \in \mathbb{F}_p^n,\ \forall f \in L^2(\mathbb{F}_p^n),$$
where the distribution $y \sim N_\rho(x)$ is defined as follows: independently for each coordinate $i \in [n]$, we set $y_i = x_i$ with probability $\rho$, and with probability $1-\rho$ we take $y_i \in \mathbb{F}_p$ uniformly at random (independently of $x_i$). It is easy to see that the character $\chi_{\gamma}$ is an eigenvector of $T_{\rho}$ with eigenvalue $\rho^{|\textrm{supp}(\gamma)|}$, for all $\gamma \in \mathbb{F}_p^n$, and this yields the following Efron-Stein formula for $T_{\rho}$:
$$T_{\rho}(f) = \sum_{S \subseteq [n]}\rho^{|S|} f^{=S}\quad \forall f \in L^2(\mathbb{F}_p^n).$$

Corollary \ref{cor:Holder argument} quickly yields the following.
\begin{cor}
 Suppose that $f:\mathbb{F}_p^n \to \mathbb{C}$ is $\left(d,\epsilon\right)$-global, and $0 \leq \rho \leq 1$. Then 
\[
\left\langle \mathrm{T}_{\rho}f,f\right\rangle \le\left(\rho^{d+1}+\epsilon^{\frac{1}{4}}\left(800d\right)^{d/4}\right)\|f\|_{2}^{2}.
\]

In particular if $\delta>0$ we may set $d=\lfloor \log_{\rho}\delta/2 \rfloor$ and
$\epsilon=\frac{\delta^{4}}{16}\left(800d\right)^{-d}$ to obtain
that if $f$ is $\left(\lfloor \log_{\rho}(\delta/2) \rfloor,\frac{\delta^{4}}{16}\left(800d\right)^{-d}\right)$-global, then 
\[
\left\langle \mathrm{T}_{\rho}f,f\right\rangle \le\delta\|f\|_{2}^{2}.
\]
 
\end{cor}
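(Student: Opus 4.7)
The idea is to diagonalize $T_{\rho}$ on the Efron--Stein (Fourier) decomposition and split the resulting spectral sum at degree $d$, using Corollary \ref{cor:Holder argument} to control the low-degree contribution and the trivial eigenvalue bound $\rho^{|S|}\leq \rho^{d+1}$ to control the high-degree tail. The second, ``in particular'' claim is then just an exercise in choosing $d$ and $\epsilon$ so that the two terms balance at $\delta/2$ each.

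First I would use the spectral formula $T_{\rho}f = \sum_{\gamma} \rho^{|\gamma|}\hat{f}(\gamma)\chi_{\gamma}$, or equivalently the Efron--Stein formula $T_{\rho}f=\sum_{S}\rho^{|S|}f^{=S}$, to write
\[
\langle T_{\rho}f,f\rangle \;=\; \sum_{S\subseteq [n]} \rho^{|S|}\|f^{=S}\|_2^2 \;=\; \sum_{|S|\leq d}\rho^{|S|}\|f^{=S}\|_2^2 \;+\; \sum_{|S|>d}\rho^{|S|}\|f^{=S}\|_2^2.
\]
Since $0\leq\rho\leq 1$, in the high-degree tail $\rho^{|S|}\leq \rho^{d+1}$, so that part is bounded by $\rho^{d+1}\|f^{>d}\|_2^2\leq \rho^{d+1}\|f\|_2^2$. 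In the low-degree part $\rho^{|S|}\leq 1$, so it is bounded by $\|f^{\leq d}\|_2^2$; noting (via Lemma \ref{lem:decomposition of generalised influences}) that $(d,\epsilon)$-globalness is inherited by $f^{\leq d}$, I invoke Corollary \ref{cor:Holder argument} to conclude that $\|f^{\leq d}\|_2^2 \leq \epsilon^{1/4}(800d)^{d/4}\|f\|_2^2$. Summing the two bounds gives the first displayed inequality.

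For the ``in particular'' specialisation I would verify the arithmetic directly. The choice $d=\lfloor\log_{\rho}(\delta/2)\rfloor$ ensures $d+1\geq \log_{\rho}(\delta/2)$; since $x\mapsto \rho^{x}$ is non-increasing on $[0,1]$, this yields $\rho^{d+1}\leq \rho^{\log_{\rho}(\delta/2)}=\delta/2$. The choice $\epsilon=\delta^{4}/(16(800d)^{d})$ is engineered so that $\epsilon^{1/4}(800d)^{d/4}=\delta/2$. The two halves therefore add to give $\langle T_{\rho}f,f\rangle\leq \delta\|f\|_2^2$.

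There is no real obstacle here: the whole content has already been packaged into Corollary \ref{cor:Holder argument} (which itself rests on Corollary \ref{cor:global-hyp-trunc} and hence on Theorem \ref{thm:hyp-prod}). The only point of care is to check that globalness propagates to the degree-$d$ truncation, so that Corollary \ref{cor:Holder argument} applies to $f^{\leq d}$ with the same parameters; this is exactly what Lemma \ref{lem:decomposition of generalised influences} provides.
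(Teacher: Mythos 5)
Your proof is correct and follows essentially the same route as the paper: write $\langle T_\rho f,f\rangle=\sum_S\rho^{|S|}\|f^{=S}\|_2^2$, bound the tail $|S|>d$ by $\rho^{d+1}\|f\|_2^2$ and the head by $\|f^{\le d}\|_2^2$, invoke Corollary \ref{cor:Holder argument} for the latter, and then verify the arithmetic of the parameter choices. (One small remark, affecting the paper's own proof equally: Corollary \ref{cor:Holder argument} is stated for Boolean-valued $f$, so strictly this step uses that $f$ is Boolean, consistent with the intended application $f=1_A$; and the explicit appeal to Lemma \ref{lem:decomposition of generalised influences} is redundant since it is already internal to Corollary \ref{cor:Holder argument}.)
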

\begin{proof}
We have 
\[
\left\langle \mathrm{T}_{\rho}f,f\right\rangle =\sum_{S\subseteq\left[n\right]}\rho^{\left|S\right|}\|f^{=S}\|_{2}^{2}\le\|f^{\le d}\|_{2}^{2}+\rho^{d+1}\|f\|_{2}^{2}\le\left(\rho^{d+1}+\epsilon^{\frac{1}{4}}\left(800d\right)^{d/4}\right)\|f\|_{2}^{2},
\]
using Corollary \ref{cor:Holder argument}. 
\end{proof}

In the case that $f=1_{A}$ where $A \subset \mathbb{F}_p^n$, we note that
$$\Pr_{x \in A,\ y\sim N_{\rho}(x)}[y \in A] = \frac{\langle T_{\rho}1_{A},1_{A}\rangle}{\langle 1_{A},1_{A}\rangle},$$
where $x \sim A$ means that $x$ is chosen uniformly at random from $A$, and $N_{\rho}(x)$ is the `noised' distribution defined above, so we obtain the following small-set expansion theorem as a corollary. (For $A \subset \mathbb{F}_p^n$ and for $S \subseteq [n]$, $x \in \mathbb{F}_p^S$, we write $A_{S \to x}: = \{y \in \mathbb{F}_p^{\overline{S}}:\ (x,y) \in A\}$, and we equip $A_{S \to x}$ with the uniform measure on $\mathbb{F}_p^{\overline{S}}$.)

\begin{cor}[Small-set expansion]
\label{cor:ssetfp}
 Suppose that $A \subset \mathbb{F}_p^n$ is such that $1_A$ is $\left(d,\epsilon\right)$-global, or equivalently $\mu(A_{S \to x}) \leq \epsilon$ for all $|S| \leq d$ and all $x \in \mathbb{F}_p^S$. Then we
have 
\[
\Pr_{x \in A,\ y\sim N_{\rho}(x)}[y \in A] \le \rho^{d+1}+\epsilon^{\frac{1}{4}}\left(800d\right)^{d/4}.
\]
In particular, if $1_{A}$ is $\left(\lfloor \log_{\rho}(\delta/2) \rfloor,\frac{\delta^{4}}{16}\left(800d\right)^{-d}\right)$-global, then 
\[
\Pr_{x \in A,\ y\sim N_{\rho}(x)}[y \in A] \le \delta.
\]
\end{cor}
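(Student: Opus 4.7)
The plan is to deduce this directly from the previous corollary by translating the noise-probability into an $L^2$ inner product, and then verifying the parameter choices in the ``in particular'' clause.

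First I would observe that for any set $A \subseteq \mathbb{F}_p^n$, one has the identity
\[
\langle T_{\rho} 1_A, 1_A\rangle = \mathbb{E}_{x \sim \mathbb{F}_p^n}\!\left[1_A(x)\,\mathbb{E}_{y \sim N_{\rho}(x)}[1_A(y)]\right] = \Pr_{x \sim \mathbb{F}_p^n,\ y \sim N_\rho(x)}[x \in A \text{ and } y \in A],
\]
while $\|1_A\|_2^2 = \mu(A) = \Pr[x \in A]$. Dividing the first by the second and using the definition of conditional probability gives
\[
\Pr_{x \in A,\ y \sim N_\rho(x)}[y \in A] = \frac{\langle T_\rho 1_A, 1_A\rangle}{\|1_A\|_2^2}.
\]

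Next I would apply the preceding corollary with $f=1_A$, which is $(d,\epsilon)$-global by hypothesis. That corollary gives the bound
\[
\langle T_\rho 1_A, 1_A \rangle \leq \left(\rho^{d+1} + \epsilon^{1/4}(800d)^{d/4}\right) \|1_A\|_2^2,
\]
and dividing by $\|1_A\|_2^2$ (which we may assume is nonzero, else the claim is trivial) yields the desired first inequality.

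For the ``in particular'' part, I would check that each of the two summands is at most $\delta/2$ for the stated choices. With $d = \lfloor \log_\rho(\delta/2)\rfloor$, since $\log_\rho$ is decreasing for $\rho \in (0,1)$, we have $d+1 \geq \log_\rho(\delta/2)$, hence $\rho^{d+1} \leq \delta/2$. With $\epsilon = \tfrac{\delta^4}{16}(800d)^{-d}$, a direct calculation gives $\epsilon^{1/4}(800d)^{d/4} = \delta/2$. Summing the two bounds yields the claimed $\delta$. I expect no real obstacles here, as the entire statement is essentially a reformulation of the prior corollary; the only place where one must be a touch careful is in the inequality $\rho^{d+1} \leq \delta/2$, where the direction of the inequality on $d$ is flipped by $\log \rho < 0$.
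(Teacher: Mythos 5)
Your proposal is correct and follows the same route as the paper: translate the conditional escape probability into $\langle T_\rho 1_A, 1_A\rangle / \|1_A\|_2^2$, invoke the preceding corollary, and then verify the two parameter inequalities ($\rho^{d+1}\le\delta/2$ via the floor in $d = \lfloor\log_\rho(\delta/2)\rfloor$, and $\epsilon^{1/4}(800d)^{d/4}=\delta/2$ by direct substitution). The only minor slip is attributing $d+1 \ge \log_\rho(\delta/2)$ to the monotonicity of $\log_\rho$ — it actually comes straight from the floor bound $\log_\rho(\delta/2) < d+1$; monotonicity of $t\mapsto\rho^t$ is then what converts this into $\rho^{d+1}\le\delta/2$ — but this does not affect the validity of the argument.
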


\section{Fourier analysis on $\mathcal{L}(V,W)$}
\subsection{The Fourier expansion of functions on $\l(V,W)$}
The material in this subsection is fairly standard, but we include it for completeness. Let $q$ be a prime power, and let $V$ and $W$ be finite-dimensional vector spaces over $\mathbb{F}_{q}$. We write $\mathcal{L}\left(V,W\right)$
for the (vector) space of linear maps from $V$ to $W$. As usual, we write $V^{*}$ for
the space of linear functionals from $V$ to $\mathbb{F}_{q}$. Every
map $A\in\mathcal{L}\left(V,W\right)$ then has a `dual' map $A^{*}\in\mathcal{L}\left(W^{*},V^{*}\right)$,
defined by $A^{*}(\varphi) = (v\mapsto\varphi\left(Av\right))$, for all $\varphi \in W^*$.
We view $\mathcal{L}\left(V,W\right)$ as an Abelian group under addition; $L^2(\l(V,W))$ therefore has an orthonormal basis of characters of the group, which we now derive.

In order
to do this, we first recall the {\em trace map} $\tau\colon\mathbb{F}_{q}\to\mathbb{F}_{p}$. Write $q=p^{s}$, where $p$ is prime; we define
$$\tau: \mathbb{F}_{q} \to \mathbb{F}_{p}; \quad
\tau(x)=x+x^{p}+\ldots+x^{p^{s-1}}\quad \forall x \in \mathbb{F}_q.$$
We note that $\tau(x+y)=\tau(x)+\tau(y)$ for all $x,y \in \mathbb{F}_q$, and that $\tau$ is surjective.

For each $X\in\l(W,V)$, we define 
\[
u_{X}:\l(V,W)\to\mathbb{C};\quad u_{X}(A)=\omega^{\tau(\Tr(XA))},
\]
where $\omega:=\exp(2\pi i/p)$. We equip $\l(V,W)$ with the uniform
measure. As usual, $L^{2}(\l(V,W))$ denotes the Hilbert space of all complex-valued
functions on $\l(V,W)$, equipped with the inner product 
\[
\langle f,g\rangle=\mathbb{E}[f\bar{g}].
\]
We recall that a character on a finite abelian group $A$ is a homomorphism
from $A$ to the unit circle $\{x\in\mathbb{C}:\,|x|=1\}$.
\begin{prop}
The functions $\{u_{X}:\ X\in\l(W,V)\}$ are the characters of the
Abelian group $\l(V,W)$.
\end{prop}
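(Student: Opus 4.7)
The plan is to verify three things: (i) each $u_X$ really is a character, (ii) the map $X \mapsto u_X$ is injective, and (iii) we have produced the right number of characters. Since the characters of a finite abelian group $G$ form a group of size $|G|$, once (i)--(iii) are in place we will have exhausted $\widehat{\l(V,W)}$.

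For (i), I would first observe that $\tau\colon \mathbb{F}_q \to \mathbb{F}_p$ is $\mathbb{F}_p$-linear (it is the sum of Frobenius powers, each of which is additive, and characteristic $p$ kills cross terms) and that $\Tr\colon \l(V,V) \to \mathbb{F}_q$ is linear. Hence for fixed $X \in \l(W,V)$ the map $A \mapsto \tau(\Tr(XA))$ is an additive homomorphism $\l(V,W) \to \mathbb{F}_p$, and composing with the nontrivial character $a \mapsto \omega^a$ of $(\mathbb{F}_p,+)$ shows $u_X$ is a group homomorphism $\l(V,W) \to S^1$.

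For (ii), I would show that the bilinear pairing
\[
\langle \cdot,\cdot\rangle_{\tau}\colon \l(W,V) \times \l(V,W) \to \mathbb{F}_p, \qquad (X,A) \mapsto \tau(\Tr(XA))
\]
is non-degenerate. Given $X \neq 0$, the linear map $A \mapsto \Tr(XA)$ on $\l(V,W)$ is nonzero (choosing bases and writing $X$ and $A$ as matrices, $\Tr(XA) = \sum_{i,j} X_{ij} A_{ji}$, so picking $A$ with a single $1$ in the position dual to a nonzero entry of $X$ produces the value of that entry). Since $\tau\colon\mathbb{F}_q \to \mathbb{F}_p$ is $\mathbb{F}_p$-linear and surjective (standard fact about the absolute trace of a finite field extension), the composition $A \mapsto \tau(\Tr(XA))$ is a surjective $\mathbb{F}_p$-linear functional on $\l(V,W)$ viewed as an $\mathbb{F}_p$-vector space, and in particular it takes a nonzero value. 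Hence $u_X$ is nontrivial whenever $X \neq 0$, which gives $u_X = u_Y \Rightarrow u_{X-Y} \equiv 1 \Rightarrow X = Y$.

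For (iii), since $\dim_{\mathbb{F}_q}\l(W,V) = \dim_{\mathbb{F}_q}\l(V,W) = \dim V \cdot \dim W$, injectivity from (ii) yields $|\{u_X : X \in \l(W,V)\}| = q^{\dim V \dim W} = |\l(V,W)|$. Combined with (i), we have produced $|\l(V,W)|$ distinct characters of the finite abelian group $\l(V,W)$, which is precisely the cardinality of its dual group, so these must be all of them. The only mild subtlety I anticipate is the surjectivity of $\tau$ used in (ii); this is standard (the trace of a separable extension is surjective), so it should not be a real obstacle.
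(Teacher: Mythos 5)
Your proposal is correct and follows essentially the same route as the paper: verify each $u_X$ is a homomorphism, establish injectivity of $X \mapsto u_X$ by combining the non-degeneracy of the trace pairing on $\l(W,V)\times\l(V,W)$ with the non-triviality/surjectivity of $\tau\colon\mathbb{F}_q\to\mathbb{F}_p$, and then count. The only cosmetic difference is that you phrase the injectivity step directly (nonzero $X$ gives a nontrivial character) whereas the paper argues by contradiction from $u_Z\equiv 1$ via scaling $A$ by $\alpha\in\mathbb{F}_q$; the underlying ingredients are identical.
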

\begin{proof}
First we assert that each character $u_{X}$ is a homomorphism from
$\l(V,W)$ to the unit circle. Indeed, we have
\[
u_{X}(A+B)=\omega^{\tau(\Tr(X(A+B)))}=\omega^{\tau(\Tr(XA))}\omega^{\tau(\Tr(XB))}=u_{X}(A)u_{X}(B).
\]

Secondly, we show that the characters $\mu_{X}$ are all distinct.
Indeed, if $u_{X}=u_{Y}$, then $u_{X-Y}=u_{X}u_{Y}^{-1}=1$. Write
$Z=X-Y.$ Our goal is to show that $Z=0$.

Since $u_{Z}=1$, we have $\tau(\Tr(ZA))=0$ for all $A$. This implies
that $\Tr(ZA)=0$ for all $A$. Indeed, suppose that $\Tr(ZA)=\beta\ne0$
for some $A$. Then for all $\alpha\in\mathbb{F}_{q}$, we have $0=u_{Z}(\alpha A)=\tau(\alpha\beta)$,
which would imply that the trace map $\tau$ is identically 0, which
it is not. Now it is easy to see that if $\Tr(ZA)=0$ for all $A$, then $Z=0$. This completes the proof that that the
characters $u_{X}$ are all distinct.

As the number of characters $u_{X}$ we have found is equal to $|\l(V,W)|$, it follows that we have found all of the characters.
\end{proof}
\begin{cor}
The characters $\{u_{X}: X \in \l(W,V)\}$ form an orthonormal basis for $L^{2}(\l(V,W))$.
\end{cor}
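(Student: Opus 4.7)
The plan is to use the standard fact that the characters of a finite abelian group form an orthonormal basis of $L^2$ with respect to the uniform measure, together with what the preceding proposition has already established.

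First, I would verify orthonormality directly. For any $X \in \l(W,V)$, the function $u_X$ takes values on the unit circle, so $|u_X|^2 \equiv 1$ and hence $\langle u_X, u_X \rangle = \mathbb{E}[|u_X|^2] = 1$. For $X \neq Y$ in $\l(W,V)$, the homomorphism property gives $u_X \overline{u_Y} = u_X u_Y^{-1} = u_{X-Y}$ (using $u_X u_{-X} = u_0 \equiv 1$), so
\[
\langle u_X, u_Y \rangle = \mathbb{E}_{A \in \l(V,W)}[u_{X-Y}(A)].
\]
Since $X - Y \neq 0$, the proof of the preceding proposition (the argument showing that $u_Z \equiv 1$ forces $Z = 0$) tells us that $u_{X-Y}$ is a nontrivial character of the abelian group $\l(V,W)$. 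For any nontrivial character $\chi$ of a finite abelian group $G$, choosing $g_0 \in G$ with $\chi(g_0) \neq 1$ and using the substitution $g \mapsto g + g_0$ gives $\sum_{g} \chi(g) = \chi(g_0) \sum_g \chi(g)$, which forces $\sum_g \chi(g) = 0$. Applied to $\l(V,W)$, this yields $\mathbb{E}[u_{X-Y}] = 0$, establishing orthogonality.

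Finally, to conclude that $\{u_X : X \in \l(W,V)\}$ is an orthonormal basis, I would note that $\dim L^2(\l(V,W)) = |\l(V,W)| = |\l(W,V)|$ (the two spaces have the same dimension over $\mathbb{F}_q$), and that the preceding proposition already showed the characters $u_X$ are distinct and indexed by $\l(W,V)$. Hence we have a collection of $\dim L^2(\l(V,W))$ pairwise orthonormal vectors, which must therefore span $L^2(\l(V,W))$. This is a short and essentially routine argument; the only slightly delicate step, the nontriviality of $u_{X-Y}$ for $X \neq Y$, has already been carried out in the previous proof, so there is no real obstacle here.
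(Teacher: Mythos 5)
Your proof is correct and follows essentially the same route as the paper: both establish orthogonality via the translation argument applied to the nontrivial character $u_{X-Y}$ (relying on the preceding proposition's proof that $u_Z \equiv 1$ forces $Z=0$), and both conclude by counting. You are slightly more explicit than the paper about the dimension-counting step that gives spanning, but the argument is the same.
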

\begin{proof}
We have 
\[
\langle u_{X},u_{Y}\rangle=\mathbb{E}[u_{X-Y}].
\]
Now if $X=Y$, then $u_{X-Y}$ is the constant 1 function, and so
its expectation is 1. If $X$ and $Y$ are distinct, then $Z:=X-Y\ne0$.
Let $B \in \l(V,W)$ be such that $u_{Z}(B)\ne1$. Choose a map $\mathbf{A}$ uniformly at random from $\l(V,W)$. Since the map $\mathbf{A}+B$ has the same distribution
as $\mathbf{A}$, we have 
\[
\mathbb{E}_{\mathbf{A}}u_{Z}(\mathbf{A})=\mathbb{E}_{\mathbf{A}}u_{Z}(\mathbf{A}+B)=u_{Z}(B)\cdot\mathbb{E}_{\mathbf{A}}u_{Z}(\mathbf{A}).
\]
Since $u_{Z}(B)\ne 1$, the expectation of the character $u_{Z}$ must
be zero, completing the proof.
\end{proof}
For $f:\l(V,W)\to\mathbb{C}$, defining 
\[
\hat{f}(X)=\langle f,u_{X}\rangle=q^{-\dim(V)\dim(W)}\sum_{A\in\l(V,W)}f(A)\overline{u_{X}(A}),
\]
we have the \emph{Fourier expansion} 
\[
f=\sum_{X\in\l(W,V)}\hat{f}(X)u_{X}.
\]

\subsection{The dictators in $L^2(\l(V,W))$.}

In $L^2(\l(V,W))$ there are two types of dictators. One type corresponds
to the value of a linear map $A \in \l(V,W)$ on a fixed vector in $V$ and the other type corresponds
to value of $A^{*}$ on a fixed vector in $W^*$. More precisely, we say a function $f$ on $\l(V,W)$ is a {\em dictator} if it is of the form $f(A) = 1_{\{Av=w\}}$ for $v \in V$ and $w \in W$, or of the form $f(A) = 1_{\{A^{*}\phi=\psi\}}$ for some $\phi \in W^*$ and $\psi \in V^*$. For $v\in V$,
we write $\mathcal{S}_{v}$ for the linear space of all linear maps $X\in\l(W,V)$ such that $\Image(X)\subseteq\mathrm{Span}(v)$. Similarly, for
$\phi\in W^{*}$ we write $\mathcal{S}_{\phi}$ for the space of all linear maps $X\in\mathcal{L}\left(W,V\right)$
such that $\Image(X^{*})\subseteq\mathrm{Span}(\phi)$. 
\begin{prop}
\label{prop:dictators} The following functions have the following
Fourier expansions.
\begin{enumerate}
\item For all $v\in V$ we have 
\[
1_{\{Av=0\}}=\frac{\sum_{X\in\mathcal{S}_{v}}u_{X}}{|\mathcal{S}_{v}|}.
\]
\item For all $\phi\in W^{*}$ we have 
\[
1_{\{A^{*}\phi=0\}}=\frac{\sum_{X\in\mathcal{S}_{\phi}}u_{X}}{|\mathcal{S}_{\phi}|}.
\]
\item Let $v\in V$, let $w\in W$, and let $B\in\l(V,W)$ be any linear map such that $B(v)=w$; then
\[
1_{\{Av=w\}}=\frac{\sum_{X\in\mathcal{S}_{v}}\overline{u_{X}(B)}u_{X}}{|\mathcal{S}_{v}|}.
\]
\item Let $\phi\in W^{*}$, let $\psi\in V^{*}$, and let $B\in\l(V,W)$ be any linear map such
that $B^{*}(\phi)=\psi$; then
\[
1_{\{A^{*}\phi=\psi\}}=\frac{\sum_{X\in\mathcal{S}_{\phi}}\overline{u_{X}(B)}u_{X}}{|\mathcal{S}_{\phi}|}.
\]
\end{enumerate}
\end{prop}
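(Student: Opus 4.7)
The plan is to prove (1) via a direct character-sum computation, to obtain (2) by the dual version of the same argument, and then to derive (3) and (4) as immediate corollaries using translation-invariance. In all four parts we may assume $v\ne 0$ in (1),(3) and $\phi\ne 0$ in (2),(4); the degenerate cases have $|\mathcal{S}_v|=1$ or $|\mathcal{S}_\phi|=1$ and reduce to a trivial verification.

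For (1), observe that $\mathcal{S}_v$ is in bijection with $W^*$: every $X\in\mathcal{S}_v$ has the form $X(w)=\phi(w)v$ for a unique $\phi\in W^*$, so $|\mathcal{S}_v|=q^{\dim W}$. For such an $X$ and any $A\in\l(V,W)$, the composition $XA\in\l(V,V)$ is the rank-at-most-one endomorphism $v'\mapsto \phi(Av')\,v$, whose trace equals $\phi(Av)$. Hence
\[
\sum_{X\in\mathcal{S}_v} u_X(A) \;=\; \sum_{\phi\in W^*} \omega^{\tau(\phi(Av))}.
\]
If $Av=0$ every term equals $1$ and the sum is $|\mathcal{S}_v|$, giving $1$ after normalisation. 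Otherwise, $\phi\mapsto\phi(Av)$ is a surjective $\mathbb{F}_q$-linear map $W^*\to \mathbb{F}_q$, so each $\alpha\in\mathbb{F}_q$ is attained by exactly $q^{\dim W-1}$ functionals, reducing the sum to $q^{\dim W-1}\sum_{\alpha\in\mathbb{F}_q}\omega^{\tau(\alpha)}$. Since $\tau$ is a surjective additive homomorphism onto $\mathbb{F}_p$ with fibres of equal size $q/p$, this inner sum equals $(q/p)\sum_{\beta\in\mathbb{F}_p}\omega^\beta=0$.

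The proof of (2) is the mirror image: every $X\in\mathcal{S}_\phi$ has the form $X(w)=\phi(w)v$ for a unique $v\in V$, and the same trace identity yields $\Tr(XA)=\phi(Av)=(A^*\phi)(v)$. Summing $u_X(A)$ over $v\in V$ and running the character-sum argument above shows the total equals $|\mathcal{S}_\phi|$ when $A^*\phi=0$ and vanishes otherwise.

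For (3) and (4), the key point is that each $u_X$ is a group character of $(\l(V,W),+)$, so $u_X(A-B)=u_X(A)\,u_X(B)^{-1}=u_X(A)\,\overline{u_X(B)}$. Given $B$ with $Bv=w$, one has $Av=w$ iff $(A-B)v=0$, so applying (1) to the translated argument $A-B$ gives
\[
1_{\{Av=w\}}(A) \;=\; \frac{1}{|\mathcal{S}_v|}\sum_{X\in\mathcal{S}_v} u_X(A-B) \;=\; \frac{1}{|\mathcal{S}_v|}\sum_{X\in\mathcal{S}_v} \overline{u_X(B)}\,u_X(A),
\]
which is (3); statement (4) follows from (2) in exactly the same way. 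The only delicate ingredients are the trace identity $\Tr(XA)=\phi(Av)$ for $X(w)=\phi(w)v$ and the fact that surjectivity of $\tau$ forces the standard character sum over $\mathbb{F}_q$ to vanish; beyond these, there is no genuine obstacle.
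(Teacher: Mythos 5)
Your proof is correct. For parts (3) and (4) the translation argument is exactly the one the paper uses, so the comparison is really about (1) and (2).

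For (1) and (2) the paper takes a more structural, computation-avoiding route: it sets $D=\mathbb{E}_{\mathbf{X}\sim\mathcal{S}_v}u_{\mathbf{X}}$, shows $D^2=D$ (using the fact that for $\mathbf{X},\mathbf{Y}$ i.i.d.\ uniform on $\mathcal{S}_v$, $\mathbf{X}+\mathbf{Y}$ is again uniform on $\mathcal{S}_v$, so $D$ is Boolean-valued), checks $D=1$ on $\{Av=0\}$, and then matches expectations ($\mathbb{E}[D]=\hat D(0)=1/|W|=\mathbb{E}[1_{\{Av=0\}}]$). You instead evaluate the character sum directly: you identify $\mathcal{S}_v\cong W^*$ via $X(w)=\phi(w)v$, compute $\Tr(XA)=\phi(Av)$ for rank-$\le 1$ maps, and reduce the whole thing to the standard vanishing of $\sum_{\alpha\in\mathbb{F}_q}\omega^{\tau(\alpha)}$ via surjectivity of $\phi\mapsto\phi(Av)$ and of $\tau$. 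Both arguments are short; the paper's sidesteps the explicit parametrisation of $\mathcal{S}_v$ and the trace computation (these become a ``black box'' once one knows $D$ is an idempotent of the right mean), while your argument is more elementary and makes fully explicit where the cancellation comes from. Either is acceptable; the direct computation is arguably more transparent to a reader unfamiliar with the idempotent trick, but it does require you to carry out the small linear-algebra identity $\Tr(XA)=\phi(Av)$, which the paper never needs.
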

\begin{proof}
We only prove (1) and (3), as (2) and (4) are similar. First we prove
(1). Write $D=\mathbb{E}_{\mathbf{X}\sim\mathcal{S}_{v}}u_{\mathbf{X}}$.
Our goal is to show that $D(A)=1_{\{Av=0\}}$. We first show that the
the function $D$ is Boolean valued, i.e.\ it only takes the values $0$
and $1$. We then show that $D$ is 1 whenever $Av=0$. We then finish
the proof by showing that the expectation of $D$ on a random $\mathbf{A}$
is the probability that $\mathbf{A}v=0$. 

Choose $\mathbf{X},\mathbf{Y}$ uniformly and independently at random from
$\mathcal{S}_{v}$, and note that $\mathbf{X}+\mathbf{Y}$ is also distributed
uniformly on $S_{v}$. We have $D=\mathbb{E}_{\mathbf{X}\sim\mathcal{S}_{v}}\left[u_{\mathbf{X}}\right].$
We therefore have 
\[
D^{2}=\mathbb{E}_{\mathbf{X},\mathbf{Y}}[u_{\mathbf{X}}u_{\mathbf{Y}}]=\mathbb{E}_{\mathbf{X},\mathbf{Y}}[u_{\mathbf{X}+\mathbf{Y}}]=D,
\]
 as $\boldsymbol{X}+\boldsymbol{Y}$ is uniformly distributed on
$\mathcal{S}_{v}$. This proves that $D$ is Boolean. We also have
$D(A)=1$ whenever $Av=0$, as in this case $AX=0$ for every $X\in\mathcal{S}_{v}$.
Consequently, to show that $D=1_{\{Av=0\}}$ it suffices to show that
they have the same expectation. We have
\[
\mathbb{E}[D]=\hat{D}(0)=\frac{1}{|\mathcal{S}_{v}|}=\frac{1}{|W|}=\mathbb{E}[1_{\{Av=0\}}],
\]
as required. We now show (3). Let $D=1_{\{Av=0\}}$, as in (1). We have $1_{\{Av=w\}}=D(A-B)$.
Moreover, every character $u_{X}$ satisfies $u_{X}(A-B)=u_{X}(A)\overline{u_{X}(B)}$.
The Fourier formula for $1_{\{Av=w\}}$ follows. 
\end{proof}

\subsection{Degree of functions on $\l(V,W)$.}

We recall that a {\em $d$-junta} is a function $f:\l(V,W) \to \mathbb{C}$ whose value on $A$ depends
only upon $Au_{1},\cdots,Au_{i},A^{*}u_{i+1},\cdots,A^{*}u_{d}$, for
some fixed $u_{1},\ldots,u_{i}\in V,u_{i+1},\ldots,u_{d}\in W^{*}$.
We defined the \emph{degree} of a function $f:\l(V,W) \to \mathbb{C}$ to be the minimal integer $d$
such that $f$ is a sum of $d$-juntas. Let us temporarily call this
notion \emph{junta-degree}. The \emph{Fourier-degree} of $f$ is $\max\{\rank(X):\ \hat{f}(X)\neq0\}$.
The dictators $1_{\{Av=w\}}$ and $1_{\{A^{*}\phi=\psi}$ both have junta-degree
equal to one and Fourier-degree equal to one. In fact, this is a general phenomenon, as the following easy lemma shows.
\begin{lem}
\label{lem:degree} Let $f\in L^{2}(\l(V,W))$. Then the junta-degree
of $f$ is equal to the Fourier-degree of $f$.
\end{lem}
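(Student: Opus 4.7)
The plan is to prove both inequalities separately by examining how the natural characters $u_X$ transform under each notion of dependence, using the Fourier formulas for dictators in Proposition \ref{prop:dictators}.

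For the direction Fourier-degree $\leq$ junta-degree, I would observe that by linearity of the Fourier transform it suffices to bound the Fourier support of a single $d$-junta $f(A) = g(Au_1, \ldots, Au_i, A^*u_{i+1}, \ldots, A^*u_d)$. Expanding $g$ over the basis of indicators of joint value-assignments writes such an $f$ as a $\mathbb{C}$-linear combination of products
\[
\prod_{j=1}^{i} 1_{\{Au_j = w_j\}} \prod_{k=i+1}^{d} 1_{\{A^* u_k = \psi_k\}}.
\]
By parts (3) and (4) of Proposition \ref{prop:dictators}, each factor has Fourier support contained in some $\mathcal{S}_{u_j}$ or $\mathcal{S}_{u_k}$, and every element of these sets has rank at most one. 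Since $u_X u_Y = u_{X+Y}$, the Fourier support of the product lies in the Minkowski sum $\mathcal{S}_{u_1} + \cdots + \mathcal{S}_{u_d}$, each element of which is a sum of $d$ rank-$\leq 1$ maps and therefore of rank at most $d$.

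For the reverse direction (junta-degree $\leq$ Fourier-degree), I would show that every character $u_X$ with $\rank(X) = r$ is itself an $r$-junta; then $f = \sum_X \hat{f}(X) u_X$ is automatically a sum of $d$-juntas, where $d$ is the Fourier-degree. To prove this, write a rank-$r$ map $X \in \l(W,V)$ in outer-product form $X = \sum_{j=1}^{r} v_j \otimes \phi_j$ with $v_j \in V$ and $\phi_j \in W^*$, meaning $Xw = \sum_j \phi_j(w)\, v_j$. A one-line trace computation then gives $\Tr(XA) = \sum_{j=1}^{r} \phi_j(A v_j)$, so
\[
u_X(A) = \prod_{j=1}^{r} \omega^{\tau(\phi_j(Av_j))},
\]
which depends on $A$ only through the $r$ vectors $Av_1, \ldots, Av_r$, exhibiting $u_X$ as an $r$-junta.

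I do not expect a genuine obstacle here: the crux in both directions is simply that the building blocks $\mathcal{S}_v,\mathcal{S}_\phi$ of the dictator Fourier expansions consist of rank-$\leq 1$ maps, and that the character identity $u_X u_Y = u_{X+Y}$ converts products of dictators into Minkowski sums of Fourier supports, matching rank additivity. The only mildly delicate point is making sure the rank-one decomposition of $X$ is phrased so that one can read off from $u_X$ precisely which $r$ evaluations $A \mapsto Av_j$ (or $A \mapsto A^*\phi_k$) suffice to determine the character.
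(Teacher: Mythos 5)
Your proof is correct and takes essentially the same approach as the paper's: for junta-degree $\le$ Fourier-degree you show each $u_X$ with $\rank(X)=r$ is an $r$-junta (the paper phrases this as ``$AX$, hence $u_X(A)$, depends only on $A$ restricted to a basis of $\Image(X)$,'' while you make it explicit via the outer-product decomposition), and for the reverse direction you expand a $d$-junta as a linear combination of products of $d$ dictators and use Proposition~\ref{prop:dictators} together with $u_Xu_Y=u_{X+Y}$, exactly as the paper does.
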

\begin{proof}
We first show that the junta-degree is at most the Fourier-degree.
For any $X \in \l(W,V)$ of rank $d$, the value of $AX$, and therefore also of
$u_{X}(A)$, depends only upon the values of $A$ on any basis for the
image of $X$. Hence, the character $u_{X}$ is a $d$-junta. Now
if $f$ is a function of Fourier-degree $d$, then it is a sum of
the $d$-juntas of the form $\widehat{f}(X)u_{X}$. So it has junta
degree at most $d$.

We now show that the Fourier-degree is at most the junta-degree. Suppose
that $f$ has junta-degree $d$. Then it is a linear combination of
$d$-juntas. Now each $d$-junta is a linear combination of products of $d$ dictators,
as if $f$ depends only on the values of $Au_{1},\ldots,Au_{i},A^{*}u_{i+1},\ldots,A^{*}u_{d}$,
then by definition it is a linear combination of products of $d$
dictators of the form 
\[
\prod_{j=1}^{i}1_{\{Au_{j}=s_{j}\}}\prod_{j=i+1}^{d}1_{\{A^{*}u_{j}=s_{j}\}}.
\]
Each dictator has Fourier-degree one, by Proposition \ref{prop:dictators}, and since $u_{X}u_Y = u_{X+Y}$ for all $X,Y$, a product of $d$ dictators has Fourier degree at most $d$. It follows that $f$ has Fourier-degree at most $d$, as required.
\end{proof}
From now on, we use the term \emph{degree} for
the junta-degree, as we know the Fourier-degree is the same. If $f:\l(V,W)\to\mathbb{C}$, we write 
\[
f^{=d}:=\sum_{X\in\l(W,V):\ \rank(X)=d}\hat{f}(X)u_{X}
\]
for its pure-degree-$d$ part,
and 
\[
f^{\leq d}:=\sum_{X\in\l(W,V):\ \rank(X)\leq d}\hat{f}(X)u_{X}.
\]
for its degree-$d$ truncation. We say that a function $f$ is of \emph{pure degree $d$} if $f=f^{=d}$.
It turns out that that some aspects of the theory of hypercontractivity for functions on $\l(V,W)$
works better for functions of pure degree $d$, than for general
functions of degree $d$.

\subsection{Restrictions}

Let $V_{1}\le V$ and let $W_{1}\le W$. There is a natural one-to-one correspondence between $\l(V/V_1,W)$ and the space of linear maps in $\l(V,W)$ whose kernel contains $V_1$ (this correspondence is given by composing $A \in \l(V/V_1,W)$ on the right with the natural quotient map $\mathcal{Q}_{V_1}:V \to V/V_1;\ v \mapsto v+V_1$). Similarly, there is a natural one-to-one correspondence between $\l(V/V_1,W_1)$ and the space of linear maps in $\l(V,W)$ whose kernel contains $V_1$ and whose image is contained in $W_1$. If $A\in\l(V/V_{1},W_{1})$ then
we write $A(V,W)$ for the corresponding linear map from $V$ to $W$ whose
image is contained in $W_{1}$ and whose kernel contains $V_{1}$. Similarly, if $A \in \l(V,W)$ such that $V_{1} \subseteq \ker(A)$
and $\Image(A) \subseteq W_{1}$, then we write $A(V/V_{1},W_{1})$
for the corresponding linear map from $V/V_{1}$ to $W_{1}$. If $V_1 \leq V$, $W_1 \leq W$ and $A \in \l(V,W)$, we write $A(V_1,W/W_1)$ for the linear map in $\l(V_1,W/W_1)$ obtained by restricting $A$ to $V_1$ and then composing it (on the left) with the natural quotient map $\mathcal{Q}_{W_1}$ from $W$ to $W/W_1$; in symbols, $A(V_1,W/W_1) := \mathcal{Q}_{W_1} \circ (A|_{V_1})$. We will also `compose' these notations whenever it makes sense. For example, let $V_{1}\le V_{2}\le V$ and let $W_{1}\le W_{2}\le W$. If $A\in\l(V,W)$ satisfies $A(V_{1}) \subseteq W_{1}$ and $A(V_{2}) \subseteq W_2$, then we let $A(V_{2}/V_{1},W_{2}/W_{1})$ denote the map
sending $v+V_{1}$ to $Av+W_{1}$. 

For $W_{1}\le W$ we let $W_1^{\circ}$ be the annhilator of $W_1$, i.e.\ $W_1^{\circ} := \{\phi \in W^*: \phi(w)=0\ \forall w \in W_1\}$.

For $T \in \l(V,W)$, we write $\Delta_T$ for the {\em shift operator}
$$\Delta_T:L^2(\l(V,W)) \to L^2(\l(V,W));\quad (\Delta_T(f))(A) = f(A+T)\quad \forall A \in \l(V,W),\quad \forall f \in L^2(\l(V,W)).$$

\subsection*{Restrictions of functions}

Let $f\colon\l(V,W)\to\mathbb{C}$. Given a subspace $V_{1}\le V$,
a subspace $W_{1}\le W$, and a map $T\in\l(V,W)$, we would like
to find a convenient way of restricting $f$ to the (affine linear) space of all maps
such that $A$ that agrees with $T$ on $V_{1}$ and $A^{*}$ agrees
with $T^{*}$ on $W_1^{\circ}$. (The reader should think of $V_1$ having small dimension and $W_1$ having small codimension, though the following definitions make sense however large these are.)
\begin{defn}
Let $V_{1}\le V$, let $W_{1}\le W$ and let $f:\l(V,W) \to \mathbb{C}$. The \emph{restriction of $f$
to $\l(V/V_{1},W_{1})$} is the function $f_{(V_{1},W_{1})}\colon\l(V/V_{1},W_{1})\to\mathbb{C}$
defined by by $f_{(V_{1},W_{1})}(A)=f(A(V,W))$ for all $A \in \l(V/V_1,W_1)$. The \emph{restriction of $f$ corresponding to $(V_1,W_1,T)$} is the function $f_{(V_{1},W_{1})\to T}:=(\Delta_{T}f)_{(V_{1},W_{1})} \in \l(V/V_1,W_1)$.
Equivalently, $f_{(V_{1},W_{1})\to T} \in \l(V/V_{1},W_{1})$ is defined by $f(A)=f(A(V,W)+T)$ for all $A \in \l(V,W)$.
\end{defn}
Note that the linear maps of the form $A(V,W)+T$ are exactly the linear maps $B \in \l(V,W)$ such that
$B$ agrees with $T$ on $V_{1}$ and $B^{*}$ agrees with $T^{*}$ on $W_1^{\circ}$. Note also that by definition, $f_{(V_1,W_1)} = f_{(V_1,W_1) \to 0}$.

\subsection{Fourier formulas for the restrictions}

We now discuss how restrictions look in terms of their Fourier expansions.
In order to do that, we need to find the restriction of each
character $u_{X}$.
\begin{lem}
\label{lem:restriction of characters} Let $V_{1}\le V$, let $W_{1}\le W$,
let $X\in\l(W,V)$, and let $Y=X\left(W_{1},V/V_{1}\right)$, i.e.\ $Y$ is the linear map obtained by restricting the domain of $X$ to $W_1$, and then composing on the right with the quotient map $V \to V/V_1$. Then 
\[
(u_{X})_{(V_{1},W_{1})\to T}=u_{X}(T)u_{Y}.
\]
\end{lem}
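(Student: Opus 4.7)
My plan is to reduce the identity to a single trace computation, exploiting the fact that $u_X$ is a group homomorphism. By the definition of the restriction, for any $A\in\l(V/V_{1},W_{1})$,
\[
(u_{X})_{(V_{1},W_{1})\to T}(A) \;=\; (\Delta_{T}u_{X})_{(V_{1},W_{1})}(A) \;=\; u_{X}\!\bigl(A(V,W)+T\bigr),
\]
and since $u_{X}$ is a character of the additive group $\l(V,W)$, this factors as $u_{X}(A(V,W))\cdot u_{X}(T)$. Thus the whole lemma collapses to the pointwise identity $u_{X}(A(V,W))=u_{Y}(A)$ for every $A\in\l(V/V_{1},W_{1})$, which in turn (unpacking the definition $u_Z(B)=\omega^{\tau(\Tr(ZB))}$ and using that $\tau$ is additive) follows from the trace identity
\[
\Tr\bigl(X\cdot A(V,W)\bigr) \;=\; \Tr(Y\cdot A),
\]
where on the left the composition lives in $\l(V,V)$ and on the right in $\l(V/V_{1},V/V_{1})$.

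To establish this trace identity I would pick adapted bases: a basis of $V$ whose first $\dim(V_1)$ vectors span $V_1$, and a basis of $W$ whose first $\dim(W_1)$ vectors span $W_1$. Relative to these bases, $A(V,W)$ — which has kernel containing $V_{1}$ and image contained in $W_{1}$ — takes the block form $\bigl(\begin{smallmatrix}0 & M\\ 0 & 0\end{smallmatrix}\bigr)$, where $M$ is precisely the matrix of $A\in\l(V/V_{1},W_{1})$ in the induced bases of $V/V_1$ and $W_1$. Writing $X$ in the corresponding block decomposition as $\bigl(\begin{smallmatrix}P & Q\\ R & S\end{smallmatrix}\bigr)$, the composition $X\cdot A(V,W)$ becomes $\bigl(\begin{smallmatrix}0 & PM\\ 0 & RM\end{smallmatrix}\bigr)$, whose trace is $\Tr(RM)$.

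On the other side, $Y=X(W_{1},V/V_{1})$ is obtained from $X$ by first restricting the domain to $W_1$ (which retains the left-hand block column $\bigl(\begin{smallmatrix}P\\ R\end{smallmatrix}\bigr)$) and then composing with the quotient $V\to V/V_{1}$ (which kills the top block, leaving $R$). Hence $Y$ corresponds to the matrix $R$, so $Y\cdot A$ has matrix $RM$ and $\Tr(YA)=\Tr(RM)$, matching the other side and completing the proof.

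There is no real obstacle: the statement is essentially linear-algebraic bookkeeping, and once one has the right adapted bases the trace identity is immediate. The only thing worth being careful about is that the matrix $M$ representing $A\in\l(V/V_1,W_1)$ agrees, block-wise, with the nonzero block of $A(V,W)$ — which is precisely how $A(V,W)$ was defined — and that the restriction-then-quotient description of $Y$ corresponds on the matrix side to selecting the first $\dim(W_1)$ columns and then dropping the first $\dim(V_1)$ rows.
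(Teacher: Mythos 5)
Your argument is correct and follows the same route as the paper: factor via $\Delta_T u_X = u_X(T)u_X$ to reduce to $(u_X)_{(V_1,W_1)}=u_Y$, and then establish the underlying trace identity $\Tr(A\cdot Y)=\Tr(A(V,W)\cdot X)$. The paper's appendix Lemma \ref{Lem:Traces} proves exactly this trace identity in a coordinate-free fashion using the splittings $V=V_1\oplus V_2$ and $W=W_1\oplus W_2$ and the associated projections, which is the same adapted-basis block-matrix computation you wrote out, just without choosing explicit bases.
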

\begin{proof}
We have $\Delta_{T}(u_{X})=u_{X}(T)u_{X}$, so it suffices to show
that $(u_{X})_{(V_{1},W_{1})}=u_{Y}$. Let $A\in\l(V/V_{1},W_{1})$.
It is easy to check that 
\[
\mathrm{Tr}\left(A\cdot Y\right)=\mathrm{Tr}\left(A\left(V,W\right)\cdot X\right)
\]
 for all $A \in \l(V/V_1,W_1)$ (see Lemma \ref{Lem:Traces} in the Appendix for the details, if necessary), and therefore $(u_{X})_{(V_{1},W_{1})}=u_{Y}$, as required.
\end{proof}
As a consequence, we obtain the following Fourier expansion of the
restriction of a function.
\begin{lem}
\label{lem:Fourier for restriction} Let $f \in L^2(\l(V,W))$, let $V_1 \leq V$ and let $W_1 \leq W$. Let $g:=f_{(V_{1},W_{1})\to T}$. Then 
\[
\hat{g}(Y)=\sum_{X\in\l(W,V):\,X(W_{1},V/V_{1})=Y}\hat{f}(X)u_{X}(T)
\]
for all $Y\in\l(W_{1},V/V_{1})$.
\end{lem}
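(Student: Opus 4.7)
The plan is to derive this formula by applying Lemma \ref{lem:restriction of characters} term-by-term to the Fourier expansion of $f$, using the linearity of the restriction operator. Concretely, I would start from
\[
f=\sum_{X\in\l(W,V)}\hat{f}(X)\,u_{X},
\]
and observe that both operations $f\mapsto \Delta_T f$ and $h\mapsto h_{(V_1,W_1)}$ are linear; hence the composed operation $f\mapsto f_{(V_1,W_1)\to T}$ is linear as well. Applying it to each character and invoking Lemma \ref{lem:restriction of characters}, I obtain
\[
g=f_{(V_1,W_1)\to T}=\sum_{X\in\l(W,V)}\hat{f}(X)\,u_{X}(T)\,u_{X(W_1,V/V_1)}.
\]

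Next I would group the terms in this sum according to the value $Y=X(W_1,V/V_1)\in\l(W_1,V/V_1)$, rewriting
\[
g=\sum_{Y\in\l(W_1,V/V_1)}\Bigl(\sum_{X:\,X(W_1,V/V_1)=Y}\hat{f}(X)\,u_{X}(T)\Bigr)u_{Y}.
\]
Applying the corollary that $\{u_Y:Y\in\l(W_1,V/V_1)\}$ is an orthonormal basis for $L^{2}(\l(V/V_1,W_1))$ (which is just the characters-form-an-ONB corollary applied to the space $\l(V/V_1,W_1)$ in place of $\l(V,W)$), I can read off the Fourier coefficient $\hat g(Y)$ as the expression in parentheses, which gives the stated formula.

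There is no real obstacle here; the only thing one has to check is that the book-keeping of indices is consistent, i.e.\ that the map $X\mapsto X(W_1,V/V_1)$, which restricts the domain of $X\in\l(W,V)$ from $W$ to $W_1$ and composes with the quotient map $V\to V/V_1$, matches the $Y$-index used in Lemma \ref{lem:restriction of characters}. This is essentially tautological from the definitions, and the argument is a one-line consequence once Lemma \ref{lem:restriction of characters} is in hand. Thus the content of the proof is really the content of Lemma \ref{lem:restriction of characters}; the present lemma is merely its linear extension to arbitrary $f$.
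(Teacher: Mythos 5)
Your argument is correct and is essentially the paper's own proof: reduce to characters by linearity and invoke Lemma~\ref{lem:restriction of characters}. You merely spell out the regrouping of the resulting sum by the value $Y=X(W_1,V/V_1)$, which the paper leaves implicit.
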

\begin{proof}
Both restrictions, and taking Fourier coefficients, are linear operators.
Therefore, it suffices to consider the case where $f$ is a character
$u_{X}$. The proof for characters $u_{X}$ follows from Lemma \ref{lem:restriction of characters}.
\end{proof}

\subsection{Laplacians and derivatives of functions on $\l(V,W)$.}

The proof of our conditional hypercontractive inequality for functions on $\l(V,W)$  will rely (as in the product-space setting) on induction on the
degree of a function. It is therefore crucial to define the right
notion of the `derivative' of a function on $\l(V,W)$. We
will define derivatives with the following desirable properties.
\begin{enumerate}
\item The derivatives are linear operators.
\item The derivatives of order $i$ reduce the degree by at least $i$.
\item The composition of a derivative of order $i$ with a derivative of
order $j$ is a derivative of order $i+j$.
\item The 2-norms of the derivatives of a function are a measure of its
globalness.
\end{enumerate}

\subsection*{Laplacians}

\begin{defn}
For a subspace $V_{1}\leq V$, we define the {\em Laplacian corresponding to $V_1$} by
\[
L_{V_{1}}[f]=\sum_{X\in\l(W,V):\,\Image(X)\supseteq V_{1}}\hat{f}(X)u_{X}
\]
for all $f \in L^2(\l(V,W))$. In a dual way, for a subspace $W_1 \leq W$, we define the {\em Laplacian corresponding to $W_1$} by 
\[
L_{W_{1}}[f]=\sum_{X\in\l(W,V):\,\mathrm{Ker}(X)\subseteq W_{1}}\hat{f}(X)u_{X}
\]
for all $f \in L^2(\l(V,W))$.
\end{defn} 
Note that the Laplacians $L_{V_1}$ and $L_{W_1}$ are perhaps the most straightforward analogues of the Laplacians $L_S:L^2(\mathbb{F}_p^n) \to L^2(\mathbb{F}_p^n)$ (for $S \subset [n]$) which we used in the product-space setting. Unfortunately however, they do not satisfy the desirable property (2) above.

So, we also need to define `hybrid' Laplacians, of the form $L_{V_{1},W_{1}}$ for $V_1 \leq V$ and $W_1 \leq W$. The following definition might not seem terribly natural at first sight, but it is precisely what we need for the corresponding derivatives to satisfy the desirable property (2) above. (These
corresponding derivatives will soon be defined as restrictions of the `hybrid' Laplacians.)
\begin{defn}
For a character $u_X$ (for $X \in \l(W,V)$), we define $L_{V_{1},W_{1}}\left[u_{X}\right]: = u_{X}$ if $\Image(X)\supseteq V_{1}$ and $X^{-1}\left(V_{1}\right)\subseteq W_{1}$; otherwise we set $L_{V_{1},W_{1}}\left[u_{X}\right]:=0$.
We then extend linearly, defining
\end{defn}
\[
L_{V_{1},W_{1}}\left(f\right):=\sum_{{X\in\l(W,V):\atop \Image(X)\supseteq V_{1},\ X^{-1}(V_{1})\subseteq W_{1}}}\hat{f}\left(X\right)u_{X}
\]
for all $f \in L^2(\l(V,W))$. We call $L_{V_{1},W_{1}}$ a \emph{Laplacian of order
$i$} if $\dim(V_{1})+\codim(W_{1}) = i$.

\subsection*{Derivatives and influences}

\begin{defn} The {\em derivative} $D_{V_{1},W_{1},T}$ is the linear operator from $L^{2}(\l(V,W))$
to $L^{2}(\l(V/V_{1},W_{1}))$ defined by $D_{V_{1},W_{1},T}(f) = (L_{V_{1},W_{1}}[f])_{(V_{1},W_{1})\to T}$, for all $f \in L^2(\l(V,W))$.
We call the operator $D_{V_{1},W_{1},T}$ a \emph{derivative of order
$i$} if $i=\dim(V_{1})+\mathrm{codim}(W_{1})$. For brevity, we write
$D_{V_{1},W_{1}}: = D_{V_1,W_1,0}$. We define the {\em influence
of $(V_{1},W_{1})$ at $T$} by
\[
I_{(V_{1},W_{1},T)}[f]:=\|D_{V_{1},W_{1},T}[f]\|_{2}^{2}.
\]
\end{defn}

In Lemma \ref{lem:derivatives}, we will show that the $i$-order derivatives
reduce the degree by at least $i$. Note that the derivatives are indeed linear operators, since the Laplacians and the restriction operators are linear.

In Proposition \ref{prop:composition of derivatives}, we will show that
composition of a derivative of order $i$ with a derivative of order
$j$ is a derivative of order $i+j$. In Proposition \ref{prop:influences measure globalness},
we will show that the influences are indeed a good measure of the globalness
of $f$. Hence, the derivatives $D_{V_1,W_1,T}$ will satisfy all four of our desirable properties above.

\subsection{$X$-Laplacians and $X$-derivatives}
When adapting our strategy in the product-space setting to that of functions on $\l(V,W)$, compositions of the form $L_{V_1} \circ L_{W_1}$ (for $V_1 \leq V$ and $W_1 \leq W$) will arise naturally in the first step, when we expand $\reallywidehat{f^2}(X)$ for $X \in \l(W,V)$. In order to utilise degree-reduction (and hence implement induction on the degree), we would like to express such compositions in terms of the hybrid Laplacians $L_{V_2,W_2}$. This is not completely straightforward. Indeed, let $X\in\mathcal{L}\left(W,V\right)$ with $V_{1} \subseteq \Image(X)$ and $\ker(X) \subseteq W_{1}$, or equivalently $(L_{V_1} \circ L_{W_1})(u_X) \neq 0$. Then we
may still have $\mathcal{L}_{V_{1},W_{1}}\left(u_{X}\right)=0$ (if $X^{-1}(V_1) \nsubseteq W_1$): the condition $X^{-1}(V_1) \subseteq W_1$ is strictly stronger than the condition $(V_1 \subseteq \Image(X)) \wedge \ker(X) \subseteq W_1)$, as indeed can be seen by considering the case $q=2$, $V=W=\mathbb{F}_2^2$, $V_1 = \Span\{e_1\}$, $W_1 = \Span\{e_2\}$ and $X$ being the identity operator. In order to express the composition $L_{V_{1}}\circ L_{W_{1}}$
in terms of the hybrid Laplacians $L_{V_{2},W_{2}}$, we need to define yet more Laplacian operators $L_{Y}$, and corresponding derivative
operators $D_{Y}$.

\subsection*{A poset on linear maps}

\begin{defn}
We define a poset on $\l(W,V)$ by
setting $X\le Y$ if $\mathrm{rank}(Y)=\mathrm{rank}(X)+\mathrm{rank}(Y-X)$.\end{defn}
Note that $X \leq Y$ if and only if $\Image(X)$ and $\Image(Y-X)$ form a direct sum which is equal to $\Image(Y)$. It is also easy to see that $X \leq Y$ if and only if we have both $\ker(X)+\ker(Y-X)=W$ and $\Image(X) \cap \Image(Y) = \{0\}$. (To see this, first observe that if $\Image(X) \cap \Image(Y-X) = \{0\}$, then $\ker(X) \cap \ker(Y-X) = \ker(Y)$: clearly we have $\ker(X) \cap \ker(Y) \subseteq \ker(Y)$, whereas if $w \in \ker(Y)$ then $Xw + (Y-X)w=0$, so $Xw=  -(Y-X)w \in \ker(X) \cap \ker(Y-X)$. Hence, if $\Image(X) \cap \Image(Y-X)=\{0\}$ then $\dim(\ker(X)+\ker(Y-X)) = \dim(\ker(X))+\dim(\ker(Y-X))-\dim(\ker(X) \cap \ker(Y-X)) = \dim(\ker(X))+\dim(\ker(Y-X))-\dim(\ker(Y)) = \dim(W) - (\rank(Y)-\rank(X)-\rank(Y-X))$, so under the condition $\Image(X) \cap \Image(Y-X) = \{0\}$ we have $\ker(X)+\ker(Y-X)=W$ if and only if $X \leq Y$.)

If $X\le Y$, then (with only a very slight abuse of notation) we write $Y=X\oplus\left(Y-X\right)$. We note that the use of this notation
is consistent with its (usual) use for an internal direct sum of linear maps, since if $X \leq Y$ then we have $\ker(X),\ker(Y-X) \supseteq \ker(Y)$ and $\ker(X)+\ker(Y-X)=W$, so writing $W = \ker(X)\oplus W_3$ where $W_3 \subseteq \ker(Y-X)$, we may express $Y$ as the (usual) internal direct sum of $(Y-X)|_{\ker X}$ and $X|_{W_3}$.

We also have the following useful (alternative) characterisation of when $X \leq Y$.

\begin{prop}
\label{prop:poset} Let $X,Y \in \l(W,V)$. Then $X\le Y$ if and only if $\Image(X)\le\Image(Y)$
and when setting $P=Y^{-1}(\Image(X))$, we have $X(w)=Y(w)$ for all $w \in P$.
\end{prop}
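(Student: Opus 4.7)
The plan is to leverage the equivalent characterisation of $X\le Y$ already established in the paragraph preceding the proposition, namely that $X\le Y$ iff $\Image(X)\cap\Image(Y-X)=\{0\}$ and $\Image(Y)=\Image(X)+\Image(Y-X)$ (i.e., the sum is direct). Both directions then become short arguments about when $\Image(X)$ sits nicely inside $\Image(Y)$.

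For the forward direction, assume $X\le Y$. Then $\Image(X)\subseteq\Image(Y)$ is immediate from the direct-sum decomposition. For any $w\in P=Y^{-1}(\Image(X))$, write $(Y-X)(w)=Y(w)-X(w)$. The first summand lies in $\Image(X)$ by definition of $P$ and the second lies in $\Image(X)$ trivially, so $(Y-X)(w)\in\Image(X)$; but also $(Y-X)(w)\in\Image(Y-X)$. Since $\Image(X)\cap\Image(Y-X)=\{0\}$, we conclude $(Y-X)(w)=0$, i.e., $X(w)=Y(w)$.

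For the converse, assume $\Image(X)\subseteq\Image(Y)$ and that $X$ and $Y$ agree on $P$. I plan to re-establish the direct-sum characterisation. The sum $\Image(X)+\Image(Y-X)=\Image(Y)$ is automatic from $Y(w)=X(w)+(Y-X)(w)$ together with $\Image(X)\subseteq\Image(Y)$. For the intersection being trivial, suppose $v=X(w_1)=(Y-X)(w_2)$. Then $Y(w_2)=v+X(w_2)\in\Image(X)$, so $w_2\in P$; by hypothesis $X(w_2)=Y(w_2)$, hence $v=(Y-X)(w_2)=0$. Rank additivity $\rank(Y)=\rank(X)+\rank(Y-X)$ then follows, giving $X\le Y$.

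There is essentially no genuine obstacle here—it is a routine linear-algebra manipulation. The only mildly delicate point is recognising that the single hypothesis ``$X$ agrees with $Y$ on $P=Y^{-1}(\Image(X))$'' is precisely what forces the intersection $\Image(X)\cap\Image(Y-X)$ to vanish: any element of this intersection arises as $(Y-X)(w_2)$ with $Y(w_2)\in\Image(X)$, and the hypothesis then kills it. Once that observation is in place, both directions reduce to a couple of lines.
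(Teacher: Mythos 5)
Your proof is correct and runs along the same lines as the paper's, but it contains a genuine simplification in the converse direction. Both proofs re-establish the direct-sum characterisation $\Image(Y)=\Image(X)\oplus\Image(Y-X)$, and the forward direction and trivial-intersection step are essentially identical. The difference is how $\Image(Y-X)\subseteq\Image(Y)$ is proved in the converse. The paper uses the agreement hypothesis for this: given $v=(Y-X)w$, it picks $u$ with $Yu=Xw$, notes $u\in P$ so $Xu=Yu$, and after some algebra concludes $v=(Y-X)(w-u)=Y(w-u)\in\Image(Y)$. You instead observe that $(Y-X)w=Yw-Xw$ is already a difference of two vectors in $\Image(Y)$ (the second because $Xw\in\Image(X)\subseteq\Image(Y)$), so the inclusion follows from $\Image(X)\subseteq\Image(Y)$ alone. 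This is simpler and cleanly isolates which step actually needs the agreement hypothesis (only the trivial-intersection step does). One presentational nit: you describe $\Image(X)+\Image(Y-X)=\Image(Y)$ as ``automatic,'' but the $\subseteq$ inclusion deserves its one-line justification, namely $\Image(Y-X)\subseteq\Image(Y)+\Image(X)=\Image(Y)$, since a reader sees only the $\supseteq$ inclusion immediately from $Y=X+(Y-X)$.
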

\begin{proof}
First suppose that $X\le Y$. Since $\Image(Y)\le\Image(X)+\Image(Y-X)$,
and since we have the equality 
\[
\dim(\Image(Y))=\dim(\Image(X))+\dim(\Image(Y-X)),
\]
we clearly have $\Image(X) \cap \Image(Y-X) = \{0\}$. Now let $w \in W$ such that $Yw\in\Image(X)$. Then 
\[
(Y-X)w\in\Image(X)\cap\Image(Y-X)=\{0\},
\]
 and therefore $Yv=Xv$. So indeed $X$ and $Y$ agree on $P$.

Now suppose that $\Image(X)\le\Image(Y)$ and that $X$ and $Y$ agree on $P$.
The desired property of the ranks of $Y,X$ and $Y-X$, will follow
once we have shown that $\Image(Y)=\Image(X)\oplus\Image(Y-X)$. In order
to do this, we must show that the image of $Y$ contains
the image of $Y-X$ and to show that the intersection of $\Image(X)$ and $\Image(Y-X)$ is trivial.

Let $v \in \Image(Y-X)$; then there exists $w \in W$ such that $(Y-X)w=v$. Since the image of
$Y$ contains the image of $X$, there exists $u \in W$ such that $Yu=Xw$. Since
$Y$ agrees with $X$ on $Y^{-1}(\Image(X))$, we have $Xu=Yu$. Hence
$(Y-X)(w-u)$ is equal to both $Y(w-u)$, as $Xw=Yu=Xu$, and to $(Y-X)w$,
as $Yu=Xu$. Hence, $v=(Y-X)w$ is in the image of $Y$, completing the
proof that $\Image(Y-X)\le\Image(Y)$.

Suppose now that $v=Xu=(Y-X)w$ is an element in the intersection
of the images of the maps $X$ and $Y-X$. Rearranging, we obtain
that $X(u+w)=Yw$ lies in the image of $X$, and so $w\in Y^{-1}(\Image(X))$,
which implies that $Yw=Xw$. This in turn yields $v=(Y-X)w=0$,
as desired.
\end{proof}

We now demonstrate that the relation $\le$
is indeed a partial order on $\l(V,W)$.
\begin{prop}
The relation $\le$ is a partial order on $\l(V,W)$. 
\end{prop}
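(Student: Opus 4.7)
The plan is to verify the three axioms of a partial order: reflexivity, antisymmetry, and transitivity. Reflexivity is immediate from the definition, since $\rank(X) = \rank(X) + \rank(X-X) = \rank(X) + 0$. For antisymmetry, suppose $X \leq Y$ and $Y \leq X$. Then $\rank(Y) = \rank(X) + \rank(Y-X)$ and $\rank(X) = \rank(Y) + \rank(X-Y)$. Adding these equations and using $\rank(X-Y) = \rank(Y-X)$ forces $\rank(Y-X) = 0$, hence $Y = X$.

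The only part requiring real work is transitivity. Suppose $X \leq Y$ and $Y \leq Z$; I want to conclude $X \leq Z$. The cleanest route is to appeal to the characterisation just proved in Proposition \ref{prop:poset}, rather than manipulate rank identities directly. From $X \leq Y$ we have $\Image(X) \leq \Image(Y)$ and $X$ agrees with $Y$ on $Y^{-1}(\Image(X))$; from $Y \leq Z$ we have $\Image(Y) \leq \Image(Z)$ and $Y$ agrees with $Z$ on $Z^{-1}(\Image(Y))$. The containment $\Image(X) \leq \Image(Z)$ follows immediately by chaining the two inclusions.

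It remains to show that $X$ and $Z$ agree on $P := Z^{-1}(\Image(X))$. For $w \in P$, since $Zw \in \Image(X) \subseteq \Image(Y)$ we get $w \in Z^{-1}(\Image(Y))$, and therefore $Yw = Zw$ by the characterisation of $Y \leq Z$. But then $Yw = Zw \in \Image(X)$ shows $w \in Y^{-1}(\Image(X))$, so $Xw = Yw$ by the characterisation of $X \leq Y$. Combining gives $Xw = Zw$, completing the verification by Proposition \ref{prop:poset}.

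The main obstacle, if any, is just choosing the right characterisation; working directly with the rank-additivity identity $\rank(Z) = \rank(X) + \rank(Z-X)$ via $Z-X = (Y-X) + (Z-Y)$ requires establishing that $\Image(Y-X)$ and $\Image(Z-Y)$ form a direct sum inside $\Image(Z)$, which is exactly what the image/preimage characterisation makes transparent. Using Proposition \ref{prop:poset} as a black box reduces everything to the elementary chase above.
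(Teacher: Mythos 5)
Your proof is correct and follows essentially the same route as the paper: reduce transitivity to the characterisation in Proposition \ref{prop:poset}, then chase an element $w \in Z^{-1}(\Image(X))$ through the two agreement conditions. Your element chase is in fact slightly more direct than the paper's, which invokes surjectivity of $Z(Z^{-1}(\Image(X)),\Image(X))$ to conclude $Z^{-1}(\Image(X)) \subseteq Y^{-1}(\Image(X))$, whereas you obtain the same inclusion immediately from $Yw = Zw \in \Image(X)$.
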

\begin{proof}
The only non-obvious property is transitivity. Suppose that $X\le Y$
and that $Y\le Z$. Then we obviously have $\Image(X)\leq\Image(Z)$,
and (by the preceding lemma) it remains only to show that $X$ and $Z$ agree on $Z^{-1}(\Image(X))$.
Since $Y$ and $Z$ agree on $Z^{-1}(\Image(Y))$ they also agree
on $Z^{-1}(\Image(X))$. Now $Z(Z^{-1}\Image(X),\Image(X))$ is surjective,
and therefore so is $Y(Z^{-1}(\Image(X)),\Image(X))$, implying that
$Z^{-1}(\Image(X))\subseteq Y^{-1}(\Image(X))$. Thus, $X$ agrees
with $Y$ and therefore with $Z$ on $Z^{-1}(\Image(X))$, as required.
\end{proof}

We are now ready to define the $X$-Laplacians and the $X$-derivatives.
\begin{defn}
For $X\in\l(W,V)$, we define $L_{X}:L^2(\l(V,W))\to L^2(\l(V,W))$ by $L[f]=\sum_{Y\ge X}\hat{f}(Y)u_{Y}$, for each $f \in L^2(\l(V,W))$. We call $L_{X}$
a \emph{Laplacian of order $\rank(X)$}. Letting $V_1: = \Image(X)$ and $W_1: = \ker(X)$, we define corresponding
derivatives $D_{X,T}:L^2(\l(V,W)) \to L^2(\l(V/V_1,W_1))$ by
\[
D_{X,T}[f]=(L_{X}[f])_{(V_{1},W_{1})\to T}\quad \forall f \in L^2(\l(V,W)),
\]
 and for brevity we write
\[
D_{X}[f]:=D_{X,0}[f]=(L_{X}[f])_{\left(V_{1},W_{1}\right)},
\]
 where we recall that $g_{\left(V_{1},W_{1}\right)}: = g_{(V_1,W_1)\to0}$ for each $g$.
\end{defn}

\subsection{How derivatives behave with respect to degree-decompositions and
compositions}
\label{subsec:Laplacians and restrictios relations}

We are now ready to show that the $i$-order derivatives decrease
the degree by at least $i$. We need the following linear-algebraic lemma.

\begin{lem}
\label{Lem:Linear algebraic} Suppose that $\Image(X)\supseteq V_{1}$
and that $X^{-1}(V_{1})\subseteq W_{1}$. Then $X(W_{1},V/V_{1})$
is of rank 
\[
\mathrm{rank}(X)-\dim(V_{1})-\mathrm{codim}(W_{1}).
\]
\end{lem}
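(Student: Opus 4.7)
The plan is to apply the rank-nullity theorem to the linear map $X(W_1,V/V_1)\colon W_1\to V/V_1$, which by definition sends $w\mapsto X(w)+V_1$. Writing $d_W = \dim W$, $d_{W_1} = \dim W_1 = d_W - \codim(W_1)$, and so on, the rank will be $\dim(W_1)$ minus the dimension of the kernel, so the whole proof reduces to computing the kernel.

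First I would identify the kernel of $X(W_1,V/V_1)$ explicitly: a vector $w\in W_1$ lies in this kernel iff $X(w)\in V_1$, i.e. iff $w\in X^{-1}(V_1)\cap W_1$. Here is where the first hypothesis enters: since $X^{-1}(V_1)\subseteq W_1$, this intersection collapses to the full preimage $X^{-1}(V_1)$. So $\ker(X(W_1,V/V_1)) = X^{-1}(V_1)$.

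Next I would compute $\dim X^{-1}(V_1)$ by applying rank-nullity to the restricted-codomain map $X\colon W\to \Image(X)$ and then pulling back $V_1$. Concretely, $X^{-1}(V_1)$ is an extension of $\ker X$ by $V_1\cap \Image(X)$, and now the second hypothesis $V_1\subseteq \Image(X)$ forces $V_1\cap \Image(X) = V_1$, giving
\[
\dim X^{-1}(V_1) \;=\; \dim\ker X + \dim V_1 \;=\; (d_W-\rank X)+\dim V_1.
\]
Substituting into rank-nullity yields
\[
\rank\bigl(X(W_1,V/V_1)\bigr) \;=\; d_{W_1} - \dim X^{-1}(V_1) \;=\; \rank X - \dim V_1 - \codim W_1,
\]
as required.

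I do not expect any genuine obstacle here; the lemma is a bookkeeping exercise in rank-nullity, and both hypotheses are used in exactly one place each (the first to identify the kernel of $X(W_1,V/V_1)$ with $X^{-1}(V_1)$, the second to compute $\dim X^{-1}(V_1)$ cleanly). The only mild care needed is to remember that $X$ lives in $\l(W,V)$ (not $\l(V,W)$), so that ``image'' and ``kernel'' refer to subspaces of $V$ and $W$ respectively, matching the conditions as stated.
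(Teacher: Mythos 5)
Your proof is correct and uses essentially the same ingredients as the paper's: two applications of rank-nullity, with the hypothesis $X^{-1}(V_1)\subseteq W_1$ used to identify the kernel of the restricted map and $V_1\subseteq\Image(X)$ used to get the clean dimension count. The paper factors through the intermediate map $X(W,V/V_1)$ (first quotienting the codomain, then restricting the domain), whereas you compute $\ker(X(W_1,V/V_1))=X^{-1}(V_1)$ directly and then find $\dim X^{-1}(V_1)$; this is a reorganization of the same argument rather than a genuinely different route.
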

\begin{proof}
First of all, since $\Image(X)\supseteq V_{1}$, we have 
\[
\mathrm{rank}(X)=\mathrm{rank}(X(W,V/V_{1}))+\dim(V_{1}).
\]
 Now since the kernel of the map $X(W,V/V_{1})$ is contained in $W_{1}$, we have $\dim(\ker(X(W,V/V_1)) = \dim(\ker(X(W_1,V/V_1))$ and therefore 
\[
\mathrm{rank}(X(W_{1},V/V_{1}))=\mathrm{rank}(X(W,V/V_{1}))-\mathrm{codim}(W_{1}).
\]
Substituting the first equality into the second proves the lemma.
\end{proof}

\begin{lem}
\label{lem:derivatives} Let $V_{1}\le V$, let $W_{1}\le W$ and let $i=\dim(V_{1})+\mathrm{codim}(W_{1})$.
For each $f:\l(V,W) \to \mathbb{C}$ and each $d \in \mathbb{N} \cup \{0\}$, we have 
\[
D_{V_{1},W_{1},T}[f^{=d}]=(D_{V_{1},W_{1},T}[f])^{=d-i}.
\]
\end{lem}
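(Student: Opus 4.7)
The plan is to reduce the statement to characters by linearity and then compute both sides explicitly using the structural lemmas already proved.

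First I would note that both sides of the desired identity are linear in $f$: the operators $D_{V_1,W_1,T}$, $(\cdot)^{=d}$ and $(\cdot)^{=d-i}$ are all linear in $f$ (the first by the definition of the derivatives as a composition of the Laplacian $L_{V_1,W_1}$ and a restriction, and the pure-degree projections by their Fourier definition). Consequently it suffices to verify the identity when $f = u_X$ for an arbitrary $X \in \l(W,V)$.

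Next I would compute both sides on $f = u_X$. Since $u_X$ is of pure degree $\rank(X)$, the left-hand side $D_{V_1,W_1,T}[u_X^{=d}]$ equals $D_{V_1,W_1,T}[u_X]$ if $\rank(X)=d$ and equals $0$ otherwise. For the right-hand side, unfold the derivative: by the definition of $L_{V_1,W_1}$, we have $L_{V_1,W_1}[u_X] = u_X$ when $\Image(X)\supseteq V_1$ and $X^{-1}(V_1)\subseteq W_1$, and $L_{V_1,W_1}[u_X]=0$ otherwise. Applying Lemma~\ref{lem:restriction of characters} in the first case gives
\[
D_{V_1,W_1,T}[u_X] \;=\; u_X(T)\, u_Y, \qquad Y := X(W_1, V/V_1).
\]
By Lemma~\ref{Lem:Linear algebraic}, under these hypotheses $\rank(Y) = \rank(X) - i$, so $u_X(T)u_Y$ is a character of pure degree $\rank(X)-i$.

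Finally, I would compare the two sides case by case. If $X$ fails the condition $\Image(X)\supseteq V_1$ and $X^{-1}(V_1)\subseteq W_1$, then $D_{V_1,W_1,T}[u_X]=0$, so both sides vanish regardless of $d$. If $X$ satisfies the condition, then $D_{V_1,W_1,T}[u_X]$ is a character of pure degree $\rank(X)-i$; projecting onto pure degree $d-i$ yields a nonzero contribution precisely when $\rank(X)=d$, matching the left-hand side which is nonzero precisely when $\rank(X)=d$. In every case the two sides agree, completing the proof.

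There is no real obstacle here: the only subtlety is to make sure the two conditions $\Image(X)\supseteq V_1$ and $X^{-1}(V_1)\subseteq W_1$ appearing in the definition of $L_{V_1,W_1}$ are precisely the hypotheses of Lemma~\ref{Lem:Linear algebraic}, so that the rank of the restricted character is correctly identified as $\rank(X)-i$; once this is in place, the case analysis is routine.
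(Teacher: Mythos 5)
Your proof is correct and is essentially the same as the paper's: both reduce to characters by linearity, apply Lemma~\ref{lem:restriction of characters} together with the definition of $L_{V_1,W_1}$ to identify $D_{V_1,W_1,T}[u_X]$, and invoke Lemma~\ref{Lem:Linear algebraic} to pin down the rank drop as exactly $i$.
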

\begin{proof}
Since both the left-hand side and the right-hand side are linear in
$f$, it is enough to prove the lemma in the case where $f$ is some
character $u_{X}$. If $\Image(X) \supseteq V_1$ and $X^{-}(V_1) \subset W_1$, then $$D_{V_{1},W_{1},T}[u_{X}] = u_{X}(T)u_{X(W_{1},V/V_{1})},$$
in which case (by Lemma \ref{Lem:Linear algebraic}) we have $\rank(X(W_1,V/V_1)) = \rank(X)-i$; otherwise, $D_{V_{1},W_{1},T}[u_{X}] = 0$. The lemma follows.
\end{proof}

Similarly, we have the following.

\begin{lem}
\label{lem:linear-algebraic-2}
Suppose that $X,Y \in \l(W,V)$ with $Y \geq X$. Let $V_1 = \Image(X)$ and let $W_1=\ker(X)$. Then
$$\rank(Y(W_1,V/V_1)) = \rank(Y)-\rank(X).$$
\end{lem}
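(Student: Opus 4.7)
The plan is to compute the rank of $Y(W_1,V/V_1)$ by first reducing to the rank of $(Y-X)|_{W_1}$ and then using the kernel identity for the poset relation $X \leq Y$.

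First I will observe that since $W_1 = \ker(X)$, for every $w \in W_1$ we have $Y(w) = X(w) + (Y-X)(w) = (Y-X)(w)$. Hence $Y|_{W_1} = (Y-X)|_{W_1}$ as maps $W_1 \to V$. After composing with the quotient $\mathcal{Q}_{V_1}: V \to V/V_1 = V/\Image(X)$, the image of this composition lies in $\mathcal{Q}_{V_1}(\Image(Y-X))$. Because $Y \geq X$ means $\Image(Y) = \Image(X) \oplus \Image(Y-X)$, we have $\Image(Y-X) \cap V_1 = \{0\}$, so $\mathcal{Q}_{V_1}$ is injective on $\Image(Y-X)$. Consequently
\[
\rank(Y(W_1,V/V_1)) = \rank((Y-X)|_{W_1}).
\]

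Next I would compute $\ker((Y-X)|_{W_1}) = \ker(Y-X) \cap \ker(X)$. I will show this equals $\ker(Y)$. The inclusion $\ker(X) \cap \ker(Y-X) \subseteq \ker(Y)$ is immediate from $Y = X + (Y-X)$. For the reverse inclusion, take $w \in \ker(Y)$; then $Xw = -(Y-X)w \in \Image(X) \cap \Image(Y-X) = \{0\}$ (again using $Y \geq X$), so $Xw = 0$ and $(Y-X)w = 0$. This yields $\ker((Y-X)|_{W_1}) = \ker(Y)$.

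Finally, by rank-nullity,
\[
\rank((Y-X)|_{W_1}) = \dim(W_1) - \dim(\ker(Y)) = (\dim(W) - \rank(X)) - (\dim(W) - \rank(Y)) = \rank(Y) - \rank(X),
\]
which completes the proof. There is no real obstacle here: the argument is entirely linear-algebraic and essentially repackages the two equivalent characterisations of $X \leq Y$ (namely $\Image(X) \cap \Image(Y-X) = \{0\}$ and $\ker(X) \cap \ker(Y-X) = \ker(Y)$) that were already recorded immediately after the definition of the poset in the excerpt.
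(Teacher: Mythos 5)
Your proof is correct. The route is essentially the same as the paper's: both arguments come down to identifying $\ker(Y(W_1,V/V_1))$ with $\ker(Y)$ and then applying rank--nullity. The only stylistic difference is that you pass through $(Y-X)|_{W_1}$ and invoke the two characterisations $\Image(X)\cap\Image(Y-X)=\{0\}$ and $\ker(X)\cap\ker(Y-X)=\ker(Y)$, whereas the paper appeals directly to Proposition~\ref{prop:poset} (that $X$ and $Y$ agree on $Y^{-1}(\Image(X))$) to show that any $w\in W_1$ with $Yw\in V_1$ already lies in $\ker(Y)$; the two are equivalent reformulations of $X\le Y$, so the arguments are substantively the same.
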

\begin{proof}
Since $Y \geq X$, we have $\ker(Y) \subset \ker(X) = W_1$. If $w \in \ker(Y(W_1,V/V_1))$ then $Y(w) \in V_1 = \Image(X)$ so $w \in Y^{-1}(\Image(X))$, and therefore, since $Y$ and $X$ agree on $Y^{-1}(\Image(X))$, and $w \in W_1 = \ker(X)$, we have $Yw = Xw = 0$. It follows that
$$\ker(Y(W_1,V/V_1)) = \ker(Y),$$
so
\begin{align*} \rank(Y(W_1,V/V_1)) & = \dim(W_1) - \dim(\ker(Y(W_1,V/V_1)))\\
&= \dim(W_1) - \dim(\ker(Y))\\
&= \dim(W) - \codim(W_1) - \dim(\ker(Y))\\
&= \rank(Y) - \codim(W_1)\\
&= \rank(Y)-\rank(X),
\end{align*}
as required.
\end{proof}
\begin{lem}
\label{lem:derivatives-2}
Let $T\in \l(V,W)$, let $X$ in $\l(W,V)$ with $\rank(X)=i$ and let $f:\l(V,W) \to \mathbb{C}$. Then
$$D_{X,T}[f^{=d}] = (D_{X,T}[f])^{=d-i}.$$
\end{lem}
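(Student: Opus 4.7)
The plan is to mirror the strategy of Lemma~\ref{lem:derivatives}, using linearity to reduce to the case where $f$ is a single character, and then computing $D_{X,T}[u_Y]$ explicitly using the definitions of $L_X$ and of the restriction operator, together with the rank formula provided by Lemma~\ref{lem:linear-algebraic-2}.

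First, since $f \mapsto D_{X,T}[f^{=d}]$ and $f \mapsto (D_{X,T}[f])^{=d-i}$ are both linear in $f$, it suffices to verify the identity when $f = u_Y$ is a character, for each $Y \in \l(W,V)$. By the definition of $L_X$, we have $L_X[u_Y] = u_Y$ if $Y \geq X$ and $L_X[u_Y] = 0$ otherwise. Setting $V_1 := \Image(X)$ and $W_1 := \ker(X)$ (so $\rank(X) = \dim(V_1) + \codim(W_1) = i$ is consistent with $X$ factoring through $W/W_1 \cong V_1$), and invoking Lemma~\ref{lem:restriction of characters}, when $Y \geq X$ we obtain
\[
D_{X,T}[u_Y] = (u_Y)_{(V_1,W_1) \to T} = u_Y(T)\, u_{Y(W_1,V/V_1)}.
\]

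Now apply Lemma~\ref{lem:linear-algebraic-2}: whenever $Y \geq X$, we have $\rank(Y(W_1,V/V_1)) = \rank(Y) - \rank(X) = \rank(Y) - i$. Hence, if $Y \geq X$ and $\rank(Y) = d$, then $D_{X,T}[u_Y]$ is (a scalar multiple of) a character of rank exactly $d-i$; and if $Y \geq X$ but $\rank(Y) \neq d$, then $D_{X,T}[u_Y]$ is a character of rank $\neq d-i$. It follows that, for $f = u_Y$,
\[
D_{X,T}[f^{=d}] = \begin{cases} u_Y(T)\, u_{Y(W_1,V/V_1)} & \text{if } Y \geq X \text{ and } \rank(Y) = d,\\ 0 & \text{otherwise,} \end{cases}
\]
and this coincides exactly with $(D_{X,T}[f])^{=d-i}$, since the latter picks out precisely those contributions to $D_{X,T}[u_Y]$ whose output character has rank $d-i$, which (by Lemma~\ref{lem:linear-algebraic-2}) is equivalent to requiring $\rank(Y) = d$.

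There is no real obstacle here: the only subtlety is in keeping straight the two ``rank-decrement'' statements (one for the hybrid Laplacian $L_{V_1,W_1}$ via Lemma~\ref{Lem:Linear algebraic}, the other for the $X$-Laplacian $L_X$ via Lemma~\ref{lem:linear-algebraic-2}), and in noting that the condition $Y \geq X$ automatically forces both $\Image(Y) \supseteq V_1$ and $\ker(Y) \subseteq W_1$, so that the restriction $Y(W_1,V/V_1)$ is well-defined and behaves as expected in the rank computation.
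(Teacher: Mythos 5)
Your proof is correct and follows essentially the same route as the paper: reduce to a single character $u_Y$ by linearity, note $D_{X,T}[u_Y] = u_Y(T)\,u_{Y(W_1,V/V_1)}$ when $Y\ge X$ and $0$ otherwise, and invoke Lemma \ref{lem:linear-algebraic-2} to identify the rank drop as exactly $i$. The only minor quibble is that $Y(W_1,V/V_1)$ is always well-defined (it is just a restriction followed by a quotient), so the phrase ``so that the restriction is well-defined'' is unnecessary; what matters is only that the rank formula of Lemma \ref{lem:linear-algebraic-2} applies when $Y\ge X$.
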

\begin{proof}
It is enough to prove the lemma in the case where $f = u_Y$ for some $Y \in \l(W,V)$. Let $V_1 = \Image(X)$ and let $W_1 = \ker(X)$. If $Y \geq X$, then
$$D_{X,T}[u_Y] = u_Y(T) u_{Y(W_1,V/V_1)},$$
and we have $\rank(Y(W_1,V/V_1)) = \rank(Y)-i$ by the preceding lemma; otherwise, $D_{X,T}[u_Y]=0$. The lemma follows.
\end{proof}

We now show the third desired property of the derivatives, viz., that the composition
of an order-$i$ derivative with an order-$j$ derivative is an order-$(i+j)$
derivative.
\begin{prop}
\label{prop:composition of derivatives} Let $V_2 \le V_{1}\le V$, let $W_{1}\le W_{2}\le W$, let
$T \in \l(V,W)$ and let $S\in \l(V/V_{2},W_{2})$. Then
\[
D_{V_{1}/V_{2},W_{1},S}\circ D_{V_{2},W_{2},T}=D_{V_{1},W_{1},T+S(V,W)}.
\]
\end{prop}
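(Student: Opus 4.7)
The plan is to verify the identity by testing on characters, since both sides are linear operators on $L^2(\l(V,W))$; it therefore suffices to show that $(D_{V_1/V_2,W_1,S}\circ D_{V_2,W_2,T})[u_X]$ and $D_{V_1,W_1,T+S(V,W)}[u_X]$ agree for every $X\in\l(W,V)$. I will handle this by checking (a) that both sides vanish on $u_X$ for the same set of $X$'s, and (b) that their values coincide when non-vanishing.

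For (a), the right-hand side is non-zero on $u_X$ precisely when $\Image(X)\supseteq V_1$ and $X^{-1}(V_1)\subseteq W_1$, by the definition of $L_{V_1,W_1}$. The left-hand side requires first that $D_{V_2,W_2,T}[u_X]\ne 0$, i.e.\ $\Image(X)\supseteq V_2$ and $X^{-1}(V_2)\subseteq W_2$ (in which case, by Lemma~\ref{lem:restriction of characters}, $D_{V_2,W_2,T}[u_X]=u_X(T)\,u_Y$ where $Y:=X(W_2,V/V_2)$), and then that $D_{V_1/V_2,W_1,S}[u_Y]\ne 0$, i.e.\ $\Image(Y)\supseteq V_1/V_2$ and $Y^{-1}(V_1/V_2)\subseteq W_1$. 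The forward implication (RHS conditions $\Rightarrow$ LHS conditions) is immediate from $V_2\le V_1$ and $W_1\le W_2$ together with the definition of $Y$. The reverse implication is the main delicate step: given $w\in W$ with $Xw\in V_1$, I must show $w\in W_1$. If $Xw\in V_2$ the hypothesis $X^{-1}(V_2)\subseteq W_2$ already places $w$ in $W_2$; otherwise $Xw+V_2\in V_1/V_2$, and using $V_1/V_2\subseteq \Image(Y)$ I can find $w_2\in W_2$ with $X(w_2-w)\in V_2$, hence $w_2-w\in X^{-1}(V_2)\subseteq W_2$, forcing $w\in W_2$ in either case. Once $w\in W_2$, the condition $Y^{-1}(V_1/V_2)\subseteq W_1$ gives $w\in W_1$. (A similar, easier argument shows $V_1\subseteq\Image(X)$.)

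For (b), in the non-vanishing case I need to check that the scalar coefficients and the resulting characters match. The scalar on the left is $u_X(T)\cdot u_Y(S)$, while on the right it is $u_X(T+S(V,W))=u_X(T)u_X(S(V,W))$ by the homomorphism property; these agree since Lemma~\ref{lem:restriction of characters} gives $u_Y(S)=(u_X)_{(V_2,W_2)}(S)=u_X(S(V,W))$. The character part on the left is $u_{Y(W_1,(V/V_2)/(V_1/V_2))}$ and on the right is $u_{X(W_1,V/V_1)}$; these coincide because restricting $X$ to $W_1\le W_2$ and then projecting modulo $V_1$ is the same, under the canonical isomorphism $(V/V_2)/(V_1/V_2)\cong V/V_1$, as first restricting to $W_2$ and projecting modulo $V_2$, then further restricting to $W_1$ and projecting modulo $V_1/V_2$.

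The only subtle point in the argument is the reverse implication in (a) above; once that equivalence of support conditions is established, the rest follows from the character-level bookkeeping and the already-proven Fourier formula for restrictions in Lemma~\ref{lem:restriction of characters}.
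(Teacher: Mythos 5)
Your proposal is correct and follows essentially the same route as the paper: reduce to characters, factor out the scalars $u_X(T)$ and $u_Y(S)=u_X(S(V,W))$ via the Fourier formula for restrictions, and then verify that the support conditions for the two sides are equivalent. The inline equivalence argument you give (for $\Image(X)\supseteq V_1$ and $X^{-1}(V_1)\subseteq W_1$ versus the two-step conditions) is exactly the content of Lemma~\ref{lem:Establishing that the compositition of derivatives behaves nicely} in the paper's appendix, to which the paper delegates this step.
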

\begin{proof}
Since the derivatives are all linear operators, it is enough to prove
the proposition for characters $u_{X}$. Also, we have $D_{V_2,W_2,T}[u_X] = u_X(T) D_{V_2,W_2}[u_X]$ and $D_{V_1/V_2,W_1,S}[u_Y] = u_Y(S) D_{V_1/V_2,W_1}[u_Y]$ for $Y = X(W_1,V/V_1)$, so
\begin{align*}
D_{V_{1}/V_{2},W_{1},S}\circ D_{V_{2},W_{2},T}[u_X] &= u_{X(W_1,V/V_1)}(S)u_X(T)D_{V_{1}/V_{2},W_{1}}\circ D_{V_{2},W_{2}}[u_X]\\
&= u_{X}(S(V,W))u_X(T) D_{V_{1}/V_{2},W_{1}}\circ D_{V_{2},W_{2}}[u_X]\\
& = u_{X}(S(V,W)+T)D_{V_{1}/V_{2},W_{1}}\circ D_{V_{2},W_{2}}[u_X],
\end{align*}
whereas
$$D_{V_{1},W_{1},T+S(W,V)}[u_X] = u_X(S(V,W)+T)D_{V_1,W_1}[u_X],$$
so we may reduce to the case $T=S=0$. Now, the operator $D_{V_{1},W_{1}}$ on the right-hand side
sends the character $u_{X}$ to $u_{X(W_{1},V/V_{1})}$ if
\begin{equation}\label{eq:basic}\Image(X) \supseteq V_1,\quad X^{-1}\left(V_{1}\right)\subseteq W_{1},\end{equation}
and to zero otherwise.
The operator $D_{V_{1}/V_{2},W_{1}}\circ D_{V_{2},W_{2}}$ on the
left-hand side sends $u_{X}$ to zero unless it satisfies the following properties:
\begin{equation}\label{eq:basic2}\Image(X) \supseteq V_2,\ X^{-1}\left(V_{2}\right)\subseteq W_{2},\quad \text{and}\quad \Image(Y) \supseteq V_1/V_2,\ Y^{-1}(V_1/V_2) \subseteq W_1,\ \text{where}\ Y: = X(W_2,V/V_2),\end{equation}
in which case it sends $u_X$ to
$$(u_{X(W_2,V/V_2)})_{(V_1/V_2,W_1)} = u_{X(W_1,(V/V_2)/(V_1/V_2))} = u_{X(W_1,V/V_1)}.$$
Therefore, it suffices to show that (\ref{eq:basic}) is equivalent to (\ref{eq:basic2}). This easy linear-algebraic fact is established
in Lemma \ref{lem:Establishing that the compositition of derivatives behaves nicely}.
\end{proof}

Things get more complicated when we restrict the composition of two
Laplacians. We now show that when restricting a composition of the form $L_{V_{1}}\circ L_{W_{1}}$,
we get a sum of compositions of derivatives, i.e.\ a sum of summands of the form $D_{X}\circ D_{V_{2},W_{2}}$,
where $V_{2}$ is contained in $V_{1}$, $W_{2}$ contains
$W_{1}$, and $X\in\mathcal{L}\left(W_{2},V/V_{2}\right)$ has $W_{1}$
as its kernel and $V_{1}/V_2$ as its image. We note that there is a natural one-to-one correspondence between linear maps $X\in\mathcal{L}\left(W_{2},V/V_{2}\right)$ with $\ker(X)=W_{1}$ and $\Image(X) = V_{1}/V_2$, and linear isomorphisms $\tilde{X}\colon W_{2}/W_{1}\overset{\sim}{\to}V_{2}/V_{1}$; we use this correspondence $X \leftrightarrow \tilde{X}$ in the statement below, as a notational convenience.
\begin{prop}
\label{prop:Taking restrictions of Laplacians} Let $V_{1}\le V$
and let $W_{1}\le W$. Then
\[
\left(L_{V_{1}}\circ L_{W_{1}}[f]\right)_{(V_{1},W_{1})\to T}=\sum_{V_{2}\subseteq V_{1}}\sum_{W_{2}\supseteq W_{1}}\sum_{\tilde{X}\colon W_{2}/W_{1}\overset{\sim}{\to}V_{1}/V_{2}}D_{X}\circ D_{V_{2},W_{2},T}.
\]
\end{prop}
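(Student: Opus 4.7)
The plan is to verify the identity of operators by evaluating both sides on a character $u_Z$ for arbitrary $Z \in \l(W,V)$; since characters form a basis of $L^2(\l(V,W))$ and both sides are linear, this suffices. By the definitions of $L_{V_1}, L_{W_1}$ and Lemma~\ref{lem:restriction of characters}, the left-hand side evaluates to $u_Z(T)\, u_{Z(W_1,V/V_1)}$ when $\Image(Z) \supseteq V_1$ and $\ker(Z) \subseteq W_1$, and to $0$ otherwise. Unfolding a single summand on the right in the same way: $D_{V_2,W_2,T}[u_Z] = u_Z(T)\, u_Y$ with $Y := Z(W_2,V/V_2)$ whenever $\Image(Z) \supseteq V_2$ and $Z^{-1}(V_2) \subseteq W_2$, and then $D_X[u_Y] = u_{Z(W_1,V/V_1)}$ whenever $Y \ge X$ (using $Y(W_1,(V/V_2)/(V_1/V_2)) = Z(W_1,V/V_1)$). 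Every nonzero summand therefore contributes the same value $u_Z(T)\, u_{Z(W_1,V/V_1)}$, and the proposition reduces to showing that for each $Z$ satisfying the LHS conditions exactly one triple $(V_2,W_2,X)$ contributes, while for every other $Z$ no triple contributes.

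The non-contribution direction is short. If a triple contributes then $Y \ge X$ gives $\Image(Y) \supseteq \Image(X) = V_1/V_2$, so $Z(W_2) + V_2 \supseteq V_1$ and hence $\Image(Z) \supseteq V_1$; for any $w \in \ker(Z)$ we have $w \in Z^{-1}(V_2) \subseteq W_2$, hence $Y(w) = 0$, and the direct-sum decomposition $\Image(Y) = \Image(X) \oplus \Image(Y-X)$ (the defining property of $\ge$) forces $X(w) = 0$, so $w \in \ker(X) = W_1$. For existence, given $Z$ satisfying the LHS conditions I take
\[
V_2 = Z(W_1) \cap V_1, \qquad W_2 = W_1 + Z^{-1}(V_1),
\]
and define $X$ by $X|_{W_1} = 0$ and $X(w) = Z(w) + V_2$ for $w \in Z^{-1}(V_1)$ (well-defined because $Z(W_1 \cap Z^{-1}(V_1)) = Z(W_1) \cap V_1 = V_2$). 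The conditions $\Image(Z) \supseteq V_2$ and $Z^{-1}(V_2) \subseteq W_2$ follow from the identity $Z^{-1}(Z(W_1)) = W_1$ (which uses $\ker(Z) \subseteq W_1$), while $Y \ge X$ follows by computing $\Image(Y-X) = (Z(W_1)+V_2)/V_2$ and observing via the modular law that this meets $V_1/V_2$ only in $V_2/V_2 = 0$.

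The main obstacle is uniqueness. Let $(V_2',W_2',X')$ be any contributing triple and set $Y' := Z(W_2',V/V_2')$. Evaluating on $w \in W_1 \cap Z^{-1}(V_1)$ (where $X'(w)=0$ and $Y'(w) \in V_1/V_2' = \Image(X')$) and invoking $\Image(X') \cap \Image(Y'-X') = 0$ forces $Z(w) \in V_2'$, whence $V_2 = Z(W_1)\cap V_1 \subseteq V_2'$. Setting $W_3 := \ker(Y'-X')$, one checks $W_3 = Z^{-1}(V_1) \cap W_2'$ from the direct-sum decomposition, and $W_2' = W_1 + W_3$ from $\ker(X') + \ker(Y'-X') = W_2'$. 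Now $V_1 \subseteq \Image(Y')$ together with $V_2' \subseteq V_1$ gives $V_1 \subseteq Z(W_2')$, so $Z|_{Z^{-1}(V_1) \cap W_2'}$ is surjective onto $V_1$ with kernel $\ker(Z) \subseteq W_1 \subseteq W_2'$; hence $\dim(Z^{-1}(V_1) \cap W_2') = \dim V_1 + \dim\ker(Z) = \dim Z^{-1}(V_1)$, forcing $Z^{-1}(V_1) \subseteq W_2'$. Therefore $W_2' = W_1 + Z^{-1}(V_1) = W_2$, and comparing $\dim(W_2/W_1) = \dim(V_1/V_2)$ with $\dim(W_2'/W_1) = \dim(V_1/V_2')$ yields $V_2' = V_2$. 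Finally $X'$ is pinned down by its values on $W_1$ (where $X' = 0$) and on $Z^{-1}(V_1) \subseteq \ker(Y'-X')$ (where $X' = Y'$), which together span $W_2 = W_2'$, so $X' = X$.
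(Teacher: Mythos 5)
Your proposal is correct and follows the same route as the paper: reduce to characters $u_Z$, show both sides vanish unless $\Image(Z)\supseteq V_1$ and $\ker(Z)\subseteq W_1$, and in that case exhibit a unique contributing triple $(V_2,W_2,X)$ via the same explicit choice $V_2=Z(W_1)\cap V_1$, $W_2=W_1+Z^{-1}(V_1)$ that the paper makes in Lemma~\ref{lem:restrictiond of compositions of Laplacians}; your uniqueness argument (pinning down $W_2'$ via a dimension count through $\ker(Y'-X')$, then $V_2'$, then $X'$) is a valid, slightly reorganized version of the paper's. One step as written is too quick: you assert that $Y\ge X$ ``follows'' from $\Image(X)\cap\Image(Y-X)=\{0\}$, but that alone does not give $\rank(Y)=\rank(X)+\rank(Y-X)$ (take, e.g., $X=e_1\otimes e_1^*$ and $Y=(e_1+e_2)\otimes e_1^*$). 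You also need $\Image(X)\subseteq\Image(Y)$, equivalently $\ker(X)+\ker(Y-X)=W_2$; this does hold here since $\Image(Y)=(Z(W_2)+V_2)/V_2\supseteq V_1/V_2=\Image(X)$ (using $\Image(Z)\supseteq V_1$ and $Z^{-1}(V_1)\subseteq W_2$), or more directly $\ker(X)+\ker(Y-X)\supseteq W_1+Z^{-1}(V_1)=W_2$, but it should be stated. With that small addition the argument is complete.
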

\begin{proof}
Again it is enough to show that the equality holds in the special case where
$f=u_{Y}$ and $T=0$. This case of the proposition then follows immediately from Lemma \ref{lem:restrictiond of compositions of Laplacians}
in the Appendix. We restate it here, for the reader's convenience.
\begin{lem*}
Let $Y\in \l(W,V)$ be a linear map whose kernel is contained in $W_{1}$
and whose image contains $V_{1}$. Then there exist a unique triple $(W_2,V_2,X)$ such that
$W \geq W_{2}\geq W_{1}$, $V_{2}\leq V_{1} \leq V$, $X\in \l(W_{2}, V_{1}/V_{2})$ is a linear surjection with kernel $W_1$, $Y^{-1}(V_{2})\subseteq W_{2}$
and $X\le Y(W_{2},V/V_{2})$. 
\end{lem*}
\end{proof}
It will also be convenient for us to interchange the order of $D_{X}$ derivatives and $D_{V_{1},W_{1}}$ derivatives. This will be accomplished using the following relation.
\begin{prop}
\label{prop:switching the roles between the derivatives} Let $V_{1}\le V_{2}\le V$
and $W_{2}\le W_{1}\le W$. Suppose that $X\in\l(W,V)$ is a linear map whose
kernel is $W_{1}$ and whose image is $V_{1}$.
\begin{enumerate}
\item Let $\mathcal{F}_{1}$ be the set of subspaces $V_{3}$ of $V$ such that $V_{3}\le V_{2}$
and $V_{3}\oplus V_{1}=V_{2}$. Let $\mathcal{F}_{2}$ be the
set of subspaces $W_3$ of $W$ such that $W_{3}\ge W_{2}$ and 
\[
(W_{3}/W_{2})\oplus (W_{1}/W_{2})=W/W_{2}.
\]
 Then 
\[
D_{V_{2}/V_{1},W_{2}}\circ D_{X}=\sum_{V_{3}\in\mathcal{F}_{1},W_{3}\in\mathcal{F}_{2}}D_{X(W_{3},V/V_{3})}\circ D_{V_{3},W_{3}}.
\]
\item More generally, let $T\in\l(V/V_{1},W_{1})$ and $S\in\l(V,W)$ be linear maps.
Then 
\[
D_{V_{2}/V_{1},W_{2},T}\circ D_{X,S}=\sum_{V_{3}\in\mathcal{F}_{1},W_{3}\in\mathcal{F}_{2}}D_{X(W_{3},V/V_{3})}\circ D_{V_{3},W_{3},S+T(W,V)}.
\]
\end{enumerate}
\end{prop}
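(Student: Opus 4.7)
The plan is as follows. By linearity of both sides in $f$, it suffices to verify the identity applied to a single character $u_Y$ for an arbitrary $Y \in \l(W, V)$.

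First, I would reduce part (2) to part (1) by tracking the scalar factors introduced by the shifts. Since $D_{X, S}[u_Y] = u_Y(S)\, D_X[u_Y]$ and $D_{V_2/V_1, W_2, T}[u_Z] = u_Z(T)\, D_{V_2/V_1, W_2}[u_Z]$ for $Z \in \l(W_1, V/V_1)$, the LHS of (2) applied to $u_Y$ differs from that of (1) by the scalar $u_Y(S)\cdot u_{Y(W_1, V/V_1)}(T)$, while each RHS summand differs by $u_Y(S + T(W, V)) = u_Y(S)\cdot u_Y(T(W, V))$. These factors agree by the trace identity $\Tr(Y(W_1, V/V_1)\cdot T) = \Tr(Y\cdot T(W, V))$, a routine computation analogous to Lemma \ref{Lem:Traces}.

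For part (1), I would unpack both sides on $u_Y$ using the Fourier formulas. The LHS equals $u_{Y(W_2, V/V_2)}$ when (i) $Y \ge X$, (ii) $V_2 \subseteq \Image(Y)$, and (iii) $Y^{-1}(V_2) \cap W_1 \subseteq W_2$, and vanishes otherwise. For each $(V_3, W_3) \in \mathcal{F}_1 \times \mathcal{F}_2$, a short linear-algebra computation using the defining conditions on $\mathcal{F}_1, \mathcal{F}_2$ shows that $X(W_3, V/V_3)$ has image $V_2/V_3$ and kernel $W_2$; hence the corresponding summand on the RHS equals $u_{Y(W_2, V/V_2)}$ when (a) $V_3 \subseteq \Image(Y)$, (b) $Y^{-1}(V_3) \subseteq W_3$, and (c) $Y(W_3, V/V_3) \ge X(W_3, V/V_3)$, and vanishes otherwise.

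The remaining task, which I expect to be the main obstacle, is to establish the combinatorial claim that for each $Y$ the number of pairs in $\mathcal{F}_1 \times \mathcal{F}_2$ satisfying (a)--(c) equals $1$ when (i)--(iii) hold, and $0$ otherwise. Assuming (i)--(iii), set $Z := Y - X$, so that $Y = X \oplus Z$ in the sense of the poset on $\l(W,V)$; let $V_1' := \Image(Z)$ and $W_1' := \ker(Z)$, so that $V_1 \oplus V_1' = \Image(Y)$, $W_1 + W_1' = W$, and $W_1 \cap W_1' = \ker(Y) \subseteq W_2$ (the last inclusion from (iii), since $\ker(Y) \subseteq Y^{-1}(V_2) \cap W_1$). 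The unique contributing pair is
\[
V_3 := V_2 \cap V_1', \qquad W_3 := W_2 + W_1'.
\]
Membership in $\mathcal{F}_1 \times \mathcal{F}_2$ and (a)--(c) follow by direct verification, with (iii) used crucially to obtain (b). For uniqueness, any other $V_3 \in \mathcal{F}_1$ not contained in $V_1'$ would introduce a nonzero intersection between $\Image(X(W_3, V/V_3)) = V_2/V_3$ and $\Image(Z(W_3, V/V_3))$ in $V/V_3$, violating (c); similarly (c) pins down $W_3$ through the dual requirement $\ker(X(W_3, V/V_3)) + \ker(Z(W_3, V/V_3)) = W_3$. The delicate part is carrying out this uniqueness argument cleanly, via a dimension count that matches the splitting $Y = X \oplus Z$ against the complementarity conditions defining $\mathcal{F}_1$ and $\mathcal{F}_2$.
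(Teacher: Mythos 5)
Your proposal is correct and follows essentially the same approach as the paper: reduce to characters and to part (1), then show that for each $Y$ exactly one pair $(V_3,W_3)$ contributes on the right-hand side (namely $V_3=\Image(Y-X)\cap V_2$ and $W_3=\ker(Y-X)+W_2$), precisely when the left-hand-side conditions (i)--(iii) hold. The linear-algebraic existence/uniqueness claim you sketch is exactly what the paper delegates to Lemmas \ref{Lem: swapping places 2} and \ref{lem:switching roles between derivatives lin-alg part} in the Appendix.
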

\begin{proof}
As in the previous propositions it is enough to prove the lemma for
characters $u_{Y}$ and to only prove (1) (where $T=S=0$). The proposition
now follows from Lemmas
\ref{Lem: swapping places 2} and \ref{lem:switching roles between derivatives lin-alg part} in the Appendix. 
\end{proof}

\subsection{The $\mathcal{E}_{v}$ operator.}

Recall that in the product-space setting, we defined the Laplacian via the
formula $f-E_{i}[f]$. There is no completely straightforward way to generalise
this notion. In fact, there is no completely obvious way of generalising the averaging
operator $E_{i}[f]$. Indeed, given a linear map $A$, one cannot simply
change its value on a vector $v$ without affecting its values on
other vectors. A possible attempt to generalise the Laplacian is to
complete $v$ to a basis $v=v_{1},v_{2},\ldots,v_{n}$ of $V$, to
leave the value of $v_{i}$ as it is for all $i\ge2$, while resampling
the value of $v$. The problem with this approach is that different
choices of the vectors $v_{2},\ldots,v_{n}$ yield different operators.
Our combinatorial version of the Laplacian is the average of all
such operators.
\begin{defn}
Given a subspace $V'\leq V$, we define a linear operator $\mathfrak{e}_{V/V'}:L^2(\l(V,W)) \to L^2(\l(V,W))$ by
\[
(\mathfrak{e}_{V/V'}[f])(A):=\underset{\mathbf{B}\sim\l(V/V',W)}{\mathbb{E}}f(A+\mathbf{B}(V,W))\quad \forall A \in \l(V,W),
\]
where the expectation is (as the notation suggests) over a uniform random element of $\l(V/V',W)$. 
We also define, for $v \in V$ with $v\neq 0$, a linear operator $\mathcal{E}_v:L^2(\l(V,W)) \to L^2(\l(V,W))$ by
$$\mathcal{E}_{v}[f]:=\underset{\mathbf{V'} \notni v}{\mathbb{E}}[\mathfrak{e}_{V/\mathbf{V}'}[f]],$$
where the expectation is over a uniformly random subspace $\mathbf{V}'\subseteq V$ of codimension one, that does not contain $v$. 
If $U$ is the one-dimensional subspace spanned by $v$, then we may write $\mathcal{E}_{U}$
in place of $\mathcal{E}_{v}$. (The operator $\mathcal{E}_{U}$ is easily seen to be independent
of the choice of the generator $v$.) For $v \in V \setminus \{0\}$, we define the {\em combinatorial Laplacian} $\frak{L}_v$ by
$$\mathfrak{L}_v[f] =f-\mathcal{E}_v[f]\quad \forall f \in L^2(\l(V,W)).$$
\end{defn}
We note that the combinatorial Laplacian $\mathfrak{L}_v$ is the Laplacian of the Markov chain on $\l(V,W)$ where at each step, we replace a matrix $A$ with $A+\mathbf{B}(V,W)$, where $\mathbf{B}$ is a uniform random element of $\l(V/\mathbf{V'},W)$ and $\mathbf{V'}$ is a uniform random codimension-one subspace of $V$ that does not contain $v$ (the random choices being independent of all previous steps).
\begin{lem}
\label{lem:EVV'}
For any $X \in \l(W,V)$, we have
$$\mathfrak{e}_{V/V'}[f]=\sum_{X:\,\Image(X)\subseteq V'}\hat{f}(X)u_{X}.$$
\end{lem}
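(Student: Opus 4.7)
The plan is to reduce to characters by linearity and then recognise $\mathfrak{e}_{V/V'}$ as an averaging operator whose kernel in the Fourier basis is controlled by classical character orthogonality over a subgroup.

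Since $\mathfrak{e}_{V/V'}$ and the Fourier expansion are both linear, it suffices to compute $\mathfrak{e}_{V/V'}[u_Y]$ for each character $u_Y$ (with $Y\in\l(W,V)$) and show that it equals $u_Y$ when $\Image(Y)\subseteq V'$ and $0$ otherwise. Using $u_Y(A+B)=u_Y(A)u_Y(B)$, I would pull $u_Y(A)$ out of the expectation to get
\[
(\mathfrak{e}_{V/V'}[u_Y])(A)=u_Y(A)\cdot \mathbb{E}_{\mathbf{B}\sim \l(V/V',W)}\bigl[u_Y(\mathbf{B}(V,W))\bigr].
\]
The map $\mathbf{B}\mapsto \mathbf{B}(V,W)$ is a linear bijection from $\l(V/V',W)$ onto the subgroup
\[
H:=\{A\in\l(V,W):\ker(A)\supseteq V'\}\le \l(V,W),
\]
so $\mathbf{B}(V,W)$ is uniform on $H$ and the expectation above equals $\mathbb{E}_{A\in H}[u_Y(A)]$.

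The standard averaging argument over an abelian subgroup (identical to the one used in the proof that the $u_X$ form an orthonormal basis) then shows that this expectation is $1$ if $u_Y|_H\equiv 1$ and $0$ otherwise, so everything boils down to the characterisation: $u_Y$ is trivial on $H$ if and only if $\Image(Y)\subseteq V'$. The ``if'' direction is the easy half: if $\Image(Y)\subseteq V'$, then for any $A\in H$ the composition $AY$ satisfies $\Image(AY)=A(\Image(Y))\subseteq A(V')=0$, so $AY=0$, hence $\Tr(YA)=\Tr(AY)=0$ and $u_Y(A)=\omega^{\tau(0)}=1$.

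For the converse I have two options and would take whichever reads more cleanly. The slick option is a counting argument: the subgroup of $\l(V,W)$-characters that are trivial on $H$ has size $|\l(V,W)|/|H|=q^{\dim(W)\dim(V')}$, which is exactly the number of $Y\in\l(W,V)$ with $\Image(Y)\subseteq V'$, so the two groups of characters coincide. Alternatively, one can argue directly: if $\Image(Y)\not\subseteq V'$, pick $w_0\in W$ with $Yw_0\notin V'$, fix a complement $V_0$ to $V'$ and use the non-triviality of $\tau$ on $\mathbb{F}_q$ to construct $A\in H$ with $\tau(\Tr(AY))\ne 0$ (this is essentially the same ``shift by a witness'' argument already used earlier in the excerpt to prove that the $u_X$ are all distinct). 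Either way, combining this identification with the first paragraph yields
\[
\mathfrak{e}_{V/V'}[u_Y]=\mathbf{1}_{\{\Image(Y)\subseteq V'\}}\,u_Y,
\]
and summing against $\hat f(Y)$ gives exactly the stated formula. No step looks like a serious obstacle; the only mild subtlety is keeping straight the bijection $\mathbf{B}\leftrightarrow \mathbf{B}(V,W)$ between $\l(V/V',W)$ and $H$, which is where the ``$(V,W)$'' notational convention earns its keep.
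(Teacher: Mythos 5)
Your proof is correct and follows essentially the same route as the paper's: reduce to characters, pull out $u_X(A)$, and evaluate $\mathbb{E}_{\mathbf{B}}[u_X(\mathbf{B}(V,W))]$ as an average of a character over the subgroup $H$. The paper finishes the $\Image(X)\not\subseteq V'$ case by observing that $B\mapsto\tau(\Tr(BX))$ is a nontrivial (hence surjective) homomorphism to $\mathbb{F}_p$, which is exactly the content of the ``standard averaging argument'' you invoke, so the two write-ups coincide up to phrasing.
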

\begin{proof}
Let $X\in\l(W,V)$. We have 
\[
\mathfrak{e}_{V/V'}\left[u_{X}\right]=\underset{\mathbf{B}\sim\l(V/V',W)}{\mathbb{E}}\Delta_{B}\left(u_{X}\right)=\left(\underset{\mathbf{B}\sim\l(V/V',W)}{\mathbb{E}}[u_{X}(\mathbf{B}(V,W))]\right)u_{X}.
\]
 Now if $\Image(X)\subseteq V'$, then $u_{X}(B)=\omega^{\tau(\Tr\left(XB\right))} = \omega^{\tau(\Tr(BX))}$
is identically 1 on $\{B(V,W):\ B \in \l(V/V',W)\}$, as $B(V,W)X=0$ for all $B \in \l(V/V',W)$. On the other hand, if
$\Image(X)$ is not contained in $V'$, then the map $B\mapsto\tau(\Tr(BX))$
is a nontrival (and therefore surjective) homomorphism from $\l(V/U,W)$ to $\mathbb{F}_{p}$.
Therefore, its image is uniformly distributed on $\mathbb{F}_{p}$,
and so
\[
\underset{\mathbf{B}\sim\l(V/V',W)}{\mathbb{E}}[u_{X}(\mathbf{B}(V,W))]=\underset{a \in \mathbb{F}_p} {\mathbb{E}}[\omega^{a}]=0.
\]
\end{proof}
By averaging the above, we obtain the following.
\begin{lem}
\label{lem:Fourier formula for combinatorial Laplacian}
For any $f \in L^2(\l(W,V))$ and $v \in V \setminus \{0\}$, we have
\[
\mathcal{E}_{v}[f]=\sum_{X \in \l(W,V):\ v \notin \Image(X)} q^{-\rank(X)} \hat{f}(X)u_X.
\]
\end{lem}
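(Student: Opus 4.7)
The plan is to apply Lemma \ref{lem:EVV'} to the inner operator $\mathfrak{e}_{V/\mathbf{V}'}$ in the definition of $\mathcal{E}_v$, swap expectation and sum by linearity, and then reduce to a counting problem about codimension-one subspaces. Concretely, by definition we have
\[
\mathcal{E}_v[f] \;=\; \mathbb{E}_{\mathbf{V}'\not\ni v}\bigl[\mathfrak{e}_{V/\mathbf{V}'}[f]\bigr]
\;=\; \mathbb{E}_{\mathbf{V}'\not\ni v}\Bigl[\sum_{X\in\l(W,V):\,\Image(X)\subseteq\mathbf{V}'}\hat{f}(X)u_X\Bigr]
\;=\; \sum_{X\in\l(W,V)}\hat{f}(X)\,u_X\cdot p_X,
\]
where $p_X := \Pr_{\mathbf{V}'\not\ni v}[\Image(X)\subseteq\mathbf{V}']$ and $\mathbf{V}'$ ranges uniformly over the codimension-one subspaces of $V$ that do not contain $v$. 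Thus the whole statement reduces to showing that $p_X=0$ when $v\in\Image(X)$, and $p_X=q^{-\rank(X)}$ otherwise.

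The first case is immediate: if $v\in\Image(X)$ and $\Image(X)\subseteq\mathbf{V}'$, then $v\in\mathbf{V}'$, contradicting the hypothesis on $\mathbf{V}'$. For the second case, suppose $v\notin\Image(X)$ and set $n:=\dim V$ and $r:=\rank(X)$. Codimension-one subspaces of $V$ correspond (via $V'=\ker\phi$) to nonzero functionals $\phi\in V^*$ modulo scaling by $\mathbb{F}_q^\times$. The subspaces not containing $v$ correspond to $\phi$ with $\phi(v)\neq 0$, and there are $(q^n-q^{n-1})/(q-1)=q^{n-1}$ of these. The subspaces that additionally contain $\Image(X)$ correspond to $\phi\in\Image(X)^\circ$ with $\phi(v)\neq 0$; since $v\notin\Image(X)$, the evaluation-at-$v$ functional restricted to $\Image(X)^\circ$ is nonzero, so exactly $q^{n-r}-q^{n-r-1}$ such $\phi$ exist, giving $q^{n-r-1}$ subspaces.

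Taking the ratio yields $p_X=q^{n-r-1}/q^{n-1}=q^{-r}=q^{-\rank(X)}$, and substituting back into the Fourier expansion above gives the claimed formula. The only step requiring care is the counting of hyperplanes not containing $v$ that contain $\Image(X)$, which is a standard dimension count in $V^*$; no further obstacles are expected.
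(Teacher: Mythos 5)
Your proof is correct, and it takes a genuinely different route from the paper's. Both proofs begin identically, by applying Lemma \ref{lem:EVV'} and reducing the statement to computing $p_X=\Pr_{\mathbf{V}'\not\ni v}[\Image(X)\subseteq\mathbf{V}']$. From there the paper argues \emph{inductively}: it first establishes the rank-one base case (for any $v'\notin\Span(v)$, exactly one of $\{v'+\alpha v:\alpha\in\mathbb{F}_q\}$ lands in $\mathbf{V}'$, so $\Pr[v'\in\mathbf{V}']=1/q$), then picks a basis of $\Image(X)$ and chains conditional probabilities, arguing that each basis vector has conditional probability $1/q$ of lying in $\mathbf{V}'$ given the previous ones do, by passing to a quotient. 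You instead give a \emph{direct count} in the dual space: you parametrize hyperplanes by nonzero functionals up to scaling, observe that hyperplanes avoiding $v$ are those with $\phi(v)\neq 0$ (of which there are $q^{n-1}$), that those additionally containing $\Image(X)$ are parametrized by $\phi\in\Image(X)^\circ$ with $\phi(v)\neq 0$, and use $v\notin\Image(X)$ to see the evaluation-at-$v$ map is a nonzero (hence surjective) functional on $\Image(X)^\circ$, yielding $q^{n-r-1}$ such hyperplanes; the ratio gives $q^{-r}$ in one step. Your approach is cleaner and avoids the slightly hand-wavy reduction to a quotient space in the paper's higher-rank step, at the price of invoking the annihilator/double-annihilator machinery; the paper's argument is more elementary but multi-stage. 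You also explicitly dispatch the case $v\in\Image(X)$ (where $p_X=0$), which the paper leaves implicit. Both are valid.
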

\begin{proof}
By Lemma \ref{lem:EVV'}, we only need to prove that if $v \notin \Image(X)$, then the probability
that $\Image(X)$ is contained in a random codimension-one
subspace $\mathbf{V'}$ not containing $v$, is equal to $q^{-\rank(X)}$.

\subsection*{Settling the case where $\protect\rank\left(X\right)=1$}

Choose a uniformly random subspace $\mathbf{V'}$ of $V$ with codimension one, that does not contain
$v$. First, we assert that for any $v'\notin\mathrm{Span}(v)$ we
have $\Pr[v'\in\mathbf{V'}]=\frac{1}{q}$. This will follow once we
show that $\mathbf{V'}$ contains exactly one element of the set $\left\{v'+\alpha v:\ \alpha \in \mathbb{F}_q\right\}$. To see this, simply choose a linear functional $\varphi\colon V\to\mathbb{F}_{q}$ with $\ker(\varphi) =\mathbf{V}'$ and $\varphi\left(v\right)=1$, and note that $\varphi\left(v'+\alpha v\right)$ is equal to zero for exactly one value of $\alpha \in \mathbb{F}_q$.

\subsection*{Settling the case of higher rank}

For $X$ of rank greater than one, we choose a basis $u_{1},\ldots,u_{d}$ of $\mathrm{rank(X)}$,
and we finish the proof by noting that that the probability that $u_{i}\in\mathbf{V}'$
given that the vectors $u_{1},\ldots,u_{i-1}$ are already in $\mathbf{V'}$
is still $1/q$. Indeed, this follows by appealing to case where the
rank is $1$ when inside the space $\mathcal{L}\left(V/U,W\right)$,
where $U=\mathrm{Span}\left(u_{1},\ldots,u_{d-1}\right).$
\end{proof}

\subsection{The dual operators $\mathcal{E}_{W'}$}

For $f\colon\mathcal{L}\left(V,W\right)\to\mathbb{C}$, we define $f^{*}\colon\mathcal{L}\left(W^{*},V^{*}\right) \to \mathbb{C}$, as usual,
by $f^{*}\left(A\right)=f\left(A^{*}\right)$ for each $A \in \l(W^*,V^*)$.

Given a subspace $W'\leq W$ of codimension $1$, we define the linear operator $\mathcal{E}_{W'}:L^2(\l(V,W)) \to L^2(\l(V,W))$
as follows. We let $\varphi \in W^*$ with $\varphi\ne0$ and $\varphi\left(W'\right)=0$,
and set 
\[
\mathcal{E}_{W'}\left[f\right]=\left(\mathcal{E}_{\varphi}\left[f^{*}\right]\right)^{*}\quad \forall f \in L^2(\l(V,W)).
\]
Dually to Lemma \ref{lem:Fourier formula for combinatorial Laplacian},
we obtain
\begin{lem}
\label{lem:lw formula}
For any $f \in L^2(\l(V,W))$ and any codimension-one subspace $W'$ of $W$, we have
\[
\mathcal{E}_{W'}[f]=\sum_{X\in \l(W,V):\,\mathrm{Ker}(X)+W'=W}q^{-\mathrm{rank}(X)}\hat{f}(X)u_{X}.
\]
\end{lem}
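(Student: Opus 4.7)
The plan is to deduce the formula by dualising Lemma \ref{lem:Fourier formula for combinatorial Laplacian}, exploiting the fact that $\mathcal{E}_{W'}[f] = (\mathcal{E}_{\varphi}[f^{*}])^{*}$ for any nonzero $\varphi \in W^{*}$ with $\ker(\varphi)=W'$. The key preparatory step is to check how the Fourier transform interacts with the $*$-operation. For $X \in \l(W,V)$ the dual $X^{*}$ lies in $\l(V^{*},W^{*})$, and a direct change of variables $B = A^{*}$, together with the standard identity $\Tr(X^{*}B^{*}) = \Tr((XB)^{*}) = \Tr(XB)$, yields
\[
\widehat{f^{*}}(X^{*}) = \hat{f}(X), \qquad u_{X^{*}}(A^{*}) = u_{X}(A),
\]
for all $A \in \l(V,W)$. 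These two identities will turn the Fourier expansion of $\mathcal{E}_{\varphi}[f^{*}]$ (viewed as a function on $\l(W^{*},V^{*})$) back into a Fourier expansion of a function on $\l(V,W)$.

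Next I would apply Lemma \ref{lem:Fourier formula for combinatorial Laplacian} to $f^{*}$ with the vector $\varphi \in W^{*}$, then reindex the sum using $X = (X')^{*}$ for $X' \in \l(V^{*},W^{*})$. Since $\rank(X^{*}) = \rank(X)$, the coefficient $q^{-\rank(X^{*})}$ becomes $q^{-\rank(X)}$, and the identities above show that the summand becomes $q^{-\rank(X)}\hat{f}(X)u_{X}$ after applying $(\cdot)^{*}$. The only remaining task is to translate the support condition $\varphi \notin \Image(X^{*})$ into a condition on $X$.

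For this translation I use the standard duality $\Image(X^{*}) = \ker(X)^{\circ}$ (the annihilator of $\ker(X)$ in $W^{*}$). Then $\varphi \notin \Image(X^{*})$ is equivalent to $\varphi$ not vanishing on $\ker(X)$, i.e.\ $\ker(X) \not\subseteq W'$. As $W'$ has codimension one, this is the same as saying $\ker(X) + W' = W$. Substituting this condition into the reindexed sum produces exactly
\[
\mathcal{E}_{W'}[f] = \sum_{X \in \l(W,V):\,\ker(X)+W'=W} q^{-\rank(X)} \hat{f}(X) u_{X},
\]
as required. I expect no real obstacle: the only delicate point is keeping track of the duality bookkeeping ($X \leftrightarrow X^{*}$, $A \leftrightarrow A^{*}$, $\Image \leftrightarrow \ker^{\circ}$), but each ingredient is a standard elementary identity.
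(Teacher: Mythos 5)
Your proposal is correct, and it follows exactly the route the paper intends (the paper merely states ``Dually to Lemma~\ref{lem:Fourier formula for combinatorial Laplacian}, we obtain'' and omits the details). You have correctly identified the necessary duality identities $\widehat{f^{*}}(X^{*}) = \hat{f}(X)$ and $u_{X^{*}}(A^{*})=u_{X}(A)$ (both following from $\Tr(X^{*}A^{*})=\Tr((AX)^{*})=\Tr(XA)$), and the translation $\Image(X^{*}) = \ker(X)^{\circ}$ so that $\varphi\notin\Image(X^{*})$ is equivalent to $\ker(X)\not\subseteq W'$, which for a codimension-one $W'$ is the same as $\ker(X)+W'=W$.
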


\section{Our conditional hypercontractive inequality for global functions on $\l(V,W)$:
statement, and overview of the proof.}

We are now ready to state our `conditional hypercontractive inequality' for functions on $\l(V,W)$, which is
a clear analogue of Theorem \ref{thm:hyp-prod}.
\begin{thm}
\label{thm:hyp in BS} There exists an absolute constant $C>0$ such
that the following holds. Let $f:\l(V,W) \to \mathbb{C}$ be a function of degree at most $d$. Then
\[
\|f\|_{4}^{4}\le q^{Cd^{2}}\sum_{V_1 \leq V,\ W_1 \leq W:\atop \dim(V_{1})+\mathrm{codim}(W_{1})\le d}\mathbb{E}_{T\sim\l(V,W)}[I_{V_{1},W_{1},T}[f]^{2}].
\]
\end{thm}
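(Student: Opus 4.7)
The plan is to follow the strategy of the product-space proof (Theorem \ref{thm:hyp-prod}): establish an inductive ``degree-reduction'' proposition and then iterate it, using the fact that both the hybrid derivatives $D_{V_1,W_1,T}$ and the $X$-derivatives $D_{X,T}$ strictly reduce the degree (Lemmas \ref{lem:derivatives}, \ref{lem:derivatives-2}). The target proposition states that for any $f$ of degree at most $d$,
\[
\|f\|_4^4 \;\le\; q^{C_1 d^2}\|f\|_2^4 \;+\; q^{C_1 d}\!\!\sum_{\substack{V_1\le V,\ W_1\le W \\ \dim V_1+\codim W_1 \ge 1}}\!\!\|L_{V_1,W_1}[f]\|_4^4 \;+\; q^{C_1 d}\!\sum_{X\ne 0}\|L_X[f]\|_4^4,
\]
where both kinds of Laplacian terms appear because, as foreshadowed by Proposition \ref{prop:Taking restrictions of Laplacians}, restrictions of the plain compositions $L_{V_1}\circ L_{W_1}$ decompose into sums of both $D_{V_2,W_2,T}$ and $D_{X,T}$ operators.

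To prove the proposition, I would start with Parseval, $\|f\|_4^4 = \sum_{Y\in\l(W,V)}|\widehat{f^2}(Y)|^2$, and expand $\widehat{f^2}(Y) = \sum_{X_1+X_2=Y}\hat{f}(X_1)\hat{f}(X_2)$ using $u_{X_1}u_{X_2}=u_{X_1+X_2}$. For each $Y$ the pairs $(X_1,X_2)$ summing to $Y$ split into a ``Boolean-type'' family $\mathcal{F}_1(Y)$ of pairs with $\rank(X_1)+\rank(X_2)=\rank(Y)$ (equivalently $X_1\le Y$ in the poset of Subsection~3.7) and the complementary ``Laplacian-covered'' family $\mathcal{F}_2(Y)$ of pairs with $\Image(X_1)\cap\Image(X_2)\ne 0$ or $\ker(X_1)+\ker(X_2)\ne W$. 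For $\mathcal{F}_1(Y)$, Cauchy--Schwarz gives a bound of $|\mathcal{F}_1(Y)|\cdot\sum|\hat{f}(X_1)|^2|\hat{f}(X_2)|^2$, and a direct count of direct-sum decompositions $Y=X_1\oplus X_2$ gives $|\mathcal{F}_1(Y)| = \sum_{k}\binom{\rank(Y)}{k}_q q^{k(\rank(Y)-k)}\le q^{O(d^2)}$; summing over $Y$ (each pair lies in a unique $\mathcal{F}_1(Y)$) yields a total contribution of at most $q^{O(d^2)}\|f\|_2^4$. For $\mathcal{F}_2(Y)$, I would run an inclusion--exclusion over the possible shared image subspace $V_1\subseteq\Image(X_1)\cap\Image(X_2)$ and shared kernel-complement subspace $W_1\supseteq\ker(X_1)+\ker(X_2)$, refined by the poset structure on $\l(W,V)$ to separate the $L_{V_1,W_1}$-covered from the genuinely $L_X$-covered cases; this expresses the $\mathcal{F}_2$ contribution as a signed combination of $\widehat{(L_{V_1,W_1}[f])^2}(Y)$ and $\widehat{(L_X[f])^2}(Y)$. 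A final Cauchy--Schwarz step (analogous to the $(2|S|)^{|T|}$ estimate in Proposition \ref{prop:Inductive lemma }) absorbs the inclusion--exclusion coefficients at a cost of a $q^{O(d)}$ factor per Laplacian norm.

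With the degree-reduction proposition established, I would induct on $d$ (the base case $d=0$ being trivial). Using Proposition \ref{prop:Taking restrictions of Laplacians} and the analogous restriction identity for $L_X$, we have $\|L_{V_1,W_1}[f]\|_4^4 = \mathbb{E}_{T\sim\l(V,W)}\|D_{V_1,W_1,T}[f]\|_4^4$ and $\|L_X[f]\|_4^4 = \mathbb{E}_T\|D_{X,T}[f]\|_4^4$; since $D_{V_1,W_1,T}[f]$ has degree $\le d-(\dim V_1+\codim W_1)$ and $D_{X,T}[f]$ has degree $\le d-\rank(X)$, both strictly less than $d$, the inductive hypothesis applies to each. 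Then, using Proposition \ref{prop:composition of derivatives} to merge iterated hybrid derivatives into a single $D_{V_1',W_1',T'}$, and Proposition \ref{prop:switching the roles between the derivatives} to rewrite any mixed $D_{V,W}\circ D_X$ compositions purely in hybrid form, the telescoping sum collapses to $\sum_{\dim V_1+\codim W_1\le d}\mathbb{E}_T[I_{V_1,W_1,T}[f]^2]$, with the total coefficient $q^{Cd^2}$ coming from summing the geometric series in $d$.

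The main obstacle is the inclusion--exclusion for the $\mathcal{F}_2$ pairs. Unlike the product-space case, the hybrid Laplacian $L_{V_1,W_1}$ enforces the \emph{stronger} condition $X^{-1}(V_1)\subseteq W_1$ (and not merely $\Image(X)\supseteq V_1$ together with $\ker(X)\subseteq W_1$), so the $L_{V_1,W_1}$'s alone cannot cover all non-Boolean pairs --- the $X$-Laplacians are precisely what picks up the slack, which is why the poset structure and the $X$-Laplacians were introduced in Section~3. Organising the inclusion--exclusion so that each pair in $\mathcal{F}_2$ is accounted for exactly once, while keeping all coefficients bounded by $q^{O(d)}$ per order so that the final exponent is the sharp $q^{Cd^2}$ (and not $q^{Cd^3}$ or worse), will be the delicate technical step.
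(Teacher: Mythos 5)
Your high-level strategy (Parseval expansion of $\widehat{f^2}$, a Boolean/Laplacian split of the pairs, a degree-reduction lemma, induction on the degree using Propositions \ref{prop:Taking restrictions of Laplacians}, \ref{prop:composition of derivatives} and \ref{prop:switching the roles between the derivatives}) matches the paper, and your direct Cauchy--Schwarz count of direct-sum decompositions of $Y$ is a perfectly serviceable substitute for the paper's Boolean-encoding-plus-Bonami argument for the $\mathcal{F}_1$ terms. But there are two genuine gaps.

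The first, and main, gap is in the degree-reduction lemma. You propose that the $\mathcal{F}_2$ contribution be expressed by inclusion--exclusion over the hybrid Laplacians $L_{V_1,W_1}$ together with the $X$-Laplacians $L_X$; this is not what the paper does, and it is not clear it can be made to work. The paper's degree-reduction lemma (Lemma \ref{lem:hyp inductive step}) is stated in terms of the compositions $L_{V_1}\circ L_{W_1}$ of the \emph{simple} Laplacians, whose support is the ``box'' $\{Z:\Image(Z)\supseteq V_1\}\cap\{Z:\ker(Z)\subseteq W_1\}$: for a fixed pair $(X_1,X_2)$, the set of $(V_1,W_1)$ for which both $X_1,X_2$ lie in this box is a \emph{product} $\{V_1\subseteq\Image(X_1)\cap\Image(X_2)\}\times\{W_1\supseteq\ker(X_1)+\ker(X_2)\}$, and this product structure is exactly what makes the inclusion--exclusion of Lemma \ref{lem:F2 helper} clean. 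For the hybrid $L_{V_1,W_1}$ the membership condition is $\Image(Z)\supseteq V_1$ \emph{and} $Z^{-1}(V_1)\subseteq W_1$, so the admissible $W_1$'s depend on $V_1$ (and on $X_1,X_2$), and the admissible $(V_1,W_1)$'s no longer form a product; consequently there is no obvious inclusion--exclusion. You correctly notice that the hybrids alone cannot cover $\mathcal{F}_2$ and assert that $L_X$ ``picks up the slack'', but you do not produce the scheme, and the supports $\{Z:Z\ge X\}$ have yet another shape, making it unclear how they can be combined with hybrids to represent $1_{\mathcal{F}_2}$ with coefficients controlled by $q^{O(d)}$ per order. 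In the paper the hybrid and $X$-Laplacians never appear in the degree-reduction step at all; they enter only at the restriction step, where Proposition \ref{prop:Taking restrictions of Laplacians} decomposes the restriction of $L_{V_1}\circ L_{W_1}$ into sums of \emph{compositions} $D_X\circ D_{V_2,W_2,T}$.

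The second gap is in the inductive step. You claim that Proposition \ref{prop:switching the roles between the derivatives} lets one rewrite mixed compositions $D_{V,W}\circ D_X$ ``purely in hybrid form''. It does not: that proposition only interchanges the order, producing sums of compositions $D_{X'}\circ D_{V_3,W_3}$, so an $X$-derivative is still present. The paper needs a dedicated step to remove these --- namely Lemma \ref{lem:getting rid of D_x 2-norms} and Corollary \ref{cor:getting rid of D_X 4-norms}, which bound $\sum_X q^{-4d\rank(X)}\|D_X[g]\|_2^4$ by $2\|g\|_2^4$ using the poset structure --- and without an analogue of this the induction does not close. This step is entirely absent from your sketch.
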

A crucial property of the statement of this theorem is that there
is no dependency on the dimensions of $V$ and $W$, anywhere in the
statement. Another nice feature of the statement is that it involves
only the generalized influences, which are defined in terms of the
restrictions of the `hybrid' Laplacians $L_{V_{1},W_{1}}$. (We note that, even though the
$X$-Laplacians $L_{X}$ play a crucial role in the proof, they do not appear in the theorem's statement.)

\subsection{Overview of proof of Theorem \ref{thm:hyp in BS} }

Proposition \ref{prop:Inductive lemma }, our `degree-reduction lemma' regarding functions on $\mathbb{F}_{p}^{n}$,
took the form 
\[
\|f\|_{4}^{4}\le C^{d}\|f\|_{2}^{2}+\sum_{S\subseteq\left[n\right]:\,S\ne\varnothing}\left(Cd\right)^{\left|S\right|}\|L_{S}\left[f\right]\|_{4}^{4},
\]
 for any function $f:\mathbb{F}_p^n \to \mathbb{C}$ of degree at most $d$. Our first step is to prove an $\l(V,W)$-analogue of this degree-reduction lemma, of the form
\[
\|f\|_{4}^{4}\le q^{Cd^{2}}\|f\|_{2}^{2}+\sum_{V_{1}\subseteq V,W_{1}\subseteq W:\,\left(V_{1},W_{1}\right)\ne\left(0,W\right)}q^{Cd\left(\dim\left(V_{1}\right)+\codim\left(W_{1}\right)\right)}\|L_{V_{1}}\circ L_{W_{1}}\left[f\right]\|_{4}^{4}.
\]
 The proof of this degree-reduction lemma follows closely the pattern of the proof in the $\mathbb{F}_{p}^{n}$ setting. Clearly, we have 
\[
\reallywidehat{f^{2}}\left(X\right)=\sum_{(Y,Z)\in (\l(W,V))^2:\atop Y+Z=X}\hat{f}\left(Y\right)\hat{f}\left(Z\right)
\]
for each $X \in \l(W,V)$. Now for each $X \in \l(W,V)$, we define $\mathcal{F}_{1} = \mathcal{F}_1(X)$ to be the set of all pairs
$(Y,Z) \in (\l(W,V))^2$ such that $Y+Z=X$ and such that there exist $Y'\le Y$ and $Z'\le Z$ with $Y'\oplus Z'=X.$ 

We then let $\mathcal{F}_{2}=\mathcal{F}_2(X)$ consist of all pairs $(Y,Z) \in (\l(W,V))^2$ 
with $Y+Z=X$ and with
\[
V_{1}:=\mathrm{image}\left(Y\right)\cap\mathrm{image}\left(Z\right)\cap\mathrm{image}\left(X\right)\ne\left\{ 0\right\} \quad \text{or} \quad
W_{1}:=\mathrm{ker}\left(Y\right)+\mathrm{ker}\left(Z\right)+\mathrm{ker}\left(X\right)\ne W.
\]
 It turns out that any pair $(Y,Z)$ with $Y+Z = X$ must belong either to $\mathcal{F}_1(X)$ or to $\mathcal{F}_2(X)$. Further, the summands in the above sum arising from pairs in $\mathcal{F}_{2}(X)$ (for some $X$) correspond to terms
appearing in $\|(L_{V_1}\circ L_{W_1})\left[f\right]\|_{4}^{4}$, for some $(V_1,W_1) \neq (\{0\},W)$. And finally, the
contribution from summands arising from pairs in $\mathcal{F}_{1}(X)$ (for some $X$) can be upper-bounded
via an encoding of $f$ inside the Boolean hypercube with the uniform measure (together with an application of the classical Bonami-Beckner-Gross hypercontractive inequality for the Boolean cube, as in the product-space setting). 

To deduce our conditional hypercontractive inequality from our degree-reduction lemma, we use induction on the degree. Assuming the inequality holds for all functions of degree less than $d$, we let $f:\l(V,W) \to \mathbb{C}$ be of degree $d$. We first take the terms of the form $\|L_{V_{1}}\circ L_{W_{1}}\left[f\right]\|_{4}^{4}$ that we obtain from applying the degree-reduction lemma,
and upper-bound their $4$-norms by upper-bounding the 4-norm of each
restriction $(L_{V_1} \circ L_{W_1}[f])_{\left(V_{1},W_{1}\right)\to T}$, and then taking expectations over a random $T$. We then use Proposition \ref{prop:Taking restrictions of Laplacians} to write the restriction
as a sum of compositions of derivatives of the form $D_{X}D_{V_{1},W_{1}, T}[f]$.
We then use the triangle inequality to obtain an upper-bound in terms of the individual 4-norms $\|D_{X}D_{V_1,W_1,T}[f]\|_{4}$. We then apply the inductive hypothesis to each of the functions $D_{X} D_{V_1,W_1,T}[f]$, to obtain an upper bound involving sums of terms of the form
$\|D_{V_{2},W_{2}}D_{X}D_{V_{1},W_{1},T}[f]\|_{2}^{4}$.
We then use the relations in Section \ref{subsec:Laplacians and restrictios relations} and the triangle inequality again to upper-bound this sum in terms of a sum of compositions of derivatives of the form $\|D_{X}D_{V_{3},W_{3},T}[f]\|_{2}^{4}$.
Finally, we complete the proof of the inductive step by getting rid of the $D_{X}$-derivatives, using a simple bound.

\subsection{Small-set expansion}

The proof of the small-set expansion theorem follows a similar pattern to the proof
in $\mathbb{F}_{p}^{n}$. The only difference is that it is less clear than in the product-space setting,
how one should relate the (combinatorial) globalness of a Boolean-valued function $f \in L^2(\l(V,W))$, to (analytic) globalness in terms of small influences of the function $f^{=d}$.
This is accomplished in Section \ref{sec:Equivalence-between-globalness notions}, by means of the combinatorial Laplacians defined in the previous section.

\section{The degree-reduction lemma.}

Our goal in this section is to prove the following lemma. We write $\mathcal{V}_{i}$ for the set ${V \brack i}$ of $i$-dimensional subspaces
of $V$; we write $\mathcal{W}_{j}$ for the set ${W \brack \dim(W)-j}$
of codimension-$j$ subspaces of $W$. 
\begin{lem}
\label{lem:hyp inductive step} Let $f\colon\l(V,W)\to\mathbb{C}$
be a function of degree at most $d$. Then 
\[
\frac{1}{162}\|f\|_{4}^{4}\le q{}^{3d^{2}}\|f\|_{2}^{4}+\sum_{\left(i,j\right)\ne\left(0,0\right)}\sum_{V_{1}\in\mathcal{V}_{i},\atop W\in\mathcal{W}_{j}}q{}^{7d(i+j)}\|L_{V_{1}}\circ L_{W_{1}}f\|_{4}^{4}.
\]
\end{lem}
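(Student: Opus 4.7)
I would mirror the structure of the product-space degree-reduction lemma (Proposition \ref{prop:Inductive lemma }), substituting subspace-lattice combinatorics for subset combinatorics. By Parseval applied to $g := f \cdot \bar f$, we have $\|f\|_4^4 = \|g\|_2^2 = \sum_{X \in \l(W,V)} |\hat g(X)|^2$; expanding $f$ and $\bar f$ in Fourier characters and using $\overline{u_Z} = u_{-Z}$ gives $\hat g(X) = \sum_{Y+Z=X} \hat f(Y) \overline{\hat f(-Z)}$, and since $f$ has degree at most $d$, only pairs with $\rank(Y),\rank(Z)\le d$ (hence $\rank(X)\le 2d$) contribute. For each such $X$, I would partition the contributing pairs $(Y,Z)$ into the families $\mathcal{F}_1(X)$ and $\mathcal{F}_2(X)$ from the overview, first verifying that they jointly cover every pair: if $(Y,Z) \notin \mathcal{F}_2(X)$, then $\Image(Y)\cap\Image(Z)\cap\Image(X) = \{0\}$ and $\ker(Y)+\ker(Z)+\ker(X) = W$, and a dimension-counting argument using Proposition \ref{prop:poset} produces $Y' \leq Y$, $Z' \leq Z$ with $Y' \oplus Z' = X$, placing $(Y,Z) \in \mathcal{F}_1(X)$.

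\textbf{The $\mathcal{F}_2$ contribution.} The defining conditions of $\mathcal{F}_2(X)$ match precisely the ``survival'' conditions for the Laplacians on characters: $L_{V_1}[u_X] = u_X$ iff $\Image(X)\supseteq V_1$, and $L_{W_1}[u_X] = u_X$ iff $\ker(X)\subseteq W_1$. I would therefore perform a two-dimensional Möbius inversion on the product of the subspace lattices of $V$ and of $W$, grouping pairs $(Y,Z) \in \mathcal{F}_2(X)$ according to the minimal $V_1$ contained in $\Image(Y)\cap\Image(Z)\cap\Image(X)$ and the maximal $W_1$ containing $\ker(Y)+\ker(Z)+\ker(X)$. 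This rewrites the $\mathcal{F}_2$-contribution to $\sum_X |\hat g(X)|^2$ as a signed combination of coefficients of $L_{V_1}\circ L_{W_1}[f] \cdot \overline{L_{V_1}\circ L_{W_1}[f]}$. Applying Cauchy--Schwarz with weights $q^{-7d(i+j)/2}$ (so that the dual weight sum is absorbed by $|\mathcal{V}_i|\cdot|\mathcal{W}_j| \leq q^{Cd(i+j)}$ within the rank-$\le d$ support) and reapplying Parseval yields $\sum_{(i,j)\neq(0,0)} \sum_{V_1\in\mathcal{V}_i, W_1\in\mathcal{W}_j} q^{7d(i+j)} \|L_{V_1}\circ L_{W_1}[f]\|_4^4$.

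\textbf{The $\mathcal{F}_1$ contribution.} For $(Y,Z) \in \mathcal{F}_1(X)$, the direct-sum condition $Y'\oplus Z' = X$ forces $\rank(Y')+\rank(Z') = \rank(X) \leq d$, the exact analogue of disjointness $|T_1|+|T_2| = |S|$ for $T_1\sqcup T_2 = S$ in the Boolean cube. I would follow the product-space strategy and encode into a Boolean cube: construct $\tilde f \in L^2(\{0,1\}^N)$ for $N$ large, whose Fourier coefficient on a set $T$ of size $r$ enumerates the $L^2$-mass of the rank-$r$ Fourier component of $f$ distributed along direct-sum decompositions, arranged so that $\|\tilde f\|_2^2 \leq \|f\|_2^2$ and $\tilde f$ has degree at most $d$. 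Applying Bonami's Lemma \ref{lem:bonami} gives $\|\tilde f\|_4^4 \leq 9^d \|\tilde f\|_2^4$; unpacking this bound together with the combinatorial overhead from counting direct-sum decompositions of a rank-$\le d$ map (bounded by a product of Gaussian binomials of total size $q^{O(d^2)}$) produces the leading term $q^{3d^2}\|f\|_2^4$.

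\textbf{Main obstacle.} The most delicate step will be the two-dimensional Möbius inversion in the $\mathcal{F}_2$ treatment, since the defining condition of $\mathcal{F}_2$ is a genuine disjunction (either the image condition or the kernel condition can fail alone), requiring simultaneous inversion on both subspace lattices and careful identification of the extremal $V_1$ and $W_1$ for each triple $(X,Y,Z)$. Closely related is the need to tune the Cauchy--Schwarz weights so that the $q^{-Cd(i+j)}$ prefactors precisely balance the Gaussian-binomial blow-up $|\mathcal{V}_i|\cdot|\mathcal{W}_j|$, while still delivering the target exponents $7d(i+j)$ on the Laplacian side and $3d^2$ on the $\|f\|_2^4$ side; getting these numerical constants to line up (and in particular the quadratic $d^2$ dependence, which is sharp by the example in the paper) is where the lattice-theoretic proof departs most visibly from its Boolean ancestor.
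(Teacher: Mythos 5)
Your overall architecture — Parseval applied to $f\cdot\bar f$ (equivalently $f^2$), the split into $\mathcal{F}_1(X)$ and $\mathcal{F}_2(X)$, the covering lemma, a lattice M\"obius/inclusion-exclusion argument for $\mathcal{F}_2$ with Cauchy--Schwarz weights absorbed by Gaussian-binomial counts, and a Boolean-cube encoding plus Bonami for $\mathcal{F}_1$ — matches the paper's plan, and your $\mathcal{F}_2$ treatment is essentially what the paper does (via the coefficients $a_{i,j}$ bounded by $q^{2i^2+2j^2}$).

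The gap is in the $\mathcal{F}_1$ step. You propose a \emph{single} function $\tilde f$ on a Boolean cube, of degree at most $d$, with $\|\tilde f\|_2^2\le\|f\|_2^2$, by analogy with $\tilde f=\sum_T\|f^{=T}\|_2\,\chi_T$ in the product-space proof. That analogy breaks because the quantity to control is
\[
\sum_X\Bigl(\sum_{(Y,Z)\in\mathcal{F}_1(X)}|\hat f(Y)\hat f(Z)|\Bigr)^2,
\]
and membership in $\mathcal{F}_1(X)$ is parametrised by triples $(Y',Z',X')$ with $Y=Y'\oplus X'$, $Z=Z'\oplus(-X')$, $Y'\oplus Z'=X$. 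Unlike the Boolean case, where $T_1\Delta T_2=S$ uniquely determines the ``overlap'' $T_1\cap T_2$, a linear map $Y$ of rank $\le d$ admits on the order of $q^{d^2}$ distinct internal direct-sum splittings $Y=Y'\oplus X'$, so there is no canonical set $T(Y)\subseteq[N]$ with $T(Y_1)\,\Delta\,T(Y_2)=T(Y_1+Y_2)$ that a single $\tilde f$ could be built on. Any faithful encoding must therefore enumerate decompositions, and then each $\hat f(Y)$ is repeated roughly $q^{d^2}$ times, which forces $\|\tilde f\|_2^2\gg\|f\|_2^2$ rather than $\le\|f\|_2^2$. The paper sidesteps this with a genuinely different device: it builds \emph{two} functions $f_1,f_2$ on $\{-1,1\}^{\mathcal{B}\times\{1,2\}}$, each of degree exactly $2$ (not $d$), indexed by \emph{pairs} — one slot for the ``private'' part ($Y'$ or $Z'$) and one for the ``shared'' part $X'$ — so that the cancellation $x_{(X',2)}^2=1$ in the product $f_1f_2$ automatically performs the sum over $X'$, and the non-uniqueness shows up only as the controllable loss $\|f_i\|_2^2\le q^{d^2}\|f\|_2^2$. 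Applying Bonami with $d=2$ (not $d$) is what keeps the final constant at $q^{O(d^2)}$; your description, with a degree-$d$ function and an exact $\|\tilde f\|_2$-equality, would not reproduce either the correct error accounting or the quadratic exponent.

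Two smaller points worth noting: when you use $g=f\bar f$ you need $\widehat{\bar f}(Z)=\overline{\hat f(-Z)}$, so your $\mathcal{F}_1/\mathcal{F}_2$ split must be stated for pairs $(Y,-Z)$ or equivalently you should use $g=f^2$ as the paper does (this is cosmetic but worth getting straight); and your proposed counting ``produces the leading term $q^{3d^2}$'' is not justified — the paper's argument naturally lands on $q^{6d^2}$ (the displayed $q^{3d^2}$ in the lemma statement appears to be a typo, as the downstream uses all employ $q^{6d^2}$), so you should not expect or claim the smaller exponent without a concrete count.
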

We use the (Parseval) equality $\mathbb{E}[f^{4}]=\sum_{X\in\l(W,V)}|\widehat{f^{2}}(X)|^{2}$.
Our task is reduced to upper-bounding $|\widehat{f^{2}}(X)|^{2}$ for each $X$.
Note that
$$\reallywidehat{f^{2}}(X)=\sum_{Y+Z=X}\hat{f}(Y)\hat{f}(Z).$$
As outlined above, we now classify the pairs $(Y,Z) \in (\l(W,V))^2$ with $Y+Z=X$, into two types.
We define $\mathcal{F}_{1}(X)$ to be the set of all pairs $(Y,Z) \in \l(W,V)^2$ with $Y+Z=X$,
such that there exists $Y'\le Y$ and $Z'\le Z$ with $Y'\oplus Z'=X.$
The pairs in $\mathcal{F}_{1}\left(X\right)$ behave somewhat like Fourier coefficients in the Boolean cube, and we will be able to upper-bound their contribution in terms of $\|f\|_{2}^{4}$, using the classical Bonami-Beckner hypercontractive inequality on the discrete cube --- in fact, as in the product-space setting, Bonami's lemma (Lemma \ref{lem:bonami}, above) suffices.

We now define $\mathcal{F}_{2}\left(X\right)$ to be the set of all pairs $(Y,Z) \in \l(W,V)^2$ with $Y+Z=X$, such that either $\mathrm{image}\left(Y\right)\cap\text{image}\left(Z\right)\cap\mathrm{image}\left(X\right)\ne\varnothing$
or $\mathrm{ker}\left(Y\right)+\text{ker}\left(Z\right)+\text{ker}\left(X\right)\ne W.$
These are the terms that appear when expanding 
\[
\|L_{V_{1}}\circ L_{W_{1}}f\|_{4}^{4}=\sum_{X}\reallywidehat{L_{V_{1}}\circ L_{W_{1}}\left[f\right]^{2}}\left(X\right)
\]
 for some $V_{1}$ contained in the intersection of the images of
$X,Y,Z$ and some $W_{1}$ containing all of the kernels of the maps
$X,Y,Z$ (with at least one of $V_1$ and $W_1$ being nontrivial, i.e., with at least one of $V_1 \neq \{0\}$ and $W_1 \neq W$ holding). It turns out that every pair $(Y,Z) \in \l(W,V)^2$ with $Y+Z=X$ belongs to either $\f_1(X)$ or to $\f_2(X)$, as the following lemma shows.

\begin{lem}
\label{lem:F1}Let $X,Y,Z \in \l(W,V)$ with $Y+Z=X$. Then at least one of the following
holds. 
\begin{enumerate}
\item $\mathrm{image}\left(Y\right)\cap\text{image}\left(Z\right)\cap\mathrm{image}\left(X\right)\ne\varnothing$
\item $\mathrm{ker}\left(Y\right)+\text{ker}\left(Z\right)+\text{ker}\left(X\right)\ne W$
\item $\left(Y,Z\right)\in\mathcal{F}_{1}\left(X\right).$ 
\end{enumerate}
\end{lem}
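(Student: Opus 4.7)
The plan is to assume both (1) and (2) fail and to construct explicit $Y' \le Y$, $Z' \le Z$ with $Y' \oplus Z' = X$, thereby verifying (3). The crucial preliminary step uses the failure of (2) to prove the identity $\mathrm{image}(X) = V_Y + V_Z$, where $V_Y := \mathrm{image}(Y) \cap \mathrm{image}(X)$ and $V_Z := \mathrm{image}(Z) \cap \mathrm{image}(X)$. Indeed, since $W = \ker(Y) + \ker(Z) + \ker(X)$, any $w \in W$ decomposes as $w = w_Y + w_Z + w_X$ with $w_Y \in \ker(Y)$, $w_Z \in \ker(Z)$, $w_X \in \ker(X)$, whence $X(w) = X(w_Y) + X(w_Z)$; here $X(w_Y) = Y(w_Y) + Z(w_Y) = Z(w_Y) \in V_Z$, and symmetrically $X(w_Z) \in V_Y$, while the reverse inclusion $V_Y + V_Z \subseteq \mathrm{image}(X)$ is trivial. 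The failure of (1) then upgrades this to a direct sum: $V_Y \cap V_Z \subseteq \mathrm{image}(Y) \cap \mathrm{image}(Z) \cap \mathrm{image}(X) = \{0\}$.

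Next I would let $\pi_Y, \pi_Z \colon \mathrm{image}(X) \to \mathrm{image}(X)$ denote the projections associated with the decomposition $\mathrm{image}(X) = V_Y \oplus V_Z$, so that $\pi_Y + \pi_Z = \mathrm{Id}_{\mathrm{image}(X)}$, and define $Y' := \pi_Y \circ X$ and $Z' := \pi_Z \circ X$ in $\l(W,V)$. By construction $Y' + Z' = X$, with $\mathrm{image}(Y') \subseteq V_Y$ and $\mathrm{image}(Z') \subseteq V_Z$; and writing any $v = X(w) \in \mathrm{image}(X)$ as $v = Y'(w) + Z'(w)$ shows that $\mathrm{image}(Y')$ and $\mathrm{image}(Z')$ sum to $\mathrm{image}(X)$, with the sum being direct because they sit inside $V_Y$ and $V_Z$ respectively. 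Thus $\mathrm{rank}(Y') + \mathrm{rank}(Z') = \mathrm{rank}(X)$, i.e.\ $Y' \oplus Z' = X$ in the poset sense.

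It remains to verify $Y' \le Y$ (the case $Z' \le Z$ being symmetric). The key computation, using $\pi_Y = \mathrm{Id} - \pi_Z$ on $\mathrm{image}(X)$, is $(Y - Y')(w) = Y(w) - \pi_Y(X(w)) = Y(w) - X(w) + \pi_Z(X(w)) = -Z(w) + \pi_Z(X(w))$, which shows $\mathrm{image}(Y - Y') \subseteq \mathrm{image}(Z)$. Therefore $\mathrm{image}(Y') \cap \mathrm{image}(Y - Y') \subseteq V_Y \cap \mathrm{image}(Z) \subseteq \mathrm{image}(Y) \cap \mathrm{image}(X) \cap \mathrm{image}(Z) = \{0\}$, invoking the failure of (1) a second time; combined with the trivial identity $Y(w) = Y'(w) + (Y - Y')(w)$, which gives $\mathrm{image}(Y) = \mathrm{image}(Y') + \mathrm{image}(Y - Y')$, this yields $\mathrm{image}(Y) = \mathrm{image}(Y') \oplus \mathrm{image}(Y - Y')$, i.e.\ $Y' \le Y$ as needed. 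The main (minor) obstacle is recognising this second use of the failure of (1): the direct-sum identity $\mathrm{image}(X) = V_Y \oplus V_Z$ alone already lets us define $Y'$ and $Z'$, but the verification of the poset relation $Y' \le Y$ crucially relies on the observation $\mathrm{image}(Y - Y') \subseteq \mathrm{image}(Z)$ before one can invoke (1) again.
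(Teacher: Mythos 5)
Your construction reaches the same pair $(Y',Z')$ as the paper, but from the opposite side. The paper builds $Z'$ (and dually $Y'$) directly on the domain: using $\ker(Y)+\ker(Z)+\ker(X)=W$, it declares $Z'$ to equal $Z$ on $\ker(Y)$ and to vanish on $\ker(Z)+\ker(X)$, after verifying consistency on the overlap of these two pieces. You instead work on the codomain: the failure of (1) and (2) produces the direct sum $\Image(X)=V_Y\oplus V_Z$, and $Y':=\pi_Y\circ X$, $Z':=\pi_Z\circ X$ are defined via the associated projections. These are in fact the same maps (restrict your $Y'$ to $\ker(Z)$ and to $\ker(Y)+\ker(X)$ to recover the paper's prescription), so the route is dual rather than genuinely new; still the image-side view is arguably cleaner, since it replaces the ad hoc consistency check by the direct sum $V_Y\oplus V_Z$ and makes the role of hypothesis (1) transparent at each stage. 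One small slip worth flagging: to prove $Y'\le Y$ you assert that the identity $Y(w)=Y'(w)+(Y-Y')(w)$ ``gives'' $\Image(Y)=\Image(Y')+\Image(Y-Y')$, but by itself it only gives the inclusion $\Image(Y)\subseteq\Image(Y')+\Image(Y-Y')$. The reverse inclusion is what actually forces $\rank(Y)=\rank(Y')+\rank(Y-Y')$, and you should record it: $\Image(Y')\subseteq V_Y\subseteq\Image(Y)$, and $(Y-Y')(w)=Y(w)-\pi_Y(X(w))\in\Image(Y)$ since both summands lie in $\Image(Y)$. (The paper's own ``it is easy to see'' for its three image identities elides the parallel step.)
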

\begin{proof}
We suppose that (1) and (2) do not hold, and show that (3) holds. Suppose that (1) and (2) do not hold. We claim there exists
a (unique) linear map $Z' \in \l(W,V)$ such that $Z'w=Zw$ for each $w\in\text{ker}\left(Y\right)$
and $Z'w=0$ for each $w\in\text{ker}\left(Z\right)+\text{ker}\left(X\right)$.
This follows from the fact that 
\[
\ensuremath{\left(\text{ker}\left(Z\right)+\ensuremath{\text{ker}\left(X\right)}\right)\cap\text{ker}\left(Y\right)=\text{ker}\left(Z\right)\cap\text{ker}\left(Y\right).}
\]
To see the latter, observe that the left-hand side trivially contains the right; on the other hand, if $w \in (\ker(X) + \ker(Z))\cap Y$, then we may write $w=w_1+w_2$ where $w_1 \in \ker(X)$, $w_2 \in \ker(Z)$ and $w_1+w_2=w \in \ker(Y)$; it follows that $Xw_1=Yw_1+Zw_1=0$, $Zw_2=0$ and $Yw_1+Yw_2 = 0$, so $X(w_1+w_2) = Zw_1$ and $Zw_1 \in \Image(X) \cap \Image(Y) \cap \Image(Z)=\{0\}$ and therefore $w \in \ker(Z) \cap \ker(Y)$. Note that the linear map $Z'$ is defined on all of $W$, since $\text{ker}\left(Z\right)+\text{ker}\left(X\right)+\text{ker}\left(Y\right)=W$.

By symmetry (between $Y$ and $Z$), there also exists a linear map $Y' \in \l(W,V)$ such that $Y'$ is equal to $Y$ on $\text{ker}\left(Z\right)$
and equal to zero on $\text{ker}\left(Y\right)+\text{ker}\left(X\right)$.

Let $X'=Y-Y'=Z'-Z$. Then it is easy to see that we have 
\[
\text{image}\left(Z\right)=\text{image}\left(Z'\right)+\text{image}\left(X'\right),
\]
\[
\text{image}\left(Y\right)=\text{image}\left(Y'\right)+\text{image}\left(X'\right)
\]
 and 
\[
\text{image}\left(X\right)=\text{image}\left(Z'\right)+\text{image}\left(Y'\right).
\]
 The fact that (1) does not hold, implies that all of these sums are actually direct sums.
Hence $Y'\oplus Z'=X$, $Y'\le Y$ and $Z'\le Z$. This shows that $\left(Y,Z\right)\in\mathcal{F}_{1}(X)$, so (3) holds, as required.
\end{proof}

\subsection{Upper bounding the $\mathcal{F}_{1}$ terms}

We now show that, when restricting to the pairs $\left(Y,Z\right)\in\mathcal{F}_{1}\left(X\right)$
for each $X$, we obtain (by Bonami's lemma) an inequality
of the form 
\[
\sum_{X}\left(\sum_{Y,Z\in\mathcal{F}_{1}\left(X\right)}\left|\hat{f}\left(Y\right)\hat{f}\left(Z\right)\right|\right)^{2}\le C_{d,q}\|f\|_{2}^{4},
\]
 where $C_{d,q}$ depends upon $d$ and $q$ alone.
\begin{lem}
\label{lem:F1helper}
Let $f\colon\mathcal{L}\left(V,W\right)\to\mathbb{C}$ be a function
of degree at most $d$. Then 
\[
\sum_{X}\left(\sum_{Y,Z\in\mathcal{F}_{1}\left(X\right)}\left|\hat{f}\left(Y\right)\hat{f}\left(Z\right)\right|\right)^{2}\le81q^{6d^{2}}\|f\|_{2}^{4}.
\]
 
\end{lem}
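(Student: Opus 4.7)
The plan is to mimic the analogous argument in the product-space setting, where the corresponding step was reduced to Bonami's lemma (Lemma \ref{lem:bonami}) on the Boolean cube. There, one defined $\tilde f := \sum_T \|f^{=T}\|_2 \chi_T$ on $\{0,1\}^n$ (a function of Boolean degree at most $d$) and obtained, via Bonami,
\[
\sum_S \left(\sum_{T_1 \Delta T_2 = S} \|f^{=T_1}\|_2 \|f^{=T_2}\|_2\right)^{2} = \|\tilde f\|_4^4 \leq 9^d \|\tilde f\|_2^4 = 9^d\|f\|_2^4.
\]

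I would begin by parameterizing each pair $(Y, Z) \in \mathcal{F}_1(X)$ via witnesses $Y' \leq Y$ and $Z' \leq Z$ with $Y' \oplus Z' = X$; setting $N := Y - Y'$ yields $Y = Y' \oplus N$ and $Z = Z' \oplus (-N)$, with $\Image(N)$ disjoint from $\Image(Y')$ and from $\Image(Z')$, and $\rank(X) \leq 2d$. Next, I would encode each rank-$\leq d$ map $Y \in \mathcal{L}(W, V)$ as a subset $\sigma(Y) \subseteq \Omega$ of an auxiliary Boolean cube $\{0, 1\}^\Omega$, where $\Omega$ denotes the set of rank-$1$ maps in $\mathcal{L}(W, V)$, via a rank-$1$ direct-sum decomposition $Y = Y_1 \oplus \cdots \oplus Y_{\rank(Y)}$ (in the poset sense $\leq$). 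The key observation I would aim to exploit is that $(Y, Z) \in \mathcal{F}_1(X)$ is precisely the condition that rank-$1$ decompositions of $Y$ and $Z$ can be chosen to be \emph{compatible}, meaning their symmetric difference (as subsets of $\Omega$) is a valid rank-$1$ decomposition of $X$.

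I would then define an auxiliary Boolean function $\tilde f$ on $\{0,1\}^\Omega$, of Boolean degree at most $d$, by averaging the contributions $|\hat f(Y)|\, \chi_{\sigma(Y)}$ over all rank-$1$ decompositions of each $Y$, and apply Bonami's lemma to obtain $\|\tilde f\|_4^4 \leq 9^d \|\tilde f\|_2^4$. A direct orthogonality computation, using that a rank-$r$ map admits $q^{O(r^2)}$ ordered rank-$1$ decompositions, bounds $\|\tilde f\|_2^2 \leq q^{O(d^2)}\|f\|_2^2$. Expanding $\|\tilde f\|_4^4 = \sum_T |\widehat{\tilde f^2}(T)|^2$, the Fourier coefficient at each $T$ corresponding to a rank-$1$ decomposition of some $X$ equals (up to the same multiplicity factor $q^{O(d^2)}$) the target inner sum $\sum_{(Y, Z) \in \mathcal{F}_1(X)} |\hat f(Y)|\,|\hat f(Z)|$, yielding the claimed bound once the resulting factors of $q^{O(d^2)}$ are combined.

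The main obstacle is the non-canonical nature of rank-$1$ decompositions: a single rank-$r$ map has roughly $q^{r^2}$ ordered rank-$1$ decompositions, so the encoding $\sigma$ is inherently multi-valued. One must track these multiplicities carefully both in bounding $\|\tilde f\|_2^2$ and in relating $\widehat{\tilde f^2}(T)$ to the target sum; the $q^{6d^2}$ factor in the statement absorbs precisely this source of loss. A secondary subtlety, absent over $\mathbb{F}_2$, is the sign in $Z = Z' \oplus (-N)$ when $\mathrm{char}(\mathbb{F}_q)$ is odd, but since $\Image(-N) = \Image(N)$ this only affects the character values and not the rank-$1$ atom-set encoding, and can be absorbed into the overall constant.
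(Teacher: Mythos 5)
Your approach is genuinely different from the paper's, but it has a real gap that the acknowledged ``secondary subtlety'' remark does not resolve. The paper does not encode $Y$ via rank-one decompositions; instead it introduces the set $\mathcal{B}$ of all rank-$\le d$ maps and defines two auxiliary functions $f_1,f_2$ on the Boolean cube $\{-1,1\}^{\mathcal{B}\times\{1,2\}}$, each of Boolean degree \emph{two}: $f_1$ has a degree-$2$ monomial $|\hat{f}(Y'\oplus X')|\,x_{(Y',1)}x_{(X',2)}$ for each ordered decomposition $Y=Y'\oplus X'$, and $f_2$ has $|\hat{f}(Z'\oplus(-X'))|\,x_{(Z',1)}x_{(X',2)}$. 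The sign on $X'$ is built into $f_2$'s coefficients by fiat, so the $x_{(X',2)}$ variables cancel cleanly in $\widehat{f_1 f_2}$ regardless of the characteristic; Bonami is then applied to the degree-$2$ functions $f_1,f_2$, giving the clean constant $81=3^2\cdot 3^2$. The multiplicity bound $\|f_i\|_2^2\le q^{d^2}\|f\|_2^2$ is also a one-line count of ordered splittings $Y=Y'\oplus X'$, which is cleaner than counting full rank-one chains.

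The gap in your version: your ``key observation'' --- that $(Y,Z)\in\mathcal{F}_1(X)$ corresponds to choosing rank-one decompositions $\sigma_1$ of $Y$ and $\sigma_2$ of $Z$ whose symmetric difference decomposes $X$ --- is false over $\mathbb{F}_q$ for odd $q$. Writing $Y=Y'\oplus N$ and $Z=Z'\oplus(-N)$, a rank-one atom $n$ of $N$ appears in $\sigma_1$, but it is $-n$ (a \emph{different} element of $\Omega$ when $\operatorname{char}\ne 2$) that appears in $\sigma_2$; these do not cancel under symmetric difference, so $\sigma_1\Delta\sigma_2$ does not shrink to a decomposition of $X$. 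Identifying $n$ with $-n$ breaks the encoding: the atom set $\sigma$ then no longer determines the map $\sum_{\omega\in\sigma}\omega$, and the assertion that this ``only affects the character values'' has no clear meaning within the $\{0,1\}^\Omega$ framework --- it is precisely the symmetric-difference combinatorics that is affected, which is the heart of the Bonami step. Even over $\mathbb{F}_2$, the proposal leans on structural facts that need proof (e.g.\ that any partial sum of a rank-one direct-sum decomposition of $Y$ is again $\le Y$; the exact characterisation of $\mathcal{F}_1(X)$ in terms of compatible decompositions; and that the relevant multiplicities are uniform enough across $X$, given that not every subset $T\subseteq\Omega$ is a direct-sum decomposition of the map $\sum_{\omega\in T}\omega$). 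These are plausible but unverified, and the bookkeeping is heavier than the paper's. If you want to salvage a rank-one-atoms encoding, you would at minimum need to double the index set to carry explicit signs, at which point you are essentially forced back to something like the paper's $\{1,2\}$-labelled construction.
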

\begin{proof}
We start by encoding $f$ as a pair of functions on the discrete cube (with the uniform measure). Let $\mathcal{B} = \mathcal{B}(d)$
be the set of all linear maps in $\l(W,V)$ with rank at most $d$.
We encode our function $f$ by means of two functions on the $(2|\mathcal{B}|)$-dimensional discrete cube,
each of degree $2$. We define $f_{1},f_{2}\colon\{-1,1\}^{\mathcal{B}\times\left\{1,2\right\} }\to\mathbb{R}$
by 
\[
f_{1}:=\sum_{Y',X'\in \l(W,V):\atop Y'+X'=Y'\oplus X'}\left|\hat{f}\left(Y'\oplus X'\right)\right|x_{\left(Y',1\right)}x_{\left(X',2\right)}
\]
 and 
\[
f_{2}:=\sum_{Z',X'\in \l(W,V):\atop Z'+X'=Z'\oplus X'}\left|\hat{f}\left(Z'\oplus-X'\right)\right|x_{\left(Z',1\right)}x_{\left(X',2\right)},
\]
where the $x_I \in \{-1,1\}$ (for $I \in \mathcal{B} \times \{1,2\}$) are Boolean variables. Note that $\|f_{1}\|_{2}^{2}\le q^{d^{2}}\|f\|_{2}^{2}$ and similarly $\|f_{2}\|_2^2 \leq q^{d^2}\|f\|_2^2$. Indeed, each term $\hat{f}\left(Y\right)^{2}$ appears
in (the Parseval expansion of) $\|f_{1}\|_{2}^{2}$ the same number of times as the number of
ways of writing $Y=Y'\oplus X'$. Such (a way of writing $Y=Y'\oplus X'$) is uniquely determined by choosing a linear map
$Y'$ in $\l(V/\ker(Y),\Image(Y))$, since any $Y'\leq Y$ has $\Image(Y') \leq \Image(Y)$ and $\ker(Y') \geq \ker(Y)$,
and we have $|\l(V/\ker(Y),\Image(Y))|= q^{d^{2}}$.

Note that the summand on the left-hand side of the statement of the lemma is zero unless $\rank(X) \leq 2d$, since $f$ has degree at most $d$. So let $X \in \l(W,V)$ be of rank at most $2d$. For each $(Y,Z) \in \mathcal{F}_1(X)$, we may write $Y=Y'\oplus X'$ and $Z=Z'\oplus (-X')$,
where $Y'\oplus Z'=X$. Write $\mathcal{S}\left(X\right)$ for the
set of triples $\left(X',Y',Z'\right) \in \mathcal{B}^3$ with $Y'\oplus Z'=X$ and with
$X'$ having the property that $Y'+X'$ and $Z'+(-X')$ are direct sums.
Using Cauchy-Schwarz, and noting that the number of pairs $(Y',Z')$ such that $Y'\oplus Z' = X$ is at most $q^{(2d)^2}$ (by the same argument as above, but with $X$ in place of $Y$), we have
\begin{align*}
\left(\sum_{(Y,Z)\in\mathcal{F}_{1}\left(X\right)}\left|\hat{f}\left(Y\right)\hat{f}\left(Z\right)\right|\right)^2 & \leq\left(\sum_{(X',Y',Z')\in\mathcal{S}\left(X\right)}\left|\hat{f}\left(Y'\oplus X'\right)\hat{f}\left(Z'\oplus-X'\right)\right|\right)^{2}\\
 & \le q^{4d^{2}}\sum_{(Y',Z'):\atop Y'\oplus Z'=X}\left(\sum_{X':\,\left(X',Y',Z'\right)\in\mathcal{S}\left(X\right)}\left|\hat{f}\left(Y'\oplus X'\right)\hat{f}\left(Z'\oplus-X'\right)\right|\right)^{2}\\
 & = q^{4d^{2}}\sum_{(Y',Z'):\atop Y'\oplus Z'=X}\left(\sum_{X'\in\mathcal{L}\left(W,V\right):\atop (X',Y',Z') \in \mathcal{S}(X)}\hat{f_{1}}\left(\{(Y',1),(X',2)\}\right)\hat{f_{2}}\left(\{(Z',1),(X',2)\}\right)\right)^{2}\\
 & \leq q^{4d^{2}}\sum_{(Y',Z'):\atop Y'\oplus Z'=X}\left(\sum_{X'\in\mathcal{L}\left(W,V\right):\atop \rank(X') \leq d}\hat{f_{1}}\left(\{(Y',1),(X',2)\}\right)\hat{f_{2}}\left(\{(Z',1),(X',2)\}\right)\right)^{2}\\
 & =q^{4d^{2}}\sum_{(Y',Z'):\atop Y'\oplus Z'=X}\widehat{f_{1}f_{2}}\left(\{(Y',1),(Z',1)\})\right)^{2}.
\end{align*}
Therefore, by Cauchy--Schwarz and Lemma \ref{lem:bonami} (applied to the functions $f_{1}$ and $f_{2}$, both of which have degree two), we have
\[
\sum_{X}\left(\sum_{(Y,Z)\in\mathcal{F}_{1}\left(X\right)}\left|\hat{f}\left(Y\right)\hat{f}\left(Z\right)\right|\right)^{2}\le q^{4d^{2}}\|f_{1}f_{2}\|_{2}^{2}\le q^{4d^{2}}\|f_{1}\|_{4}^{2}\|f_{2}\|_{4}^{2}\le 81q^{4d^{2}}\|f_{1}\|_{2}^{2}\|f_{2}\|_{2}^{2}\le81q^{6d^{2}} \|f\|_2^4.
\]
 \end{proof}
\subsection{Upper bounding the $\mathcal{F}_{2}$ terms}

Let $\mathcal{V}_{i}\left(X\right)$ be the set of $i$-dimensional subspaces of $V$
contained in the image of $X$, and let $\mathcal{W}_{j}\left(X\right)$ be the set of codimension-$j$ subspaces of $W$ containing the kernel of $X$. We now
upper-bound the terms corresponding to pairs $\left(Y,Z\right)\in\mathcal{F}_{2}\left(X\right).$
These are terms that appear when expanding $\reallywidehat{L_{V_{1}}\left[f\right]^{2}}\left(X\right)$
for $V_{1}\in\mathcal{V}_{1}\left(X\right)$ and $\reallywidehat{L_{W_{1}}\left[f\right]^{2}}\left(X\right)$ for $W_{1}\in\mathcal{W}_{1}\left(X\right)$.
This suggests that an upper bound of the form 
\[
\left|\sum_{(Y,Z)\in\mathcal{F}_{2}(X)}\hat{f}(Y)\hat{f}(Z)\right|\le\sum_{V_{1}\in\mathcal{V}_{1}\left(X\right)}\reallywidehat{L_{V_{1}}\left[f\right]^{2}}\left(X\right)+\sum_{W_{1}\in\mathcal{W}_{1}\left(X\right)}\reallywidehat{L_{W_{1}}\left[f\right]^{2}}\left(X\right)
\]
might hold. This is not quite right, as there may be additional (negative) terms on the right-hand side. But the latter problem can easily be fixed, with an inclusion-exclusion type argument. 
\begin{lem}
\label{lem:F2 helper} 
\[
\left|\sum_{(Y,Z)\in\mathcal{F}_{2}(X)}\hat{f}(Y)\hat{f}(Z)\right|\le\sum_{\left(i,j\right)\ne\left(0,0\right)}\sum_{V_{1}\in\mathcal{V}_{i}\left(X\right),\atop W_{1}\in\mathcal{W}_{j}\left(X\right)}q^{2i^{2}+2j^{2}}|\reallywidehat{(L_{V_{1}}\circ L_{W_{1}}[f])^{2}}(X)|.
\]
 
\end{lem}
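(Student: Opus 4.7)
The plan is to write $\sum_{(Y,Z)\in\mathcal{F}_{2}(X)}\hat{f}(Y)\hat{f}(Z)$ as a signed combination of the quantities $P(V_{1},W_{1}):=\reallywidehat{(L_{V_{1}}\circ L_{W_{1}}[f])^{2}}(X)$ via Möbius inversion on a product lattice of subspaces, and then apply the triangle inequality to pass to absolute values.

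First, I would note that whenever $V_{1}\subseteq\Image(X)$ and $W_{1}\supseteq\ker(X)$, the Fourier formulas for the Laplacians $L_{V_{1}}$ and $L_{W_{1}}$ yield
\[
P(V_{1},W_{1})\;=\!\!\sum_{\substack{Y+Z=X\\ V_{1}\subseteq\Image(Y)\cap\Image(Z)\\ \ker(Y),\,\ker(Z)\subseteq W_{1}}}\!\!\hat{f}(Y)\hat{f}(Z).
\]
Setting $V^{*}(Y,Z):=\Image(Y)\cap\Image(Z)\cap\Image(X)$ and $W^{*}(Y,Z):=\ker(Y)+\ker(Z)+\ker(X)$, and using that $V_{1}\subseteq\Image(X)$, $W_{1}\supseteq\ker(X)$, the summation conditions above are equivalent to $V_{1}\subseteq V^{*}(Y,Z)$ and $W_{1}\supseteq W^{*}(Y,Z)$; by definition $(Y,Z)\in\mathcal{F}_{2}(X)$ precisely when $(V^{*}(Y,Z),W^{*}(Y,Z))\ne(0,W)$.

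Next I would perform Möbius inversion on the product poset $\mathcal{P}$ of pairs $(V_{1},W_{1})$ with $V_{1}\subseteq\Image(X)$ and $W_{1}\supseteq\ker(X)$, ordered by $V_{1}\subseteq V_{1}'$ and $W_{1}\supseteq W_{1}'$, whose minimum element is $(0,W)$. The second factor (ordered by reverse inclusion) becomes an ordinary subspace lattice after passing to annihilators, so the classical formula $\mu_{\mathrm{sub}}(U,U')=(-1)^{d}q^{\binom{d}{2}}$ for $d=\dim(U'/U)$ applied twice yields
\[
\mu\bigl((0,W),(V_{1},W_{1})\bigr)\;=\;(-1)^{i+j}\,q^{\binom{i}{2}+\binom{j}{2}},\qquad i:=\dim V_{1},\ j:=\codim W_{1}.
\]
Using the defining property of the Möbius function, a short computation then gives $1_{(V^{*},W^{*})\ne(0,W)}=-\sum_{(V_{1},W_{1})\ne(0,W)}\mu((0,W),(V_{1},W_{1}))\,1_{V_{1}\subseteq V^{*}}\,1_{W_{1}\supseteq W^{*}}$. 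Multiplying this identity by $\hat{f}(Y)\hat{f}(Z)$, summing over pairs $(Y,Z)$ with $Y+Z=X$, and recognising the inner sums as $P(V_{1},W_{1})$, we obtain
\[
\sum_{(Y,Z)\in\mathcal{F}_{2}(X)}\hat{f}(Y)\hat{f}(Z)\;=\;-\!\!\sum_{(V_{1},W_{1})\ne(0,W)}(-1)^{i+j}q^{\binom{i}{2}+\binom{j}{2}}P(V_{1},W_{1}).
\]

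The claimed bound then follows by taking absolute values, applying the triangle inequality, re-indexing the sum by $(i,j)=(\dim V_{1},\codim W_{1})$ with $V_{1}\in\mathcal{V}_{i}(X)$ and $W_{1}\in\mathcal{W}_{j}(X)$, and using the crude estimate $\binom{i}{2}+\binom{j}{2}\le 2i^{2}+2j^{2}$. The main obstacle is bookkeeping: one must check that the Möbius inversion is carried out on the restricted poset $\mathcal{P}$, so that the sum naturally ranges over $V_{1}\in\mathcal{V}_{i}(X)$ and $W_{1}\in\mathcal{W}_{j}(X)$ rather than over arbitrary subspaces of $V$ and $W$. This is exactly what the equivalences of summation conditions from the first step provide, given the confinements $V_{1}\subseteq\Image(X)$ and $W_{1}\supseteq\ker(X)$.
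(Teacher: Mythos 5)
Your proof is correct, and it reaches the same coefficient identity as the paper by a cleaner route. The paper first splits $1_{(Y,Z)\in\mathcal{F}_2}$ as $1_{V'\neq 0}+1_{W'\neq W}-1_{V'\neq 0}1_{W'\neq W}$, then expands each factor with a custom recursion $a_1=1$, $a_k=1-\sum_{i<k}a_i{k\brack i}_q$, and only \emph{bounds} $|a_k|\le q^{2k^2}$ by induction without computing $a_k$ explicitly. You instead perform a single M\"obius inversion on the product poset $\mathcal P$ (with the second factor under reverse inclusion, i.e.\ the interval $[\ker X,W]$), invoking the known formula $\mu_{\mathrm{sub}}(U,U')=(-1)^d q^{\binom d2}$ for the subspace lattice. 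This amounts to solving the paper's recursion in closed form, $a_k=(-1)^{k+1}q^{\binom k2}$, so the two derivations produce literally the same coefficients $a_{i,j}=-(-1)^{i+j}q^{\binom i2+\binom j2}$. Your route is tidier: it avoids the two-stage inclusion--exclusion, yields the exact coefficient rather than a recursive upper bound, and in fact gives the sharper estimate $|a_{i,j}|=q^{\binom i2+\binom j2}$, which you then coarsen to $q^{2i^2+2j^2}$ only to match the stated inequality. The one bookkeeping point you flag --- that the inversion lives on $\mathcal P$ so that the sum ranges over $\mathcal V_i(X)\times\mathcal W_j(X)$ rather than all subspaces --- is exactly the subtlety to get right, and you handle it correctly via the observations $V^*\subseteq\Image(X)$ and $W^*\supseteq\ker(X)$.
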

\begin{proof}
We define integers $(a_{i,j})$ such
that
\begin{equation}
\sum_{(Y,Z)\in\mathcal{F}_{2}(X)}\hat{f}(Y)\hat{f}(Z)=\sum_{(i,j)\ne\left(0,0\right)}\sum_{V_{1}\in\mathcal{V}_{i}\left(X\right),\atop W_{1}\in\mathcal{W}_{j}\left(X\right)}a_{i,j}\reallywidehat{(L_{V_{1}}\circ L_{W_{1}}[f])^{2}}(X),\label{eq:inclusion exclusion}
\end{equation}
 and such that $a_{ij}\le q^{2i^{2}+2j^{2}}$ for all $i,j$. Let $V'$ be the
intersection of the images of $X,Y$ and $Z$, and let $W'$ be the sum of their
kernels. The pair $\left(Y,Z\right)$ is in $\mathcal{F}_{2}$ if
and only if either $V'\ne\left\{ 0\right\} $ or $W'\ne W.$ We can
therefore write 
\begin{equation}
1_{\left(Y,Z\right)\in\mathcal{F}_{2}}=1_{V'\ne\left\{ 0\right\} }+1_{W'\ne W}-1_{V'\ne\left\{ 0\right\} }\cdot1_{W'\ne W}.\label{eq: first simple inclusion exclusion}
\end{equation}

Define a sequence of integers $a_{1},\ldots,a_{d}$ by $a_{1}=1$ and
$a_{k}=1-\sum_{i=1}^{k-1}a_{i}{k\brack i}_q$ for $2\leq k \leq d$. Observe that
\[
1_{V'\ne\left\{ 0\right\} }=\sum_{i=1}^{\dim\left(V'\right)}a_{i}{\dim\left(V'\right) \brack i}_q=\sum_{i=1}^{d}\sum_{V_{1}\in\mathcal{V}_{i}\left(X\right)}a_{i}1_{V'\supseteq V_{1}}.
\]
 Dually, we have 
\[
1_{W'\ne W}=\sum_{i=1}^{d}\sum_{W_{1}\in\mathcal{W}_{i}\left(X\right)}a_{i}1_{W'\subseteq W_{1}}.
\]

Therefore, defining $a_{i,j}:=-a_{i}a_{j}$ for $i,j\geq 1$, defining
$a_{0,j}:=a_{j}$ for all $j \geq 1$, defining $a_{i,0}:=a_{i}$ for all $i \geq 1$, and defining $a_{0,0}:=0$, we obtain by (\ref{eq: first simple inclusion exclusion})
that 
\[
1_{\left(Y,Z\right)\in\mathcal{F}_{2}}=\sum_{i,j}\sum_{V_{1}\in\mathcal{V}_{i}\left(X\right),\,W_{1}\in\mathcal{W}_{j}\left(X\right)}a_{i,j}1_{V'\supseteq V_{1}}1_{W'\subseteq W_{1}}.
\]
We therefore have
\begin{align*}
\sum_{(Y,Z)\in\mathcal{F}_{2}(X)}\hat{f}(Y)\hat{f}(Z) & = \sum_{(Y,Z)\in (\l(W,V))^2:\atop Y+Z=X}\hat{f}(Y)\hat{f}(Z)1_{(Y,Z) \in \mathcal{F}_2(X)}\\
& = \sum_{(Y,Z)\in (\l(W,V))^2:\atop Y+Z=X}\hat{f}(Y)\hat{f}(Z) \sum_{i,j}\sum_{V_{1}\in\mathcal{V}_{i}\left(X\right),\atop W_{1}\in\mathcal{W}_{j}\left(X\right)}a_{i,j}1_{V'\supseteq V_{1}}1_{W'\subseteq W_{1}}\\
& =  \sum_{i,j}\sum_{V_{1}\in\mathcal{V}_{i}\left(X\right),\atop W_{1}\in\mathcal{W}_{j}\left(X\right)}a_{i,j} \sum_{(Y,Z)\in (\l(W,V))^2:\atop Y+Z=X}\hat{f}(Y)\hat{f}(Z) 1_{V'\supseteq V_{1}}1_{W'\subseteq W_{1}}\\
& = \sum_{i,j}\sum_{V_{1}\in\mathcal{V}_{i}\left(X\right),\atop W_{1}\in\mathcal{W}_{j}\left(X\right)}a_{i,j} \sum_{(Y,Z)\in (\l(W,V))^2:\atop Y+Z=X}\hat{f}(Y)\hat{f}(Z) 1_{\Image(Y') \cap \Image(Z') \supseteq V_{1}}1_{\ker(Y')+\ker(Z') \subseteq W_{1}}\\
& = \sum_{i,j}\sum_{V_{1}\in\mathcal{V}_{i}\left(X\right),\atop W_{1}\in\mathcal{W}_{j}\left(X\right)}a_{i,j}\reallywidehat{(L_{V_{1}}\circ L_{W_{1}}[f])^{2}}(X)\\
& = \sum_{(i,j) \neq (0,0)}\sum_{V_{1}\in\mathcal{V}_{i}\left(X\right),\atop W_{1}\in\mathcal{W}_{j}\left(X\right)}a_{i,j}\reallywidehat{(L_{V_{1}}\circ L_{W_{1}}[f])^{2}}(X),
\end{align*}
using the facts that $\hat{f}\left(Y\right)\hat{f}\left(Z\right)$
appears in the expansion of $\reallywidehat{(L_{V_{1}}\circ L_{W_{1}}[f])^{2}}(X)$
precisely when $\Image(Y) \cap \Image(Z) \supseteq V_{1}$ and $\ker(Y),\ker(Z) \subseteq W_{1}$, and that $a_{0,0} = 0$.

To complete the proof of the lemma we must show that $|a_{i,j}|\le q^{2i^{2}+2j^{2}}$ for all $i$ and $j$.
To prove this, it suffices to show that $\left|a_{k}\right|\le q^{2k^{2}}$ for all $k$. We prove the latter by induction on $k$. It certainly holds when $k=0$. Suppose by induction that $|a_{i}| \leq q^{2i^2}$ for all $i < k$; then
\[
\left|a_{k}\right|\le1+\sum_{i=1}^{k-1}\left|a_{i}\right| {k \brack i}_q\le1+\sum_{i=1}^{k-1}q^{2i^{2}}q^{k\left(k-i\right)}\le q^{2k^{2}},
\]
 as required, completing the proof of the lemma.
\end{proof}
We quickly obtain the following consequence. 
\begin{lem}
\label{lem:F2} We have
\[
\left|\sum_{(Y,Z)\in\mathcal{F}_{2}(X)}\hat{f}(Y)\hat{f}(Z)\right|^{2}\le\sum_{\left(i,j\right)\ne\left(0,0\right)}\sum_{V_{1}\in\mathcal{V}_{i}\left(X\right),\atop W_{1}\in\mathcal{W}_{j}\left(X\right)}q^{7d(i+j)}|\reallywidehat{(L_{V_{1}}\circ L_{W_{1}}[f])^{2}}(X)|^{2},
\]
 and therefore 
\[
\sum_{X}\left|\sum_{(Y,Z)\in\mathcal{F}_{2}(X)}\hat{f}(Y)\hat{f}(Z)\right|^{2}\le\sum_{\left(i,j\right)\ne\left(0,0\right)}\sum_{V_{1}\in\mathcal{V}_{i},\atop W_{1}\in\mathcal{W}_{j}}q^{7d(i+j)}\|L_{V_{1}}\circ L_{W_{1}}[f]\|_{4}^{4}.
\]
 
\end{lem}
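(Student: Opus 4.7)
The plan is to derive the first inequality from Lemma~\ref{lem:F2 helper} via a weighted Cauchy--Schwarz inequality, and then to obtain the second (summed) inequality from the first by summing over $X$, interchanging summation order, and applying Parseval's identity.

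Write $\hat{g}_{V_{1},W_{1}}(X) := \reallywidehat{(L_{V_1}\circ L_{W_1}[f])^{2}}(X)$ for brevity. The preliminary observation I would rely on is that $\hat{g}_{V_1,W_1}(X)$ vanishes unless there exist $Y,Z \in \l(W,V)$ with $Y+Z=X$, $\Image(Y),\Image(Z) \supseteq V_1$, $\ker(Y),\ker(Z) \subseteq W_1$, and $\hat{f}(Y), \hat{f}(Z) \neq 0$. Since $f$ has degree at most $d$, every such $Y,Z$ has rank at most $d$, so in every non-vanishing summand we must have $i := \dim(V_1) \leq d$, $j := \codim(W_1) \leq d$, and $\rank(X) \leq 2d$.

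Starting from the bound of Lemma~\ref{lem:F2 helper}, I would apply the weighted Cauchy--Schwarz inequality $(\sum_k c_k)^2 \leq (\sum_k w_k)(\sum_k c_k^2/w_k)$ indexed over tuples $k = (i,j,V_1,W_1)$ with $(i,j) \neq (0,0)$, $V_1 \in \mathcal{V}_i(X)$, $W_1 \in \mathcal{W}_j(X)$, using $c_k := q^{2i^2+2j^2}|\hat{g}_{V_1,W_1}(X)|$ and weights $w_k := q^{4i^2+4j^2-7d(i+j)}$. This choice is tailored so that $\sum_k c_k^2/w_k$ agrees exactly with the right-hand side of the desired first inequality, leaving the task of showing $\sum_k w_k \leq 1$. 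By the preliminary observation we may restrict to indices with $i,j \leq d$, since the other $c_k$'s vanish. Using the Gaussian-binomial bound $|\mathcal{V}_i(X)|\cdot|\mathcal{W}_j(X)| = {r \brack i}_q {r \brack j}_q \leq q^{(i+j)r - (i^2+j^2)}$ with $r = \rank(X) \leq 2d$, together with $i,j \leq d$ (so that $3(i^2+j^2) \leq 3d(i+j)$), the exponent inside $\sum_k w_k$ is at most
\[
(i+j)r - (i^2+j^2) + 4i^2+4j^2 - 7d(i+j) \leq 2d(i+j) + 3d(i+j) - 7d(i+j) = -2d(i+j).
\]
Consequently $\sum_k w_k \leq \sum_{(i,j)\neq(0,0)} q^{-2d(i+j)} \leq 1$ for $q \geq 2, d \geq 1$, which yields the first inequality.

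For the second inequality I would sum the first over $X$ and interchange the order of summation via $\sum_X \sum_{V_1 \in \mathcal{V}_i(X),\,W_1 \in \mathcal{W}_j(X)} = \sum_{V_1 \in \mathcal{V}_i,\,W_1 \in \mathcal{W}_j}\sum_{X:\,V_1\subseteq\Image(X),\,\ker(X)\subseteq W_1}$. Dropping the restriction on $X$ in the innermost sum and invoking Parseval gives $\sum_X |\hat{g}_{V_1,W_1}(X)|^2 = \|(L_{V_1}\circ L_{W_1}[f])^2\|_2^2 = \|L_{V_1}\circ L_{W_1}[f]\|_4^4$, and the second inequality follows. The only delicate point is arranging the Cauchy--Schwarz weights to recover exactly the target exponent $q^{7d(i+j)}$; the required slack $(i+j)r + 3(i^2+j^2) \leq 5d(i+j)$ is supplied precisely by the degree bounds $i,j \leq d$ and $\rank(X) \leq 2d$, both of which follow from $\deg f \leq d$.
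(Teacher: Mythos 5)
Your proof is correct and follows essentially the same route as the paper: weighted Cauchy--Schwarz applied to the conclusion of Lemma~\ref{lem:F2 helper} to obtain the pointwise bound, followed by summing over $X$, interchanging summation, and invoking Parseval. You are in fact slightly more careful than the paper in one detail: you correctly treat $|\mathcal{V}_i(X)| = {\rank(X) \brack i}_q$ with $\rank(X) \le 2d$ (together with the observation that only $i,j \le d$ contribute), whereas the paper's displayed bound uses ${d \brack i}_q$, which strictly presupposes $\rank(X)\le d$; the available slack in the exponent means both computations reach the same conclusion, but your accounting is the cleaner one.
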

\begin{proof}
This follows from Lemma \ref{lem:F2 helper} and Cauchy--Schwarz, which
yield 
\begin{align*}
\left|\sum_{(Y,Z)\in\mathcal{F}_{2}(X)}\hat{f}(Y)\hat{f}(Z)\right|^{2} & \le\left(\sum_{\left(i,j\right)\ne\left(0,0\right)}\sum_{V_{1}\in\mathcal{V}_{i}\left(X\right),\atop W_{1}\in\mathcal{W}_{j}\left(X\right)}q^{2i^{2}+2j^{2}}|\reallywidehat{(L_{V_{1}}\circ L_{W_{1}}[f])^{2}}(X)|\right)^{2}\le\\
 & \le\sum_{\left(i,j\right)\ne\left(0,0\right)}\sum_{V_{1}\in\mathcal{V}_{i}\left(X\right),\atop W_{1}\in\mathcal{W}_{j}\left(X\right)}q{}^{7d(i+j)}|\reallywidehat{(L_{V_{1}}\circ L_{W_{1}}[f])^{2}}(X)|^{2}\\
 & \cdot\sum_{\left(i,j\right)\ne\left(0,0\right)}\sum_{V_{1}\in\mathcal{V}_{i}\left(X\right),\atop W_{1}\in\mathcal{W}_{j}\left(X\right)}q^{-7d(i+j)}q^{4i^{2}+4j^{2}},
\end{align*}
 which completes the proof of the first part of the lemma, as 
\begin{align*}
\sum_{\left(i,j\right)\ne\left(0,0\right)}\sum_{V_{1}\in\mathcal{V}_{i}\left(X\right),\atop W_{1}\in\mathcal{W}_{j}\left(X\right)}q^{-7d(i+j)}q^{4i^{2}+4j^{2}} & \leq \sum_{\left(i,j\right) \neq (0,0)} {d \brack i}_q {d \brack j}_q q^{-7d(i+j)}q^{4i^2+4j^2}\\
& \leq \sum_{\left(i,j\right) \neq (0,0)} q^{di} q^{dj} q^{-7d(i+j)}q^{4di+4dj}\\
& = \sum_{\left(i,j\right) \neq (0,0)} q^{-2d(i+j)}\\
& \leq \sum_{\left(i,j\right) \neq (0,0)} 2^{-2d(i+j)}\\
& < 1.
\end{align*}
 The last part of the lemma follows by using Parseval,
\[
\|g\|_{4}^{4}=\sum_{X}\left|\widehat{g^{2}}\left(X\right)\right|^{2}
\]
 for the functions $g$ of the form $L_{V_{1}}\circ L_{W_{1}}\left[f\right]$. 
\end{proof}

\subsection{Proof of Lemma \ref{lem:hyp inductive step}}

We are now ready to deduce Lemma \ref{lem:hyp inductive step} from
our upper bounds on the $\mathcal{F}_{1}$ terms and the $\mathcal{F}_{2}$
terms.
\begin{proof}[Proof of Lemma \ref{lem:hyp inductive step}]
Write $\f(X) = \{(Y,Z)\in \l(W,V)^2:\ Y+Z = X\}$. By Cauchy--Schwarz and Lemma \ref{lem:F1}, we obtain 
\[
\widehat{f^{2}}(X)^{2}\le2\left|\sum_{(Y,Z)\in\mathcal{F}\left(X\right)\setminus \mathcal{F}_2(X)}\hat{f}\left(Y\right)\hat{f}\left(Z\right)\right|^{2}+2\left|\sum_{(Y,Z)\in\mathcal{F}_{2}\left(X\right)}\hat{f}\left(Y\right)\hat{f}\left(Z\right)\right|^{2} \leq 2\left(\sum_{(Y,Z)\in\mathcal{F}_1\left(X\right)}\left|\hat{f}\left(Y\right)\hat{f}\left(Z\right)\right|\right)^{2}+2\left|\sum_{(Y,Z)\in\mathcal{F}_{2}\left(X\right)}\hat{f}\left(Y\right)\hat{f}\left(Z\right)\right|^{2}.
\]
 Summing over all values of $X$, and using Lemmas \ref{lem:F1helper} and
\ref{lem:F2} we obtain
\[
\|f\|_{4}^{4}\leq 162 q^{6d^{2}}\|f\|_{2}^{4}+2\sum_{\left(i,j\right)\ne\left(0,0\right)}\sum_{V_{1}\in\mathcal{V}_{i},\atop W_{1}\in\mathcal{W}_{j}}q{}^{7d(i+j)}\|L_{V_{1}}\circ L_{W_{1}}[f]\|_{4}^{4},
\]
 as required.
\end{proof}

\section{Proof of our conditional hypercontractive inequality for global functions}

In this section we prove Theorem \ref{thm:hyp in BS}. It takes the
form 
\[
\|f\|_{4}^{4}\le q^{O\left(d^{2}\right)}\left(\|f\|_{2}^{4}+\sum_{\left(V_{1},W_{1}\right)\ne\left(0,W\right)}\mathbb{E}_{T}\|D_{V_{1},W_{1},T}\left[f\right]\|_{2}^{4}\right)
\]
We can now be a bit more specific about the proof-strategy. The idea is to use the inequality 
\[
\|f\|_{4}^{4}\le 162q^{6d^{2}}\|f\|_{2}^{4}+\sum_{\left(i,j\right)\ne\left(0,0\right)}\sum_{V_{1}\in\mathcal{V}_{i},\atop W\in\mathcal{W}_{j}}q^{7d(i+j)}\|(L_{V_{1}}\circ L_{W_{1}})[f]\|_{4}^{4}
\]
 which we established in Lemma \ref{lem:hyp inductive step}.
We then take a restriction $\left(V_{1},W_{1},T\right)$ (w.r.t.\ to a random $T$) and
apply Proposition \ref{prop:Taking restrictions of Laplacians}, yielding a sum of compositions of derivatives of the form $D_{X}D_{V_{1},W_{1},T}[f].$
We then apply the induction hypothesis to each of the functions
$D_{X}D_{V_{1},W_{1},T}[f]$, which has a lower degree than $f$,
to show that 
\[
\|D_{X}D_{V_{1},W_{1},T}[f]\|_{4}^{4}
\]
 can be upper bounded by sums of terms of the form $\|D_{V_{2},W_{2},S}D_{X}D_{V_{1},W_{1},T}[f]\|_{2}^{4}$.
We then apply Lemma \ref{lem:switching roles between derivatives lin-alg part}
to show that $D_{V_{2},W_{2},S}D_{X}D_{V_{1},W_{1},T}[f]$
is a sum of (not too many) terms of the form 
\[
D_{Y}D_{V_{3},W_{3},T+S(V,W)}[f].
\]
 It remains to upper-bound sums of terms of
the form $\|D_{Y}D_{V_{3},W_{3}, T+S(V,W)}[f]\|_{2}^{4}$,
i.e.\ we need to get rid of the $D_{Y}$-derivatives, which we indeed manage to do.

\subsection{Replacing the Laplacians by derivatives}

For $V_{1}\le V$, $W_{1}\le W$, and $X\in \l(W_{1},V/V_{1})$
we write $|(V_{1},W_{1})|:=\dim(V_{1})+\codim(W_{1})$. We
also write $|(V_{1},W_{1},X)|:=|(V_{1},W_{1})|+\mathrm{rank}(X)$.
\begin{lem}
\label{lem:bounds on composition} Let $f:\l(V,W) \to \mathbb{C}$, let $V_{1}\in\mathcal{V}_{i}:={V \brack i}$
and let $W_{1}\in\mathcal{W}_{j}: = {W \brack \dim(W)-j}$. Then 
\[
\|L_{V_{1}}\circ L_{W_{1}}\left[f\right]\|_{4}^{4}\le q^{5\left(i^{2}+j^{2}\right)}\sum_{V_{2}\le V_{1},W_{2}\ge W_{1}}\sum_{X\colon W_{2}/W_{1}\overset{\sim}{\to}V_{1}/V_{2}}\mathbb{E}_{T}\|D_{X}\circ D_{V_{2},W_{2},T}[f]\|_{4}^{4}
\]
\end{lem}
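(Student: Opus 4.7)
The plan is to reduce the $4$-norm on the left-hand side to an average over restrictions, use Proposition \ref{prop:Taking restrictions of Laplacians} to rewrite each restriction as a sum of derivative compositions, and then apply the triangle inequality together with a simple counting bound for the number of terms.

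First I would establish the averaging identity: for every $g\in L^{2}(\l(V,W))$,
\[
\|g\|_{4}^{4}=\mathbb{E}_{T\sim\l(V,W)}\|g_{(V_{1},W_{1})\to T}\|_{4}^{4}.
\]
Indeed, unfolding the definitions, the right-hand side equals $\mathbb{E}_{T}\mathbb{E}_{A\sim\l(V/V_{1},W_{1})}|g(A(V,W)+T)|^{4}$, and for each fixed $A$ the map $T\mapsto A(V,W)+T$ is a bijection of $\l(V,W)$, so the inner expectation over $T$ is exactly $\|g\|_{4}^{4}$. Applying this with $g=L_{V_{1}}\circ L_{W_{1}}[f]$ and then invoking Proposition \ref{prop:Taking restrictions of Laplacians} lets me rewrite
\[
g_{(V_{1},W_{1})\to T}=\sum_{(V_{2},W_{2},\tilde{X})\in\mathcal{T}}D_{X}\circ D_{V_{2},W_{2},T}[f],
\]
where $\mathcal{T}$ is the set of triples with $V_{2}\le V_{1}$, $W_{1}\le W_{2}\le W$, and $\tilde{X}\colon W_{2}/W_{1}\overset{\sim}{\to}V_{1}/V_{2}$ an isomorphism.

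By the triangle inequality for $\|\cdot\|_{4}$, followed by the power-mean inequality $(\sum_{k=1}^{N}x_{k})^{4}\le N^{3}\sum_{k}x_{k}^{4}$, I then obtain
\[
\|g_{(V_{1},W_{1})\to T}\|_{4}^{4}\le|\mathcal{T}|^{3}\sum_{(V_{2},W_{2},X)\in\mathcal{T}}\|D_{X}\circ D_{V_{2},W_{2},T}[f]\|_{4}^{4}.
\]
Taking expectation over $T$ and using the averaging identity from the previous step reduces the lemma to the purely combinatorial bound $|\mathcal{T}|^{3}\le q^{5(i^{2}+j^{2})}$.

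The remaining step -- this counting bound -- is where the main quantitative work lies. Grouping triples by the common dimension $m:=\dim(W_{2}/W_{1})=\dim(V_{1}/V_{2})$, we have
\[
|\mathcal{T}|=\sum_{m=0}^{\min(i,j)}{i\brack m}_q{j\brack m}_q\,|\GL_{m}(\mathbb{F}_{q})|,
\]
and the cleanest way to bound this is to recognise it as counting the linear maps in $\l(W/W_{1},V_{1})$ by rank: the $m$-th summand is precisely the number of such maps of rank exactly $m$ (choose an $(j-m)$-dimensional kernel in $W/W_{1}$, an $m$-dimensional image in $V_{1}$, and an isomorphism between the quotient and the image). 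Hence $|\mathcal{T}|=|\l(W/W_{1},V_{1})|=q^{ij}$, so $|\mathcal{T}|^{3}=q^{3ij}\le q^{(3/2)(i^{2}+j^{2})}\le q^{5(i^{2}+j^{2})}$, with considerable room to spare.
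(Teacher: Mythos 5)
Your proof is correct and follows the same strategy as the paper's: the averaging identity $\|g\|_4^4 = \mathbb{E}_T\|g_{(V_1,W_1)\to T}\|_4^4$, Proposition \ref{prop:Taking restrictions of Laplacians} to expand the restriction as a sum of $D_X\circ D_{V_2,W_2,T}$, and then the triangle inequality and power-mean/H\"older bound, reducing everything to a count of the triples $(V_2,W_2,\tilde X)$. The one genuine difference is that counting step: you observe that $\sum_{m}{i\brack m}_q{j\brack m}_q|\GL_m(\mathbb{F}_q)|$ is exactly the number of linear maps $W/W_1\to V_1$, hence equals $q^{ij}$, which gives $|\mathcal{T}|^3 = q^{3ij}\le q^{(3/2)(i^2+j^2)}$. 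The paper instead bounds each summand crudely by $q^{(i+j+k)k}$ and sums, obtaining the weaker $q^{9(i^2+j^2)/2}$; your bijection argument is cleaner and yields a tighter constant, though both comfortably meet the stated $q^{5(i^2+j^2)}$.
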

\begin{proof}
By Proposition \ref{prop:Taking restrictions of Laplacians}, the triangle
inequality and H\"{o}lder's inequality, we have 
\begin{align*}
\|L_{V_{1}}\circ L_{W_{1}}\left[f\right]\|_{4}^{4} & =\mathbb{E}_{T}\left\Vert \left(L_{V_{1}}\circ L_{W_{1}}\left[f\right]\right)_{\left(V_{1},W_{1}\right)\to T}\right\Vert _{4}^{4}\\
 & =\mathbb{E}_{T}\left\Vert \sum_{V_{2}\le V_{1},W_{2}\ge W_{1}}\sum_{\tilde{X}\colon W_{2}/W_{1}\overset{\sim}{\to}V_{1}/V_{2}}D_{X}\circ D_{V_{2},W_{2},T}\left[f\right]\right\Vert _{4}^{4}\\
 & \leq\mathbb{E}_{T}\left(\sum_{V_{2}\le V_{1},W_{2}\ge W_{1}}\sum_{\tilde{X}\colon W_{2}/W_{1}\overset{\sim}{\to}V_{1}/V_{2}}\left\Vert D_{X}\circ D_{V_{2},W_{2},T}\left[f\right]\right\Vert _{4}\right)^{4}\\
 & \le\left(\sum_{V_{2}\le V_{1},W_{2}\ge W_{1}}\sum_{\tilde{X}\colon W_{2}/W_{1}\overset{\sim}{\to}V_{1}/V_{2}}1\right)^{3}\sum_{V_{2}\le V_{1},W_{2}\ge W_{1}}\sum_{\tilde{X}\colon W_{2}/W_{1}\overset{\sim}{\to}V_{1}/V_{2}}\mathbb{E}_{T}\|D_{X}\circ D_{V_{2},W_{2},T}[f]\|_{4}^{4}\\
 & \le q^{9\left(i^{2}+j^{2}\right)/2}\sum_{V_{2}\le V_{1},W_{2}\ge W_{1}}\sum_{\tilde{X}\colon W_{2}/W_{1}\overset{\sim}{\to}V_{1}/V_{2}}\mathbb{E}_{T}\|D_{X}\circ D_{V_{2},W_{2},T}[f]\|_{4}^{4},
\end{align*}
using the fact that for any $0 \leq k \leq \min\{i,j\}$, there are at most ${i \brack k}_q{j \brack k}_q q^{k^2} \leq q^{(i+j+k)k}$ triples $(V_2,W_2,\tilde{X})$ such that $V_2 \leq V_1$, $W_2 \geq W_1$, $\dim(V_1/V_2) = \dim(W_2/W_1)=k$ and $\tilde{X}:W_2/W_1 \to V_1/V_2$ is a linear isomorphism.
\end{proof}
By combining Lemma \ref{lem:bounds on composition} with Lemma \ref{lem:hyp inductive step}, we obtain the following.
\begin{lem}
\label{lem:after restricting}Let $f:\l(V,W) \to \mathbb{C}$ be a function of degree at most $d$.
Then 
\begin{align*}
\frac{1}{162}\|f\|_{4}^{4} & \le q^{6d^{2}}\|f\|_{2}^{4}+\sum_{V_{2}\le V,\ W_{2}\le W,\atop X\in \l(W_{2},V/V_{2}),\, |(V_{2},W_{2},X)|>0}q^{24d|(V_{2},W_{2},X)|}\mathbb{E}_{T}\|D_{X}\circ D_{V_{2},W_{2},T}[f]\|_{4}^{4}\\
\end{align*}
\end{lem}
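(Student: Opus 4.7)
The plan is to combine the two lemmas immediately preceding the statement: Lemma \ref{lem:hyp inductive step}, which bounds $\|f\|_4^4$ by $\|f\|_2^4$ plus a weighted sum of the $\|L_{V_1}\circ L_{W_1}[f]\|_4^4$, and Lemma \ref{lem:bounds on composition}, which expresses each $\|L_{V_1}\circ L_{W_1}[f]\|_4^4$ as a weighted sum of $\mathbb{E}_T\|D_X\circ D_{V_2,W_2,T}[f]\|_4^4$ over triples $(V_2,W_2,\tilde X)$ with $V_2\le V_1$, $W_2\ge W_1$, and $\tilde X\colon W_2/W_1\overset{\sim}{\to}V_1/V_2$.

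First I would plug the second bound into the first. This yields, up to the leading $q^{3d^2}\|f\|_2^4$ term, a quintuple sum indexed by $(V_1,W_1,V_2,W_2,\tilde X)$ with weight $q^{7d(i+j)+5(i^2+j^2)}$, where $i=\dim V_1$ and $j=\codim W_1$. The next step is to re-index this as a triple sum over $(V_2,W_2,X)$ with $V_2\le V$, $W_2\le W$ and $X\in\l(W_2,V/V_2)$, by observing that any such $X$ determines $V_1$ (the preimage in $V$ of $\Image(X)+V_2$) and $W_1=\ker(X)$ uniquely, giving a bijection with the original index set. Under this correspondence one reads off $i=\dim V_2+\rank(X)$ and $j=\codim W_2+\rank(X)$, so that
\[
i+j=|(V_2,W_2)|+2\rank(X)=|(V_2,W_2,X)|+\rank(X)\le 2|(V_2,W_2,X)|,
\]
since $\rank(X)\le \min(i,j)\le (i+j)/2$.

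It then remains to bound the exponent $7d(i+j)+5(i^2+j^2)$ by $24d\,|(V_2,W_2,X)|$. Because $f$ has degree at most $d$, only terms with $i,j\le d$ survive (otherwise the derivative vanishes by Lemma \ref{lem:derivatives}); using $i^2+j^2\le d(i+j)$ in this range, one has $7d(i+j)+5(i^2+j^2)\le 12d(i+j)\le 24d\,|(V_2,W_2,X)|$. The constraint $(i,j)\ne(0,0)$ translates into $|(V_2,W_2,X)|>0$ after the reindexing, and the leading constant $q^{3d^2}$ is trivially bounded by $q^{6d^2}$. Putting all of this together yields the stated inequality.

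I don't expect any serious obstacle: the only mildly delicate point is verifying that the reindexing $(V_1,W_1,V_2,W_2,\tilde X)\leftrightarrow(V_2,W_2,X)$ is a genuine bijection and keeping track of the degeneracy case $(i,j)=(0,0)$ corresponding to $|(V_2,W_2,X)|=0$. The rest is a bookkeeping computation exploiting the trivial degree bounds $i,j\le d$.
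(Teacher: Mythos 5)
Your proposal is correct and takes essentially the same route as the paper: plug Lemma \ref{lem:bounds on composition} into Lemma \ref{lem:hyp inductive step}, observe that $(V_2,W_2,X)$ determines $(V_1,W_1)$ via $V_1=\mathcal{Q}_{V_2}^{-1}(\Image X)$ and $W_1=\ker X$, and use $i+j\le 2|(V_2,W_2,X)|$ together with $i,j\le d$ to control the exponent. The only (inconsequential) deviation is that the paper folds the bound $5(i^2+j^2)\le 5d(i+j)$ into its citation of Lemma \ref{lem:bounds on composition} and phrases the rank inequality as $|(V_2,W_2,X)|\ge|(V_1,W_1)|/2$ rather than via the explicit identities $i=\dim V_2+\rank X$, $j=\codim W_2+\rank X$ that you write out.
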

\begin{proof}
The lemma follows immediately from Lemmas \ref{lem:hyp inductive step}
and \ref{lem:bounds on composition}. Indeed, they imply that
\[
\frac{1}{162}\|f\|_{4}^{4}\le q^{6d^{2}}\|f\|_{2}^{4}+\sum_{(V_{1},W_{1})\ne\left(0,W\right)}q^{7d|(V_1,W_{1})|}\sum_{V_{2}\le V_{1},W_{2}\ge W_{1}}\sum_{\tilde{X}\colon W_{2}/W_{1}\overset{\sim}{\to}V_{1}/V_{2}}q^{5d\left|(V_{1},W_{1})\right|}\mathbb{E}_{T}\|D_{X}\circ D_{V_{2},W_{2},T}[f]\|_{4}^{4}.
\]
 Now observe that each term in the sum on the right-hand side corresponds to a quintuple $(V_1,W_1,V_2,W_2,X)$ with $V_2 \leq V_1$, $W_2 \geq W_1$ and $X \in \l(W_2 \to V_1/V_2)$ being a linear surjection with kernel $W_1$, but in fact the triple $\left(V_{2},W_{2},X\right)$
uniquely determines $(V_{1},W_{1})$, since $V_{1}$ is the
preimage of $\Image(X)$ under the projection map
$V\to V/V_{2}$, and $W_{1}=\ker(X)$.
Moreover, we have $\ensuremath{\left|\left(V_{2},W_{2},X\right)\right|}\ensuremath{\ge\frac{\left|(V_{1},W_{1})\right|}{2}}$, and $|(V_2,W_2,X)| = 0$ only if $V_1 = 0$ and $W_1 = W$.
Hence, 
\[
\frac{1}{162}\|f\|_{4}^{4}\le q^{6d^{2}}\|f\|_{2}^{4}+\sum_{V_{2}\le V,W_{2}\le W,\atop X \in \l(W_2,V/V_2):\ |(V_2,W_2,X)| > 0} q^{24d\left|\left(V_{2},W_{2},X\right)\right|}\mathbb{E}_{T}\|D_{X}\circ D_{V_{2},W_{2},T}[f]\|_{4}^{4}.
\]
as required.
\end{proof}

As mentioned above, we will prove our conditional hypercontractive inequality by induction on the degree $d$. Our induction hypothesis will be as follows.

\textbf{Induction hypothesis.} Let $f:\l(V,W) \to \mathbb{C}$ be a function of degree at most $d'$, where $d' \leq d-1$.
Then 
\begin{equation}
    \label{eq:ind-hyp}
\|f\|_{4}^{4}\le q^{100(d')^{2}}\sum_{V_{1}\le V,W_{1}\le W}\mathbb{E}_{\mathbf{T}\sim\mathcal{L}\left(V,W\right)}\|D_{V_{1},W_{1},\mathbf{T}}[f]\|_{2}^{4}.
\end{equation}

Taking a function of degree $d$ and applying our inductive hypothesis to the lower-degree functions appearing on the right-hand side of the inequality in Lemma \ref{lem:after restricting}, will at first yield a version of the induction hypothesis for $f$ {\em with extra $D_X$-deriatives appearing}; the following sequence of lemmas will enable us to remove these extra derivatives, recoving the (genuine) induction hypothesis.

\begin{lem}
\label{lem:getting rid of D_x 2-norms}Let $f\colon\l(V,W)\to\mathbb{C}$
be a function of degree at most $d$. Then $\sum_{X\in\l(W,V)}q^{-4d\rank(X)}\|D_{X}[f]\|_{2}^{2}\le 2\|f\|_{2}^{2}.$ 
\end{lem}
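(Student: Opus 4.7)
The plan is to expand $\|D_{X}[f]\|_{2}^{2}$ via Parseval on $\mathcal{L}(V/V_{1}, W_{1})$ (where $V_{1} = \Image(X)$, $W_{1} = \ker(X)$), apply Cauchy--Schwarz fibrewise, and reduce matters to a combinatorial estimate over the poset $\{X: X \le Y\}$. From the definition of $D_{X}$ together with Lemmas \ref{lem:restriction of characters} and \ref{lem:linear-algebraic-2}, one has the Fourier expansion
\[
D_{X}[f] = \sum_{Y \ge X} \hat{f}(Y)\, u_{\psi_{X}(Y)}, \qquad \psi_{X}(Y) := Y(W_{1}, V/V_{1}),
\]
so Parseval in $L^{2}(\mathcal{L}(V/V_{1}, W_{1}))$ yields $\|D_{X}[f]\|_{2}^{2} = \sum_{Z} |c_{Z}|^{2}$ with $c_{Z} = \sum_{Y \ge X,\, \psi_{X}(Y) = Z} \hat{f}(Y)$. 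Applying Cauchy--Schwarz inside each fibre and switching the order of summation, the lemma reduces, by $\|f\|_{2}^{2} = \sum_{Y} |\hat{f}(Y)|^{2}$, to showing
\[
\Psi(Y) := \sum_{X \le Y} q^{-4d\rank(X)} N_{X}(Y) \le 2 \quad \text{for every } Y \text{ with } \rank(Y) \le d,
\]
where $N_{X}(Y) := |\{Y' \ge X : \psi_{X}(Y') = \psi_{X}(Y)\}|$.

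The crux is a linear-algebraic computation showing $N_{X}(Y) = q^{2k(d'-k)}$, where $k := \rank(X)$ and $d' := \rank(Y)$. Fixing $X \le Y$, I will choose complements $W = \ker(X) \oplus C$ and $V = \Image(X) \oplus U$ with $\Image(Y-X) \subseteq U$ (possible since $Y \ge X$ forces $\Image(Y-X) \cap \Image(X) = \{0\}$). Writing $Y' = X + H$ with block decompositions $H|_{\ker X} = M + A'$ (valued in $V_{1}$ and $U$ respectively) and $H|_{C} = h_{X} + h_{U}$, direct analysis of the two defining conditions for $Y' \ge X$ (namely $\Image(H) \cap \Image(X) = \{0\}$ and $\ker(H) + \ker(X) = W$) shows that they are equivalent to $M|_{\ker A'} = 0$, $\Image(h_{U}) \subseteq \Image(A')$, and $h_{X}$ being uniquely determined by $(M, A', h_{U})$ through the map $\overline{M} \circ \overline{A'}^{-1}: \Image(A') \to V_{1}$ induced on $\ker(X)/\ker(A')$. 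The matching condition $\psi_{X}(Y') = \psi_{X}(Y)$ further pins down $A' = A := (Y-X)|_{\ker X}$; one verifies $\ker(A) = \ker(Y)$. The remaining free parameters are $M \in \mathcal{L}(\ker(X)/\ker(Y), \Image(X))$ and $h_{U} \in \mathcal{L}(C, \Image(A))$, yielding
\[
N_{X}(Y) = q^{(d'-k)k} \cdot q^{k(d'-k)} = q^{2k(d'-k)}.
\]

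Combining this with the standard count $|\{X \le Y: \rank(X) = k\}| = \sqbinom{d'}{k}_{q} \cdot q^{k(d'-k)}$, obtained from Proposition \ref{prop:poset} (choose $\Image(X)$ as a $k$-dimensional subspace of $\Image(Y)$; then $X$ is forced on $Y^{-1}(\Image(X))$ and free on a complement), we arrive at
\[
\Psi(Y) = \sum_{k=0}^{d'} \sqbinom{d'}{k}_{q}\, q^{3k(d'-k) - 4dk}.
\]
The $k = 0$ term contributes $1$. Using $\sqbinom{d'}{k}_{q} \le q^{k(d'-k)} \prod_{j \ge 1}(1 - q^{-j})^{-1} \le 4\, q^{k(d'-k)}$ for $q \ge 2$, together with $d \ge d'$ so that $d - d' + k \ge k$, each $k \ge 1$ term is at most $4\, q^{-4k^{2}}$, and $\sum_{k \ge 1} 4\, q^{-4k^{2}} < 1$ comfortably for $q \ge 2$. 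Hence $\Psi(Y) < 2$, giving the claim. The principal technical obstacle will be the bookkeeping in the block-matrix parameterisation establishing $N_{X}(Y) = q^{2k(d'-k)}$; the remaining steps are short calculations.
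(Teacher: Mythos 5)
Your proposal is correct, and it in fact repairs a gap in the paper's own proof. The paper's proof opens with the claimed identity $\|D_{X}[f]\|_{2}^{2}=\sum_{Y\geq X}|\hat{f}(Y)|^{2}$, and then reduces to the simple estimate $|\{X\le Y:\rank(X)=k\}|\le q^{3kd}$, giving $\sum_{X\le Y}q^{-4d\rank(X)}\le\sum_{k\ge0}q^{-kd}\le 2$. But the opening identity is equivalent to injectivity of $\psi_X\colon Y\mapsto Y(W_1,V/V_1)$ on $\{Y:Y\geq X\}$, and this fails. For instance, over $\mathbb{F}_2$ with $V=W=\mathbb{F}_2^2$ and $X$ the rank-one projection $e_1\mapsto e_1$, $e_2\mapsto 0$, both the identity and the shear $e_1\mapsto e_1$, $e_2\mapsto e_1+e_2$ are $\geq X$ and have the same image under $\psi_X$; taking $f$ to be the sum of these two characters gives $\|D_X[f]\|_2^2=4>2=\sum_{Y\ge X}|\hat{f}(Y)|^2$. (The averaged version $\mathbb{E}_T\|D_{X,T}[f]\|_2^2=\|L_X[f]\|_2^2=\sum_{Y\ge X}|\hat{f}(Y)|^2$ is true, but the lemma concerns $T=0$.)

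Your fibrewise Cauchy--Schwarz step is exactly what is needed, and I have checked both of your counts. The fibre size $N_X(Y)=q^{2k(d'-k)}$ is right: with $A=\psi_X(Y)$ fixed, the two conditions $\Image(X)\cap\Image(Y'-X)=\{0\}$ and $\ker(X)+\ker(Y'-X)=W$ reduce to $\ker(A)\subseteq\ker(M)$, $\Image(h_U)\subseteq\Image(A)$, and the remaining block being determined, leaving $q^{(d'-k)k}\cdot q^{k(d'-k)}$ free choices; this also matches the small example above, where the fibre over the nonzero $Z$ has size $4=2^{2}$. Your exact count $|\{X\le Y:\rank(X)=k\}|=\sqbinom{d'}{k}_q\, q^{k(d'-k)}$ is also correct and sharper than the paper's crude $q^{3kd}$. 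The resulting bound $\Psi(Y)\le 1+\sum_{k\ge1}4q^{-4k^2}<2$, using $d-d'+k\ge k$, then yields the lemma. You should still write out the block-matrix verification of $N_X(Y)$ in full, as you flag yourself; but all the numbers check out, and your argument is more careful than the one in the paper.
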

\begin{proof}
We have
$$\sum_{X\in\l(W,V)}q^{-4d\rank(X)}\|D_{X}[f]\|_{2}^{2}=\sum_{X\in\l(W,V)}q^{-4d\rank(X)}\sum_{Y\ge X}|\hat{f}(Y)|^{2}=\sum_{Y\in\l(W,V)}|\hat{f}(Y)|^{2}\sum_{X \leq Y}q^{-4d\rank(X)},$$
so it suffices to prove the following.
\begin{claim}
Let $Y\in \l(W,V)$ be of rank at most $d$. Then the number of $X\in\l(W,V)$
with $X\le Y$ and $\rank(X)=k$ is at most $q^{3kd}$. 
\end{claim}
\begin{proof}[Proof of claim.]
Let $W_{1} = \ker(Y)$ and let $V_{1}=\Image(Y)$. If $X \leq Y$, then $\ker(X) \supset W_{1}$ and $\Image(X) \subset V_1$. There are therefore at most $q^{kd}$ choices for $\Image(X)$ and at most $q^{kd}$ choices for $\ker(X)$, and given these, there are at most $q^{k^2} \leq q^{kd}$ choices for $X$. This concludes the proof of the claim.
\end{proof}
We now have
$$\sum_{X\in\l(W,V)}q^{-4d\rank(X)}\|D_{X}[f]\|_{2}^{2} \leq \sum_{Y \in \l(W,V)} |\hat{f}(Y)|^2 \sum_{k=0}^{d} q^{-kd} \leq 2\|f\|_2^2,$$
completing the proof of the lemma.

\end{proof}
\begin{cor}
\label{cor:getting rid of D_X 4-norms}Let $f\colon\l(V,W)\to\mathbb{C}$
be a function of degree $d$. Then $\sum_{X\in\l(W,V)}q^{-4d\rank(X)}\|D_{X}[f]\|_{2}^{4}\le 2\|f\|_{2}^{4}.$ 
\end{cor}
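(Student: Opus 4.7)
The plan is to deduce this corollary directly from the preceding Lemma \ref{lem:getting rid of D_x 2-norms} via the pointwise estimate $\|D_X[f]\|_2^2\le\|f\|_2^2$, which lets us split one factor of $\|D_X[f]\|_2^2$ off from $\|D_X[f]\|_2^4$ and then invoke the 2-norm statement we have already proved.

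First I would observe that the proof of Lemma \ref{lem:getting rid of D_x 2-norms} in fact establishes the identity
\[
\|D_X[f]\|_2^2=\sum_{Y\ge X}|\hat f(Y)|^2,
\]
for every $X\in\l(W,V)$, which combined with Parseval's identity $\|f\|_2^2=\sum_{Y\in\l(W,V)}|\hat f(Y)|^2$ yields the uniform pointwise bound $\|D_X[f]\|_2^2\le\|f\|_2^2$ for all $X$.

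With this in hand, I would write $\|D_X[f]\|_2^4=\|D_X[f]\|_2^2\cdot\|D_X[f]\|_2^2\le\|f\|_2^2\cdot\|D_X[f]\|_2^2$, multiply by $q^{-4d\rank(X)}$, and sum over $X$, obtaining
\[
\sum_{X\in\l(W,V)}q^{-4d\rank(X)}\|D_X[f]\|_2^4\le\|f\|_2^2\sum_{X\in\l(W,V)}q^{-4d\rank(X)}\|D_X[f]\|_2^2\le 2\|f\|_2^4,
\]
where the last inequality is exactly Lemma \ref{lem:getting rid of D_x 2-norms}.

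There is essentially no obstacle here: the corollary is an immediate consequence of the previous lemma once one notes that its proof gives not merely the inequality but the stronger Fourier identity, which supplies the pointwise $L^\infty$-in-$X$ estimate needed to pass from the 2-norm sum to the 4-norm sum.
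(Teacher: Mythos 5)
Your proof is correct and is essentially the paper's proof: the paper likewise writes $\|D_X[f]\|_2^4\le\|D_X[f]\|_2^2\cdot\|f\|_2^2$ and then invokes the preceding lemma. You simply make explicit, via the Fourier identity $\|D_X[f]\|_2^2=\sum_{Y\ge X}|\hat f(Y)|^2$, why the pointwise bound $\|D_X[f]\|_2^2\le\|f\|_2^2$ holds, which the paper states without comment.
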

\begin{proof}
We have $\|D_{X}[f]\|_{2}^{2}\le\|f\|_{2}^{2}$ and therefore $\|D_{X}[f]\|_{2}^{4}\le\|D_{X}[f]\|_{2}^{2}\cdot\|f\|_{2}^{2}$, so we are done by the preceding lemma. 
\end{proof}

\subsection{Applying the induction hypothesis to the functions $D_{X}D_{V_{2},W_{2},T}[f]$.}

\begin{lem}
\label{lem:upper bound on 4-norms of the derivatives given induction hypothesis}
Suppose that the inductive
hypothesis (\ref{eq:ind-hyp}) holds. Let $f:\l(V,W) \to \mathbb{C}$ be a function of degree at most $d$. Let $V_{2}\le V$, let $W_{2}\le W$ and let $X\in\l(W_{2},V/V_{2})$ such that $|(V_2,W_2,X)|: = \dim(V_2)+\codim(W_2)+\rank(X) \geq 1$. Let $V_{1}$ be the preimage of $\Image(X)$ under the natural projection map from $V$ to $V/V_2$, and let $W_{1} = \ker(X)$. Then

\begin{equation}
\underset{T \sim \l(V,W)}{\mathbb{E}}\|D_{X}\circ D_{V_{2},W_{2},T}f\|_{4}^{4}\le q^{100\left(d-\left|\left(V_{2},W_{2},X\right)\right|\right)^{2}+6d\rank(X)}\sum_{V_{3}\ge V_{2},W_{3}\le W_{2}:\atop V_{3}\cap V_{1}=V_{2},W_{3}+W_{1}=W_{2}}\underset{T \sim \l(V,W)}{\mathbb{E}}\|D_{X(W_{3},V/V_{3})}D_{V_{3},W_{3},T}f\|_{2}^{4}.\label{eq:technical}
\end{equation}
 
\end{lem}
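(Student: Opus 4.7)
The plan is to apply the induction hypothesis (\ref{eq:ind-hyp}) to $g := D_X \circ D_{V_2, W_2, T}[f]$, then use Proposition \ref{prop:switching the roles between the derivatives} to swap each resulting outer derivative past $D_X$, and finally use Proposition \ref{prop:composition of derivatives} to recombine the inner two derivatives into a single derivative of the form appearing on the right-hand side of the lemma. By Lemmas \ref{lem:derivatives} and \ref{lem:derivatives-2}, $g$ has degree at most $d' := d - |(V_2, W_2, X)|$ and $g \in L^2(\l(V/V_1, W_1))$, so the induction hypothesis applies.

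Applying (\ref{eq:ind-hyp}) in the ambient space $\l(V/V_1, W_1)$ yields
\[
\|g\|_4^4 \leq q^{100(d')^2} \sum_{V_1' \leq V/V_1,\, W_1' \leq W_1} \mathbb{E}_S \|D_{V_1', W_1', S}[g]\|_2^4.
\]
For each $(V_1', W_1')$, I apply Proposition \ref{prop:switching the roles between the derivatives} inside $\l(V/V_2, W_2)$, with the proposition's $V_1, W_1, X$ playing the role of $V_1/V_2, W_1, X$ and its $V_2, W_2$ playing the role of $\hat{V}/V_2, W_1'$, where $\hat{V}$ is the preimage of $V_1'$ in $V$. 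The conclusion rewrites $D_{V_1', W_1', S} \circ D_X$ as a sum over pairs $(V_3, W_3)$ with $V_2 \leq V_3 \leq \hat{V}$, $V_3 \cap V_1 = V_2$, $V_3 + V_1 = \hat{V}$ and $W_1' \leq W_3 \leq W_2$, $W_3 \cap W_1 = W_1'$, $W_3 + W_1 = W_2$, of operators of the form $D_{X(W_3, V/V_3)} \circ D_{V_3/V_2, W_3, \tilde{S}}$, where $\tilde{S}$ is the lift of $S$ to $\l(V/V_2, W_2)$. Proposition \ref{prop:composition of derivatives} then collapses the composition with $D_{V_2, W_2, T}$ into a single derivative $D_{V_3, W_3, T + S(V, W)}$.

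The key observation is that the pair $(V_1', W_1')$ is determined by $(V_3, W_3)$: namely $V_1' = (V_3+V_1)/V_1$ and $W_1' = W_3 \cap W_1$. Hence the double sum collapses to a single sum over valid pairs $(V_3, W_3)$ satisfying $V_3 \supseteq V_2$, $V_3 \cap V_1 = V_2$, $W_3 \leq W_2$, $W_3 + W_1 = W_2$, exactly as in the statement. I then bound the inner $L^2$-norm of the sum by Minkowski, followed by the crude estimate $(\sum_{i=1}^N a_i)^4 \leq N^3 \sum_i a_i^4$, where $N$ counts the valid $(V_3, W_3)$ for fixed $(V_1', W_1')$. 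Standard counts give $N \leq q^{(\dim V_1' + \codim_{W_1}(W_1'))\rank(X)} \leq q^{d' \rank(X)}$, since these count complements of $V_1/V_2$ in $\hat{V}/V_2$ and of $W_1/W_1'$ in $W_2/W_1'$, while the summand vanishes when $\dim V_1' + \codim_{W_1}(W_1') > d'$. Finally, taking expectation over $T \sim \l(V, W)$ absorbs the shift $S(V, W)$ by translation invariance of the uniform measure, producing
\[
\mathbb{E}_T \|g\|_4^4 \leq q^{100(d')^2 + 3d\rank(X)} \sum_{(V_3, W_3) \text{ valid}} \mathbb{E}_T \|D_{X(W_3, V/V_3)} D_{V_3, W_3, T}[f]\|_2^4,
\]
comfortably within the claimed exponent of $6d\rank(X)$.

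The hard part will be the notational bookkeeping between the two propositions: verifying the claimed bijection between $(V_1', W_1')$ and valid $(V_3, W_3)$, correctly identifying which spaces play the roles of the propositions' dummy variables when they are applied inside quotient spaces, and carefully tracking how the shift parameter combines when lifted successively through $\l(V/V_1, W_1)$, $\l(V/V_2, W_2)$, and $\l(V, W)$.
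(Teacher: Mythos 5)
Your proposal is correct and follows essentially the same route as the paper's own proof: apply the inductive hypothesis to $D_X\circ D_{V_2,W_2,T}f$ in the ambient space $\l(V/V_1,W_1)$, swap the resulting outer derivative past $D_X$ via Proposition \ref{prop:switching the roles between the derivatives}, recombine the two hybrid derivatives via Proposition \ref{prop:composition of derivatives}, collapse the double sum using the observation that $(V_3,W_3)$ determines the outer index, apply Minkowski plus Hölder (equivalently $(\sum a_i)^4\le N^3\sum a_i^4$), and absorb the extra shift $S(V,W)$ by translation invariance of the uniform measure on $\l(V,W)$. The only deviation is cosmetic: your counting bound $N\le q^{(d-|(V_2,W_2,X)|)\rank X}$ is slightly sharper than the paper's $|\mathcal{V}(V_4)|,|\mathcal{W}(W_4)|\le q^{d\rank X}$, so you land at exponent $3d\rank(X)$ rather than the stated $6d\rank(X)$, which is of course still within the claimed bound.
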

\begin{proof}
By Lemmas \ref{lem:derivatives} and \ref{lem:derivatives-2}, the function $D_X \circ D_{V_2,W_2,T}f \in \l(V/V_1,W_1)$ has degree at most $d-|(V_2,W_2,X)| \leq d-1$, so by the induction hypothesis we have 
\begin{equation}\label{eq:halfway-house}
\|D_{X}\circ D_{V_{2},W_{2},T}f\|_{4}^{4}\le\sum_{V_4: V_1 \leq V_4 \leq V,\atop W_{4}\le W_{1}}q^{100(d-|(V_{2},W_{2},X)|)^2}\mathbb{E}_{S\sim\mathcal{L}\left(V/V_{1},W_{1}\right)}\|D_{V_{4}/V_1,W_{4},S} \circ D_{X} \circ D_{V_{2},W_{2},T}f\|_{2}^{4}.
\end{equation}
 for each $T \in \l(V,W)$. Note for later that the summand in (\ref{eq:halfway-house}) is zero unless 
 $$\dim(V_4)+\codim(W_4) \leq d.$$
 We now apply Proposition \ref{prop:switching the roles between the derivatives}.
For a subspace $V_4$ of $V$ with $V_1 \subseteq V_4$, let $\mathcal{V}\left(V_{4}\right)$ be the set of subspaces $V_{3}$ of $V$
such that $V_3 \supseteq V_2$ and $(V_{3}/V_2)\oplus (V_{1}/V_2)=V_{4}/V_2$, and for a subspace $W_4$ of $W_2$ with $W_4 \subseteq W_1$, let $\mathcal{W}\left(W_{4}\right)$
be the set of subspaces $W_3$ of $W_2$ such that $W_3 \supseteq W_4$ and $(W_{3}/W_{4})\oplus (W_{1}/W_{4})=W_2/W_{4}$. Using Proposition \ref{prop:switching the roles between the derivatives}, Proposition \ref{prop:composition of derivatives}, the triangle inequality and H\"{o}lder's inequality, we obtain 
\begin{align*}
\frac{\underset{T \sim \l(V,W)}{\mathbb{E}}\|D_{X}\circ D_{V_{2},W_{2},T}f\|_{4}^{4}}{q^{100\left(d-\left|\left(V_{2},W_{2},X\right)\right|\right)^{2}}} & \le\sum_{V_4: V_1 \leq V_4 \leq V,\atop W_{4}\le W_{1}}\underset{T \sim \l(V,W),\atop S \sim \l(V/V_1,W_1)}{\mathbb{E}}\left\Vert \sum_{V_{3}\in\mathcal{V}\left(V_{4}\right),\atop W_{3}\in\mathcal{W}\left(W_{4}\right)}D_{X\left(W_{3},V/V_{3}\right)}D_{V_{3}/V_2,W_{3},S(V/V_2,W_2)} D_{V_{2},W_{2},T}f\right\Vert _{2}^{4}\\
& = \sum_{V_4: V_1 \leq V_4 \leq V,\atop W_{4}\le W_{1}}\underset{T \sim \l(V,W),\atop S \sim \l(V/V_1,W_1)}{\mathbb{E}}\left\Vert \sum_{V_{3}\in\mathcal{V}\left(V_{4}\right),\atop W_{3}\in\mathcal{W}\left(W_{4}\right)}D_{X\left(W_{3},V/V_{3}\right)}D_{V_{3},W_{3},S(V,W)+T} f\right\Vert _{2}^{4}\\
& = \sum_{V_4: V_1 \leq V_4 \leq V,\atop W_{4}\le W_{1}}\underset{T \sim \l(V,W)}{\mathbb{E}}\left\Vert \sum_{V_{3}\in\mathcal{V}\left(V_{4}\right),\atop W_{3}\in\mathcal{W}\left(W_{4}\right)}D_{X\left(W_{3},V/V_{3}\right)}D_{V_{3},W_{3},T} f\right\Vert _{2}^{4}\\
 & \le\sum_{V_4: V_1 \leq V_4 \leq V,\atop W_{4}\le W_{1}}\underset{T \sim \l(V,W)}{\mathbb{E}}\left(\sum_{V_{3}\in\mathcal{V}\left(V_{4}\right),\atop W_{3}\in\mathcal{W}\left(W_{4}\right)}\left\Vert D_{X\left(W_{3},V/V_{3}\right)}D_{V_{3},W_{3},T}f\right\Vert _{2}\right)^{4}\\
 & \le\sum_{V_4: V_1 \leq V_4 \leq V,\atop W_{4}\le W_{1}}\left|\mathcal{V}\left(V_{4}\right)\right|^{3}\left|\mathcal{W}\left(W_{4}\right)\right|^{3}\underset{T \sim \l(V,W)}{\mathbb{E}}\sum_{V_{3}\in\mathcal{V}\left(V_{4}\right),\atop W_{3}\in\mathcal{W}\left(W_{4}\right)}\left\Vert D_{X\left(W_{3},V/V_{3}\right)}D_{
 V_{3},W_{3},T}f\right\Vert _{2}^{4}\\
 & \le \sum_{V_{3}\ge V_{2},W_{3}\le W_{2}:\atop V_{3}\cap V_{1}=V_{2},\,W_{3}+W_{1}=W_{2}} q^{6d\rank(X)} \underset{T \sim \l(V,W)}{\mathbb{E}}\|D_{X(W_{3},V/V_{3})}D_{V_{3},W_{3},T}f\|_{2}^{4},
\end{align*}
 where for the last inequality we use the observations
\[
|\mathcal{V}(V_{4})|\le\sqbinom{\dim(V_{4}/V_2)}{\dim(V_{1}/V_2)}_q = {\dim(V_4/V_2) \brack \rank(X)}_q \le q^{d\cdot\rank(X)},
\]
and (dually) $|\mathcal{W}(W_{4})|\leq q^{d\cdot\text{rank}\left(X\right)}$. This completes the proof of the lemma.
\end{proof}

Plugging Lemma \ref{lem:upper bound on 4-norms of the derivatives given induction hypothesis}
into Lemma \ref{lem:after restricting} we obtain: 
\begin{lem}
\label{lem:after restricting and switching roles} Let $f:\l(V,W) \to \mathbb{C}$ be a function
of degree at most $d$. Then 
\[
\frac{1}{162}\|f\|_{4}^{4}\le q^{6d^2} \|f\|_2^4+ q^{100d^2}\sum_{k=0}^{d} q^{-31d(k+1)}\sum_{V_3 \leq V,\ W_3 \leq W,\ Y \in \l(W_3,V/V_3):\atop \rank(Y)=k,\ |(V_3,W_3,Y)|>0}q^{-4kd} \underset{T \sim \l(V,W)}{\mathbb{E}}\| D_Y D_{V_3,W_3,T}[f]\|_2^4.
\]
\end{lem}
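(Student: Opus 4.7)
The plan is to substitute the inductive bound of Lemma \ref{lem:upper bound on 4-norms of the derivatives given induction hypothesis} into the right-hand side of Lemma \ref{lem:after restricting}, and then to reorganise the resulting quintuple sum by letting $(V_3,W_3,Y)$, with $Y:=X(W_3,V/V_3)$, be the outer index of summation. After substitution we will have
\[
\frac{1}{162}\|f\|_{4}^{4}\le q^{6d^{2}}\|f\|_{2}^{4}+\sum_{(V_2,W_2,X)}\sum_{(V_3,W_3)} q^{24ds+100(d-s)^{2}+6dk}\,\mathbb{E}_{T}\|D_{X(W_3,V/V_3)}\,D_{V_3,W_3,T}[f]\|_{2}^{4},
\]
where $s:=|(V_2,W_2,X)|\ge 1$, $k:=\rank(X)$, and the inner sum runs over $V_3\supseteq V_2$, $W_3\subseteq W_2$ satisfying $V_3\cap V_1=V_2$ and $W_3+W_1=W_2$ (with $V_1$ the preimage of $\Image X$ in $V$ and $W_1:=\ker X$). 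The same linear algebra behind Lemma \ref{lem:linear-algebraic-2} forces $\rank Y=\rank X=k$ under these constraints, and only triples with $|(V_3,W_3,Y)|\le d$ contribute, since otherwise $D_Y\circ D_{V_3,W_3,T}$ is a derivative of order exceeding $d$ and annihilates $f$; in particular $\dim V_3,\codim W_3\le d$.

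The heart of the argument is reversing the order of summation and bounding the number of preimages $(V_2,W_2,X)$ of each $(V_3,W_3,Y)$. Fix $(V_3,W_3,Y)$ with $\rank Y=k$ and further fix $(a,b):=(\dim V_2,\codim W_2)$. There are at most $\sqbinom{\dim V_3}{a}_q\le q^{ad}$ choices for $V_2\subseteq V_3$, and dually at most $q^{bd}$ for $W_2\supseteq W_3$. Given $(V_2,W_2)$, the constraint $V_3\cap V_1=V_2$ forces the projection $V_1/V_2\to V/V_3$ to restrict to an isomorphism onto $\Image Y$, so $V_1/V_2$ is a $k$-dimensional complement of $V_3/V_2$ inside the preimage of $\Image Y$ in $V/V_2$, contributing at most $q^{kd}$ choices for $V_1$. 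Once $V_1$ is chosen, $X|_{W_3}$ is determined by $Y$ through this isomorphism, and $X$ extends freely across a complement of $W_3$ in $W_2$ into $V_1/V_2$ (at most $q^{kd}$ choices); the condition $W_3+W_1=W_2$ then holds automatically, because $X(W_3)=V_1/V_2$ means every $w\in W_2$ is congruent modulo $\ker X$ to an element of $W_3$. Summing over $(a,b)$ gives at most $d^{2}\cdot q^{d(s+k)}$ preimages of each $(V_3,W_3,Y)$.

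The final step is exponent bookkeeping. For fixed $(V_3,W_3,Y)$ of rank $k$, the coefficient appearing is at most
\[
q^{\,24ds+100(d-s)^{2}+6dk+d(s+k)+2\log_q d}\;\le\;q^{\,100d^{2}-75ds+7dk+O(d)},
\]
using $(d-s)^{2}\le d(d-s)$, which is valid since $s\le d$. Since $s\ge\max(k,1)$ (from $s\ge 1$ and $s\ge k$), a two-case check bounds this by $q^{\,100d^{2}-31d(k+1)-4kd}$: for $k=0$ the slack is $(75-31)d=44d$ against the $O(d)$ counting term, and for $k\ge 1$ the slack is $(68-35)dk-31d=33dk-31d\ge 2d$ against the same $O(d)$. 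This is the stated bound.

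The main obstacle is the preimage counting in the middle step. The crucial geometric point is that under the constraints $V_3\cap V_1=V_2$ and $W_3+W_1=W_2$, the projection $V_1/V_2\to\Image Y$ is forced to be an isomorphism (and dually $X|_{W_3}$ pins down $\Image X=V_1/V_2$), so the apparently enormous sets of $(V_2,W_2,X)$ collapse to something bounded by $q^{O(d(s+k))}$ independently of the ambient dimensions $\dim V$ and $\dim W$. Once this is recognised, the remainder of the proof is routine algebra.
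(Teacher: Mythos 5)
Your proposal is correct and takes essentially the same approach as the paper: substitute Lemma \ref{lem:upper bound on 4-norms of the derivatives given induction hypothesis} into Lemma \ref{lem:after restricting}, interchange the order of summation, bound the number of preimage triples $(V_2,W_2,X)$ of each $(V_3,W_3,Y)$ by a power of $q$ independent of $\dim V, \dim W$, and then do exponent bookkeeping using $(d-s)^2 \leq d(d-s)$. Your preimage count is a slight variant of the paper's (you count $V_1 = \mathcal{Q}_{V_2}^{-1}(\Image X)$ and then $X$ restricted to a complement of $W_3$ in $W_2$, obtaining $q^{2kd}$ per choice of $(V_2,W_2)$, whereas the paper counts $\Image X$, $\ker X$, and $X$ separately obtaining $q^{3kd}$), but the two arguments are morally identical and both suffice for the stated exponents.
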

\begin{proof}
This follows fairly easily from Lemmas \ref{lem:after restricting} and \ref{lem:upper bound on 4-norms of the derivatives given induction hypothesis}. Indeed, these two lemmas together imply that
\begin{align*}
\frac{1}{162}\|f\|_{4}^{4} & \le q^{6d^{2}}\|f\|_{2}^{4}+\sum_{V_{2}\le V,\ W_{2}\le W,\atop X\in \l(W_{2},V/V_{2}),\, |(V_{2},W_{2},X)|>0}q^{24d|(V_2,W_2,X)|}\underset{T \sim \l(V,W)}{\mathbb{E}}\|D_{X}\circ D_{V_{2},W_{2},T}f\|_{4}^{4}\\
& \leq q^{6d^{2}}\|f\|_{2}^{4} +\\
& \sum_{V_{2}\le V,\ W_{2}\le W,\atop X\in \l(W_{2},V/V_{2}),\, |(V_{2},W_{2},X)|>0} q^{100\left(d-\left|\left(V_{2},W_{2},X\right)\right|\right)^{2}+24d|(V_2,W_2,X)|+6d\rank(X)}\sum_{V_{3}\ge V_{2},W_{3}\le W_{2}:\atop V_{3}\cap V_{1}=V_{2},W_{3}+W_{1}=W_{2}}\underset{T \sim \l(V,W)}{\mathbb{E}}\|D_{X(W_{3},V/V_{3})}D_{V_{3},W_{3},T}f\|_{2}^{4},
\end{align*}
where in the final sum, $W_1$ denotes the kernel of $X$ and $V_1$ the preimage (under the natural projection $V \to V/V_2$) of the image of $X$.

In order to complete the proof (by interchanging the order of summation), we only need show that for any fixed subspaces $V_3 \leq V$, $W_3 \leq W$, any fixed $Y \in \l(W_3,V/V_3)$ with $|(V_3,W_3,Y)| \leq d$, and any fixed $(i,j,k) \in \mathbb{N}^3$ with $0 < i+j+k \leq d$, the number of triples $(V_2,W_2,X)$ with
\begin{align*} & V_2 \leq V_3,\ W \geq W_2 \geq W_3,\ X \in \l(W_2,V/V_2),\ \dim(V_2)=i,\ \codim(W_2)=j,\ \rank(X)=k,\\
& V_3 \cap \mathcal{Q}_{V_2}^{-1}(\Image X) = V_2,\ W_3 + \ker(X) = W_2,\ X(W_3,V/V_3)=Y
\end{align*}
is at most $q^{3(i+j+k)d}$, where $\mathcal{Q}_{V_2}:V \to V/V_2$ is the natural quotient map, and that the above conditions force $\rank(X) = \rank(Y)$. This follows because firstly, there are at most $q^{i\dim(V_3)} \leq q^{id}$ choices for $V_2 \leq V_3$ with $\dim(V_2)=i$, and at most $q^{j\codim(W_3)} \leq q^{jd}$ choices for $W_2 \geq W_3$ with $\codim(W_2)=j$. Secondly, observe that $X \in \l(W_2,V/V_2)$ satisfying the above conditions must have $\ker(X) \supset \ker(Y)$; indeed, if $Yw = 0$ for some $w \in W_3$, then $Xw \in V_3/V_2$, but $Xw \in \Image(X)$ and $\mathcal{Q}_{V_2}^{-1}(\Image X) \cap V_3 = V_2$, so $Xw=0$. Moreover, since $W_3+\ker(X)=W_2$, we have $\Image(X(W_3,V/V_2)) = \Image(X)$, and since $V_3 \cap \mathcal{Q}_{V_2}^{-1}(\Image X) = V_2$, $X$ and $Y$ have the same rank; furthermore we have $\mathcal{Q}_{V_3/V_2}(\Image(X))=\Image(Y)$, so there are at most $q^{k(\dim(V_3)-\dim(V_2))} \leq q^{kd}$ choices for $
\Image(X)$. A dual argument shows that there are at most $q^{k(\codim(W_3)-\codim(W_2))} \leq q^{kd}$ choices for $\ker(X)$. Given $\Image(X)$ and $\ker(X)$, there are at most $q^{k^2} \leq q^{kd}$ choices for $X$, yielding the above bound.

Interchanging the order of summation, we therefore obtain
\begin{align*}
\frac{1}{162}\|f\|_{4}^{4} & \leq q^{6d^{2}}\|f\|_{2}^{4} + \sum_{i,j,k:\ 1 \leq i+j+k \leq d} q^{100(d-i-j-k)^2+24d(i+j+k)+6dk+3d(i+j+k)} \sum_{V_3 \leq V,\ W_3 \leq W,\ Y \in \l(W_3,V/V_3):\atop \rank(Y)=k,\ |(V_3,W_3,Y)|>0} \underset{T \sim \l(V,W)}{\mathbb{E}}\| D_Y D_{V_3,W_3,T}[f]\|_2^4\\
& \le q^{6d^{2}}\|f\|_{2}^{4} + q^{100d^2} \sum_{i,j,k:\ 1 \leq i+j+k \leq d} q^{-63d(i+j+k)}\sum_{V_3 \leq V,\ W_3 \leq W,\ Y \in \l(W_3,V/V_3):\atop \rank(Y)=k,\ |(V_3,W_3,Y)|>0}q^{-4kd} \underset{T \sim \l(V,W)}{\mathbb{E}}\| D_Y D_{V_3,W_3,T}[f]\|_2^4\\
& \le q^{6d^{2}}\|f\|_{2}^{4} + q^{100d^2}\sum_{k=0}^{d} q^{-31d(k+1)}\sum_{V_3 \leq V,\ W_3 \leq W,\ Y \in \l(W_3,V/V_3):\atop \rank(Y)=k,\ |(V_3,W_3,Y)|>0}q^{-4kd} \underset{T \sim \l(V,W)}{\mathbb{E}}\| D_Y D_{V_3,W_3,T}[f]\|_2^4\\
& \le q^{6d^{2}}\|f\|_{2}^{4} + q^{100d^2-31d} \sum_{V_3 \leq V,\ W_3 \leq W} \underset{T \sim \l(V,W)}{\mathbb{E}}\| D_{V_3,W_3,T}[f]\|_2^4,
\end{align*}
where for the last inequality we have applied Corollary \ref{cor:getting rid of D_X 4-norms} to remove the $D_Y$-derivatives.
\end{proof}

\subsection{Finishing the proof of our conditional hypercontractive inequality}
\begin{thm}[Conditional hypercontractive inequality for functions on $\l(V,W)$]
\label{thm:Hypercontractivity in the Bilinear scheme} If $V$ and $W$ are vector spaces over $\mathbb{F}_q$, and $f:\l(V,W)\to \mathbb{C}$ is
a function of degree at most $d$, then
$$\|f\|_{4}^{4}\le q^{100d^{2}}\sum_{V_{1}\le V,W_{1}\le W}\underset{\mathbf{T} \sim \l(V,W)}{\mathbb{E}}\left[\|D_{V_{1},W_{1},\mathbf{T}}[f]\|_{2}^{4}\right] = q^{100d^{2}}\sum_{V_{1}\le V,W_{1}\le W:\atop \dim(V)+\codim(W) \leq d}\underset{\mathbf{T} \sim \l(V,W)}{\mathbb{E}}\left[\|D_{V_{1},W_{1},\mathbf{T}}[f]\|_{2}^{4}\right].$$
\end{thm}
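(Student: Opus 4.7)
The plan is to prove the theorem by strong induction on the degree $d$ of $f$, with Lemma \ref{lem:after restricting and switching roles} serving as the main engine. The statement to propagate is exactly the induction hypothesis \eqref{eq:ind-hyp} that was assumed in Lemma \ref{lem:upper bound on 4-norms of the derivatives given induction hypothesis}, so the induction is self-consistent.

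For the base case $d=0$, the function $f$ is constant, so $\|f\|_4^4 = \|f\|_2^4$. I would note that $L_{0,W}$ is the identity operator, since the conditions $\Image(X)\supseteq\{0\}$ and $\ker(X)\subseteq W$ are automatic for every $X\in\l(W,V)$; consequently $D_{0,W,T}[f]$ is simply the translate of $f$ by $T$, whose $2$-norm equals $\|f\|_2$. Thus $\|f\|_2^4 = \mathbb{E}_T\|D_{0,W,T}[f]\|_2^4$ already appears as one term in the right-hand sum, giving the base case (with enormous slack).

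For the inductive step, assume $d\ge 1$ and that the theorem holds for all functions of degree at most $d-1$; then \eqref{eq:ind-hyp} is available and Lemma \ref{lem:after restricting and switching roles} yields
\[
\|f\|_4^4 \le 162\,q^{6d^2}\|f\|_2^4 \;+\; 162\,q^{100d^2-31d}\sum_{V_1\le V,\,W_1\le W}\underset{T\sim\l(V,W)}{\mathbb{E}}\|D_{V_1,W_1,T}[f]\|_2^4.
\]
Using the base-case observation that $\|f\|_2^4$ equals the $(V_1,W_1)=(\{0\},W)$ term of the sum, I would absorb the first term into the sum, obtaining a bound of $162(q^{6d^2}+q^{100d^2-31d})$ times the sum. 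It then suffices to check the arithmetic inequality $162(q^{6d^2}+q^{100d^2-31d})\le q^{100d^2}$ for all $d\ge 1$ and $q\ge 2$, which follows since $162\,q^{-94d^2}$ and $162\,q^{-31d}$ are each much smaller than $1/2$ in this range. For the final equality in the theorem statement, I would observe (via Lemma \ref{lem:derivatives}) that $D_{V_1,W_1,T}[f^{=d}]$ is of pure degree $d-\dim(V_1)-\codim(W_1)$, and hence vanishes whenever $\dim(V_1)+\codim(W_1)>d$, so the sum is automatically supported on the indicated index set.

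The main obstacle has already been handled by the preceding lemmas: the degree-reduction inequality (Lemma \ref{lem:hyp inductive step}), the passage from hybrid Laplacians to compositions of derivatives (Lemma \ref{lem:bounds on composition} together with Proposition \ref{prop:Taking restrictions of Laplacians}), and crucially the swap of derivative orders in Proposition \ref{prop:switching the roles between the derivatives} combined with the removal of the extraneous $D_X$-derivatives via Corollary \ref{cor:getting rid of D_X 4-norms} --- all packaged inside Lemma \ref{lem:after restricting and switching roles}. At this stage, what remains is essentially bookkeeping: spelling out the induction and verifying that the constants $q^{100d^2}$ are generous enough to absorb the accumulated $162$'s and the polynomial-in-$q$ losses at each step. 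The trickiest conceptual point in this final stage is simply recognising that $\|f\|_2^4$ is captured by the trivial derivative $(V_1,W_1)=(\{0\},W)$, since without that observation the $\|f\|_2^4$ term in Lemma \ref{lem:after restricting and switching roles} cannot be folded into the sum and the induction would not close cleanly.
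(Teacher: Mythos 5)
Your proposal is correct and follows essentially the same route as the paper: induction on $d$ using Lemma \ref{lem:after restricting and switching roles} as the engine, with the final equality handled by degree-reduction. In fact you are somewhat more careful than the paper's terse ``multiplying through by 162'' remark, since you spell out both the key observation that $\|f\|_2^4$ is precisely the $(V_1,W_1)=(\{0\},W)$ term (so it can be absorbed into the sum) and the arithmetic check $162\left(q^{6d^2}+q^{100d^2-31d}\right)\le q^{100d^2}$; both are correct and are exactly the unstated steps the paper's proof relies on.
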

\begin{proof}
 It suffices to prove the inequality, since the equality is trivial. We prove the theorem by induction on $d$. Clearly, it holds when $d=0$, so let $d \geq 1$ and assume the inequality holds (with $d'$ in place of $d$) for all functions of degree at most $d'$, for all $d' < d$. Now let $f:\l(V,W)\to \mathbb{C}$ be of degree $d$. By Lemma \ref{lem:after restricting and switching roles}
we have 
\[
\frac{1}{162}\|f\|_{4}^{4}\le q^{6d^{2}}\|f\|_{2}^{4} + q^{100d^2-31d} \sum_{V_3 \leq V,\ W_3 \leq W}\underset{T \sim \l(V,W)}{\mathbb{E}}\| D_{V_3,W_3,T}[f]\|_2^4;
\]
multiplying through by 162, we obtain that the induction hypothesis holds for $f$, completing the proof of the induction step, and proving the theorem.
\end{proof}

\section{Equivalence between globalness and having small generalized influences. \label{sec:Equivalence-between-globalness notions}}

Our goal in this section is to show that if a Boolean function $f$
has a small density inside restrictions of dictators, then the pure-degree-$i$ part
$f^{=i}$ (for small $i$) has small generalized influences. 

\subsection{Combinatorial interpretation of the Laplacian}

We say that $f$ is \emph{homogeneous of degree $i$ if $f=f^{=i}.$
}While we do not have a nice combinatorial interpretation for the
Laplacians of a general function, we do have one in the case where
$f$ is homogeneous or nearly homogeneous.

The following lemmas give a combinatorial interpretation of the
Laplacian for homogeneous functions.
\begin{lem}
\label{prop:Combinatorial interpretation of the Laplacian} Let $U$
be either a 1-dimensional subspace of $V$ or a subspace of $W$ of
codimension 1, and let $i \in \mathbb{N} \cup \{0\}$. Then we have 
\[
L_{U}[f^{=i}]=f^{=i}-q^{i}\mathcal{E}_{U}[f^{=i}].
\]
\end{lem}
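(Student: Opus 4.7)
The plan is to prove both cases by directly comparing the Fourier expansions of the two sides, exploiting the fact that on a homogeneous function $f^{=i}$ only characters $u_X$ with $\rank(X)=i$ survive.

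First I would treat the case $U=\mathrm{Span}(v)\le V$. By definition, $L_U[f^{=i}]=\sum_{X:\ \Image(X)\supseteq U}\widehat{f^{=i}}(X)\,u_X$, and the Fourier formula from Lemma \ref{lem:Fourier formula for combinatorial Laplacian} gives
\[
\mathcal{E}_U[f^{=i}]=\sum_{X:\ v\notin\Image(X)}q^{-\rank(X)}\widehat{f^{=i}}(X)\,u_X.
\]
Since every $X$ with $\widehat{f^{=i}}(X)\neq 0$ has $\rank(X)=i$, the factor $q^{-\rank(X)}$ becomes $q^{-i}$; multiplying by $q^i$ exactly cancels this. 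Moreover, because $U=\mathrm{Span}(v)$ is $1$-dimensional, the condition $U\not\subseteq\Image(X)$ is equivalent to $v\notin\Image(X)$, which is the complement (within the rank-$i$ characters) of the condition defining $L_U$. Hence
\[
f^{=i}-q^{i}\mathcal{E}_U[f^{=i}]=\sum_{X:\ \rank(X)=i}\widehat{f^{=i}}(X)u_X-\sum_{X:\ \rank(X)=i,\ U\not\subseteq\Image(X)}\widehat{f^{=i}}(X)u_X=L_U[f^{=i}].
\]

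The case $U=W'\le W$ of codimension $1$ is dual and proceeds identically, using Lemma \ref{lem:lw formula}: we have $L_{W'}[f^{=i}]=\sum_{X:\ \ker(X)\subseteq W'}\widehat{f^{=i}}(X)u_X$ and $\mathcal{E}_{W'}[f^{=i}]=\sum_{X:\ \ker(X)+W'=W}q^{-i}\widehat{f^{=i}}(X)u_X$. The point here is that, since $\codim(W')=1$, the subspace $\ker(X)+W'$ is either $W'$ or $W$, so the two conditions $\ker(X)\subseteq W'$ and $\ker(X)+W'=W$ partition the set of rank-$i$ maps; once again multiplying by $q^i$ cancels the $q^{-\rank(X)}=q^{-i}$ factor and the identity follows by subtracting the second expansion from $f^{=i}$.

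There is no real obstacle: the lemma is a bookkeeping identity matching the Fourier-support condition defining $L_U$ with the complementary Fourier-support condition defining $\mathcal{E}_U$, together with the observation that $\rank(X)$ is constant ($=i$) on the Fourier support of $f^{=i}$, which is precisely why the normalising factor $q^i$ appears. The only thing worth highlighting is the $1$-dimensional (respectively codimension-$1$) hypothesis on $U$: this is exactly what guarantees that "$U\subseteq\Image(X)$'' and "$U\not\subseteq\Image(X)$'' (resp.\ "$\ker(X)\subseteq W'$'' and "$\ker(X)+W'=W$'') are complementary conditions on rank-$i$ maps, so the two Fourier expansions add up cleanly to $f^{=i}$.
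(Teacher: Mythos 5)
Your proof is correct and is essentially the same as the paper's, which simply states that the identity is immediate from the Fourier formulae for $L_U$ and $\mathcal{E}_U$; you have just spelled out the bookkeeping. Your observation that the $1$-dimensional (resp.\ codimension-$1$) hypothesis makes the conditions $U\subseteq\Image(X)$ and $v\notin\Image(X)$ (resp.\ $\ker(X)\subseteq W'$ and $\ker(X)+W'=W$) complementary is exactly the point, and the $q^i$ factor is correctly explained by the constancy of $\rank(X)$ on the Fourier support of $f^{=i}$.
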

\begin{proof}
This is immediate from the Fourier formulae for $L_{U}$ and $\mathcal{E}_{U}$.
\end{proof}
We will also need the following.
\begin{lem}
Let $U$ be either a 1-dimensional subspace of $V$ or a subspace
of $W$ of codimension 1, and let $i \in \mathbb{N}$. Write
$\mathcal{T} = \mathcal{T}_{i,U}:L^2(\l(V,W)) \to L^2(\l(V,W))$ for the linear operator defined by 
\[
\mathcal{T}f:=f-(q^{i}+q^{i-1})\mathcal{E}_{U}[f]+q^{2i-1}\mathcal{E}_U^2[f]\quad \forall f \in L^2(\l(V,W)).
\]
 Then for all $f \in L^2(\l(V,W))$ we have
\[
L_{U}[f^{=i}]=(\mathcal{T}[f])^{=i}
\]
and
\[
L_{U}[f^{=i-1}]=(\mathcal{T}[f])^{=i-1}.
\]
\end{lem}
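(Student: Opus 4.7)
The plan is to prove the lemma by direct comparison of Fourier coefficients, exploiting the explicit Fourier formulas for $L_U$ and $\mathcal{E}_U$ that have already been established (Lemmas \ref{lem:Fourier formula for combinatorial Laplacian} and \ref{lem:lw formula}, together with the defining formula for $L_{V_1}$ and $L_{W_1}$). The key point is that both operators $L_U$ and $\mathcal{E}_U$ are diagonal in the Fourier basis $\{u_X\}$, so $\mathcal{T}$ is diagonal too, and the identities reduce to a scalar identity on each Fourier coefficient.

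Concretely, treating the case when $U$ is a one-dimensional subspace of $V$ (the codimension-one case in $W$ is dual and identical up to swapping the roles of "image" and "kernel"), one has $\widehat{L_U f}(X) = \hat f(X)\,\mathbf{1}_{U\subseteq\Image(X)}$ and $\widehat{\mathcal{E}_U f}(X) = q^{-\rank(X)}\hat f(X)\,\mathbf{1}_{U\cap\Image(X)=\{0\}}$. Since $U$ is one-dimensional, these two indicator conditions partition all $X$. Iterating $\mathcal{E}_U$ gives $\widehat{\mathcal{E}_U^2 f}(X)=q^{-2\rank(X)}\hat f(X)\,\mathbf{1}_{U\cap\Image(X)=\{0\}}$. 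Therefore
\[
\widehat{\mathcal{T}f}(X) = \hat f(X)\cdot\begin{cases} 1 & \text{if } U\subseteq\Image(X),\\[2pt] P_i(q^{-\rank(X)}) & \text{if } U\cap\Image(X)=\{0\},\end{cases}
\]
where $P_i(t):=1-(q^{i}+q^{i-1})t+q^{2i-1}t^{2}$.

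The heart of the matter is then the polynomial identity $P_i(t)=q^{2i-1}(t-q^{-i})(t-q^{-(i-1)})$, so that $P_i$ vanishes at both $t=q^{-i}$ and $t=q^{-(i-1)}$. Consequently, for any $X$ with $\rank(X)\in\{i,i-1\}$ and $U\cap\Image(X)=\{0\}$, the coefficient $\widehat{\mathcal{T}f}(X)$ is zero; whereas when $U\subseteq\Image(X)$ it equals $\hat f(X)$. Projecting to the pure-degree-$i$ (resp.\ pure-degree-$(i-1)$) part then recovers exactly $\widehat{L_U f^{=i}}(X)$ (resp.\ $\widehat{L_U f^{=i-1}}(X)$), giving the two claimed identities. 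The codim-one-in-$W$ case is handled identically by replacing $\Image(X)\supseteq U$ with $\ker(X)\subseteq U$ and $U\cap\Image(X)=\{0\}$ with $\ker(X)+U=W$.

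No step looks like a real obstacle: this is a one-variable polynomial identity dressed up in Fourier notation. The only thing to be careful about is the iteration $\mathcal{E}_U^2$ — one must check that $\mathcal{E}_U$ is indeed diagonal with the stated eigenvalues so that squaring it just squares the eigenvalues on each Fourier mode; this is immediate from Lemma \ref{lem:Fourier formula for combinatorial Laplacian} (or \ref{lem:lw formula} in the dual case), since $\mathcal{E}_U$ preserves each one-dimensional Fourier subspace $\mathbb{C}\, u_X$.
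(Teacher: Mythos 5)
Your argument is correct and amounts to essentially the same proof as the paper's: the operator identity the paper uses, namely the regrouping $\mathcal{T} = (\mathrm{Id}-q^{i-1}\mathcal{E}_U)\circ(\mathrm{Id}-q^{i}\mathcal{E}_U)$ together with $L_U[f^{=i}]=(\mathrm{Id}-q^{i}\mathcal{E}_U)[f^{=i}]$ and $\mathcal{E}_U\circ L_U=0$, is exactly the operator form of your scalar factorization $P_i(t)=q^{2i-1}(t-q^{-i})(t-q^{-(i-1)})$ applied on each Fourier mode. You present it via diagonalization in the $\{u_X\}$ basis while the paper manipulates the operators directly, but the computational content is identical.
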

\begin{proof}
The lemma follows from   the Fourier formulas for $\mathcal{E}_{U}$ given in
Lemmas \ref{lem:Fourier formula for combinatorial Laplacian} and \ref{lem:lw formula}, together with Lemma \ref{prop:Combinatorial interpretation of the Laplacian}. Indeed, we have
\begin{align*}(\mathcal{T}[f])^{=i} & = f^{=i}-q^{i}\mathcal{E}_U[f^{=i}] - q^{i-1}\mathcal{E}_{U}(f^{=i}-q^i\mathcal{E}_U [f^{=i}])\\
& = L_U[f^{=i}]-q^{i-1}\mathcal{E}_U(L_U[f^{=i}])\\
& = L_U[f^{=i}]
\end{align*}
and
\begin{align*}(\mathcal{T}[f])^{=i-1} & = f^{=i-1}-q^{i-1}\mathcal{E}_U[f^{=i-1}] - q^{i}\mathcal{E}_{U}(f^{=i-1}-q^{i-1}\mathcal{E}_U [f^{=i-1}])\\
& = L_U[f^{=i-1}]-q^{i}\mathcal{E}_U(L_U[f^{=i-1}])\\
& = L_U[f^{=i-1}].
\end{align*}
\end{proof}
The following lemma shows how to relate the order-one derivatives of $f^{=i}$ to
the homogeneous parts of the restrictions of $\mathcal{T}[f]$. In what follows, if $U$ is a subspace of $V$ of dimension one or a subspace of $W_1$ of codimension one, and $T \in \l(V,W)$, by a slight abuse of notation we write $D_{U,T}$ for $D_{U,W,T}$ (if $U \leq V$) or for $D_{\{0\},U,T}$ (if $U \leq W$).
\begin{lem}
\label{lem:lv} Let $U$ be either a subspace of $V$ of dimension one, or a
subspace of $W$ of codimension one, and let $i \in \mathbb{N}$. Then for any $f \in L^2(\l(V,W))$ we have
\[
D_{U\to T}\left[f^{=i}\right]=\left(\left(\mathcal{T}_{i,U}[f]\right)_{U\to T}\right)^{=i-1}.
\]
\end{lem}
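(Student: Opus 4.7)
The plan is to decompose $\mathcal{T}_{i,U}[f]$ into its pure-degree parts and track how restriction by $U$ interacts with the rank of characters. By definition, $D_{U,T}[f^{=i}] = (L_U[f^{=i}])_{U \to T}$, and the preceding lemma gives $L_U[f^{=i}] = (\mathcal{T}_{i,U}[f])^{=i}$, so the claim reduces to showing
\[
\bigl((\mathcal{T}_{i,U}[f])^{=i}\bigr)_{U \to T} = \bigl((\mathcal{T}_{i,U}[f])_{U \to T}\bigr)^{=i-1}.
\]

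The key linear-algebraic input is the rank behaviour of the restriction map on characters. When $U$ is $1$-dimensional in $V$, the restriction sends $u_X$ to a scalar multiple of $u_{X(W,V/U)}$, and an easy rank-computation (in the spirit of Lemma \ref{Lem:Linear algebraic}) shows that $\rank(X(W,V/U)) = \rank(X) - 1$ when $U \subseteq \Image(X)$, and $\rank(X(W,V/U)) = \rank(X)$ otherwise; the dual statement holds when $U$ has codimension $1$ in $W$. In particular, $(g^{=d})_{U \to T}$ can only contribute to the pure-degree-$d$ or pure-degree-$(d-1)$ part of $g_{U \to T}$, so the pure-degree-$(i-1)$ part of $(\mathcal{T}_{i,U}[f])_{U \to T}$ can only pick up contributions from $(\mathcal{T}_{i,U}[f])^{=i-1}$ and $(\mathcal{T}_{i,U}[f])^{=i}$.

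The only thing left to verify is that the contribution from $(\mathcal{T}_{i,U}[f])^{=i-1}$ vanishes, which is precisely where the preceding lemma earns its keep: it tells us that $(\mathcal{T}_{i,U}[f])^{=i-1} = L_U[f^{=i-1}]$, and the Fourier formula for $L_U$ forces this to be supported on characters $u_X$ with $U \subseteq \Image(X)$ (or $\ker(X) \subseteq U$ in the dual case). For exactly those characters the rank-computation above gives $\rank(X(W,V/U)) = i-2$, so their restrictions lie in pure degree $i-2$ and contribute nothing to pure degree $i-1$. The surviving contribution from $(\mathcal{T}_{i,U}[f])^{=i} = L_U[f^{=i}]$ is, by the same rank-computation, supported on characters whose restrictions all have rank exactly $i-1$, and so equals $D_{U,T}[f^{=i}]$ on the nose. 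I do not anticipate any substantial obstacle; the argument is essentially bookkeeping on top of the preceding lemma, with the only mild care being to verify that the two cases $U \le V$ and $U \le W$ proceed by identical arguments, which follows from the duality between the Laplacians $L_{V_1}$ and $L_{W_1}$.
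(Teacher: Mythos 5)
Your proposal is correct and takes essentially the same approach as the paper's proof: both reduce to the preceding lemma (identifying $(\mathcal{T}_{i,U}[f])^{=i}$ and $(\mathcal{T}_{i,U}[f])^{=i-1}$ with $L_U[f^{=i}]$ and $L_U[f^{=i-1}]$) combined with the fact that restriction by a $1$-dimensional $U$ shifts pure degree down by at most one, with the shift being exactly one on the support of $L_U$. The only cosmetic difference is that you unroll the rank bookkeeping explicitly where the paper cites Lemma~\ref{lem:derivatives} for the degree-reduction fact.
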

\begin{proof}
We consider just the case where $U\subseteq V$ is of dimension 1
as the other case is dual. The space $H_{=i}$ of homogeneous functions
of degree $i$ is $\mathcal{T}_{i,U}$-invariant and the restriction $g \mapsto g_{(U,W)\to T}$
sends $H_{=j}$ to $H_j+H_{=j-1}$ for all $j\in \mathbb{N}$. Hence, we have 
\begin{align*}
(\mathcal{T}_{i,U}[f]_{U\to T})^{=i-1} & =((\mathcal{T}_{i,U}[f^{=i}+f^{=i-1}])_{U\to T})^{=i-1}\\
 & =((L_{U}[f^{=i}+f^{=i-1}])_{U\to T})^{=i-1}\\
 & =\left(D_{U\to T}[f^{=i}]\right)^{=i-1}+\left(D_{U,T}\left[f^{=i-1}\right]\right)^{=i-1}\\
 & =D_{U\to T}[f^{=i}],
\end{align*}
where the last equality uses Lemma \ref{lem:derivatives}. 
\end{proof}

\subsection{Laplacians and restrictions}

\begin{defn}Recall that we say a function $f:\l(V,W)\to \mathbb{C}$ is {\em $(d,\epsilon)$-restriction global} if $\|f_{(V_{1},W_{1})\to T}\|_{2}^{2}\le\epsilon$
for each $V_{1} \leq V$ and $W_{1} \leq W$ with $\dim\left(V_{1}\right)+\text{codim}\left(W_{1}\right)\le d$, and each $T \in \l(V,W)$.
We say that $f$ has {\em $\left(d,\epsilon\right)$-small generalized influences}
if $I_{V_{1},W_{1},T}[f]\le\epsilon$ for each $V_{1} \leq V$ and $W_{1} \leq W$
with $\dim\left(V_{1}\right)+\codim\left(W_{1}\right)\le d$,
and each $T\in\mathcal{L}\left(V,W\right)$.
\end{defn}

Our next goal is to obtain an analogue of Lemma \ref{lem:restriction global implies global},
saying that restriction-globalness implies that $f^{=d}$ has small
generalized influences.
\begin{prop}
\label{prop:influences measure globalness} Let $f:\l(V,W) \to \mathbb{C}$ and let $d\in \mathbb{N}$, $\epsilon>0$. Suppose that $f$ is $(d,\epsilon)$-restriction global, i.e.\ that $\|f_{(V_{1},W_{1})\to T}\|_{2}^{2}\le\epsilon$ for
all $V_{1}\subseteq V$ and $W_{1}\subseteq W$ with $\dim(V_1)+\codim(W_1) \leq d$. Then $I_{V_{1},W_{1},T}[f^{=d}]\le q^{10d^{2}}\epsilon$ for all $V_1 \leq V$ and $W_1 \leq W$ with $\dim(V_1)+\codim(W_1) \leq d$, and all $T \in \l(V,W)$.
\end{prop}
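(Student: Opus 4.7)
The plan is to induct on $e := \dim(V_1) + \codim(W_1) \in \{0, 1, \ldots, d\}$, peeling off a single dimension at a time using Proposition \ref{prop:composition of derivatives} to factor the derivative and Lemma \ref{lem:lv} to convert the outer order-1 derivative into a combinatorial expression. In the base case $e = 0$ we have $V_1 = \{0\}$ and $W_1 = W$, so $D_{\{0\}, W, T}[f^{=d}]$ is simply the shift $\Delta_T[f^{=d}]$, whose squared $L^2$-norm is $\|f^{=d}\|_2^2 \le \|f\|_2^2 \le \epsilon$ (applying the hypothesis at $(V_1', W_1', T') = (\{0\}, W, 0)$).

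For the inductive step with $e \ge 1$, by the symmetry between $V$ and $W^*$ I may assume $\dim(V_1) \ge 1$; the other case is dual. Pick a codimension-one subspace $V_1' \subset V_1$, and let $U := V_1/V_1'$, a 1-dimensional subspace of $V/V_1'$. By Proposition \ref{prop:composition of derivatives} (with $V_2 = V_1'$, $W_2 = W_1$, $S = 0$) I have the factorization $D_{V_1, W_1, T} = D_{U, 0} \circ D_{V_1', W_1, T}$, in which the inner operator has order $e-1$ and the outer is order $1$. Setting $h := D_{V_1', W_1, T}[f]$, Lemma \ref{lem:derivatives} yields $D_{V_1', W_1, T}[f^{=d}] = h^{=d-e+1}$, and applying Lemma \ref{lem:lv} to the outer order-1 derivative gives
\[
D_{V_1, W_1, T}[f^{=d}] \;=\; \bigl((\mathcal{T}_{d-e+1,\, U}[h])_{U \to 0}\bigr)^{=d-e},
\]
whence $\|D_{V_1, W_1, T}[f^{=d}]\|_2^2 \le \|(\mathcal{T}_{d-e+1,\, U}[h])_{U \to 0}\|_2^2$. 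Expanding $\mathcal{T}_{i, U} = I - (q^i + q^{i-1})\mathcal{E}_U + q^{2i-1}\mathcal{E}_U^2$ with $i = d-e+1$ and applying the triangle inequality reduces the task to bounding the three restricted $L^2$-norms $\|(\mathcal{E}_U^k[h])_{U \to 0}\|_2$ for $k = 0, 1, 2$, each appearing with a coefficient of size at most $q^{O(d)}$.

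The main obstacle is to control each of these three restricted 2-norms by $q^{O(d^2)}\epsilon$ so that the bound compounds, over the $e \le d$ induction levels, to the claimed $q^{10d^2}\epsilon$. The key observation is that, since $h = (L_{V_1', W_1}[f])_{(V_1', W_1) \to T}$, the composition of restrictions gives $h_{U \to 0} = (L_{V_1', W_1}[f])_{(V_1, W_1) \to T}$, a Fourier-projection of the honest restriction $f_{(V_1, W_1) \to T}$. To avoid the naive fiber-counting on Fourier coefficients (which blows up with $\dim V$ and $\dim W$), the idea is to invoke the combinatorial identity $L_v[g^{=i}] = g^{=i} - q^i \mathcal{E}_v[g^{=i}]$ of Proposition \ref{prop:Combinatorial interpretation of the Laplacian} iteratively on pure-degree components, rewriting $L_{V_1', W_1}$ (when applied to the relevant pure-degree slice) as a signed inclusion-exclusion sum of $q^{O(e)}$ compositions of $\mathcal{E}_v$-operators with coefficients bounded by a fixed power of $q^d$. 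Interchanging these $\mathcal{E}_v$-operators with the restriction $(\cdot)_{(V_1, W_1) \to T}$, and noting that each such interchange produces terms of the form $(\mathcal{E}_{v_1} \cdots \mathcal{E}_{v_m}[f])_{(V_1, W_1) \to T}$ whose 2-norm is controlled Fourier-analytically by the globalness hypothesis $\|f_{(V_1', W_1') \to T'}\|_2^2 \le \epsilon$ (for $|(V_1', W_1')| \le d$), one obtains the desired step bound. Aggregating the $q^{O(d)}$ coefficients at each of the $e$ induction levels yields the claimed $q^{10d^2}\epsilon$ bound with slack in the constant.
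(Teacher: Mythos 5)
Your factorization runs in the opposite direction from the one the paper needs, and this creates a gap in the final step that I don't see how to close. You write $D_{V_1,W_1,T}=D_{U,W_1,0}\circ D_{V_1',W_1,T}$ with the \emph{order-1} operator on the outside and the big order-$(e-1)$ operator $D_{V_1',W_1,T}$ applied to $f$ first, so that after Lemma \ref{lem:lv} you must control restricted $2$-norms of $\mathcal{E}_U^k[h]$ where $h=D_{V_1',W_1,T}[f]=(L_{V_1',W_1}[f])_{(V_1',W_1)\to T}$ already involves the \emph{hybrid} Laplacian $L_{V_1',W_1}$. Your proposed fix --- ``rewriting $L_{V_1',W_1}$ as a signed inclusion-exclusion sum of $q^{O(e)}$ compositions of $\mathcal{E}_v$-operators'' by iterating Proposition \ref{prop:Combinatorial interpretation of the Laplacian} --- is not justified: that proposition only expresses the \emph{rank-one} Laplacian $L_v$ (on a pure-degree slice) in terms of $\mathcal{E}_v$, and the hybrid Laplacian $L_{V_1',W_1}$ is precisely \emph{not} a composition of such rank-one Laplacians. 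The paper stresses this subtlety explicitly (see the discussion preceding the $X$-Laplacians, with the $q=2$, $V=W=\mathbb{F}_2^2$ counterexample showing $L_{V_1,W_1}\ne L_{V_1}\circ L_{W_1}$), and it devotes Propositions \ref{prop:Taking restrictions of Laplacians}, \ref{prop:composition of derivatives} and \ref{prop:switching the roles between the derivatives} to managing exactly this complication. Moreover, even for rank-one Laplacians, $L_v[g^{=i}]=g^{=i}-q^i\mathcal{E}_v[g^{=i}]$ holds only on pure-degree slices, so an iterated application requires tracking the pure-degree pieces at every stage --- this is already why the paper had to introduce the two-term operator $\mathcal{T}_{i,U}$ rather than just $I-q^i\mathcal{E}_U$, and you would need a multi-dimensional analogue you haven't supplied.

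The paper closes the induction by reversing the factorization: it writes $D_{V_1,W_1,T}[f^{=d}]=D_{V_1/U,W_1}\,D_{U,W,T}[f^{=d}]$ with the order-1 derivative on the \emph{inside}, acting directly on $f$. Lemma \ref{lem:inductive step} then shows that $D_{U,W,T}[f^{=d}]=(f')^{=d-1}$ for a function $f'$ which is a bounded combination of restrictions of $f$, $\mathcal{E}_U[f]$ and $\mathcal{E}_U^2[f]$; since $\mathcal{E}_U$ is just an average of shifts $\Delta_{B(V,W)}$, the restriction-globalness of $f$ transfers directly to $f'$ (with a $q^{O(d)}$ loss). The inductive hypothesis --- now on the degree parameter $d$, not on $e$ --- then applies cleanly to $f'$ and the remaining outer derivative $D_{V_1/U,W_1}$. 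It is this transfer-of-globalness to the intermediate function that makes the induction close, and it is precisely what your outside-in factorization cannot deliver, because your intermediate object $h$ is a restricted hybrid Laplacian rather than a restriction-global function.
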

This proposition will follow from applying the following lemma repeatedly
and decomposing each derivative $D_{V_{1},W_{1},T}$
as a composition of order-one derivatives. 
\begin{lem}
\label{lem:inductive step}
Let $d \in \mathbb{N}$, let $\epsilon>0$ and let $f\colon\mathcal{L}\left(V,W\right)\to\mathbb{C}$
be $(d,\epsilon)$-restriction global. Let $D_{U,T}$ be an order-one
derivative for $f$. Then there exists a $\left(d-1,4q^{4d}\epsilon\right)$-restriction
global function $f'\colon\mathcal{L}\left(V',W'\right)\to\mathbb{C}$ (where $(V',W') = (V/U,W)$ if $U \subseteq V$ and $(V',W') = (V,U)$ if $U \subseteq W$)
such that $(f')^{=d-1}=D_{U,T}\left[f^{=d}\right]$. 
\end{lem}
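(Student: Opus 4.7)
The strategy is to set $f' := (\mathcal{T}_{d,U}[f])_{U \to T}$, where $\mathcal{T}_{d,U}$ is the operator from the preceding lemma and the restriction notation abbreviates $(U,W,T)$ if $U$ is a $1$-dimensional subspace of $V$ (in which case $f' \in L^{2}(\l(V/U,W))$), and $(\{0\},U,T)$ if $U$ is a codimension-$1$ subspace of $W$ (in which case $f' \in L^{2}(\l(V,U))$). The required identity $(f')^{=d-1} = D_{U,T}[f^{=d}]$ is then immediate from Lemma~\ref{lem:lv}, so the entire task reduces to verifying that $f'$ is $(d-1,\,4q^{4d}\epsilon)$-restriction global.

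For that verification, I would first observe that composing an order-$(d-1)$ restriction of $f'$ with the outer order-$1$ restriction yields an order-$d$ restriction of $\mathcal{T}_{d,U}[f]$ on $\l(V,W)$: the two functions coincide (for some lifted pair $(\tilde{V}_1,\tilde{W}_1)$ with $\dim(\tilde{V}_1)+\codim(\tilde{W}_1) \le d$ and some translated $T''$), so they have equal $2$-norms. It thus suffices to show $\|(\mathcal{T}_{d,U}[f])_{(\tilde{V}_1,\tilde{W}_1)\to T''}\|_2^{2} \le 4q^{4d}\epsilon$ for every such $d$-restriction. Expanding
\[
\mathcal{T}_{d,U}[f] = f - (q^{d}+q^{d-1})\,\mathcal{E}_U[f] + q^{2d-1}\,\mathcal{E}_U^{2}[f]
\]
and applying the triangle inequality for the $L^{2}$-norm splits this into three separate bounds.

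The main content is to show that the combinatorial averaging operator $\mathcal{E}_U$ preserves $(d,\epsilon)$-restriction globalness. By construction (directly when $U \le V$, and via the $f \leftrightarrow f^{*}$ duality that defines it when $U \le W$), $\mathcal{E}_U[f]$ is the expectation of a family of shifts $\Delta_{\mathbf{D}}[f]$ of $f$, where $\mathbf{D} \in \l(V,W)$ is drawn from some distribution depending only on $U$. Since the restriction operator is linear and commutes with the expectation, any restriction of $\mathcal{E}_U[f]$ is the corresponding expectation of restrictions of shifts of $f$, each of which equals a restriction of $f$ with a translated $T$ and therefore has $2$-norm at most $\sqrt{\epsilon}$ by hypothesis; Jensen's inequality (convexity of the $L^{2}$-norm) then transfers the bound to $\mathcal{E}_U[f]$, and one more iteration handles $\mathcal{E}_U^{2}[f]$. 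Putting everything into the triangle inequality gives
\[
\bigl\|(\mathcal{T}_{d,U}[f])_{(\tilde{V}_1,\tilde{W}_1)\to T''}\bigr\|_2 \le \bigl(1 + q^{d} + q^{d-1} + q^{2d-1}\bigr)\sqrt{\epsilon} \le 2q^{2d}\sqrt{\epsilon},
\]
so squaring delivers the required $4q^{4d}\epsilon$. The only step I expect to need genuine care is unpacking the shift-averaging representation of $\mathcal{E}_U$ in the codimension-$1$ case $U \le W$, which has to be pushed through the defining duality; once this is in hand, the rest of the argument is triangle-inequality and restriction-composition bookkeeping.
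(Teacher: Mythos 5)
Your proposal is correct and takes essentially the same route as the paper's proof: define $f'$ as the corresponding restriction of $\mathcal{T}_{d,U}[f]$, get the degree identity from Lemma~\ref{lem:lv}, and bound each $d$-restriction of $\mathcal{T}_{d,U}[f]$ by expanding $\mathcal{T}_{d,U}$, applying the triangle inequality, and using that $\mathcal{E}_U$ is an average of shifts (hence, by convexity of the $L^2$-norm, preserves restriction-globalness). The only cosmetic difference is that you translate by the given $T$ when forming $f'$ while the paper writes $\to 0$; the paper's dual-case-by-symmetry and your "pushed through duality" remark are the same elision.
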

\begin{proof}
Assume without loss of generality that $U\subseteq V$ with $\dim(U)=1$. We set $f'=(\mathcal{T}_{d,U}\left[f\right])_{(U,W) \to 0}$.
Let $V \geq V_{2}\geq U$ and $W_{2} \leq W$ such that $\dim\left(V_{2}\right)+\codim\left(W_{2}\right)\le d$. We have
\begin{align}
\left(\mathcal{T}_{d,U}\left[f\right]_{(U,W)\to 0}\right)_{\left(V_{2}/U,W_{2}\right)\to T}&=(\mathcal{T}_{d,U}[f])_{(V_2,W_2) \to T}\nonumber \\
&= f_{\left(V_{2},W_{2}\right)\to T}-\left(q^{d}+q^{d-1}\right)\left(\mathcal{E}_{U}\left[f\right]\right)_{\left(V_{2},W_{2}\right)\to T}+q^{2d-1}\left(\mathcal{E}_{U}^{2}\left[f\right]\right)_{\left(V_{2},W_{2}\right)\to T}.
\label{eq:expanding T_i,U}
\end{align}
 Now $\|f_{\left(V_{2},W_{2}\right)\to T}\|_{2}^{2}\le\epsilon.$
Taking $V'$ to be a uniformly random subspace of $V$ of codimension $1$ with $V'+U=V$, and $B$ to be a uniformly random element of $\l(V/V',W)$, we have by convexity that
\begin{align*}
\|\left(\mathcal{E}_{U}\left[f\right]\right)_{\left(V_{2},W_{2}\right)\to T}\|_{2} & =\|\mathbb{E}_{B}\left[\Delta_{B(V,W)}f\right]_{\left(V_{2},W_{2}\right)\to T}\|_{2}\\
 & \le\mathbb{E}_{B}\|\left(\Delta_{B(V,W)}f\right)_{\left(V_{2},W_{2}\right)\to T}\|_{2}\\
 & =\mathbb{E}_{B}\|f_{\left(V_{2},W_{2}\right)\to T+B(V,W)}\|_{2}\\
 & \le \sqrt{\epsilon}.
\end{align*}
 Repeating the above argument for $\mathcal{E}_{U}\left[f\right]$ shows that
\[
\|(\mathcal{E}_{U} \circ \mathcal{E}_U [f])_{\left(V_{2},W_{2}\right)\to T}\|_{2}^{2}\le\epsilon.
\]
 Therefore applying the triangle inequality to (\ref{eq:expanding T_i,U}) yields that
 $$\left\|\left(\mathcal{T}_{d,U}\left[f\right]_{(U,W)\to 0}\right)_{\left(V_{2}/U,W_{2}\right)\to T}\right\|_2 \leq 4q^{4d}\epsilon.$$
Hence, $(\mathcal{T}_{d,U}[f])_{(U,W) \to 0}$ is $\left(d-1,4q^{4d}\epsilon\right)$-restriction
global, as required.
\end{proof}
\begin{proof}[Proof of Proposition \ref{prop:influences measure globalness}]
 The proof is by induction on $d$. Trivially, the statement of the proposition holds when $d=0$. Let $d \in \mathbb{N}$, and assume the statement of the proposition holds with $d-1$ in place of $d$; we will obtain it for $d$. Let $f:\l(V,W) \to \mathbb{C}$ be $(d,\epsilon)$-restriction global. Let $V_1 \leq V$ and $W_1 \leq W$ with $\dim(V_1)+\codim(W_1) = d$; without loss of generality we may assume that $\dim(V_1) \geq 1$. Let $U$ be a one-dimensional subspace of $V_1$; then we may write 
\[
\|D_{V_{1},W_{1},T}\left[f^{=d}\right]\|_{2}^{2}=\|D_{V_{1}/U,W_{1}}D_{U,W,T}\left[f^{=d}\right]\|_{2}^{2}.
\]
Now we may apply Lemma \ref{lem:inductive step} to conclude that 
\[
\|D_{V_{1}/U,W_{1}}D_{U,W,T}\left[f^{=d}\right]\|_{2}^{2}=\|D_{V_{1}/U,W_{1}}\left(f'\right)^{=d-1}\|_{2}^{2}
\]
 for a $\left(d-1,4q^{4d}\epsilon \right)$-restriction global function $f':\l(V/U,W) \to \mathbb{C}$. By the induction hypothesis, we obtain
\[
\|D_{V_{1}/U,W_{1}}\left(f'\right)^{=d-1}\|_{2}^{2}\le q^{10\left(d-1\right)^{2}}4q^{4d}\epsilon\le q^{10d^{2}}\epsilon,
\]
 completing the inductive step, and proving the proposition.
\end{proof}

\section{Applications of our conditional hypercontractive inequality}

Our first application is an upper bound on the $4$-norm of a low-degree function having small generalized influences.
\begin{cor}
\label{cor:Corollary of hypercontractivity} Suppose that $f:\l(V,W) \to \mathbb{C}$ is
a function of degree at most $d$ that has $\left(d,\epsilon\right)$-small
generalized influences. Then 
\[
\|f\|_{4}^{4}\le q^{103d^{2}}\epsilon\|f\|_{2}^{2}.
\]
 
\end{cor}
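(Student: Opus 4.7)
The plan is to combine the conditional hypercontractive inequality (Theorem \ref{thm:Hypercontractivity in the Bilinear scheme}) with the small-influences assumption, converting fourth powers of 2-norms of derivatives into single powers at the cost of a factor $\epsilon$, and then sum over $(V_1,W_1)$ using the Fourier formula for the hybrid Laplacians and Parseval.

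First, I would apply Theorem \ref{thm:Hypercontractivity in the Bilinear scheme}, which since $f$ has degree at most $d$ gives
\[
\|f\|_4^4 \;\le\; q^{100d^2} \sum_{\substack{V_1\le V,\,W_1\le W:\\ \dim(V_1)+\codim(W_1)\le d}} \mathbb{E}_{\mathbf{T}\sim\l(V,W)}\bigl[\|D_{V_1,W_1,\mathbf{T}}[f]\|_2^4\bigr].
\]
Note that we may restrict the sum to pairs with $\dim(V_1)+\codim(W_1)\le d$, since by Lemma \ref{lem:derivatives} a derivative of order larger than $d$ annihilates $f$. The small-influences hypothesis says exactly that $\|D_{V_1,W_1,T}[f]\|_2^2 = I_{V_1,W_1,T}[f]\le \epsilon$ for all such $(V_1,W_1)$ and all $T$, so each summand $\|D_{V_1,W_1,T}[f]\|_2^4 \le \epsilon\,\|D_{V_1,W_1,T}[f]\|_2^2$, and hence
\[
\|f\|_4^4 \;\le\; q^{100d^2}\,\epsilon\sum_{\substack{V_1\le V,\,W_1\le W:\\ \dim(V_1)+\codim(W_1)\le d}} \mathbb{E}_{\mathbf{T}}\bigl[\|D_{V_1,W_1,\mathbf{T}}[f]\|_2^2\bigr].
\]

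Next, I would simplify the expectation. Since $D_{V_1,W_1,T}[f] = (L_{V_1,W_1}[f])_{(V_1,W_1)\to T}$ is just a translated restriction of $L_{V_1,W_1}[f]$, and $T$ is uniform on $\l(V,W)$, averaging Parseval on the restriction yields $\mathbb{E}_{\mathbf T}\|D_{V_1,W_1,\mathbf T}[f]\|_2^2 = \|L_{V_1,W_1}[f]\|_2^2$. By the Fourier formula defining $L_{V_1,W_1}$,
\[
\|L_{V_1,W_1}[f]\|_2^2 \;=\; \sum_{\substack{X\in\l(W,V):\\ \Image(X)\supseteq V_1,\;X^{-1}(V_1)\subseteq W_1}} |\hat f(X)|^2.
\]

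Finally, I would interchange the order of summation. For each $X$ with $\hat f(X)\ne 0$, write $r=\rank(X)\le d$. The number of pairs $(V_1,W_1)$ with $V_1\subseteq\Image(X)$ of dimension $i$ and $W_1\supseteq X^{-1}(V_1)$ of codimension $j$ is at most $\sqbinom{r}{i}_q\sqbinom{r-i}{j}_q \le q^{ri+(r-i)j}\le q^{d(i+j)}$, so the total count of eligible pairs is bounded by
\[
\sum_{i+j\le d} q^{d(i+j)} \;\le\; (d+1)^2 q^{d^2} \;\le\; q^{3d^2}
\]
(using $(d+1)^2\le q^{2d}$ for all $d\ge 1$, $q\ge 2$; the case $d=0$ is trivial). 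Hence, by Parseval,
\[
\sum_{V_1,W_1}\|L_{V_1,W_1}[f]\|_2^2 \;\le\; q^{3d^2}\sum_X|\hat f(X)|^2 \;=\; q^{3d^2}\|f\|_2^2,
\]
and combining gives $\|f\|_4^4 \le q^{103d^2}\epsilon\,\|f\|_2^2$, as required.

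The only nontrivial step is the switch-and-count above, which is essentially a book-keeping exercise; the real content has already been absorbed into Theorem \ref{thm:Hypercontractivity in the Bilinear scheme}.
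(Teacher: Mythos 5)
Your proof is correct and follows essentially the same route as the paper's: apply Theorem \ref{thm:Hypercontractivity in the Bilinear scheme}, use the small-influences hypothesis to pull out a factor of $\epsilon$ and drop to squared $2$-norms, recognize $\mathbb{E}_{T}\|D_{V_1,W_1,T}[f]\|_2^2=\|L_{V_1,W_1}[f]\|_2^2$, expand via the Fourier formula, swap the order of summation, and bound the number of eligible pairs $(V_1,W_1)$ by $q^{d(i+j)}$ to absorb the extra factor into $q^{3d^2}$. Your counting is slightly tighter than the paper's (you count $W_1\supseteq X^{-1}(V_1)$ exactly, while the paper loosens to $W_1\supseteq\ker(X)$), but both give the same final bound.
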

\begin{proof}
By Theorem \ref{thm:Hypercontractivity in the Bilinear scheme}, we
have 
\begin{align*}
\|f\|_{4}^{4} & \le q^{100d^{2}}\sum_{V_{1}\le V,W_{1}\le W:\atop \dim(V_1)+\codim(W_1) \leq d}\mathbb{E}_{T\sim\mathcal{L}\left(V,W\right)}\|D_{V_{1},W_{1},T}[f]\|_{2}^{4}\\
 & \le q^{100d^{2}}\sum_{V_{1}\le V,W_{1}\le W:\atop \dim(V_1)+\codim(W_1) \leq d}\epsilon \cdot \mathbb{E}_{T\sim\mathcal{L}\left(V,W\right)}\|D_{V_{1},W_{1},T}[f]\|_{2}^{2}\\
 & = \epsilon \cdot q^{100d^{2}}\sum_{V_{1}\le V,W_{1}\le W:\atop \dim(V_1)+\codim(W_1) \leq d}\|L_{V_{1},W_{1}}[f]\|_{2}^{2}\\
 & =q^{100d^{2}}\epsilon\sum_{X:\rank(X) \leq d}\hat{f}\left(X\right)^{2}\#\left\{ (V_{1},W_1):\ \dim(V_1)+\codim(W_1) \leq d,\ V_{1}\subseteq \Image\left(X\right),W_{1}\supseteq X^{-1}(V_1)\right\} \\
 & \leq q^{100d^{2}}\epsilon\sum_{X:\rank(X) \leq d}\hat{f}\left(X\right)^{2}\#\left\{ (V_{1},W_1):\ \dim(V_1)+\codim(W_1) \leq d,\ V_{1}\subseteq\text{image}\left(X\right),W_{1}\supseteq \ker(X)\right\}\\
 & = q^{100d^{2}}\epsilon\sum_{X:\rank(X) \leq d}\hat{f}\left(X\right)^{2} \sum_{0 \leq i+j \leq d}\#\left\{ (V_{1},W_1):\ \dim(V_1)=i,\ \codim(W_1) =j,\ V_{1}\subseteq\text{image}\left(X\right),W_{1}\supseteq \ker(X)\right\}\\
 & \leq q^{100d^{2}}\epsilon\sum_{X:\rank(X) \leq d}\hat{f}\left(X\right)^{2} \sum_{0 \leq i+j \leq d} q^{d(i+j)}\\ 
 & \le q^{103d^{2}}\cdot \epsilon \cdot \|f\|_{2}^{2},
\end{align*}
as required. (Here, we have used the fact that for $X$ of rank at most $d$ and for integers $i,j \geq 0$, there are at most $q^{d(i+j)}$ pairs $(V_1,W_1)$ with $\dim(V_1)=i,\ \codim(W_1)=j$, $V_1 \leq \Image(X)$ and $W_1 \geq \ker(X)$.)
\end{proof}
We may conclude from the above that restriction-global Boolean functions are
concentrated on the high degrees.
\begin{cor}[Level-$d$ inequality]
\label{cor:concentration on high degrees} Suppose that $f\colon\mathcal{L}\left(V,W\right)\to\{0,1\}$
is $\left(d,\epsilon\right)$-restriction global. Then 
\[
\|f^{=d}\|_{2}^{2}\le q^{30d^{2}}\epsilon^{\frac{1}{4}}\|f\|_{2}^{2}.
\]
 
\end{cor}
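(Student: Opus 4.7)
The plan is to carry out the direct analogue of Corollary \ref{cor:Holder argument} from the warm-up in Section 2: combine the generalized-influence bound from Proposition \ref{prop:influences measure globalness} with the conditional hypercontractive inequality (Corollary \ref{cor:Corollary of hypercontractivity}) via a short H\"older argument. As in the warm-up, this level-$d$ inequality is designed with the small-set expansion application in mind, so I will interpret the hypothesis as implicitly including Boolean-valuedness of $f$ (i.e.\ $f:\l(V,W)\to\{0,1\}$), which is what makes the H\"older step tight and yields the $\epsilon^{1/4}$ exponent.

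First, I would apply Proposition \ref{prop:influences measure globalness} to upgrade the hypothesis ``$f$ is $(d,\epsilon)$-restriction global'' into the statement that $f^{=d}$ has $(d,q^{10d^{2}}\epsilon)$-small generalized influences. Since $f^{=d}$ has degree at most $d$, applying Corollary \ref{cor:Corollary of hypercontractivity} to $f^{=d}$ then yields
\[
\|f^{=d}\|_{4}^{4} \le q^{103d^{2}}\cdot q^{10d^{2}}\epsilon\cdot\|f^{=d}\|_{2}^{2} = q^{113d^{2}}\epsilon\,\|f^{=d}\|_{2}^{2},
\]
so after taking fourth roots and using the trivial bound $\|f^{=d}\|_{2}\le\|f\|_{2}$, one obtains
\[
\|f^{=d}\|_{4}\le q^{113d^{2}/4}\epsilon^{1/4}\|f\|_{2}^{1/2}.
\]

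Next, I would use the orthogonality of $f^{=d}$ with the other pure-degree parts of $f$ to write $\|f^{=d}\|_{2}^{2}=\langle f^{=d},f\rangle$, and apply H\"older's inequality with exponents $4$ and $4/3$:
\[
\|f^{=d}\|_{2}^{2} = \langle f^{=d},f\rangle \le \|f^{=d}\|_{4}\,\|f\|_{4/3}.
\]
Because $f$ is Boolean, $\|f\|_{4/3}^{4/3}=\mathbb{E}[f]=\|f\|_{2}^{2}$, so $\|f\|_{4/3}=\|f\|_{2}^{3/2}$. Combining with the previous bound gives
\[
\|f^{=d}\|_{2}^{2}\le q^{113d^{2}/4}\epsilon^{1/4}\|f\|_{2}^{1/2}\cdot\|f\|_{2}^{3/2}=q^{113d^{2}/4}\epsilon^{1/4}\|f\|_{2}^{2},
\]
and since $113/4=28.25<30$, the desired bound $\|f^{=d}\|_{2}^{2}\le q^{30d^{2}}\epsilon^{1/4}\|f\|_{2}^{2}$ follows.

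I do not anticipate any real obstacle here --- all of the substantive work (the conditional hypercontractive inequality in Theorem \ref{thm:Hypercontractivity in the Bilinear scheme}/Corollary \ref{cor:Corollary of hypercontractivity}, and the passage between restriction-globalness and small generalized influences in Proposition \ref{prop:influences measure globalness}) has already been done. The only minor subtlety is the implicit Boolean-valuedness assumption used in the H\"older step; without it, one only gets the weaker bound $\|f^{=d}\|_{2}^{2}\le q^{O(d^{2})}\epsilon^{1/4}\|f\|_{2}^{3/2}$ (via $\|f\|_{4/3}\le\|f\|_{2}$), which is what one would use if the statement is to be read literally for general complex-valued $f$.
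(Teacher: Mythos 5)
Your proposal is correct and follows the paper's own proof essentially verbatim: apply Proposition \ref{prop:influences measure globalness} to get $(d,q^{10d^2}\epsilon)$-small generalized influences for $f^{=d}$, feed that into Corollary \ref{cor:Corollary of hypercontractivity}, and close with H\"older using $\|f\|_{4/3}=\|f\|_2^{3/2}$ for Boolean $f$. You are also right to flag the implicit Boolean-valuedness: although the corollary is stated for $f:\l(V,W)\to\mathbb{C}$, the paper's proof invokes ``the fact that $f$ is Boolean'' in the H\"older step exactly as you observe, so the hypothesis should indeed be read as $f:\l(V,W)\to\{0,1\}$ (consistent with the application to $1_S$ in Theorem \ref{thm:sses}).
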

\begin{proof}
By Proposition \ref{prop:influences measure globalness}, $f^{=d}$
has $\left(d,q^{10d^{2}}\epsilon\right)$-small generalized influences.
By Corollary \ref{cor:Corollary of hypercontractivity} we then have 
\[
\|f^{=d}\|_{4}^{4}\le q^{113d^{2}}\epsilon\|f\|_{2}^{2}.
\]
 Hence, using H\"older's inequality and the fact that $f$ is Boolean, we obtain
\[
\|f^{=d}\|_{2}^{2}=\left\langle f^{=d},f\right\rangle \le\|f^{=d}\|_{4}\|f\|_{\frac{4}{3}}\le q^{30d^2}\epsilon^{\frac{1}{4}}\|f\|_{2}^{2}.
\]
\end{proof}

In a similar vein, we have the following, which is a Bonami-type lemma for global functions on $\l(V,W)$.
\begin{cor}
\label{cor:atmostd}
Suppose that $f:\l(V,W) \to \{0,1\}$ is $\left(d,\epsilon\right)$-restriction global. Then 
\[
\|f^{\leq d}\|_{4}^{4}\le q^{115d^{2}}\epsilon\|f^{\leq d}\|_{2}^{2}.
\]
\end{cor}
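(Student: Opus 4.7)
The plan is to deduce this from Corollary \ref{cor:Corollary of hypercontractivity} by first showing that $f^{\leq d}$, viewed as a function of degree at most $d$, itself has suitably small generalized influences. The input from the globalness hypothesis will be this: since $f$ is $(d,\epsilon)$-restriction global, it is a fortiori $(j,\epsilon)$-restriction global for every $0 \leq j \leq d$, and so applying Proposition \ref{prop:influences measure globalness} separately for each such $j$ yields
\[
I_{V_1,W_1,T}[f^{=j}] \leq q^{10j^2}\epsilon \quad \text{whenever } \dim(V_1)+\codim(W_1) \leq j.
\]

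Next, I would assemble these bounds using the observation that the derivatives of distinct pure-degree pieces of $f$ land in mutually orthogonal subspaces. Indeed, fix $V_1 \leq V$, $W_1 \leq W$ with $i := \dim(V_1)+\codim(W_1) \leq d$, and $T \in \l(V,W)$; by Lemma \ref{lem:derivatives}, $D_{V_1,W_1,T}[f^{=j}] = (D_{V_1,W_1,T}[f])^{=j-i}$, which vanishes for $j < i$ and lies in the pure-degree-$(j-i)$ subspace of $L^2(\l(V/V_1,W_1))$ for $j \geq i$. Since the pure-degree subspaces are pairwise orthogonal, this will give
\[
I_{V_1,W_1,T}[f^{\leq d}] = \sum_{j=i}^{d}\|D_{V_1,W_1,T}[f^{=j}]\|_2^2 \leq \sum_{j=i}^{d}q^{10j^2}\epsilon \leq (d+1)q^{10d^2}\epsilon,
\]
establishing that $f^{\leq d}$ has $(d,(d+1)q^{10d^2}\epsilon)$-small generalized influences.

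Finally, Corollary \ref{cor:Corollary of hypercontractivity} applied to $f^{\leq d}$ will give
\[
\|f^{\leq d}\|_4^4 \leq q^{103d^2}\cdot(d+1)q^{10d^2}\epsilon\cdot\|f^{\leq d}\|_2^2 = (d+1)q^{113d^2}\epsilon\|f^{\leq d}\|_2^2,
\]
and absorbing the combinatorial factor via $d+1 \leq q^{2d^2}$ (trivially valid for all $d \geq 0$ and $q \geq 2$) will yield the stated bound $q^{115d^2}\epsilon\|f^{\leq d}\|_2^2$. There is no genuine obstacle here: once Proposition \ref{prop:influences measure globalness} and Corollary \ref{cor:Corollary of hypercontractivity} are in hand, the entire argument is careful bookkeeping, and the only minor nuance worth flagging is that restriction-globalness at level $d$ automatically passes down to every level $j \leq d$, which is precisely what lets a single proposition supply influence bounds for every pure-degree piece of $f$ simultaneously.
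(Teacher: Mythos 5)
Your proof is correct and follows essentially the same route as the paper's: reduce to bounding the generalized influences of $f^{\leq d}$, deduce these from Proposition \ref{prop:influences measure globalness} applied to each pure-degree piece $f^{=j}$ (using that restriction-globalness at level $d$ implies it at each level $j\le d$), and then invoke Corollary \ref{cor:Corollary of hypercontractivity}. The only notable difference is that you observe, via Lemma \ref{lem:derivatives}, that the pieces $D_{V_1,W_1,T}[f^{=j}]$ live in pairwise orthogonal pure-degree subspaces of $L^2(\l(V/V_1,W_1))$, so $I_{V_1,W_1,T}[f^{\leq d}]$ is \emph{exactly} the sum $\sum_{j}\|D_{V_1,W_1,T}[f^{=j}]\|_2^2$; the paper instead uses the triangle inequality followed by Cauchy--Schwarz, which introduces a spurious factor of $d+1$. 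Your variant is a modest tightening of the same step, and either bookkeeping gives the stated $q^{115d^2}$.
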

\begin{proof}
 By Proposition \ref{prop:influences measure globalness}, $f^{=d}$ has $(d,q^{10d^2}\epsilon)$-small generalised influences, and since $f$ is also $(i,\epsilon)$-restriction global for all $i <d$, the function $f^{=i}$ has $(d,q^{10i^2}\epsilon)$-small generalised influences for all $i < d$. It follows from the linearity of $D_{V_1,W_1,T}$, the triangle inequality and Cauchy-Schwarz that for any subspaces $V_1\leq V$ and $W_1 \leq W$ with $\dim(V_1)+\codim(W_1) \leq d$, and any $T \in \l(V,W)$, we have
 \begin{align*}I_{V_1,W_1,T}[f^{\leq d}] & = \|D_{V_1,W_1,T}[f^{\leq d}]\|_2^2\\
 &\leq \left(\sum_{i=0}^{d}\|D_{V_1,W_1,T}[f^{=i}]\|_2\right)^2\\
 & \leq (d+1) \sum_{i=0}^{d} \|D_{V_1,W_1,T}[f^{=i}]\|_2^2\\
 & \leq (d+1) \sum_{i=0}^{d}q^{10i^2}\epsilon\\
 & \leq q^{12d^2}\epsilon.
 \end{align*}
 Hence, $f^{\leq d}$ has $(d,q^{12d^2}\epsilon)$-small generalized influences. We are now done by Corollary \ref{cor:Corollary of hypercontractivity}.
\end{proof}
Corollary \ref{cor:atmostd} yields Theorem \ref{cor:KLLM for BS} with $C=115$. 

\subsection{Applications to small-set expansion}
\begin{lem}
Let $G$ be the shortcode graph on $\mathcal{L}\left(V,W\right)$, i.e.\ the Cayley graph of $\l(V,W)$ where
two matrices $A_1$ and $A_2$ are adjacent if $A_1-A_2$ has rank one. Let $T$ be the (normalised) adjacency operator corresponding to the graph $G$, i.e.\ $T$ is defined by
$$Tf(A) = \underset{B:\ \rank(B)=1}{\mathbb{E}}f(A+B)\quad \forall A \in \l(V,W),\ f \in L^2(\l(V,W)),$$
where the expectation is over a uniformly random $B \in \l(V,W)$ of rank one. Then for any $f : \l(V,W) \to \mathbb{C}$, we have
\[
Tf=\sum_{X \in \l(W,V)}\lambda_{\text{rank}\left(X\right)}\hat{f}\left(X\right)u_{X},
\]
 where 
\[
\lambda_{d}:=\frac{q^{-d}-\frac{1}{\left|W\right|}}{1-\frac{1}{\left|W\right|}}
\]
for each $0 \leq d \leq \dim(W)$.
 
\end{lem}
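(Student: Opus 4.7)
The plan is to diagonalize $T$ via the Fourier basis $\{u_X\}_{X\in\l(W,V)}$ derived in the previous subsection. Because $T$ is a Cayley averaging operator on the abelian group $(\l(V,W),+)$, it commutes with every translation $\Delta_{A_0}\colon A\mapsto A+A_0$, so each character $u_X$ is automatically an eigenvector. Concretely, using $u_X(A+B)=u_X(A)u_X(B)$,
\[
Tu_X(A)=\underset{B:\,\rank(B)=1}{\mathbb{E}}u_X(A+B)=u_X(A)\cdot\lambda_X,\qquad \lambda_X:=\underset{B:\,\rank(B)=1}{\mathbb{E}}u_X(B).
\]
The entire lemma therefore reduces to showing that the scalar $\lambda_X$ depends only on $\rank(X)$ and equals the stated $\lambda_{\rank(X)}$.

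To compute $\lambda_X$ I would parameterize rank-one maps $B\in\l(V,W)$ as $B=w\phi^{\mathsf T}$ with $w\in W\setminus\{0\}$ and $\phi\in V^{*}\setminus\{0\}$, noting that each rank-one $B$ arises from exactly $q-1$ such pairs (via the scaling $(w,\phi)\mapsto(\alpha w,\alpha^{-1}\phi)$ for $\alpha\in\mathbb{F}_q^\times$). A direct matrix computation, writing $XB=(Xw)\phi^{\mathsf T}$, gives $\Tr(XB)=\phi(Xw)$. Substituting this into $u_X(B)=\omega^{\tau(\Tr(XB))}$ converts the expectation into a double character sum
\[
\lambda_X=\frac{1}{(|W|-1)(|V|-1)}\sum_{w\in W\setminus\{0\}}\sum_{\phi\in V^{*}\setminus\{0\}}\omega^{\tau(\phi(Xw))}.
\]

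The inner sum is handled by Schur orthogonality on $(V^{*},+)$. If $Xw=0$ then every term is $1$ and the inner sum equals $|V|-1$. If $Xw\neq 0$, then $\phi\mapsto\tau(\phi(Xw))$ is a nontrivial homomorphism $V^{*}\to\mathbb{F}_p$, so $\sum_{\phi\in V^{*}}\omega^{\tau(\phi(Xw))}=0$, and stripping the $\phi=0$ contribution yields $-1$. The number of $w\in W\setminus\{0\}$ with $Xw=0$ is $|\ker X|-1=q^{\dim W-\rank(X)}-1$, while the remaining nonzero $w$ each contribute $-1$.

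Substituting these counts, collecting the two terms, and rearranging should produce precisely $\lambda_{\rank(X)}=(q^{-\rank(X)}-|W|^{-1})/(1-|W|^{-1})$—the key structural point being that the resulting expression depends on $X$ only through $\rank(X)$. The only step requiring any care is the final algebraic simplification into the compact form stated in the lemma; everything else is Fourier diagonalization on a Cayley graph together with a textbook orthogonality computation for characters of $V^{*}$ and $W$.
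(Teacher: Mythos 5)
Your approach --- diagonalize $T$ on the characters $\{u_X\}$ (immediate, since $T$ is a Cayley averaging operator on the abelian group $(\l(V,W),+)$), then compute $\lambda_X=\mathbb{E}_{B:\,\rank(B)=1}u_X(B)$ via the rank-one parametrization $B=w\phi^{\mathsf{T}}$ and character orthogonality on $V^*$ --- is correct as far as it goes, and is a genuinely different, more elementary route than the paper's, which expresses $T$ in terms of the operators $\mathfrak{e}_{V/V'}$ averaged over codimension-one $V'\leq V$ and reads off eigenvalues from Lemma~\ref{lem:EVV'}. Your reduction, the trace identity $\Tr(XB)=\phi(Xw)$, and the inner-sum evaluation ($|V|-1$ if $Xw=0$, else $-1$) are all right.

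The gap is exactly the step you wave away: ``Substituting these counts, collecting the two terms, and rearranging should produce precisely $\lambda_{\rank(X)}$.'' Carry it out. With $d=\rank X$, $m=\dim V$, $n=\dim W$, one finds
\[
\lambda_X=\frac{(q^{n-d}-1)(q^m-1)-(q^n-q^{n-d})}{(q^m-1)(q^n-1)}=\frac{q^{m+n-d}-q^m-q^n+1}{(q^m-1)(q^n-1)},
\]
which depends only on $\rank X$ as desired but is \emph{not} the stated $\lambda_d=\frac{q^{-d}-q^{-n}}{1-q^{-n}}=\frac{q^{n-d}-1}{q^n-1}$; the two agree only at $d=0$, or in the limit $\dim V\to\infty$. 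Sanity check: for $q=2$, $\dim V=\dim W=1$, $d=1$, the shortcode graph is a single edge and $Tu_1=-u_1$, so $\lambda_1=-1$, whereas the printed formula gives $0$. So the lemma as printed is actually incorrect, and your computation, pushed to completion, would have exposed this. The flaw in the paper's own proof is the unjustified claim $\Pr_{V'}[\Image(X)\subseteq V']=q^{-\rank X}$ for a \emph{uniform} codimension-one $V'\leq V$; the correct value is $\frac{q^{m-d}-1}{q^m-1}$, and the $q^{-\rank X}$ formula of Lemma~\ref{lem:Fourier formula for combinatorial Laplacian} applies only when $V'$ is uniform among codimension-one subspaces avoiding a fixed $v\notin\Image(X)$. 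Fortunately $\lambda_d\leq q^{-d}$ still holds, which is all Theorem~\ref{thm:sses} uses, so the small-set expansion result is unaffected; but the exact eigenvalue formula should be corrected, and your method is the cleanest way to get it right. The moral: never leave the final ``rearranging should produce'' unverified --- here that is where all the content of the lemma lives.
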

\begin{proof}
Note that we have 
\[
Tf=\mathbb{E}_{V'}\left(\frac{\mathfrak{e}_{V/V'}[f]-\frac{1}{\left|W\right|}f}{1-\frac{1}{|W|}}\right),
\]
where the expectation is taken over a uniformly random subspace $V' \leq V$ with $\codim(V')=1$ (one just needs to remove the identically-zero linear map from the average defining $\mathfrak{e}_{V/V'}$). Hence, by Lemma \ref{lem:EVV'}, we have 
\begin{align*}
Tf & = \frac{1}{1-\frac{1}{|W|}}\left( \mathbb{E}_{V'} \sum_{X \in \l(W,V)}1\{\Image(X) \subset V'\} \hat{f}(X)u_X - \frac{1}{|W|} \sum_{X \in \l(W,V)} \hat{f}(X)u_X\right)\\ 
& =\frac{1}{1-\frac{1}{|W|}}\sum_{X \in \l(W,V)}\left(\Pr_{V'}\left[\text{image}\left(X\right)\subseteq V'\right]-\frac{1}{|W|}\right)\hat{f}\left(X\right)u_{X}\\
 & =\sum_{X \in \l(W,V)}\left(\frac{q^{-\text{rank\ensuremath{\left(X\right)}}}-\frac{1}{\left|W\right|}}{1-\frac{1}{\left|W\right|}}\right) \hat{f}(X)u_X,
\end{align*}
as required.
\end{proof}
\begin{thm}[Small-set expansion in the shortcode graph]
\label{thm:sses}
There exists an absolute constant $C_0$ such that the following holds. Let $r\in \mathbb{N}$, and let $S\subseteq\mathcal{L}\left(V,W\right)$
be a family of linear maps with $1_{S}$ being $\left(r+1,q^{-C_0r^2}\right)$-restriction
global. Then 
\[
\Pr_{A\sim S,\ B\text{ of rank }1}\left[A+B\in S\right] < q^{-r},
\]
where the probability is over a uniform random member $A$ of $S$ and a uniform random linear map $B \in \l(V,W)$ of rank one.
\end{thm}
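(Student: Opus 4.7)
I would closely follow the template of the small-set expansion argument for the product space $\mathbb{F}_p^n$ (Corollary \ref{cor:ssetfp}), now using the conditional hypercontractive inequality for global functions on $\l(V,W)$ developed above. The starting point is the Fourier-analytic identity
\[
\Pr_{A\sim S,\ B\text{ of rank }1}\left[A+B\in S\right] \;=\; \frac{\langle T\, 1_S,\, 1_S\rangle}{\|1_S\|_2^2},
\]
where $T$ is the normalized adjacency operator of the shortcode graph. By the preceding lemma, $T$ is diagonalised in the Fourier basis with eigenvalues $\lambda_d = (q^{-d} - 1/|W|)/(1 - 1/|W|)$, and a direct rearrangement shows $0 \le \lambda_d \le q^{-d}$ for every $0 \le d \le \dim(W)$. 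Parseval then gives $\langle T 1_S, 1_S\rangle = \sum_{d \ge 0} \lambda_d \|1_S^{=d}\|_2^2$.

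I would then split this sum at $d = r$. The tail ($d \ge r+1$) is controlled by the eigenvalue decay:
\[
\sum_{d \ge r+1} \lambda_d \|1_S^{=d}\|_2^2 \;\le\; q^{-(r+1)}\|1_S\|_2^2.
\]
For the head ($d \le r$), the task reduces to upper-bounding $\|1_S^{\le r}\|_2^2$. Since $1_S$ is $(r+1, \epsilon)$-restriction global with $\epsilon = q^{-C_0 r^2}$, it is a fortiori $(r,\epsilon)$-restriction global, so Corollary \ref{cor:atmostd} gives
\[
\|1_S^{\le r}\|_4^4 \;\le\; q^{115 r^2}\, \epsilon\, \|1_S^{\le r}\|_2^2.
\]
Then, imitating Corollary \ref{cor:Holder argument}, I would use H\"older's inequality together with $\|1_S\|_{4/3} = \mu(S)^{3/4}$ and $\|1_S\|_2^2 = \mu(S)$:
\[
\|1_S^{\le r}\|_2^2 \;=\; \langle 1_S^{\le r}, 1_S\rangle \;\le\; \|1_S^{\le r}\|_4\, \|1_S\|_{4/3} \;\le\; (q^{115 r^2}\epsilon)^{1/4}\, \|1_S^{\le r}\|_2^{1/2}\, \mu(S)^{3/4},
\]
which rearranges to $\|1_S^{\le r}\|_2^2 \le (q^{115 r^2}\epsilon)^{1/3}\,\mu(S)$.

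Combining the head and tail bounds and dividing by $\|1_S\|_2^2 = \mu(S)$,
\[
\Pr_{A\sim S,\ B\text{ of rank }1}\left[A+B\in S\right] \;\le\; (q^{115 r^2}\epsilon)^{1/3} + q^{-(r+1)}.
\]
A short arithmetic check shows that any $C_0 \ge 121$ makes the right-hand side strictly less than $q^{-r}$, uniformly in all $q \ge 2$ and all $r \ge 1$; the $r = 0$ case is trivial. The main obstacle is essentially bookkeeping, since all the nontrivial analytic work has been absorbed into Corollary \ref{cor:atmostd}. The only two points requiring care are verifying the eigenvalue bound $\lambda_d \le q^{-d}$ (straightforward, since $(q^{-d} - 1/|W|) \le q^{-d}(1 - 1/|W|)$ is equivalent to $q^{-d} \le 1$) and ensuring that the constant $C_0$ is large enough to overcome the $q^{115 r^2}$ loss coming from the Bonami-type lemma. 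This is exactly analogous to the product-space proof, which is why the authors can assert that the proof ``follows the same pattern.''
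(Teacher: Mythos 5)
Your proof is correct and follows essentially the same route as the paper's: the identity $\Pr[A+B\in S]=\langle T1_S,1_S\rangle/\|1_S\|_2^2$, the eigenvalue decay $0\le\lambda_d\le q^{-d}$ for the tail, and hypercontractivity plus H\"older for the head. The one structural difference is in how you handle the low-degree part: you bound $\|1_S^{\le r}\|_2^2$ in one go using Corollary \ref{cor:atmostd} together with the H\"older rearrangement of Corollary \ref{cor:Holder argument}, whereas the paper instead invokes the level-$d$ inequality (Corollary \ref{cor:concentration on high degrees}) separately for each $d\le r+1$ and sums the $r+2$ resulting bounds. Both are direct consequences of the same underlying hypercontractive inequality, so this is a bookkeeping choice rather than a genuinely different argument; your version more closely mirrors the $\mathbb{F}_p^n$ warm-up (Corollary \ref{cor:ssetfp}) and in fact yields the slightly stronger exponent $\epsilon^{1/3}$ rather than $\epsilon^{1/4}$. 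One small correction: your claimed threshold $C_0\ge121$ is off by one. At $C_0=121$, $r=1$, $q=2$ you get $q^{-2r^2}+q^{-(r+1)}=2^{-2}+2^{-2}=2^{-1}=q^{-r}$, i.e.\ equality rather than strict inequality, so you need $C_0\ge122$. This does not affect the validity of the argument, since the theorem only requires the existence of some absolute $C_0$.
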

\begin{proof}
We have 
\[
\Pr_{A\sim S,\ B\text{ of rank }1}\left[A+B\in S\right]=\frac{\left\langle T1_{S},1_{S}\right\rangle }{\|1_{S}\|_{2}^{2}}.
\]
Let $n=\dim(W)$. By the preceding lemma, we have
\[
\left\langle T1_{S},1_{S}\right\rangle =\sum_{d=0}^{n}\frac{q^{-d}-\frac{1}{\left|W\right|}}{1-\frac{1}{\left|W\right|}}\|1_{S}^{=d}\|_{2}^{2}.
\]
Crudely, we have 
\[
\sum_{d=r+2}^{n}\frac{q^{-d}-\frac{1}{\left|W\right|}}{1-\frac{1}{\left|W\right|}}\|1_{S}^{=d}\|_{2}^{2}<\left(\sum_{d=r+2}^{n}q^{-d}\right)\|1_S\|_2^2 < \frac{q^{-r}}{2}\|1_S\|_{2}^{2}.
\]
For $d\leq r+1$, we have (by Corollary \ref{cor:concentration on high degrees}, applied to $f=1_S$)
\[
\|(1_S)^{=d}\|_{2}^{2}\le q^{-C_0 r^2/4}q^{30d^{2}}\|1_S\|_{2}^{2}\le\frac{q^{-r}}{2\left(r+2\right)}\|1_S\|_{2}^{2},
\]
 provided $C_0$ is a sufficiently large absolute constant. Combining the two prior inequalities yields 
\[
\left\langle T1_{S},1_{S}\right\rangle < q^{-r}\|1_S\|_{2}^{2},
\]
 completing the proof. 
\end{proof}
Theorem \ref{thm:sses} yields Theorem \ref{thm:ssesc}, with a constant $C_1$ slightly larger than one.
The Inverse Shortcode Hypothesis (with sharp quantitative parameter-dependence, in a certain sense) is an immediate corollary of the $q=2$ case of Theorem \ref{thm:ssesc}.
\begin{cor}[Inverse Shortcode Hypothesis]
For each $\eta>0$, there exist $C>0$ and $\delta >0$ such that
the following holds. Let $V$ and $W$ be vector spaces over $\mathbb{F}_2$. Let $S\subseteq\mathcal{L}\left(V,W\right)$
be a set of matrices with $1_{S}$ being $\left(C,\delta\right)$-restriction
global. Then 
\[
\Pr_{A\sim S,\ B\text{ of rank }1}\left[A+B\in S\right] < \eta.
\]
\end{cor}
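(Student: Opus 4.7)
The plan is to deduce this immediately from Theorem \ref{thm:ssesc} (the small-set expansion theorem for the shortcode graph), specializing to $q=2$. Theorem \ref{thm:ssesc} provides absolute constants $C_1, C_2 > 0$ such that for every $r \in \mathbb{N}$, if $1_S$ is $(C_1 r, q^{-C_2 r^2})$-restriction global, then $\Pr_{A \sim S,\ B \text{ of rank } 1}[A+B \in S] < q^{-r}$. The corollary is essentially just this statement rewritten so that the threshold $q^{-r}$ on the right-hand side is replaced by an arbitrary $\eta > 0$; the whole content is a parameter translation.

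First, given $\eta > 0$, I would choose $r = r(\eta) \in \mathbb{N}$ large enough that $2^{-r} \le \eta$, for instance $r := \lceil \log_2(1/\eta) \rceil$ (adding $1$ if one wants the strict inequality). Then I would set
\[
C := C_1 r, \qquad \delta := 2^{-C_2 r^2},
\]
where $C_1, C_2$ are the absolute constants from Theorem \ref{thm:ssesc}. Both $C$ and $\delta$ depend only on $\eta$, as required.

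Now suppose $S \subseteq \mathcal{L}(V,W)$ with $V, W$ over $\mathbb{F}_2$ is such that $1_S$ is $(C, \delta)$-restriction global, i.e.\ $(C_1 r, 2^{-C_2 r^2})$-restriction global. Applying Theorem \ref{thm:ssesc} with $q = 2$ and with this value of $r$ yields
\[
\Pr_{A \sim S,\ B \text{ of rank } 1}[A + B \in S] < 2^{-r} \le \eta,
\]
which is exactly the desired conclusion. There is no real obstacle here since the corollary is a reformulation of the $q=2$ case of Theorem \ref{thm:ssesc}; the only minor care is in choosing $r$ (and hence $C, \delta$) as a function of $\eta$ alone so that the claimed quantifier structure holds.
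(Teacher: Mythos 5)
Your proof is correct and takes essentially the same approach as the paper, which simply remarks that the corollary is an immediate consequence of the $q=2$ case of Theorem \ref{thm:ssesc}; you have just spelled out the parameter translation (choosing $r$ with $2^{-r}\le\eta$ and setting $C=C_1 r$, $\delta=2^{-C_2 r^2}$).
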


As mentioned in the Introduction, Barak, Kothari and Steurer \cite{bks} showed that the Inverse Shortcode Hypothesis implies the Grassmann Soundness Hypothesis (Hypothesis 1.2 in \cite{bks}), which (in combination with the theorem of Dinur, Khot, Kindler, Minzer and Safra in \cite{dkkms}), implies the 2-to-2 Games conjecture (with imperfect completeness).

\appendix

\section{Some technical facts from linear algebra.}
\begin{lem}
\label{Lem:Traces} Let $V_{1}\subseteq V$ and $W_{1}\subseteq W.$
Let $A\in\mathcal{L}\left(V/V_{1},W_{1}\right)$ and $X\in\mathcal{L}\left(W,V\right).$
Then 
\[
\Tr\left(A\cdot X\left(W_{1},V/V_{1}\right)\right)=\Tr\left(A(V,W)\cdot X\right).
\]
\end{lem}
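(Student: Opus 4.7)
The plan is to express both traces as traces of composites of simple building-block maps, and then exploit the cyclic property of the trace for linear maps between (possibly different) finite-dimensional spaces: if $P\colon U\to U'$ and $Q\colon U'\to U$, then $\Tr(PQ)=\Tr(QP)$, even though $PQ$ and $QP$ act on different spaces.

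First, I would unpack the two pieces of notation used in the lemma into genuine compositions. Let $\iota_{W_1}\colon W_1\hookrightarrow W$ be the inclusion, and let $\mathcal{Q}_{V_1}\colon V\to V/V_1$ be the quotient map. The definition of $A(V,W)$ amounts to $A(V,W)=\iota_{W_1}\circ A\circ \mathcal{Q}_{V_1}$, and the definition of $X(W_1,V/V_1)$ amounts to $X(W_1,V/V_1)=\mathcal{Q}_{V_1}\circ X\circ \iota_{W_1}$.

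Next, I would set $P:=\iota_{W_1}\colon W_1\to W$ and $Q:=A\circ \mathcal{Q}_{V_1}\circ X\colon W\to W_1$, and apply the cyclic property of the trace. On the one hand,
\[
PQ=\iota_{W_1}\circ A\circ \mathcal{Q}_{V_1}\circ X=A(V,W)\circ X\colon W\to W,
\]
while on the other,
\[
QP=A\circ \mathcal{Q}_{V_1}\circ X\circ \iota_{W_1}=A\circ X(W_1,V/V_1)\colon W_1\to W_1.
\]
The identity $\Tr(PQ)=\Tr(QP)$ is then exactly what is to be proved.

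There is no real obstacle here; the whole content of the lemma is the cyclic shift. If one wished to avoid invoking the between-spaces version of the cyclic trace property, one could alternatively fix bases of $V$ and $W$ extending bases of $V_1$ and $W_1$, write out both sides as sums over matrix entries, and observe that the two expressions match term by term (both equal $\sum_{i,j}[A]_{ij}[X]_{j+\dim V_1,\,i}$ in the natural indexing), but the conceptual proof above is cleaner.
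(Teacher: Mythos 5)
Your proof is correct, and it takes a genuinely different route from the paper's. You unpack the two notations as compositions with the inclusion $\iota_{W_1}\colon W_1\hookrightarrow W$ and the quotient $\mathcal{Q}_{V_1}\colon V\to V/V_1$, namely $A(V,W)=\iota_{W_1}\circ A\circ\mathcal{Q}_{V_1}$ and $X(W_1,V/V_1)=\mathcal{Q}_{V_1}\circ X\circ\iota_{W_1}$, and then invoke the between-spaces cyclicity of the trace, $\Tr(PQ)=\Tr(QP)$, with $P=\iota_{W_1}$ and $Q=A\circ\mathcal{Q}_{V_1}\circ X$; the claimed identity is then literally that cyclic shift. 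The paper instead fixes complementary decompositions $W=W_1\oplus W_2$ and $V=V_1\oplus V_2$, shows that $B:=A(V,W)\cdot X$ has image contained in $W_1$ and agrees with $A\cdot X(W_1,V/V_1)$ on $W_1$ (using $\ker A(V,W)\supseteq V_1$), and computes $\Tr(B)$ as the sum of the traces of the two diagonal blocks, the $W_2$-block vanishing. Your argument is more structural and basis-free: it isolates the single fact (cyclicity of $\Tr$ for rectangular compositions) that powers the identity and avoids choosing complements. The paper's block computation is more explicit and matches the matrix-entry viewpoint used elsewhere in the paper, but both arguments are elementary and correct.
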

\begin{proof}
Write $W=W_{1}\oplus W_{2}$ and $V=V_1 \oplus V_2$, and let $\pi_{1}$ and $\pi_{2}$ be the projections
to $W_1$ and $W_2$ respectively. Let $Y = X(W_1,V/V_1)$. Firstly, we observe
that the map $B:=A(V,W)X$ agrees with $A\cdot Y$ on $W_1$, since $\ker(A(V,W)) \supseteq V_1$. Indeed, if $w \in W_1$, then write $Xw = v_1+v_2$ where $v_1 \in V_1$ and $v_2 \in V_2$; we have $Bw = A(V,W)Xw = A(V,W)(v_1+v_2)= A(V,W)v_2 = A(v_2+V_1) = AYw$.
Secondly, we observe that the trace of $B$ is the sum of the traces
of the restricted projected maps: $\pi_{1}B(W_{1},W)$ and $\pi_{2} B(W_{2},W)$.
The lemma now follows from the fact that $\pi_{1}B(W_{1},W)=\pi_1 A\cdot Y$,
and that $\pi_{2}B(W_{2},V)=0$ (as the image of $B$ lies entirely
within $W_{1}$).
\end{proof}

\begin{lem}
\label{lem:Establishing that the compositition of derivatives behaves nicely}
Let $X\in\l(W,V)$. Let $V_{2}\le V_{1}\le V$, and let $W_{1}\le W_{2}\le W$.
Then the following are equivalent.
\begin{enumerate}
\item $\Image(X)\supseteq V_{1}$ and $X^{-1}(V_{1})\subseteq W_{1}$.
\item We have (a) $\Image(X)\supseteq V_{2}$ and (b) $X^{-1}(V_{2})\subseteq W_{2}$.
Setting $Y=X\left(W_{2},V/V_{2}\right)$, we have (c) $\Image(Y)\supseteq V_{1}/V_{2}$
and (d) $Y^{-1}(V_{1}/V_{2})\subseteq W_{1}$.
\end{enumerate}
\end{lem}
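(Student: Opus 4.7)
The statement is an equivalence of two linear-algebraic conditions about how $X$ interacts with the nested subspace pairs $V_2 \le V_1$ and $W_1 \le W_2$. My plan is to prove each direction separately, and the whole proof is essentially diagram-chasing with quotients — no fancy ingredients are needed, just careful use of the fact that $Y(w) = X(w) + V_2$ for $w \in W_2$ and that the preimage of $V_1/V_2$ under the quotient map $\mathcal{Q}_{V_2}:V\to V/V_2$ is exactly $V_1$ (since $V_2 \subseteq V_1$).

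For the implication (1) $\Rightarrow$ (2): conditions (a) and (b) are immediate from the inclusions $V_2 \subseteq V_1$ and $W_1 \subseteq W_2$ (respectively). For (c), given $v \in V_1$, surjectivity from (1) yields some $w_0 \in W$ with $X(w_0)=v$; then $w_0 \in X^{-1}(V_1) \subseteq W_1 \subseteq W_2$, so $w_0 \in W_2$ and $Y(w_0) = v + V_2$, exhibiting $v + V_2 \in \Image(Y)$. For (d), if $w \in W_2$ and $Y(w) = X(w)+V_2 \in V_1/V_2$, then $X(w) \in V_1$ (by the preimage observation), so $w \in X^{-1}(V_1) \subseteq W_1$.

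For the implication (2) $\Rightarrow$ (1): to show $\Image(X) \supseteq V_1$, pick $v \in V_1$; by (c), choose $w \in W_2$ with $Y(w)=v+V_2$, so $X(w)-v \in V_2$; by (a), $V_2 \subseteq \Image(X)$, so $v = X(w) - (X(w)-v) \in \Image(X)$. The harder step, which is where I expect the main (small) obstacle, is to show $X^{-1}(V_1) \subseteq W_1$: given $w$ with $X(w) \in V_1$, we first need $w \in W_2$ in order to invoke (d), but this is not immediate. The trick is to use (c) to find $w' \in W_2$ with $Y(w') = X(w) + V_2$; then $X(w'-w) \in V_2$, so by (b) we have $w'-w \in X^{-1}(V_2) \subseteq W_2$, and consequently $w = w' - (w'-w) \in W_2$. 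Once $w \in W_2$, we have $Y(w) = X(w)+V_2 \in V_1/V_2$, so (d) gives $w \in W_1$, completing the proof.

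The whole argument is routine once one realises that (c) acts as a surjectivity statement that lets us produce witnesses in $W_2$ and combine them with (b) to push arbitrary elements of $X^{-1}(V_1)$ back into $W_2$; this is the only nontrivial moment in an otherwise mechanical proof.
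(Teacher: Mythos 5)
Your proof is correct and follows essentially the same diagram-chasing route as the paper's; the one place you spend more words (first establishing $w\in W_2$ via conditions (b) and (c) before invoking (d)) is a step the paper's proof also performs, just more tersely.
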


\begin{proof}
Let us first prove that (1) implies (2). Suppose that (1) holds. Since $\Image(X)$ contains $V_{1}$,
it also contains $V_{2}$, proving item (a) of (2). Item (b) follows from the chain
of inclusions 
\[
X^{-1}(V_{2})\subseteq X^{-1}(V_{1})\subseteq W_{1}\subseteq W_{2}.
\]
We now prove (c). Since $X^{-1}(V_1) \subseteq W_1 \subseteq W_2$, and $\Image(X)\supseteq V_{1}$, we have $\Image(X(W_2,V)) \supseteq V_1$ and therefore $\Image(Y) \supseteq V_1/V_2$, as required. To prove (d), just note that 
\[
Y^{-1}(V_{1}/V_{2}) \subseteq X^{-1}(V_{1}) \subseteq W_{1}.
\]

We now show that (2) implies (1). Suppose that (2) holds. Then $\Image(X)\supseteq V_{2}$
and 
\[
X(W_{2})/V_{2}=\Image(Y)\supseteq V_{1}/V_{2},
\]
implying that $\Image(X)\supseteq V_{1}$.

It remains only to show that $X^{-1}(V_{1})\le W_{1}$. Let $w\in X^{-1}(V_{1})$, i.e.\ $Xw \in V_1$. We must show that $w\in W_{1}$. Since $W_1 \subseteq W_2$, we obtain that $Xw+V_2$ is in the image of $Y$, and it is also in $V_1/V_2$ by hypothesis. Since $Y^{-1}(V_1/V_2) \subset W_1$, there exists $w_1 \in W_1$ such that $Yw_1 = Xw+V_2$, so $Xw_1 \in Xw+V_2$ and therefore $X(w-w_1) \in V_2$. But $X^{-1}(V_2) \subset W_2$ and therefore $w-w_1 \in W_2$, so $w \in W_1$ as required.
\end{proof}

\begin{lem}
\label{lem:restrictiond of compositions of Laplacians}
Let $Y\in \l(W,V)$
be a linear map with $\ker(Y) \subset W_{1}$ and
$\Image(Y) \supset V_{1}$. Then there exists a unique triple $(W_2,V_2,X)$ such that $W \geq W_{2}\geq W_{1}$, $V_{2}\leq V_{1} \leq V$,
$X\in \l(W_{2},V_{1}/V_{2})$ is surjective with kernel $W_1$,
$Y^{-1}(V_{2})\subseteq W_{2}$ and $X\le Y(W_{2},V/V_{2})$.
\end{lem}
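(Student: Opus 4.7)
The plan is to guess the correct $V_2$ and $W_2$ from the constraints, construct $X$ explicitly, and then deduce uniqueness. The natural guess is $V_2:=V_1\cap Y(W_1)$ and $W_2:=W_1+Y^{-1}(V_1)$. Both definitions are forced by the compatibility conditions: a map $X$ with $\ker X=W_1$ that agrees with $Y(W_2,V/V_2)$ on the preimage of $\Image X=V_1/V_2$ (as Proposition \ref{prop:poset} requires) must send every $w\in W_1\cap Y^{-1}(V_1)$ to $0$, so $V_2$ must contain $V_1\cap Y(W_1)$; dually, the condition $V_1\leq\Image\,Y(W_2,V/V_2)=Y(W_2)+V_2$ together with $V_2\subseteq Y(W_1)$ will force $Y^{-1}(V_1)\subseteq W_2$.

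For existence I would: (i) check the containments $V_2\leq V_1$, $W_2\geq W_1$ and $Y^{-1}(V_2)\subseteq W_2$ (the last being immediate from $V_2\subseteq V_1$); (ii) define $X\colon W_2\to V_1/V_2$ on a decomposition $w=w_1+w'$ with $w_1\in W_1$ and $w'\in Y^{-1}(V_1)$ by $X(w):=Y(w')+V_2$, and verify it is well defined by noting that a different decomposition changes $w'$ by an element of $W_1\cap Y^{-1}(V_1)$, whose image under $Y$ lies in $Y(W_1)\cap V_1=V_2$; (iii) check directly that $\ker X=W_1$, that $X$ surjects onto $V_1/V_2$ (using $V_1\subseteq\Image Y$), and that $X\leq Y(W_2,V/V_2)$ via Proposition \ref{prop:poset}: the image condition is $V_1\subseteq Y(W_2)+V_2$, which holds since $Y(W_2)\supseteq Y(Y^{-1}(V_1))=V_1$, and the agreement condition on $(Y(W_2,V/V_2))^{-1}(V_1/V_2)=W_2\cap Y^{-1}(V_1)=Y^{-1}(V_1)$ is exactly the defining formula of $X$.

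For uniqueness, given any admissible triple $(W_2',V_2',X')$, I would first show $V_2'=V_2$. The inclusion $V_2\subseteq V_2'$ follows from the agreement of $X'$ with $Y(W_2',V/V_2')$ on $W_2'\cap Y^{-1}(V_1)\supseteq W_1\cap Y^{-1}(V_1)$ together with $\ker X'=W_1$. Conversely, any $v\in V_2'\subseteq V_1$ satisfies $v=Yw$ with $w\in Y^{-1}(V_2')\subseteq W_2'$; then $X'w=Yw+V_2'=0$, so $w\in\ker X'=W_1$, giving $v\in Y(W_1)\cap V_1=V_2$. Next, $V_1\subseteq Y(W_2')+V_2'=Y(W_2')+V_2$ combined with $V_2\subseteq Y(W_1)\subseteq Y(W_2')$ yields $V_1\subseteq Y(W_2')$, and then a standard argument (using $\ker Y\subseteq W_1\subseteq W_2'$) gives $Y^{-1}(V_1)\subseteq W_2'$, so $W_2\subseteq W_2'$. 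Finally $X'$ is forced to agree with $X$ on $Y^{-1}(V_1)$ and to vanish on $W_1$, determining it on $W_1+Y^{-1}(V_1)=W_2$.

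The main technical step is the uniqueness half, specifically showing the reverse inclusion $W_2'\subseteq W_2$. I would close it by a dimension count: from $\ker X'=W_1$ and $\Image X'=V_1/V_2$ one gets $\dim W_2'=\dim W_1+\dim(V_1/V_2)$, while a direct computation using $\ker Y\subseteq W_1$ yields $\dim(W_1+Y^{-1}(V_1))=\dim W_1+\dim V_1+\dim\ker Y-(\dim V_2+\dim\ker Y)=\dim W_1+\dim(V_1/V_2)$; the two dimensions agree, so the established inclusion $W_2\subseteq W_2'$ is an equality. This dimension bookkeeping is the one place the hypothesis $\ker Y\subseteq W_1$ gets used crucially.
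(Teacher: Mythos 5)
Your proof is correct and follows essentially the same route as the paper: you pick the same $V_2=Y(W_1)\cap V_1$ and $W_2=W_1+Y^{-1}(V_1)$, build $X$ the same way from the decomposition $W_2=W_1+Y^{-1}(V_1)$, and settle uniqueness by pinning down $V_2'$ and $W_2'$ with a dimension count. The only cosmetic difference is the order in the uniqueness step (you fix $V_2'$ first and then $W_2'$, while the paper does the reverse), but the underlying facts used are the same.
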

\begin{proof}
We start by proving the existence part of the claim. As usual, we will identify between a surjective linear map $X:W_2 \to U$ with kernel $W_1$, and the unique linear isomorphism $\tilde{X}$ from $W_2/W_1$ to $U$ satisfying $\tilde{X}(w+W_1) = X(w)$ for all $w \in W_2$.

\subsection*{The existence part of the claim}

Let $W_{2}=W_{1}+Y^{-1}(V_{1})$, and let $V_{2}=Y(W_{1})\cap V_{1}$. Note that
$$Y^{-1}(V_2) \subseteq Y^{-1}(Y(W_1)) = W_1 \subseteq W_2,$$
where the equality $Y^{-1}(Y(W_1))=W_1$ holds since if $w \in W$ with $Yw = Yw_1$ for some $w_1 \in W_1$, then $w-w_1 \in \ker(Y) \subseteq W_1$ so $w \in W_1$.

Let $Y_{2}=Y(W_{2},V/V_{2})$. Note that since $Y(W_1 \cap Y^{-1}(V_1)) \subset Y(W_1) \cap V_1 = V_2$, the restriction of the map $Y_{2}\colon W_{2}\to V/V_{2}$
to the subspace $Y^{-1}(V_{1})$ sends $W_{1}\cap Y^{-1}(V_{1})$ to
$0=V_2$, and therefore induces a linear map 
\[
\tilde{X}\colon Y^{-1}(V_{1})/(W_{1}\cap Y^{-1}(V_{1}))\to V_{1}/V_{2}.
\]
Since the image of $Y$ contains $V_1$, the linear map $\tilde{X}$ is clearly surjective. It is also injective, since if $w \in W$ with $Yw \in V_2 := Y(W_1)\cap V_1$, then $w \in Y^{-1}(Y(W_1)) \cap Y^{-1}(V_1) = W_1 \cap Y^{-1}(V_1)$. Hence, $\tilde{X}$ is a linear
isomorphism. Since $Y^{-1}(V_1)/(W_1 \cap Y^{-1}(V_1))$ is naturally isomorphic to $(Y^{-1}(V_1)+W_1)/W_1 = W_2/W_1$, we may equivalently view $\tilde{X}$ as a linear isomorphism from $W_2/W_1$ to $V_1/V_2$. We let $X \in \l(W_2,V_1/V_2)$ be the corresponding linear surjection with kernel $W_1$. Explicitly, for $w \in W_2$ with $w = w_1+z$ ($w_1 \in W_1$, $z \in Y^{-1}(V_1)$), we define $X(w) = \tilde{X}(z+W_1 \cap Y^{-1}(V_1))$.

To finish the proof of the existence part of the claim, we must show that
$X\le Y_{2}$. Since $\mathrm{Ker}(Y_2)=W_{1}\cap Y^{-1}(V_{1})$, and
since $W_{1}+Y^{-1}(V_{1})=W_{2}$, by the rank-nullity formula we have 
\begin{align*}
\mathrm{rank}(Y_{2}) & =\dim(W_2) - \dim(\ker(Y_2))\\
& = \dim(W_1 + Y^{-1}(V_1)) - \dim(\ker(Y_2))\\
&= \dim(W_1)+\dim(Y^{-1}(V_1)) - \dim(W_1 \cap Y^{-1}(V_1))\\
&= \dim(W_{1})+\dim(Y^{-1}(V_{1}))-2\dim(\mathrm{Ker}(Y_{2})).\end{align*}
On the other hand, we have
\[
\rank(X)=\dim(Y^{-1}(V_{1}))-\dim(\mathrm{Ker}(Y_{2})).
\]
By construction, $Y_2$ agrees with $X$ on $Y^{-1}(V_1)$, and therefore
$$\rank(Y_2-X) \leq \dim(W_2) - \dim(Y^{-1}(V_1)) = \dim(W_1) - \dim(W_1 \cap Y^{-1}(V_1)) = \dim(W_1) - \dim(\ker(Y_2)),$$
so
$$\rank(Y_2-X) \leq \dim(W_{1})-\dim(\mathrm{Ker}(Y_2)).$$
Hence, we have $\rank(Y_2) \geq \rank(X)+\rank(Y_2-X)$, so in fact $\rank(Y_2) = \rank(X)+\rank(Y_2-X)$, and therefore $X\le Y_2$, as required.

\subsection*{The uniqueness part of the claim}

Let $V_{2}\le V_{1}$ and let $W_{2}\ge W_{1}$ be such that $Y^{-1}(V_{2})\le W_{2}$, write $Y_2 = Y(W_2,V/V_2)$, and let $X$ be a linear surjection from $W_2$ to $V_{1}/V_{2}$ with kernel $W_1$, and with $X\le Y_2$. We must show that $X,V_{2}$ and $W_{2}$
are the same as obtained in the existence part of the claim. Viz., we must show that 
\[
V_{2}=Y(W_{1})\cap V_{1},\quad W_{2}=W_{1}+Y^{-1}(V_{1}),
\]
and that $X$ is the linear surjection obtained above. Since $X\le Y_{2}$,
we obtain 
\[
V_{1}/V_{2}=\Image(X)\subseteq \Image(Y_{2}),
\]
and by the previous proposition, $X$ agrees with $Y_{2}$ on 
\[
Y_{2}^{-1}(\Image(X))=Y_{2}^{-1}(V_{1}/V_{2}).
\]
Now since $\ker(X) = W_{1}$, we obtain that the subspace $W_{1}\cap Y_{2}^{-1}(V_{1}/V_2)$
is the kernel of the map $Y(Y_{2}^{-1}(V_{1}/V_2),V/V_{2})$. So $Y$
induces a linear isomorphism 
\[
\tilde{X'}\colon Y_{2}^{-1}(V_{1}/V_2)/(W_{1}\cap Y_{2}^{-1}(V_{1}/V_2))\to V_{1}/V_{2}.
\]
Since $Y_2^{-1}(V_1/V_2)/(W_1 \cap Y_2^{-1}(V_1/V_2))$ is naturally isomorphic to $(Y_2^{-1}(V_1/V_2)+W_1)/W_1$, we may alternatively view $\tilde{X'}$ as a linear isomorphism between $(W_{1}+Y_{2}^{-1}(V_{1}/V_2))/W_{1}$
and $V_{1}/V_{2}$. However, we have $W_{1}+Y_{2}^{-1}(V_{1}/V_2)\le W_{2}$ and
$W_{2}/W_{1}$ is also isomorphic to $V_{1}/V_{2}$ by hypothesis,
and therefore $W_{2}=Y_{2}^{-1}(V_{1}/V_2)+W_{1}$. So far, we have
$$Y^{-1}(V_{1})+W_{1}\supseteq Y_{2}^{-1}(V_{1}/V_2)+W_{1} = W_{2}.$$
We now note that in fact, $Y^{-1}(V_{1}) = Y_2^{-1}(V_1/V_2)$. It is clear that the first set contains the second; we claim the second also contains the first. Indeed, suppose $w \in Y^{-1}(V_1)$; then $Yw \in V_1$. Since $\Image(Y_2) \supseteq V_1/V_2$, there exists $w_2 \in W_2$ such that $Yw_2 \in Yw+V_2$. But then $Y(w-w_2) \in V_2$ and therefore $w-w_2 \in W_2$. It follows that $w \in W_2$ and therefore $w \in Y_2^{-1}(V_1/V_2)$, as required. Hence,
$$W_2 = Y^{-1}(V_{1})+W_{1}.$$
We now wish to show that
$$V_2 = Y(W_1) \cap V_1.$$
First we show that $V_2 \subset Y(W_1) \cap V_1$. Clearly, $V_2 \subset V_1$, so it suffices to show that $V_2 \subset Y(W_1)$. Let $v_2 \in V_2$. Since $\Image(Y) \supset V_1 \supset V_2$, there exists $w \in W$ such that $Yw = v_2$. Since $Y^{-1}(V_2) \subset W_2$, we have $w \in W_2$. Since $w \in \ker(Y(Y_2^{-1}(V_1/V_2),V/V_2))$, we have $w \in W_1$ as required. Now we show that $V_2 \supset Y(W_1) \cap V_1$. Indeed, suppose that $v_1 \in Y(W_1) \cap V_1$; then there exists $w_1 \in W_1$ such that $Yw_1 = v_1 \in V_1$. But then $w_1 \in \ker(Y(Y_2^{-1}(V_1/V_2),V/V_2))$, so $Yw_1 =v_1 \in V_2$, as required.

It follows that $\tilde{X}'$ is precisely the map $\tilde{X}$ in the proof of the existence part of the claim. This completes the proof of the uniqueness part of the claim.
\end{proof}

\begin{lem}
\label{Lem: swapping places 2} Let $V_{1}\le V_{2}\le V$, $W_{2}\le W_{1}\le W$,
let $X\in\l(W,V)$ be linear map whose kernel is $W_{1}$ and whose image
is $V_{1}$, and let $Y\in\l(W,V)$ be a linear map. Suppose that $W \geq W_{3}\ge W_{2}$
and $V_{3}\le V_{2} \leq V$, 
\[
W_{3}/W_{2}\oplus W_{1}/W_{2}=W/W_2,
\]
\[
V_{1}\oplus V_{3}=V_{2},
\]
\[
\Image(Y) \supseteq V_3,\quad Y^{-1}(V_{3})\subseteq W_{3}
\]
and 
\[
X(W_{3},V/V_{3})\le Y(W_{3},V/V_{3}).
\]
\
Then $X\le Y$, 
\[
\Image(Y(W_{1},V/V_{1}))\supseteq V_{2}/V_1
\]
and 
\[
Y(W_{1},V/V_{1})^{-1}(V_{2})\subseteq W_{2}.
\]
Moreover, 
\[
V_{3}=\Image(Y-X)\cap V_{2}
\]
and 
\[
W_{3}=\mathrm{Ker}(Y-X)+W_{2}.
\]
\end{lem}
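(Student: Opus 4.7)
The plan is to prove all five conclusions in sequence using Proposition \ref{prop:poset}, which characterises $X \leq Y$ by the conditions $\Image(X) \subseteq \Image(Y)$ and $X = Y$ on $Y^{-1}(\Image(X))$. Two preliminary facts do most of the heavy lifting. First, (a) $V_2 \subseteq \Image(Y)$: the inclusion $X(W_3,V/V_3) \leq Y(W_3,V/V_3)$ forces $\Image(X(W_3,V/V_3)) = V_2/V_3 \subseteq \Image(Y(W_3,V/V_3)) = (Y(W_3)+V_3)/V_3$, giving $V_2 \subseteq Y(W_3) + V_3 \subseteq \Image(Y)$ (using $V_3 \subseteq \Image(Y)$). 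Second, (b) $Y^{-1}(V_2) \subseteq W_3$: if $Yw \in V_2 \subseteq Y(W_3) + V_3$, write $Yw = Yw_3 + v_3$ with $w_3 \in W_3$, so $Y(w-w_3) = v_3 \in V_3$ forces $w - w_3 \in Y^{-1}(V_3) \subseteq W_3$ by hypothesis, hence $w \in W_3$.

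To establish the first claim $X \leq Y$, it remains to show $X = Y$ on $Y^{-1}(V_1)$. For $w \in Y^{-1}(V_1) \subseteq Y^{-1}(V_2) \subseteq W_3$, the element $Y(W_3,V/V_3)(w) = Yw + V_3$ lies in $V_2/V_3 = \Image(X(W_3,V/V_3))$, so the agreement part of the poset hypothesis gives $Xw \equiv Yw \pmod{V_3}$; since both $Xw$ and $Yw$ lie in $V_1$ and $V_1 \cap V_3 = \{0\}$ by hypothesis, we conclude $Xw = Yw$. Applying the same agreement clause, this time to $v_3 \in V_3$ written as $Yw$ with $w \in W_3$, forces $Xw \in V_3 \cap V_1 = \{0\}$, so $w \in \ker(X) \cap W_3 = W_1 \cap W_3 = W_2$ (the last equality being the direct-sum hypothesis on $W$). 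This is the key intermediate fact $V_3 \subseteq Y(W_2)$ that drives the remaining claims.

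From this intermediate fact, the second claim is immediate by passing to $V/V_1$: $V_3 \subseteq Y(W_2) \subseteq Y(W_1)$ yields $V_2/V_1 = (V_1 + V_3)/V_1 \subseteq \Image(Y(W_1,V/V_1))$. The third claim follows from (b) together with $W_1 \cap W_3 = W_2$: if $Y(W_1,V/V_1)(w_1) \in V_2/V_1$, then $Yw_1 \in V_2$, so $w_1 \in W_1 \cap W_3 = W_2$. For the identity $V_3 = \Image(Y-X) \cap V_2$, the intermediate fact gives $V_3 \subseteq (Y-X)(W_2) \subseteq \Image(Y-X) \cap V_2$ (since $X$ vanishes on $W_2 \subseteq W_1$); for the reverse inclusion, the already-proven $X \leq Y$ yields $\Image(Y) = V_1 \oplus \Image(Y-X)$, so $V_1 \subseteq V_2 \subseteq \Image(Y)$ forces $V_2 = V_1 \oplus (V_2 \cap \Image(Y-X))$, and a dimension comparison with $V_2 = V_1 \oplus V_3$ closes the argument.

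For the final identity $W_3 = \ker(Y-X) + W_2$, I would first observe $\ker(Y-X) = Y^{-1}(V_1)$ (the $\supseteq$ inclusion uses the agreement proved in the first claim, and $\subseteq$ is immediate), hence $\ker(Y-X) \subseteq W_3$ by (b), giving the inclusion $\ker(Y-X) + W_2 \subseteq W_3$. The reverse is a dimension count whose key auxiliary fact is $\ker(Y) \subseteq W_2$: fixing any complement $W_4$ of $W_2$ in $W_3$, the direct-sum hypothesis on $W$ gives $\dim(W_4) = \codim(W_1) = \rank(X)$ and makes $X|_{W_4}$ an isomorphism onto $V_1$, so any $w \in \ker(Y) \subseteq \ker(Y-X) \subseteq W_3$ decomposes as $w = w_2 + w_4$ with $Xw_4 = Xw = Yw = 0$, forcing $w_4 = 0$. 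With this, $\ker(Y-X) \cap W_2 = \ker(Y)$, and using $\rank(Y-X) = \rank(Y) - \rank(X)$ from $X \leq Y$ a short calculation yields $\dim(\ker(Y-X)+W_2) = \dim(W_3)$. The main obstacle throughout is bookkeeping the intricate web of subspace relations; the pivotal realisation is that the agreement clause in the poset hypothesis forces $Xw = 0$ whenever $Yw \in V_3$, and this single observation powers both the preliminary facts and the identities in the final two claims.
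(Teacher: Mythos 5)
Your proof is correct. For $X\le Y$ and the two conclusions about $Y(W_1,V/V_1)$ it essentially matches the paper's: both first establish $V_2\subseteq\Image(Y)$ via $\Image(X(W_3,V/V_3))=V_2/V_3$, then invoke the agreement clause of Proposition \ref{prop:poset}. Your version is a little tidier in isolating the intermediate fact $V_3\subseteq Y(W_2)$, which the paper derives only implicitly (it shows $w_3\in W_1$, with $w_3\in W_3$ left in the background). For the two ``moreover'' identities your route genuinely differs: you identify $\ker(Y-X)=Y^{-1}(V_1)$ and $\ker(Y)\subseteq W_2$ (via the complement $W_4$) and then finish with elementary dimension counts, whereas the paper computes $\rank(X(W_3,V/V_3))=\rank(X)$ and $\rank(Y(W_3,V/V_3))=\rank(Y)-\dim(V_3)-\codim(W_3)$ via Lemma \ref{Lem:Linear algebraic}, deduces $\rank((Y-X)(W_3,V/V_3))=\rank(Y-X)-\dim(V_3)-\codim(W_3)$, reads off the inclusions $\Image(Y-X)\cap V_2\supseteq V_3$ and $\ker(Y-X)+W_2\subseteq W_3$, and then upgrades them to equalities by separate arguments. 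Both are sound; your variant avoids Lemma \ref{Lem:Linear algebraic} and organises the bookkeeping around the single structural observation that the agreement clause forces $Xw=0$ whenever $Yw\in V_3$, which is indeed what powers all the remaining claims.
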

\begin{proof}
\textbf{Showing that $\Image(X)\le\Image(Y)$.} We have 
\[
\Image(X(W_{3},V/V_{3}))\le \Image(Y(W_{3},V/V_{3}))
\]
and therefore
\[
X(W_{3})+V_{3}\le Y(W_{3})+V_{3}.
\]
Now 
\[
W_{3}+W_{1}=W,
\]
and $W_1 = \ker(X)$, so $\Image(X) = X(W_3)$. Moreover, $V_{3}\leq Y(W_3)$. Thus, $$V_1 = \Image(X) = X(W_3) \leq X(W_3)+V_3 \leq Y(W_3)+V_3 \le Y(W_3) \leq \Image(Y),$$
as required. Note that since $X(W_3)+V_3 = V_1+V_3=V_2$, we showed along the way that $Y(W_3) \supseteq V_2$.

\textbf{Showing that $X$ and $Y$ agree on $Y^{-1}(\Image(X))$.}
Let $w \in W$ with $Yw\in V_{1}$. We need to show that $Xw=Yw$. Since
\[
\Image(Y(W_{3},V))\supseteq V_{2},
\]
there exists $w_{3}\in W_{3}$ such that $Yw_3 = Yw$, and therefore $Y(w-w_3)=0$. Since $Y^{-1}(V_3) \subset W_3$ It follows that $w-w_3 \in W_3$ and therefore $w \in W_3$. Hence, $$Y(W_3,V/V_3)w = Yw+V_3 \in \Image(X(W_3,V/V_3));$$
since $X(W_3,V/V_3) \leq Y(W_3,V/V_3)$, these two maps agree on $w$ and therefore $Yw-Xw \in V_3$. But $Yw-Xw \in V_1$ as well, and $V_1 \cap V_3 = \{0\}$, so $Xw=Yw$, as required.

\textbf{Showing that $V_{3}=\Image(Y-X)\cap V_{2}$ and that $W_{3}=\mathrm{Ker}(Y-X)+W_{2}$.}
We prove it by showing that 
\[
\mathrm{rank}(X(W_{3},V/V_{3}))=\mathrm{rank}(X),
\]
and that 
\[
\mathrm{rank}(Y(W_{3},V/V_{3}))=\mathrm{rank}(Y)-\dim(V_{3})-\mathrm{codim}(W_{3}).
\]
This will allow us to deduce that 
\[
\mathrm{rank}(Y-X)(W_{3},V/V_{3})=\mathrm{rank}(Y-X)-\dim(V_{3})-\mathrm{codim}(W_{3}).
\]
Now the above equality holds if and only if $\Image(Y-X)\supseteq V_{3}$
and $(Y-X)^{-1}(V_{3})\subseteq W_{3}.$ Hence, $\Image(Y-X)\cap V_{2}\supseteq V_{3}$,
and $\mathrm{Ker}(Y-X)+W_{2}\le W_{3}$. It is easy to see that these inclusions are actually equalities. Indeed, since $X \leq Y$ we have $(\Image(Y-X) \cap V_2) \cap (\Image(X) \cap V_2) = \{0\}$ and therefore $\dim(\Image(Y-X) \cap V_2) \leq \dim(V_2)-\dim(\Image(X) \cap V_2) = \dim(V_2)-\dim(V_1)=\dim(V_3)$ and therefore $\Image(Y-X) \cap V_2 = V_3$. Similarly, we have $\ker(Y-X)+W_2 = W_3$. Indeed, since
$$X(W_3) \leq X(W_3)+V_3 \leq Y(W_3)+V_3 = Y(W_3),$$
and since $Y(W_3,V)$ and $X(W_3,V)$ agree on $Y(W_3,V)^{-1}(\Image(X))$ (using that $X \leq Y$), we have $X(W_3,V) \leq Y(W_3,V)$; it follows that 
$$\ker(X(W_3,V))+\ker((Y-X)(W_3,V)) = \ker(Y(W_3,V))$$
and therefore
$$\ker(Y-X) + W_2 = (\ker(Y-X))\cap W_3 + \ker(X) \cap W_3 = W_3.$$

\textbf{Showing that 
\[
\mathrm{rank}(X(W_{3},V/V_{3}))=\mathrm{rank}(X).
\]
} We have 
\[
\Image(X(W_{3},V/V_{3}))=(X(W_{3})+V_{3})/V_3.
\]
Now 
\[
X(W_{3})=X(W_{3}+\ker(X))=X(W_{3}+W_{1})=\Image(X)=V_{1}.
\]
Hence,
\[
\rank(X(W_3,V/V_3)) = \dim(V_{1}\oplus V_{3})/V_{3}=\dim(V_{1})=\mathrm{rank}(X).
\]
\textbf{Showing that 
\[
\mathrm{rank}(Y(W_{3},V/V_{3}))=\mathrm{rank}(Y)-\dim(V_{3})-\mathrm{codim}(W_{3}).
\]
}
Since $\Image(Y) \supseteq V_3$ and $Y^{-1}(V_{3})\subseteq W_{3}$, this follows from Lemma \ref{Lem:Linear algebraic}.

Since $X(W_{3},V/V_{3})\le Y(W_{3},V/V_{3})$, we obtain 
\begin{align*}
\mathrm{rank}((Y-X)(W_{3},V/V_{3}))&=\rank(Y)-\dim(V_{3})-\mathrm{codim}(W_{3}) - \rank(X)\\
&=\mathrm{rank}(Y-X)-\dim(V_{3})-\mathrm{codim}(W_{3}),\end{align*}
as required.
\textbf{Showing that $\Image(Y(W_{1},V/V_{1}))\ge V_{2}/V_{1}$.}
It suffices to show that $Y(W_{1})+V_{1}\ge V_{2}$. Let $v_{3}\in V_{3}$;
we show that $v_{3}\in Y(W_{1})$. Since $\Image(Y) \supseteq V_3$ and $Y^{-1}(V_{3})\subseteq W_{3}$,
there exists $w_{3}\in W_{3}$ such that $Yw_{3}=v_{3}$. Hence,
\[
w_{3}\in\mathrm{Ker}(Y(W_{3},V/V_{3}))\le\mathrm{Ker}(X(W_{3},V/V_{3}));
\]
since $\Image(X)\cap V_3 = \{0\}$ we obtain $Xw_{3}=0$, and therefore $w_{3}\in W_{1}$. Thus, $v_{3}\in Y(W_{1})$, as required.

\textbf{Showing that $Y(W_{1},V/V_{1})^{-1}(V_{2}/V_1)\le W_{2}$.} Let
$w\in W_{1}$ be with $Yw\in V_{2}$; we show that $w\in W_{2}$. It suffices to show that $w \in W_3$. Write $Yw = v_1+v_3$ where $v_1 \in V_1$ and $v_3 \in V_3$. Since $\Image(Y) \supseteq V_3$ and $Y^{-1}(V_3) \subset W_3$, there exists $w_3 \in W_3$ such that $Yw_3=v_3$. Hence, $Y(w-w_3)=v_1 \in V_1 = \Image(X)$, so $X$ and $Y$ agree on $w-w_3$, so $w-w_3 \in \ker(Y-X) \subset W_3$, and therefore $w \in W_3$, as required.

This completes the proof of the lemma. 
\end{proof}
\begin{lem}
\label{lem:switching roles between derivatives lin-alg part} Let
$V_{1}\le V_{2}\le V$, and let $W_{2}\le W_{1}\le W.$ Suppose
$X\in\l(W,V)$ is a map whose kernel is $W_{1}$ and whose image is
$V_{1}$, and let $Y\in\l(W,V)$. Suppose that $X\le Y$,
that $V_{2}/V_{1}\le\Image(Y(W_{1},V/V_{1}))$, and that $Y(W_{1},V/V_{1})^{-1}(V_{2}/V_{1})\le W_{2}$.

Then there exist $V_{3}\leq V_{2}$ and $W_{3}\geq W_{2}$,
such that $V_{3}\oplus V_{1}=V_{2}$, $(W_{3}/W_{2})\oplus (W_{1}/W_{2})=W/W_{2}$,
$X(W_{3},V/V_{3})\le Y(W_{3},V/V_{3})$, $\Image(Y) \supseteq V_3$ and $Y^{-1}(V_3) \subset W_3$.
\end{lem}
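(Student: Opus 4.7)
The plan is to construct $V_3$ and $W_3$ from the data $(X,Y,V_2,W_2)$ by mimicking the ``uniqueness'' formulas at the end of Lemma \ref{Lem: swapping places 2}, namely
\[
V_3 := \Image(Y-X) \cap V_2, \qquad W_3 := \ker(Y-X) + W_2,
\]
and then to verify each required property in turn. Throughout I will use two consequences of $X \leq Y$ recorded in the proof of Proposition \ref{prop:poset}: $\Image(Y) = \Image(X) \oplus \Image(Y-X)$ and $\ker(X) + \ker(Y-X) = W$.

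The easy conditions go as follows. For $V_3 \oplus V_1 = V_2$: the intersection is $\Image(Y{-}X) \cap \Image(X) = \{0\}$, and for the sum, given $v_2 \in V_2$ the hypothesis produces $w \in W_1$ with $Yw \equiv v_2 \pmod{V_1}$; since $w \in W_1 = \ker X$ this gives $Yw = (Y{-}X)w \in \Image(Y{-}X)$, and $Yw \in V_2$, hence $Yw \in V_3$, which suffices. For $(W_3/W_2) \oplus (W_1/W_2) = W/W_2$: the sum is $\ker(Y{-}X)+W_1 = \ker(Y{-}X)+\ker X = W$; for the intersection $W_3 \cap W_1 \subseteq W_2$, any $w = w_k + w_2$ with $w_k \in \ker(Y{-}X)$, $w_2 \in W_2$, lying in $W_1$, forces $w_k \in W_1 \cap \ker(Y{-}X) \subseteq \ker X \cap \ker(Y{-}X) = \ker Y$, and then $w_k \in W_1$ with $Yw_k = 0 \in V_2$ gives $w_k \in W_2$ by the second hypothesis on $Y(W_1,V/V_1)$. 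The inclusion $\Image(Y) \supseteq V_3$ is immediate from $V_3 \subseteq \Image(Y{-}X) \subseteq \Image(Y)$. For $Y^{-1}(V_3) \subseteq W_3$: if $Yw \in V_3 \subseteq \Image(Y{-}X)$, then $Xw = Yw - (Y{-}X)w \in \Image(X) \cap \Image(Y{-}X) = \{0\}$, so $w \in W_1$; combined with $Yw \in V_3 \subseteq V_2$, the hypothesis yields $w \in W_2 \subseteq W_3$.

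The main step is $X(W_3,V/V_3) \leq Y(W_3,V/V_3)$. Write $X' := X(W_3,V/V_3)$ and $Y' := Y(W_3,V/V_3)$, so $Y' - X' = (Y{-}X)(W_3,V/V_3)$; by definition it suffices to prove $\rank(Y') = \rank(X') + \rank(Y'-X')$. Using $W_3 + W_1 = W$ (hence $X(W_3) = V_1$), the fact that $W_2 \subseteq \ker X$ (hence $(Y{-}X)(W_3) = Y(W_2)$), and $Y(\ker(Y{-}X)) = X(\ker(Y{-}X)) = V_1$, I compute
\[
\Image(X') = V_2/V_3,\quad \Image(Y'-X') = (Y(W_2)+V_3)/V_3,\quad \Image(Y') = (V_2 + Y(W_2))/V_3.
\]
Inclusion-exclusion for dimensions then reduces the required rank equality to the single claim $Y(W_2) \cap V_2 \subseteq V_3$. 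This key claim follows from the same observation used above: for $w \in W_2 \subseteq \ker X$ we have $Yw = (Y{-}X)w \in \Image(Y{-}X)$, and if additionally $Yw \in V_2$ then $Yw \in \Image(Y{-}X) \cap V_2 = V_3$. As a simultaneous sanity check, the same inclusion gives $\Image(X') \cap \Image(Y'-X') = \{0\}$ directly, confirming $X' \leq Y'$.

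The main obstacle is the last step: keeping track of the nested quotients and verifying the rank arithmetic. The crucial nontrivial input is the inclusion $Y(W_2) \cap V_2 \subseteq V_3$, which is what forces the construction $V_3 = \Image(Y-X) \cap V_2$ to behave compatibly with the restrictions to $W_3$; the remaining verifications are formal consequences of the hypothesis $X \leq Y$ and the two hypotheses on $Y(W_1,V/V_1)$.
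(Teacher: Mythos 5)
Your construction coincides exactly with the paper's: $V_3 := \Image(Y-X)\cap V_2$ and $W_3 := \ker(Y-X)+W_2$, and the verifications of $V_3\oplus V_1=V_2$, $(W_3/W_2)\oplus(W_1/W_2)=W/W_2$, $\Image(Y)\supseteq V_3$, and $Y^{-1}(V_3)\subseteq W_3$ are all correct and essentially the same as the paper's (your argument for $Y^{-1}(V_3)\subseteq W_3$ actually lands the cleaner intermediate fact $Y^{-1}(V_3)\subseteq W_2$).

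Where you genuinely diverge is the main step, proving $X(W_3,V/V_3)\le Y(W_3,V/V_3)$. The paper verifies the rank identity $\rank(Y')=\rank(X')+\rank(Y'-X')$ by computing each rank separately via Lemma \ref{Lem:Linear algebraic}: it shows $\rank(X')=\rank(X)$ by identifying $\ker(X')=W_2$, applies Lemma \ref{Lem:Linear algebraic} to $Y$ (using $\Image(Y)\supseteq V_3$, $Y^{-1}(V_3)\subseteq W_3$), then separately establishes $(Y-X)^{-1}(V_3)\subseteq W_3$ to apply Lemma \ref{Lem:Linear algebraic} to $Y-X$, and finally uses $\rank(Y)=\rank(X)+\rank(Y-X)$. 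You instead compute the three images $\Image(X')=V_2/V_3$, $\Image(Y'-X')=(Y(W_2)+V_3)/V_3$, $\Image(Y')=(Y(W_2)+V_2)/V_3$ directly from the definitions of $V_3$ and $W_3$, and then a single application of inclusion--exclusion reduces the rank identity to the inclusion $Y(W_2)\cap V_2\subseteq V_3$, which drops out immediately from $W_2\subseteq\ker X$. Your route is more self-contained (it does not need the auxiliary fact $(Y-X)^{-1}(V_3)\subseteq W_3$, and does not invoke Lemma \ref{Lem:Linear algebraic}), and it isolates a single concrete inclusion as the crux; the paper's route is more modular and reuses machinery already set up. Both are correct, and the quantitative content is identical.
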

\begin{proof}
Set $V_{3}=\Image(Y-X)\cap V_{2}$, and $W_{3}=W_{2}+\mathrm{Ker}(Y-X)$. Clearly, we have $\Image(Y) \supseteq V_3$.

\textbf{Showing that $V_{3}\oplus V_{1}=V_{2}$.} Since $\Image(X)\cap\Image(Y-X)=\{0\}$,
we have $V_{3}\cap V_{1}=\{0\}.$ Now let $v\in V_{2}$. Since 
\[
\Image(Y(W_{1},V/V_{1}))\supseteq V_{2}/V_1,
\]
and since 
\[
Y(W_{1},V/V_{1})^{-1}(V_{2}/V_1)\subseteq W_{2},
\]
we may write $v=Yu + v_1 = (Y-X)u+Xu+v_1$ for some $u \in W_2$ and $v_1 \in V_1$. Now $v_1,Xu\in V_{1}\subseteq V_{2}$ and therefore $(Y-X)u$ is also in $V_{2}$. Hence, 
\[
v\in V_{1}+(V_{2}\cap\Image(Y-X))=V_{1}+V_{3},
\]
as required.

\textbf{Showing that $W_{3}/W_{2}\oplus W_{1}/W_{2}=W/W_{2}$.} Since
\[
\mathrm{Ker}(X)+\mathrm{Ker}(Y-X)=W,
\]
we have $W_{1}+W_{3}=W$. It remains to show that $W_{3}\cap W_{1}=W_{2}$. Clearly, $W_3 \cap W_1 \supseteq W_2$; we must show the reverse inclusion.
Let $w\in W_{3}\cap W_{1}$. Then $w$ is in the kernel of $X$ and
we may write $w=w_{2}+u$, where $w_{2}\in W_{2}$ and $u \in \ker(Y-X)$. Since $W_{2}\subseteq W_{1} = \ker(X)$,
this implies that $u\in W_{1}=\ker(X)$. Hence
\[
u\in \ker(X) \cap \ker(Y-X) = \ker(Y)\subseteq W_{1}\cap Y^{-1}(V_{2})= Y(W_{1},V/V_{1})^{-1}(V_{2}/V_1)\subseteq W_{2},
\]
so $v \in V_2$ as well. This completes the proof that 
\[
W/W_{2}=W_{1}/W_{2}\oplus W_{3}/W_{2}.
\]

\textbf{Showing that $Y^{-1}(V_{3})\le W_{3}$.} Let $w \in W$ with $Yw\in V_{3}$.
Then $Yw\in V_{3}\le V_{2}$ and therefore there exists $w_{2}\in W_{2}$
with $Y(W_{1},V/V_{1})w_{2}=Yw+V_{1}$. Hence, 
\[
Y(w-w_{2})\in V_{1}=\Image(X).
\]
Since $X\le Y$, we have $w-w_{2}\in\mathrm{Ker}(Y-X)$, proving that
$w\in W_{2}+\mathrm{Ker}(Y-X).$

\textbf{Showing that $X(W_{3},V/V_{3})\le Y(W_{3},V/V_{3})$.} Since
$V_{3}$ has trivial intersection with the image of $X$, and since 
\[
\mathrm{Ker}(X(W_{3},V/V_{3}))=\mathrm{Ker}(X)\cap W_{3}=W_{2},
\]
we have 
\[
\mathrm{rank}(X(W_{3},V/V_{3}))=\mathrm{rank}(X)+\dim(W_{3}/W_{2})-\dim(W/W_{1})=\mathrm{rank}(X).
\]
On the other hand, by Lemma \ref{Lem:Linear algebraic} and the facts that $\Image(Y) \supseteq V_3$ and $Y^{-1}(V_3) \subset W_3$, we have
\[
\mathrm{rank}(Y(W_{3},V/V_{3}))=\mathrm{rank}(Y)-\dim(V_{3})-\mathrm{codim}(W_{3}).
\]
Clearly, we have $\Image(Y-X) \supseteq V_3$; we claim that also $(Y-X)^{-1}(V_3) \subset W_3$. Indeed, suppose that $w \in W$ with $(Y-X)w = v_3 \in V_3$; since $V_3 \subset \Image(Y)$ and $Y^{-1}(V_3) \subset W_3$, there exists $w_3 \in W_3$ such that $Yw_3 = v_3$. Hence, $(Y-X)w=Yw_3$, so $Y(w-w_3) = Xw \in \Image(X)$, and therefore $w-w_3 \in Y^{-1}(\Image(X)) \subset \ker(Y-X) \subset W_3$; it follows that $w\in W_3$. Hence, we have
\[
\mathrm{rank}((Y-X)(W_{3},V/V_{3}))= \rank(Y-X) - \dim(V_{3})-\mathrm{codim}(W_{3}).
\]
Thus, 
\[
X(W_{3},V/V_{3})\leq Y(W_3,V/V_3),
\]
as required.
\end{proof}

\end{document}